
\documentclass[aos,preprint]{imsart}

\RequirePackage{amsthm,amsmath,amsfonts,amssymb}
\RequirePackage[numbers]{natbib}
\RequirePackage[colorlinks,citecolor=blue,urlcolor=blue]{hyperref}

\usepackage{color, multirow, graphicx}
\usepackage{float}
\usepackage{dsfont}
\usepackage{subcaption}
 \usepackage{siunitx}
 \sisetup{round-mode=places,round-precision=3}

\startlocaldefs

\newtheorem{lemma}{Lemma}[section]

\newtheorem{theorem}[lemma]{Theorem}

\newtheorem{proposition}[lemma]{Proposition}

\newtheorem{definition}[lemma]{Definition}

\newtheorem{corollary}[lemma]{Corollary}

\newtheorem{example}[lemma]{Example}

\newtheorem{remark}[lemma]{Remark}

 \newcommand{\rbraces}[1]{\left( #1 \right)}
\providecommand{\cov}[2]{\mathrm{Cov} \rbraces{#1 , #2}} 
\providecommand{\Var}[1]{\mathrm{Var} \rbraces{#1}}	
\providecommand{\EQ}[2]{\mathbb{E}_{#2} \rbraces{#1}}
\providecommand{\E}[1]{\EQ{#1}{}} 	
\providecommand{\abs}[1]{\left\lvert#1 \right\rvert}
\providecommand{\Pb}{\mathbb{P}}
\newcommand{\limn}{\lim\limits_{n \rightarrow \infty}}
\providecommand{\distConv}{\xrightarrow{\mathcal{D}}}	
\providecommand{\pConv}{\xrightarrow{\mathbb{P}}}	
\providecommand{\NoD}[2]{\mathcal{N} \rbraces{#1 , #2}}
\newcommand{\1}{\ensuremath{\mathds{1}}}
\newcommand{\N}{{\mathbb N}}
\newcommand{\R}{{\mathbb R}}
\newcommand{\elln}{{\tilde{\ell}_n}}
\newcommand{\bn}{{\tilde{b}_n}}

\endlocaldefs

\begin{document}

\begin{frontmatter}

\title{An Asymptotic Test for Constancy of the Variance under Short-Range Dependence}
\runtitle{Asymptotic Test for Constancy of the Variance}

\begin{aug}

\author[A]{\fnms{Sara K.} \snm{Schmidt,}\ead[label=e1]{sara.schmidt@ruhr-uni-bochum.de}}
\author[B]{\fnms{Max} \snm{Wornowizki,}\ead[label=e2]{wornowizki@statistik.tu-dortmund.de}}
\author[B]{\fnms{Roland} \snm{Fried}\ead[label=e3]{fried@statistik.tu-dortmund.de}}
\and
\author[A]{\fnms{Herold} \snm{Dehling}\ead[label=e4]{herold.dehling@ruhr-uni-bochum.de}}

\address[A]{Department of Mathematics, Ruhr-Universit\"at Bochum,
  Universit\"atsstra\ss e 150, 44780 Bochum, Germany, \printead{e1,e4}}

\address[B]{Department of Statistics, TU Dortmund University, Vogelpothsweg 87, 44221 Dortmund, Germany, \printead{e2,e3}}

\end{aug}

\begin{abstract}
We present a novel approach to test for heteroscedasticity of a non-stationary time series that is based on Gini's mean difference of logarithmic local sample variances. In order to analyse the large sample behaviour of our test statistic, we establish new limit theorems for U-statistics of dependent triangular arrays. We derive the asymptotic distribution of the test statistic under the null hypothesis of a constant variance and show that the test is consistent against a large class of alternatives, including multiple structural breaks in the variance. Our test is applicable even in the case of non-stationary processes, assuming a locally stationary mean function. The performance of the test and its comparatively low computation time are illustrated in an extensive simulation study. As an application, we analyse Google Trends data, monitoring the relative search interest for the topic ``global warming.'' 
\end{abstract}

\begin{keyword}[class=MSC2020]
\kwd[Primary ]{62G10}
\kwd[; secondary ]{62M10, 60F05}
\end{keyword}

\begin{keyword}
\kwd{Change-point analysis}
\kwd{tests for heteroscedasticity}
\kwd{U-statistics of triangular arrays}
\kwd{short-range dependence}
\end{keyword}

\end{frontmatter}

\section{Introduction}

Constancy of the variance is a common assumption and several authors have proposed tests for it.
Wichern, Miller and Hsu \citep{Wichern.1976}, Abraham and Wei \citep{AbrahamWei.1984} and Baufays and Rasson \citep{BaufaysRasson.1985} do so
in the parametric framework of autoregressive models.
Incl\'{a}n and Tiao \citep{Inclan.1994} and Gombay, Horv\'{a}th and Hu\v{s}kov\'{a} \citep{Gombay.1996} propose nonparametric tests based on cumulative sums of squares against the alternative of a single structural break in a sequence of independent data.
 Lee and Park \citep{Lee.2001} and Wied et al. \citep{Wied.2012} extend this work to time series data fulfilling different short-range dependence conditions.  Gerstenberger, Vogel and Wendler \citep{Gerstenberger.2019} use Gini's mean difference instead of sums of squares to test against the same family of alternatives. Galeano and Pe\~{n}a \citep{GaleanoPena.2007} and Aue et al. \citep{Aue.2009} consider the multivariate case. Chen and Gupta \citep{ChenGupta.1997} combine binary segmentation and the Schwarz information criterion for detection of multiple change-points in independent Gaussian data.

In these and other papers, the mean of the data is assumed to be constant. Tests of the stationarity of the variance in the presence of a time-varying mean have been derived only recently by Dette, Wu and Zhou \citep{Dette.2019} and  by Gao et al. \citep{Gao.2019}. While the latter authors assume independent Gaussian observations, the former ones apply wild bootstrap to derive critical values because the covariance structure of the limiting Gaussian process depends in a complex manner on the dependencies in the data generating process.

 We construct a nonparametric asymptotic test for the constancy of the variance against the alternative of one or several change-points, allowing the data to be short-range dependent and the mean to be possibly time-varying.  The statistic underlying our test has been proposed by Wornowizki, Fried and Meintanis \citep{Wornowizki.2017}, who use the permutation principle to test the constancy of the variance in a sequence of independent observations.  They illustrate the advantages of this test statistic over several competitors in a series of simulation experiments.

For the construction of our asymptotic test, we develop new theory under the 
assumption that we observe time series data $X_1,\ldots,X_n$ generated by a particular locally stationary process in the sense of Dahlhaus \citep{Dahlhaus.1997}. Formally, we work with a triangular array
\begin{equation}
\label{Eq: Data model}
  X_i:=X_{i,n}=\sigma({i}/{n}) Y_i +\mu({i}/{n}), 
\end{equation}
where $(Y_i)_{i\in\N}$ is a stationary $\beta$-mixing process with mean zero and variance one. The local means and variances are described by the functions $\mu:[0,1]\rightarrow \R$ and $\sigma^2:[0,1]\rightarrow (0,\infty)$, respectively. 
We will test the null hypothesis $\sigma(x)\equiv \sigma_H$ against the alternative that $\sigma$ is a non-constant c\`adl\`ag function. 
We show that under some mild regularity conditions on $\mu$ and $\sigma$, this test is consistent against all non-constant variance functions. In contrast, tests that are specifically designed for the alternative of a single change-point in the variance, as modelled by the local variance function $\sigma^2(x)=\sigma_1^2 \1_{[0,\tau)}(x) +\sigma_2^2 \1_{[\tau,1]} (x)$, will hardly be consistent against arbitrary alternatives as considered here.

A model of the above type \eqref{Eq: Data model} is quite common, for instance in the context of non-parametric regression (see, for instance Wu and Zhao \cite{Wu.2007}),  where the mean function is usually assumed to be Lipschitz-continuous, possibly with jumps, and where the variance is required to be stationary. In this regard, conducting our test can be understood as a preliminary step in determining whether the latter model assumption is met. We develop our test in the first place under the assumption of a Lipschitz-continuous mean function $\mu$, and discuss an extension to piecewise Lipschitz-continuous mean functions using the differenced time series later in Section 4. While our test for constant variance is novel even under the restrictive assumption of a constant mean, we thus allow for more realistic scenarios where the mean function is nearly constant on a small time scale $o(n)$, but possibly non-constant on larger time scales.

More precisely, our test is based on the statistic $U(n)$, which takes the form of Gini's mean difference $\frac{1}{b_n(b_n-1)}\sum_{1\leq j\neq k \leq b_n} \left| \nu_j-\nu_k\right|$ of the logarithmic local sample variances $\nu_j=\log(\hat{\sigma}_j^2)$. Specifically, we subdivide the time interval $\{1,\ldots,n\}$ into blocks of length $\ell_n$, the $j$-th block being given by $\{(j-1)\ell_n+1,\ldots,j\, \ell_n\}$. Let $b_n$ denote the number of full blocks that fit into $\{1,\ldots,n\}$, i.e., $b_n=\lfloor n/\ell_n\rfloor$, and $\hat{\sigma}_j^2$ be the local sample variance in the $j$-th block,
\[
  \hat{\sigma}^2_{j,n}=\hat{\sigma}^2_j =\frac{1}{\ell_n} \sum_{i=(j-1) \ell_n+1}^{j\ell_n}
\left(X_i - \frac{1}{\ell_n}\sum_{r=(j-1) \ell_n+1}^{j\ell_n} X_r \right)^2.
\]
Using this notation, the statistic $U(n)$ reads
\begin{equation}\label{teststatistic}
  U(n)=\frac{1}{b_n(b_n-1)} \sum_{1\leq j\neq k \leq b_n} \left|\log \hat{\sigma}_j^2 -\log \hat{\sigma}_k^2 \right|.
\end{equation}
The use of the log-transformed local variances makes $U(n)$ scale invariant.
Note that, on a broader level, $U(n)$ constitutes a $U$-statistic with kernel $h(x,y)=|x-y|$, whose entries are given by the triangular array $\log \hat{\sigma}_{j,n}^2$, $1\leq j\leq b_n$. These entries, after proper centering and scaling, converge in distribution to a normal law. We develop U-statistic theory for this type of triangular arrays in order to show that $U(n)$ is asymptotically normal under the null hypothesis. Our results hold for absolutely regular processes, and thus cover a large class of time series models.
Absolute regularity is also known under the term $\beta$-mixing and is a slightly stronger assumption than the well-known ``strong mixing''-condition. Still, it covers a wide range of examples
such as certain classes of Markov chains, stationary non-degenerate Gaussian processes with a particular form of the spectral density  (see, Bradley \citep{Bradley.2005}) as well as ARMA- and GARCH-models (see, Example \ref{Example: ARMA, GARCH} below).

Our test is computationally feasible even in case of huge data sets since we analyse deviations between local statistics and compare them to critical values calculated from the asymptotical distribution derived in this paper. Its low computation time is confirmed in an extensive simulation study. Considering a wide range of data generating processes and alternatives, we find our test to have good finite sample properties and to be especially well-suited in case of multiple structural breaks or non-monotone variation of the variance function.  In particular, we compare our procedure to the approach in Dette, Wu and Zhou \citep{Dette.2015}, \citep{Dette.2019}. Moreover, we illustrate that our test performs well for both types of mean scenarios, Lipschitz-continuous mean functions and mean functions with jumps. As an application, we use our test to detect periods of different volatility in the annual increments of the relative search interest for the topic ``global warming''  retrieved from Google Trends.

The rest of the paper is structured as follows: Section \ref{Sec: Test statistic and main results} covers some central definitions, presents the key asymptotic results for the test statistic $U(n)$ and outlines the estimation of the long run variance in our particular setting. The main ideas of the proof are sketched in Section \ref{Sec:Outline and Main Ideas of Proofs}. Section \ref{Sec: Extensions} treats several extensions of our theory, among which are possible modifications of the test statistic, an application to data with jumps in the mean and the estimation of the change-point locations. The results of the simulation study and the data example are reported in Sections \ref{Sec: Simualtion Study} and \ref{Sec: Data Example}, respectively. All further proofs and some additional simulation results are deferred to an appendix. 

\section{Main Results}
\label{Sec: Test statistic and main results}

\subsection{Basic Definitions and Assumptions}

Throughout the paper, we assume that $n$ observations, $X_1, ..., X_n$, generated by the model \eqref{Eq: Data model} are given, i.e. $X_i=\sigma(i/n)Y_i+\mu(i/n)$, where $(Y_i)_{i\in\N}$ is a short-range dependent or, more precisely, an absolutely regular stationary process. In the following, we assume $$\mu:[0,1]\rightarrow \R$$ to be a Lipschitz-continuous mean function and $$\sigma:[0,1]\rightarrow [\sigma_0,\infty)$$ for some $\sigma_0>0$ to be a c\`adl\`ag-function. We want to test the hypothesis of a constant variance, i.e. $\sigma\equiv \sigma_H$, against the alternative of a non-constant variance function. Note that under the null hypothesis, we have $\E{X_i}=\mu\rbraces{\frac{i}{n}}$ and $\Var{X_i}=\sigma_H^2$. 

Lipschitz-continuity of the mean function $\mu$ implies that the means $\E{X_i}=\mu(i/n)$ are nearly constant on a small time scale $o(n)$, as for $|i-r| =o(n)$, we have $\mu(i/n)-\mu(r/n)=o(1)$. In contrast, the mean function will generally be non-constant on the larger time scale $O(n)$, as 
$\E{X_{ c_1\cdot n}} - \E{X_{ c_2 \cdot n}}  =\mu(c_1)-\mu(c_2)$ for $c_1, c_2\in [0,1]$. In fact, one can even relax the assumption on $\mu$ to Hölder-continuity, see Remark \ref{Remark: Hölder-ctd} below, but we will restrict ourselves to the notationally more feasible case of Lipschitz-continuity.  In addition, we will later also present an approach that can handle piecewise Lipschitz-continuous mean functions by considering first order differences of the data.

The dependence structure of $(X_i)_{i\in\N}$ is determined by the underlying process $(Y_i)_{i\in \N}$, which is assumed to be absolutely regular. Such processes are also known under the term $\beta$-mixing.

\begin{definition}{
A sequence of random variables $\rbraces{Y_i}_{i\in \N}$ is called absolutely regular if
\begin{equation*}
\beta_Y(k):= \sup_{m\in\N} \beta\rbraces{\sigma(Y_i, 1\leq i\leq m), \sigma(Y_i, k+m\leq i\leq \infty)} \rightarrow 0 \text{ as } k \rightarrow \infty,
\end{equation*}
where the $\beta$-mixing coefficient of two $\sigma$-fields $\mathcal{A}$ and $\mathcal{B}$ is given by
\begin{equation*}
\beta(\mathcal{A},\mathcal{B}):= \E{\underset{A\in \mathcal{A}}{\mathrm{ess}\,\sup}\abs{\Pb\rbraces{A|\mathcal{B}}-\Pb(A)}}.
\end{equation*}
}\end{definition}

Regarding the moments and the mixing rate of the process $(Y_i)_{i\in\N}$, we assume that there exists a $\vartheta>0$ such that the following two conditions are satisfied
 \begin{align*}
\text{(A1)} \quad &\E{\abs{Y_1}^{4+2\vartheta}}<\infty  \\
 \text{(A2)} \quad &\sum_{k=1}^{\infty} \beta_Y(k)^{\vartheta/(2+\vartheta)}<\infty.
\end{align*}
Given these assumptions,  the long run variance
\begin{equation*}
{\kappa}^2:=\Var{Y_1^2} +2 \sum_{k=1}^\infty \cov{Y_1^2}{Y_{k+1}^2}
\end{equation*}
is finite since $\beta_{Y^2}(k)\leq \beta_Y(k)$ (see, Theorem 10.7 in Bradley \citep{Bradley.2007}). Throughout, we will assume the long run variance to be strictly positive, ${\kappa}^2>0$.

Most of the classical models in time series analysis satisfy the above assumptions. The following example points out some suitable processes that meet all the conditions required to derive the asymptotic results below.
\begin{example}{
\label{Example: ARMA, GARCH}
\begin{enumerate}
\item Let $(Y_i)_{i\in\N}$ be a strictly stationary, causal ARMA(p,q)-process following the model
$$Y_i= \varepsilon_i + \sum_{j=1}^p \alpha_jY_{i-j} +\sum_{m=1}^q \beta_m \varepsilon_{i-m}$$
with independent innovations $(\varepsilon_i)_{i\in\N}$ and with all roots of  $1-\sum_{j=1}^p \alpha_j z^j$ being larger than one in absolute value. Additionally, assume that
the AR-and MA-polynomials have no common roots, and that the innovations $(\varepsilon_i)_{i\in \N}$ have an absolutely continuous distribution with respect to the Lebesgue measure. Then, $(Y_i)_{i\in\N}$ is absolutely regular with a geometric rate, i.e., $\beta(k)=O(e^{-\xi k})$ for some $\xi>0$ (see, Theorem 1 in Mokkadem \citep{Mokkadem.1988}).
\item Strictly stationary GARCH(p,q)-processes are strictly stationary solutions to the equations
$$Y_i=\sigma_i\varepsilon_i\quad \text{and} \quad\sigma_i^2=\alpha_0+ \sum_{j=1}^p\alpha_j Y_{i-j}^2+\sum_{m=1}^q \beta_m \sigma_{i-m}^2.$$
They are likewise absolutely regular with a geometric rate if the i.i.d. noise sequence $(\varepsilon_i)_{i\in \N}$ has finite absolute $r$th moment for some $r\in(0,\infty)$, i.e., $\E{\abs{\varepsilon_1}^r}<\infty$, and if  $\varepsilon_1$ has an absolutely continuous distribution with a density that is strictly positive in a neighbourhood of zero (see, Lindner \citep{Lindner2009} and the references therein for this result and an overview on the existence of strictly stationary solutions and the existence of moments for GARCH-models).
\end{enumerate}
}\end{example}

 The aim of this paper is to test for changes in the variance of  the time series $(X_i)_{i\in\N}$ by means of the statistic $  U(n)=\frac{1}{b_n(b_n-1)} \sum_{1\leq j\neq k \leq b_n} |\log \hat{\sigma}_j^2 -\log \hat{\sigma}_k^2 |$ from \eqref{teststatistic}, which compares the local estimates $\hat{\sigma}_j^2$, $j=1,\ldots,b_n$, derived from splitting the data into $b_n$ blocks of length $\ell_n$. Both $b_n$ and $\ell_n$ are assumed to grow with the sample size and, for simplicity, to be integers. Moreover, we  have to impose certain growth restrictions on the block length $\ell_n$ and therewith the number of blocks $b_n$ in order to ensure the desired convergence of the test statistic. Throughout the paper, we will need the block length $\ell_n$ to grow faster than the number of blocks. In particular, if we set $\ell_n=n^s$ for some $s\in(0,1)$ and consequently $b_n=n^{1-s}$, this translates to $s>0.5$.
A change in the variance should result in large differences between the block-estimates $\hat{\sigma}_j^2$ and ultimately in a high value of $U(n)$, which will lead to a rejection of the hypothesis.

\subsection{Asymptotical Results}

  First, we show the convergence of the statistic $U(n)$ defined in \eqref{teststatistic} towards a two-dimensional Riemann-integral.
\begin{theorem}{
\label{THM: Behaviour under A for U(n)}
Given the assumptions (A1) and (A2) and if $\ell_n=n^s$ with $s\in (0.5,0.75)$, it holds
\begin{equation*}
U(n) \pConv \int_0^1 \int_0^1 \abs{\log \sigma^2(x)-\log \sigma^2(y)} \mathrm{d}x\mathrm{d}y \qquad \text{as } n \rightarrow \infty.
\end{equation*}
}\end{theorem}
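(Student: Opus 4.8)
The plan is to separate $U(n)$ from the deterministic variance structure in two steps: first to replace each entry $\nu_j:=\log\hat\sigma_j^2$ by the deterministic number $a_j:=\log\sigma^2(t_j)$, where $t_j:=j\ell_n/n$ is a point of (the closure of) the $j$-th block $B_j:=\{(j-1)\ell_n+1,\dots,j\ell_n\}$, and then to identify the resulting quantity as a Riemann sum of the integrand.

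For the first step, I would use that the kernel $h(x,y)=\abs{x-y}$ is $1$-Lipschitz in each coordinate, so that, with $\widetilde U(n):=\frac{1}{b_n(b_n-1)}\sum_{1\le j\neq k\le b_n}\abs{a_j-a_k}$, the reverse triangle inequality gives
\begin{equation*}
\abs{U(n)-\widetilde U(n)}\;\le\;\frac{2}{b_n}\sum_{j=1}^{b_n}\abs{\nu_j-a_j}.
\end{equation*}
Hence it suffices to show $\frac1{b_n}\sum_{j=1}^{b_n}\abs{\nu_j-a_j}\pConv0$, and by Markov's inequality it is enough that $\frac1{b_n}\sum_{j=1}^{b_n}\E{\abs{\nu_j-a_j}}\to0$. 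The core of this is the blockwise convergence $\hat\sigma_j^2/\sigma^2(t_j)\to1$: decomposing $X_i-\bar X_j$ into the leading term $\sigma(t_j)(Y_i-\bar Y_j)$ plus remainders governed by the Lipschitz increments of $\mu$ over $B_j$ (which are $O(\ell_n/n)=o(1)$ since $\ell_n=o(n)$) and by the oscillation of $\sigma$ over $[t_{j-1},t_j)$, one obtains $\hat\sigma_j^2=\sigma^2(t_j)\cdot\ell_n^{-1}\sum_{i\in B_j}(Y_i-\bar Y_j)^2+R_j$, and $\ell_n^{-1}\sum_{i\in B_j}(Y_i-\bar Y_j)^2\to\Var{Y_1}=1$ in $L^1$, uniformly in $j$ by stationarity, the ergodic theorem, and the variance bound $O(1/\ell_n)$ following from (A1)–(A2). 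Since $\sigma$ is c\`adl\`ag, for each $\varepsilon>0$ all but $O(1)$ of the intervals $[t_{j-1},t_j)$ have $\sigma^2$-oscillation below $\varepsilon$ once $n$ is large; I call such blocks \emph{good}. On a good block the preceding display, together with $\sigma\ge\sigma_0>0$ (which keeps $\hat\sigma_j^2$ bounded away from zero with high probability, so that applying $\log$ is harmless), yields $\E{\abs{\nu_j-a_j}}=o(1)+O(\varepsilon)$; on each of the $O(1)$ bad blocks one only needs $\E{\abs{\nu_j}}$ bounded uniformly in $j$ and $n$, whose upper part follows from $\E{\hat\sigma_j^2}=O(1)$ and whose lower part follows from a uniform estimate $\Pb(\hat\sigma_j^2<\delta)\le c(\delta)$ with $c(\delta)\to0$ as $\delta\to0$, again obtained from $\sigma\ge\sigma_0$ and (A1)–(A2). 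Letting $n\to\infty$ and then $\varepsilon\to0$ finishes the first step.

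For the second step, note that $\widetilde U(n)=\frac{b_n}{b_n-1}\cdot\frac1{b_n^2}\sum_{j,k=1}^{b_n}g(t_j,t_k)$ with $g(x,y):=\abs{\log\sigma^2(x)-\log\sigma^2(y)}$, the diagonal contributing nothing because $g(x,x)=0$. Being c\`adl\`ag and bounded below by $\sigma_0$, $\sigma$ is also bounded above, so $g$ is bounded with discontinuity set contained in $(D\times[0,1])\cup([0,1]\times D)$, where $D$ is the at most countable discontinuity set of $\sigma$; hence $g$ is Riemann integrable on $[0,1]^2$. As $\{t_1,\dots,t_{b_n}\}$ is an equispaced grid of mesh $\ell_n/n\to0$, the associated Riemann sums converge and $\widetilde U(n)\to\int_0^1\int_0^1\abs{\log\sigma^2(x)-\log\sigma^2(y)}\,\mathrm{d}x\,\mathrm{d}y$, which combined with the first step gives the claim.

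I expect the main obstacle to be the uniform-in-$j$ integrability control of $\nu_j=\log\hat\sigma_j^2$ — in particular the uniform lower-tail estimate ruling out that a single block's sample variance collapses toward zero — since this is exactly where the moment assumption (A1) and the mixing assumption (A2) must genuinely be used, whereas the remainder is Lipschitz/c\`adl\`ag bookkeeping and a standard Riemann-sum argument.
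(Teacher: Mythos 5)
Your overall plan---the reverse-triangle-inequality reduction of $\abs{U(n)-\widetilde U(n)}$ to $\frac{2}{b_n}\sum_j\abs{\nu_j-a_j}$, the good/bad block dichotomy obtained from the c\`adl\`ag property of $\sigma^2$, and the concluding Riemann-sum argument via Lebesgue's integrability criterion---is exactly the route the paper takes (there the mean-centering is split off first as a separate approximation $U(n)\approx U_1(n)$ in Proposition \ref{Prop: Approximation mit U_1 unter H und A}, but that is organizational). The genuine gap is in the step you yourself flag as the main obstacle: the reduction via Markov's inequality to $\frac1{b_n}\sum_j\E{\abs{\nu_j-a_j}}\to0$, and in particular the claim that a uniform lower-tail estimate $\Pb(\hat\sigma_j^2<\delta)\le c(\delta)$ with $c(\delta)\to0$ suffices to bound $\E{\abs{\nu_j}}$ on the bad blocks. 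Under (A1)--(A2) alone, $\E{\abs{\log\hat\sigma_j^2}}$ can be infinite for every $n$: take $(Y_i)$ i.i.d.\ Rademacher (mean $0$, variance $1$, all moments finite, $\beta_Y(k)=0$); with probability $2^{-\ell_n+1}>0$ all observations in a block coincide, so $\hat\sigma_j^2=0$ and $\log\hat\sigma_j^2=-\infty$. Then $\E{\abs{\nu_j-a_j}}=+\infty$ and the Markov step cannot be started. Moreover, even when the expectation is finite, a qualitative bound $c(\delta)\to0$ does not control $\E{(\log\hat\sigma_j^2)^-}=\int_0^\infty\Pb(\hat\sigma_j^2<e^{-t})\,\mathrm{d}t$; one would need an integrable-in-$t$ rate, which (A1)--(A2) do not supply.

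The repair is the device the paper uses: never take moments of the logarithm. With $\tilde s_j^2:=\E{s_j^2}\ge\sigma_0^2$ deterministic, bound $\frac1{b_n}\sum_j\abs{\log s_j^2-\log\tilde s_j^2}\le\max_j\,(s_j^2\wedge\tilde s_j^2)^{-1}\abs{s_j^2-\tilde s_j^2}$ and intersect with the event $\{\max_j\abs{s_j^2-\tilde s_j^2}\le\sigma_0^2/4\}$, whose probability tends to one by a union bound, Chebyshev's inequality and the summability of $\cov{Y_1^2}{Y_{k+1}^2}$ (Davydov plus (A1)--(A2)); on that event the logarithm is Lipschitz on the relevant range with constant depending only on $\sigma_0$, and the complementary event is simply discarded since only convergence in probability is claimed. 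This also disposes of your bad blocks entirely: the stochastic error is controlled uniformly in $j$ by the maximum, and the finitely-many-large-jumps argument is applied only to the purely deterministic term $\abs{\log\tilde s_j^2-\log\sigma^2(t_j)}$, where no integrability issue arises. With that substitution the rest of your proposal goes through as written.
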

Obviously, $U(n)$ converges to 0 in probability under the null hypothesis, while it converges to a strictly positive value under any alternative for which
$\sigma$ is not almost surely constant with respect to the Lebesgue measure on $[0,1]$.
This will imply that our test is consistent against the very general alternative of a changing variance function.

By standardizing the statistic $U(n)$ via the long run variance ${\kappa}^2$ and by using an appropriate scaling $\sqrt{\ell_n}$, we can now state a law of large numbers for $U(n)$ under the null hypothesis.

\begin{theorem}{
\label{Theorem: LLN Un}
Let the above assumptions (A1) and (A2) be fulfilled and let $\ell_n=n^s$ with $s\in (0.5, 0.75)$. If there exists a sequence $m_n\rightarrow\infty$ as $n\rightarrow \infty$ such that $m_n=o(n^{2s-1})$ and $b_n\beta_Y(m_n)\rightarrow 0$, then it holds under the null hypothesis 
\begin{equation*}
\frac{\sqrt{\ell_n}}{\kappa}U(n)\pConv \frac{2}{\sqrt{\pi}} \quad \text{as}\quad n \rightarrow \infty.
\end{equation*}

}\end{theorem}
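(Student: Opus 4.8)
The plan is to show that, under the null hypothesis $\sigma \equiv \sigma_H$, the rescaled logarithmic block variances $\sqrt{\ell_n}(\log\hat\sigma_j^2 - \log\sigma_H^2)$ behave, up to negligible error, like a triangular array of independent centered normal variables with variance $\kappa^2/\sigma_H^4$, and then to evaluate the limiting Gini mean difference of such an array. First I would establish a local CLT for a single block: writing $\hat\sigma_j^2 = \frac{1}{\ell_n}\sum_{i\in I_j}(X_i - \bar X_{I_j})^2$ with $I_j$ the $j$-th block, the Lipschitz assumption on $\mu$ forces $\mu(i/n) - \mu(r/n) = O(\ell_n/n) = o(1)$ within a block, so the contribution of the mean function to the block variance is asymptotically negligible at the $\sqrt{\ell_n}$ scale (this is where the constraint $s>1/2$, equivalently $\ell_n^2/n \to 0$, enters). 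Hence $\hat\sigma_j^2 = \sigma_H^2 \cdot \frac{1}{\ell_n}\sum_{i\in I_j} Y_i^2 + o_{\Pb}(\ell_n^{-1/2})$, and by the CLT for $\beta$-mixing sequences under (A1)--(A2) (finite long-run variance $\kappa^2>0$), $\sqrt{\ell_n}\bigl(\frac{1}{\ell_n}\sum_{i\in I_j}Y_i^2 - 1\bigr) \distConv \mathcal N(0,\kappa^2)$. A delta-method / Taylor expansion of $\log$ around $\sigma_H^2$ then gives $\sqrt{\ell_n}(\log\hat\sigma_j^2 - \log\sigma_H^2) \distConv \mathcal N(0, \kappa^2/\sigma_H^4)$ blockwise.

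Next I would upgrade this marginal statement to a joint one across the $b_n$ blocks, with \emph{asymptotic independence}. This is the role of the mixing condition $b_n\beta_Y(m_n)\to 0$ together with $m_n = o(n^{2s-1}) = o(\ell_n^2/n) $... more precisely $m_n = o(\ell_n)$ suffices for the gap argument: replace the block sums by sums over the sub-block $\{(j-1)\ell_n + 1,\ldots, j\ell_n - m_n\}$ of length $\ell_n - m_n \sim \ell_n$, leaving mixing gaps of size $m_n$ between consecutive blocks. By a standard coupling/approximation lemma for $\beta$-mixing arrays, the truncated block statistics can be coupled to genuinely independent copies with total error bounded by $b_n\beta_Y(m_n) \to 0$; discarding the $m_n$ terms near each block boundary costs $O(\sqrt{m_n/\ell_n}) = o(1)$ in each block at the $\sqrt{\ell_n}$ scale. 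So $\bigl(\sqrt{\ell_n}(\log\hat\sigma_j^2 - \log\sigma_H^2)\bigr)_{j=1}^{b_n}$ is, up to $o_{\Pb}(1)$ uniformly, an i.i.d.\ triangular array each row of which converges to $\mathcal N(0,\kappa^2/\sigma_H^4)$.

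Finally I would compute the limit of the Gini mean difference. Writing $\nu_j = \log\hat\sigma_j^2$, we have $\frac{\sqrt{\ell_n}}{\kappa}U(n) = \frac{1}{b_n(b_n-1)}\sum_{j\neq k} \bigl|\tfrac{\sqrt{\ell_n}}{\kappa}(\nu_j - \nu_k)\bigr|$, and since $\nu_j - \nu_k = (\nu_j - \log\sigma_H^2) - (\nu_k - \log\sigma_H^2)$, the scaled difference $\frac{\sqrt{\ell_n}}{\kappa}(\nu_j-\nu_k)$ is asymptotically $\mathcal N(0, 2/\sigma_H^4)\cdot(\sigma_H^2)$... i.e.\ converges to $N_j - N_k$ where $N_j$ are i.i.d.\ $\mathcal N(0, 1/\sigma_H^4)$ — wait, the normalisation by $\kappa$ and the $\sigma_H^4$ factor should cancel so that $\frac{\sqrt{\ell_n}}{\kappa}(\nu_j - \log\sigma_H^2) \approx \sigma_H^{-2}\sqrt{\ell_n}(\frac1{\ell_n}\sum Y_i^2 - 1)/\kappa \cdot \sigma_H^2 \distConv \mathcal N(0,1)$, so the pairwise difference tends to $\mathcal N(0,2)$. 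Using $\E|Z| = \sqrt{2/\pi}$ for $Z\sim\mathcal N(0,1)$ gives $\E|N_j - N_k| = \sqrt{2}\cdot\sqrt{2/\pi} = 2/\sqrt\pi$. To conclude the convergence in probability of the U-statistic to this constant mean, I would invoke a weak law of large numbers for U-statistics of the (row-wise independent) triangular array with kernel $h(x,y)=|x-y|$: it suffices to control the first moment of $h$ applied to the scaled entries (uniform integrability of $|\tfrac{\sqrt{\ell_n}}{\kappa}(\nu_j - \log\sigma_H^2)|$, which follows from the $(4+2\vartheta)$-th moment bound in (A1) giving a uniform bound on, say, the second moment of the scaled entries) and the asymptotic independence established above; then $U(n)$ concentrates at its asymptotic mean. \textbf{The main obstacle} is the joint step: carefully justifying that the block statistics may be replaced by independent ones \emph{uniformly} at the $\sqrt{\ell_n}$ scale, i.e.\ controlling the interplay of the $\beta$-mixing coupling error $b_n\beta_Y(m_n)$, the boundary-truncation error, and the convergence rate in the blockwise CLT — this is precisely where the three hypotheses $\ell_n = n^s$ with $s\in(0.5,0.75)$, $m_n = o(n^{2s-1})$, and $b_n\beta_Y(m_n)\to 0$ must be used in concert, and it is the technical heart of the argument (and, presumably, of the U-statistic limit theory for triangular arrays announced in the introduction).
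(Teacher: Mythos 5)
Your plan follows essentially the same route as the paper: the same chain of approximations (eliminating the mean via Lipschitz continuity, linearizing the logarithm, coupling the gap-separated blocks to independent copies with error controlled by $b_n\beta_Y(m_n)$), followed by the blockwise CLT for $\beta$-mixing sums, the identity $\E{\abs{Z-Z'}}=2/\sqrt{\pi}$, and a first/second-moment argument for the resulting row-wise independent U-statistic. The only blemishes are minor: the parenthetical claim that $s>1/2$ is ``equivalently $\ell_n^2/n\to 0$'' is backwards (that ratio diverges for $s>1/2$; what is actually used is $b_n=o(\ell_n)$, and $s<3/4$ for the deterministic mean term), and the final concentration step should be made explicit by verifying $\Var{\sqrt{\ell_n}U_3(n)}=O(1/b_n)$, which the uniform second-moment bound you invoke indeed delivers.
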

Theorem \ref{Theorem: LLN Un} already reveals the double asymptotics governing the statistic $U(n)$. While there holds a central limit theorem for the inner block sums $\hat{\sigma}_j$ for which we need the scaling $\sqrt{\ell_n}$, there holds a law of large numbers for the outer structure of a U-statistic. Under the additional assumption of polynomially decaying mixing coefficients, a central limit theorem for the outer U-statistic holds as well.

\begin{theorem}{
\label{Theorem: CLT Un}
Assume there exist constants $0<\delta\leq 1$ and $\rho>1\vee \frac{9\delta}{(\delta+1)(\delta+2)}$  such that $\E{\abs{Y_1}^{4+2\delta}}<\infty$ and for all $k\in \N$ it holds $\beta_Y(k)\leq C k^{-\rho (2+\delta)(1+\delta)/\delta^2}$. Moreover, choose $\ell_n=n^s$ with
$s\in (0.5, 0.75)$ as well as $s>1/\rbraces{1+\delta\frac{\rho-1}{\rho+1}}\vee (1+\frac{\delta^2}{\rho(2+\delta)(1+\delta)})/(2+\frac{\delta^2}{\rho(2+\delta)(1+\delta)})$. Then it holds under the null hypothesis
\begin{equation*}
\sqrt{b_n}\rbraces{\frac{\sqrt{\ell_n}}{\kappa}U(n)- \frac{2}{\sqrt{\pi}}}\distConv \NoD{0}{\frac{4}{3}+\frac{8}{\pi}\rbraces{\sqrt{3}-2}} \quad \text{as}\quad n \rightarrow \infty.
\end{equation*}
}\end{theorem}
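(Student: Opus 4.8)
The strategy is to reduce the assertion to a central limit theorem for a $U$-statistic of the absolutely regular triangular array of standardised block statistics, and then to identify the limiting mean and variance by explicit Gaussian computations. First note that the hypotheses of Theorem~\ref{Theorem: CLT Un} imply (A1) and (A2) with $\vartheta=\delta$, so that Theorems~\ref{THM: Behaviour under A for U(n)} and \ref{Theorem: LLN Un} are available. \textbf{Step 1 (linearisation).} Since the kernel $h(x,y)=\abs{x-y}$ is translation invariant, $\frac{\sqrt{\ell_n}}{\kappa}U(n)$ equals the $U$-statistic $\frac{1}{b_n(b_n-1)}\sum_{1\le j\ne k\le b_n}\abs{W_j-W_k}$ built from the centred entries $W_j:=\frac{\sqrt{\ell_n}}{\kappa}\bigl(\log\hat\sigma_j^2-\log\sigma_H^2\bigr)$. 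Writing $\hat\sigma_j^2/\sigma_H^2=1+u_j$ with $u_j=O_{\Pb}(\ell_n^{-1/2})$, Taylor-expanding the logarithm, and decomposing $\hat\sigma_j^2$ into the contribution of $(Y_i)$, the block-mean centring, and the Lipschitz mean $\mu$, I would show that uniformly in $j$
\[
  W_j=S_{j,n}+R_{j,n},\qquad S_{j,n}:=\frac{1}{\kappa\sqrt{\ell_n}}\sum_{i=(j-1)\ell_n+1}^{j\ell_n}\rbraces{Y_i^2-1},
\]
with $\frac{1}{\sqrt{b_n}}\sum_{j=1}^{b_n}\abs{R_{j,n}}\pConv 0$. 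Together with the bound $\bigl|\,\abs{W_j-W_k}-\abs{S_{j,n}-S_{k,n}}\,\bigr|\le\abs{R_{j,n}}+\abs{R_{k,n}}$ this reduces the claim to the central limit theorem for $V(n):=\frac{1}{b_n(b_n-1)}\sum_{j\ne k}\abs{S_{j,n}-S_{k,n}}$. The range $s\in(0.5,0.75)$ is exactly what makes the remainder negligible after the scaling $\sqrt{\ell_n}/\kappa$: the quadratic Taylor term and the block-mean correction are of order $n^{-s/2}=o(b_n^{-1/2})$ iff $s>1/2$, while the purely deterministic term stemming from $\mu$ is of order $n^{5s/2-2}=o(b_n^{-1/2})$ iff $s<3/4$; the cross term between $(Y_i)$ and $\mu$ is controlled with a moment inequality for $\beta$-mixing partial sums.

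\textbf{Step 2 (the array $S_{j,n}$).} The $S_{j,n}$ depend only on $(Y_i)$, are row-wise stationary, and inherit the absolute regularity of $(Y_i)$. The CLT for $\beta$-mixing sequences yields $S_{j,n}\distConv\NoD{0}{1}$, the normalising $\kappa^2$ being precisely the long-run variance of $(Y_i^2)$, and $\E{\abs{S_{j,n}}^{2+\delta}}$ is uniformly bounded since $\E{\abs{Y_1}^{4+2\delta}}<\infty$ and the polynomial mixing rate is strong enough for a Rosenthal-type bound on the block sums. To pass from the dependent array to an independent one, I would delete the last $m_n$ observations of each block and apply Berbee's coupling lemma, which makes the retained sub-block sums independent at a total cost controlled by $b_n\beta_Y(m_n)$; the deleted parts alter each $S_{j,n}$ by $O_{\Pb}\rbraces{(m_n/\ell_n)^{1/2}}$, which is negligible in $V(n)$ even after multiplication by $\sqrt{b_n}$ precisely when $m_n=o(n^{2s-1})$. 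The existence of such an $m_n\to\infty$ with $b_n\beta_Y(m_n)\to 0$ is guaranteed by the polynomial mixing rate and the lower bounds on $s$; one also checks $\cov{S_{j,n}}{S_{j+1,n}}=O(\ell_n^{-1})\to 0$, so that no long-run-variance correction survives in the limit and the array behaves, for the purpose of a $U$-statistic CLT, like a row of i.i.d.\ standard normals.

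\textbf{Step 3 (Hoeffding decomposition and the constants).} Write $V(n)=\theta_n+\frac{2}{b_n}\sum_{j=1}^{b_n}h_{1,n}(S_{j,n})+V^{(2)}(n)$, where $\theta_n=\E{\abs{S_{1,n}-S_{2,n}}}$, $h_{1,n}(x)=\E{\abs{x-S_{2,n}'}}-\theta_n$ for an independent copy $S_{2,n}'$, and $V^{(2)}(n)$ is the degenerate part. I would then (i) show $\sqrt{b_n}\bigl(\theta_n-\tfrac{2}{\sqrt\pi}\bigr)\to 0$, where $\tfrac{2}{\sqrt\pi}=\E{\abs{Z_1-Z_2}}$ for i.i.d.\ $Z_1,Z_2\sim\NoD{0}{1}$ (using $Z_1-Z_2\sim\NoD{0}{2}$); this requires a \emph{quantitative} within-block CLT, i.e.\ a Wasserstein/Berry--Esseen bound, and the refined lower bounds on $s$ are exactly what make the resulting rate beat $b_n^{-1/2}$; (ii) show $V^{(2)}(n)=O_{\Pb}(b_n^{-1})=o_{\Pb}(b_n^{-1/2})$ via the standard variance bound for degenerate $U$-statistics, the weak dependence not changing the order; and (iii) apply a CLT to the centred, row-wise stationary, asymptotically independent, uniformly $L^{2+\delta}$-bounded summands $h_{1,n}(S_{j,n})$ to obtain $\frac{2}{\sqrt{b_n}}\sum_{j=1}^{b_n}h_{1,n}(S_{j,n})\distConv\NoD{0}{4\sigma_1^2}$ with $\sigma_1^2=\limn\Var{h_{1,n}(S_{1,n})}$. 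Finally, since $\E{\abs{x-Z}}=2\varphi(x)+x(2\Phi(x)-1)$ for $Z\sim\NoD{0}{1}$, with $\varphi,\Phi$ the standard normal density and distribution function, one has $h_{1,n}\to h_1$ with $h_1(x)=2\varphi(x)+x(2\Phi(x)-1)-\tfrac{2}{\sqrt\pi}$ and $\sigma_1^2=\E{h_1(Z)^2}$; expanding the square and evaluating the Gaussian integrals --- the nontrivial ones being $\E{\varphi(Z)^2}$, $\E{Z\varphi(Z)(2\Phi(Z)-1)}$ and $\E{Z^2(2\Phi(Z)-1)^2}$, which reduce to orthant probabilities of bivariate normals with correlation $\pm\tfrac12$ and produce the $\sqrt3$ --- gives $\sigma_1^2=\tfrac13+\tfrac{2}{\pi}(\sqrt3-2)$, so that the limiting variance is $4\sigma_1^2=\tfrac43+\tfrac{8}{\pi}(\sqrt3-2)$, as claimed.

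\textbf{Main obstacle.} The heart of the matter is the combination of Steps 2 and 3: one must control all terms of the Hoeffding decomposition of a $U$-statistic whose entries form a triangular array satisfying only a central limit theorem (and \emph{not} a law of large numbers) \emph{uniformly in} $n$ --- in particular the degenerate part must be genuinely of lower order and the linear part must obey a CLT with the ``i.i.d.\ Gaussian'' asymptotic variance --- while simultaneously removing the $\beta$-mixing dependence through coupling and keeping the centring error $\sqrt{b_n}\bigl(\theta_n-\tfrac{2}{\sqrt\pi}\bigr)$ asymptotically negligible. It is the interplay of these requirements that forces the precise moment condition $\E{\abs{Y_1}^{4+2\delta}}<\infty$, the polynomial mixing rate, and the refined lower bounds on $s$.
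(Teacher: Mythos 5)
Your proposal is correct and follows essentially the same route as the paper: reduce $U(n)$ to the linearised statistic of block sums of $Y_i^2-1$ (with the same rate accounting that forces $s\in(0.5,0.75)$), pass to independent blocks by trimming $m_n$ observations and coupling, and then treat the resulting triangular-array $U$-statistic via its Hoeffding decomposition — Lyapunov's CLT for the linear part, a variance bound for the degenerate part, and a Berry--Esseen-type rate to show $\sqrt{b_n}(\theta_n-2/\sqrt{\pi})\to 0$, which is exactly where the refined lower bounds on $s$ enter. The only cosmetic difference is that you evaluate $\Var{h_1(Z)}$ explicitly through Gaussian orthant probabilities, whereas the paper cites Gerstenberger and Vogel for the constant $\frac{4}{3}+\frac{8}{\pi}(\sqrt{3}-2)$.
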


\begin{example}
Coming back to Example \ref{Example: ARMA, GARCH}, consider the ARMA(p,q)- and GARCH(p,q)-models presented there with 
independent standard normally distributed innovations. Their mixing coefficients decay at a geometric rate, $\beta(k)=O(e^{-\xi k})$ for some $\xi>0$, which corresponds to the border case $\rho\rightarrow \infty$ in Theorem \ref{Theorem: CLT Un}. The conditions incorporating $\rho$ thus boil down to $s>1/(1+\delta)$. Put differently, we may choose the tuning parameter $s\in (0.5, 0.75)$ and have to ensure $\E{\abs{Y_1}^{2+2/s+\varepsilon}}<\infty$ for some $\varepsilon>0$ for Theorem \ref{Theorem: CLT Un} to hold. 
\end{example}
Theorem \ref{Theorem: CLT Un} requires an additional outer scaling factor $\sqrt{b_n}$ depending on the number of blocks for the U-statistic central limit theorem to hold. Referring to the theory of U-statistics, one can then derive convergence towards a normal distribution whose variance $\psi^2=4\Var{h_1(Z)}$ with $h_1(x):=\E{\abs{x-Z'}}-2/\sqrt{\pi}$ for two independent standard normally distributed random variables $Z$ and $Z'$ equals the limit $\limn \Var{\sqrt{n}\tilde{U}_n}$, where $\tilde{U}_n$ denotes Gini's mean difference computed from a sample of $n$ iid standard normal observations. The latter limit can be calculated explicitly, see Gerstenberger and Vogel  \citep{Gerstenberger.2015}, such that $\psi^2=\frac{4}{3}+\frac{8}{\pi}\rbraces{\sqrt{3}-2}$.

In particular, Theorem \ref{Theorem: CLT Un}  can be used for structural break testing. Based on the data $x_1, ..., x_n$, one can compute the value of the properly standardized test statistic and compare it to the asymptotic critical values obtained from the limit distribution. We shall reject the hypothesis of a constant variance if the computed value exceeds the $(1-\alpha)$-quantile of the $\NoD{0}{\frac{4}{3}+\frac{8}{\pi}\rbraces{\sqrt{3}-2}}$-distribution. Note that this test is consistent against arbitrary alternatives for which $\sigma(x)$ is not almost surely constant with respect to the Lebesgue measure on $[0,1]$. A key factor is the reliable estimation of the unknown long run variance $\kappa^2 $, which is discussed in Subsection \ref{Subsec: Estimation of LRV}.

\subsection{Estimation of the Long-run Variance}
\label{Subsec: Estimation of LRV}

For a practical implementation of our test, we need to estimate the long-run variance
\begin{equation*}
\kappa^2=\Var{Y_1^2} +2 \sum_{k=1}^\infty \cov{Y_1^2}{Y_{k+1}^2}.
\end{equation*}
In the literature, there are various procedures for estimating such long-run variances. We employ the subsampling approach, introduced by Carlstein \citep{Carlstein.1986}. We will use the relation $\E{\abs{\frac{1}{\sqrt{n}}\sum_{i=1}^n \rbraces{Y_i^2-\E{Y_1^2}}}}\rightarrow \kappa \sqrt{\frac{2}{\pi}}$ for the construction of our estimator. For standard subsampling, we divide the observations into $\bn$ non-overlapping blocks of length $\elln$, and consider the estimator
\[
  \frac{1}{\bn}\sqrt{\frac{\pi}{2}} \sum_{j=1}^{\bn}
\left| \frac{1}{\sqrt{\elln}} \sum_{i=(j-1) \elln+1}^{j \elln} (Y_i^2 -\frac{1}{n}\sum_{r=1}^n Y_r^2 )   \right|.
\]
Various authors have established consistency of this estimator for a wide class of short-range dependent data.

As we do not observe the $Y_i^2$ directly, but only $X_i=\sigma(\frac{i}{n})Y_i +\mu(\frac{i}{n})$, we need to modify the standard subsampling procedure. We will use a subsampling that is consistent under the null hypothesis $\sigma(x)\equiv \sigma_H$, i.e., when the observations are given by $X_i=\sigma_H Y_i +\mu(\frac{i}{n})$. We first center the observations by their local means, defining
\[
  \tilde{X}_i =X_i -\frac{1}{\ell_n} \sum_{r=(j-1) \ell_n+1}^{j\ell_n} X_r, \; \mbox{ for } i\in \{(j-1)\ell_n+1,\ldots, j\ell_n    \}.
\]
Setting $\hat{\sigma}_H^2 =\frac{1}{n}\sum_{i=1}^n \tilde{X}_i^2$, we then define the subsampling long-run variance estimator
\[
  \hat{\kappa} :=\frac{1}{\bn} \sqrt{\frac{\pi}{2}} \frac{1}{\hat{\sigma}_H^2}
 \sum_{j=1}^{\bn} \left| \frac{1}{\sqrt{\elln}}\sum_{i=(j-1)\elln+1}^{j \elln }
(\tilde{X}_i^2-\hat{\sigma}_H^2) \right|.
\]

The next proposition shows that consistency of the above estimator $\hat{\kappa}$ indeed remains valid  given the additional scaling factor $\sqrt{b_n}$.

\begin{proposition}
\label{Prop: Behaviour LRV estimator H}
Assume that there exist constants  $0<\delta\leq 1$ and $\rho>1$ such that $\E{\abs{Y_1}^{4+2\delta}}<\infty$ and for all $k\in \N$ it holds $\beta_Y(k)\leq C k^{-\rho (2+\delta)(1+\delta)/\delta^2}$. Moreover, let $\ell_n=n^s$ and $\elln=n^q$ such that $s>0.5$, $1-s<q\delta (\rho-1)/(\rho+1)$,  $q<s$ and $q<3(1-s)$. 
 Then it holds under the null hypothesis
$$\sqrt{b_n}\abs{\hat{\kappa}-\kappa}\pConv 0 \quad \text{as } n \rightarrow \infty.$$
\end{proposition}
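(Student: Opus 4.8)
The plan is to reduce the claimed convergence $\sqrt{b_n}\,|\hat\kappa-\kappa|\pConv 0$ to a corresponding statement about an idealized estimator built from the true (unobserved) $Y_i^2$, and then control the approximation error coming from the replacement of $Y_i^2$ by the locally centered $\tilde X_i^2/\sigma_H^2$. Write $\hat\kappa = \tfrac{1}{\hat\sigma_H^2}\cdot\tfrac{1}{\bn}\sqrt{\tfrac{\pi}{2}}\sum_{j}|W_{j,n}|$ with $W_{j,n}=\tfrac{1}{\sqrt{\elln}}\sum_{i}(\tilde X_i^2-\hat\sigma_H^2)$, and compare it termwise to the analogous quantity $\kappa^{(Y)}:=\tfrac{1}{\bn}\sqrt{\tfrac{\pi}{2}}\sum_j\big|\tfrac{1}{\sqrt{\elln}}\sum_i(Y_i^2-\E Y_1^2)\big|$ formed from the true squared innovations. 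First I would show $\sqrt{b_n}\,|\kappa^{(Y)}-\kappa|\pConv 0$: this is essentially a quantitative version of the known consistency of Carlstein subsampling under $\beta$-mixing, using the moment relation $\E|\tfrac{1}{\sqrt{n}}\sum(Y_i^2-\E Y_1^2)|\to\kappa\sqrt{2/\pi}$; the extra factor $\sqrt{b_n}$ requires the block count $\bn=n/\elln$ to be large enough and the mixing to decay fast enough, which is exactly what the conditions $1-s<q\delta(\rho-1)/(\rho+1)$ and $q<s$ encode (note $\sqrt{b_n}/\bn=\sqrt{\ell_n\elln/n^2}\cdot\sqrt{n/\ell_n}$, so one needs the per-block fluctuations, of order $\sqrt{\elln}$ in an $L^1$ sense after rescaling, to average out at rate faster than $1/\sqrt{b_n}$).

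The second and main piece is the approximation $\sqrt{b_n}\,\big|\tfrac{1}{\bn}\sqrt{\tfrac{\pi}{2}}\sum_j|W_{j,n}| - \kappa^{(Y)}\big|\pConv 0$ together with $\sqrt{b_n}\,|\hat\sigma_H^2-\sigma_H^2|\pConv$ (or at least bounded away from $0$ with the right rate) so the prefactor $1/\hat\sigma_H^2$ can be dealt with by Slutsky. For the former, I would use the triangle inequality $\big||W_{j,n}|-|Y\text{-analogue}|\big|\le \tfrac{1}{\sqrt{\elln}}\sum_i\big|(\tilde X_i^2/\sigma_H^2 - Y_i^2) - (\text{centering terms})\big|$ and expand $\tilde X_i^2$. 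Under the null $X_i=\sigma_H Y_i+\mu(i/n)$, so with $\bar X^{(k)}$ the $\ell_n$-block mean of the $X$'s and $\bar Y^{(k)}$ that of the $Y$'s, one gets $\tilde X_i = \sigma_H(Y_i-\bar Y^{(k)}) + (\mu(i/n)-\bar\mu^{(k)})$, hence $\tilde X_i^2/\sigma_H^2 = Y_i^2 - 2Y_i\bar Y^{(k)} + (\bar Y^{(k)})^2 + \tfrac{2}{\sigma_H}(Y_i-\bar Y^{(k)})(\mu(i/n)-\bar\mu^{(k)}) + \tfrac{1}{\sigma_H^2}(\mu(i/n)-\bar\mu^{(k)})^2$. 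The Lipschitz bound gives $|\mu(i/n)-\bar\mu^{(k)}|=O(\ell_n/n)$, so the last two mean-contamination terms contribute $O(\ell_n/n)$ and $O(\ell_n^2/n^2)$ pointwise — after the $\tfrac{1}{\sqrt{\elln}}\sum_{i\in\text{block}}$ this is $O(\sqrt{\elln}\,\ell_n/n)$, and after averaging and multiplying by $\sqrt{b_n}$ one needs $\sqrt{b_n}\sqrt{\elln}\,\ell_n/n\to 0$, which is the role of $q<3(1-s)$ (roughly: $\tfrac12(1-s)+\tfrac{q}{2}+s-1<0$). The block-mean terms $2Y_i\bar Y^{(k)}$ and $(\bar Y^{(k)})^2$ are handled by the CLT/LLN for $\beta$-mixing sums: $\bar Y^{(k)}=O_P(\ell_n^{-1/2})$, so $\tfrac{1}{\sqrt{\elln}}\sum_{i\in\text{block of length }\elln}(2Y_i\bar Y^{(k)} - \ldots)$ is $O_P(\sqrt{\elln/\ell_n})=O_P(n^{(q-s)/2})$ per block, and $q<s$ makes this vanish; uniformity over the $\bn$ blocks plus the $\sqrt{b_n}$ factor is absorbed by a moment bound and a union-type argument, again using the polynomial mixing rate to get enough moments on the block sums.

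Finally I would assemble the pieces: $\sqrt{b_n}|\hat\sigma_H^2-\sigma_H^2|\pConv 0$ follows from the same Lipschitz/mixing estimates applied to $\hat\sigma_H^2=\tfrac1n\sum\tilde X_i^2$ (this is an $L^1$-ergodic-type statement and is easier, since there is a full $1/n$ averaging); combined with $\hat\sigma_H^2\pConv\sigma_H^2>0$ and $\kappa>0$, the map $t\mapsto 1/t$ is Lipschitz near $\sigma_H^2$, so $\sqrt{b_n}|1/\hat\sigma_H^2 - 1/\sigma_H^2|\pConv 0$, and then $\sqrt{b_n}|\hat\kappa-\kappa^{(Y)}/\sigma_H^2|\pConv 0$ via the product decomposition, noting $\kappa^{(Y)}$ is $O_P(1)$. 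Since $\kappa^{(Y)}/\sigma_H^2$ is exactly the Carlstein estimator for the long-run variance of $(Y_i^2)$ — wait, more precisely $\kappa^{(Y)}$ already estimates $\kappa$ — the first step closes the argument. \textbf{The main obstacle} I anticipate is the uniform control over all $\bn$ blocks simultaneously of the cross terms involving $\bar Y^{(k)}$: one cannot just use $\bar Y^{(k)}=O_P(\ell_n^{-1/2})$ blockwise and sum, because $\bn$ such bounds accumulate; instead one needs a maximal inequality or a careful second-moment computation of $\sum_j \tfrac{1}{\sqrt{\elln}}|\sum_{i\in\text{block}}(Y_i-\bar Y^{(k)})\cdot(\ldots)|$ that exploits $\beta$-mixing between blocks (via a coupling to independent blocks, paying $\bn\beta_Y(\text{gap})$, which is why a sequence $m_n$-type condition is implicitly needed) and the moment assumption $\E|Y_1|^{4+2\delta}<\infty$ to bound fourth-order block quantities — this is precisely where the delicate interplay among $s$, $q$, $\delta$, and $\rho$ in the hypotheses gets used, and where the bookkeeping is heaviest.
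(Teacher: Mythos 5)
Your overall architecture is the same as the paper's (reduce $\hat\kappa$ to the idealized subsampling estimator built from $Y_i^2$, control the replacement of $\tilde X_i^2/\sigma_H^2$ by $Y_i^2$, and dispose of the prefactor $1/\hat\sigma_H^2$ via Slutsky), but there are two concrete problems. The first and most serious is that the core step, $\sqrt{b_n}\,|\kappa^{(Y)}-\kappa|\pConv 0$, is asserted rather than argued: you say it is ``a quantitative version of the known consistency of Carlstein subsampling,'' but ordinary consistency arguments only give $o_P(1)$, not $o_P(b_n^{-1/2})$. The delicate piece is the deterministic bias $\sqrt{b_n}\,\bigl|\elln^{-1/2}\,\E\bigl|\sum_{i\le\elln}(Y_i^2-\E Y_1^2)\bigr|-\kappa\sqrt{2/\pi}\bigr|$, which in the paper is handled by a Berry--Esseen theorem for mixing sequences (Tikhomirov) giving the uniform rate $\elln^{-(\delta/2)(\rho-1)/(\rho+1)}$, upgraded via Petrov's nonuniform bound to an $x$-integrable bound on $|F_n(x)-\Phi(x)|$ so that the difference of first absolute moments can be integrated; this is exactly where $1-s<q\delta(\rho-1)/(\rho+1)$ enters, together with a separate estimate $\sqrt{b_n}\,|\mathrm{Var}(T_1/\sqrt{\elln})^{1/2}-\kappa|\to 0$ from the covariance tails. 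The fluctuation part is controlled by Yokoyama's moment inequality applied to the block variables $|T_j-\elln\E Y_1^2|$. None of these ingredients appear in your sketch, and without them the rate claim does not follow.

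The second problem is that several of the rates you do write down are too crude and, as stated, fail under the proposition's hypotheses. Note that $1-s<q\delta(\rho-1)/(\rho+1)<q$ forces $q>1-s$. Your pointwise bound for the linear mean-contamination cross term requires $\sqrt{b_n}\sqrt{\elln}\,\ell_n/n=n^{(q+s-1)/2}\to 0$, i.e.\ $q<1-s$ --- a contradiction; the condition $q<3(1-s)$ covers only the purely deterministic quadratic term $(\mu_i-\bar\mu)^2$. Similarly, your per-block bound $O_P(\sqrt{\elln/\ell_n})$ for the $Y_i\bar Y^{(k)}$ terms gives $\sqrt{b_n}\cdot n^{(q-s)/2}=n^{(1+q-2s)/2}$, which diverges for, e.g., $s=0.7$, $q=0.5$. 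Both terms must instead be bounded in $L^1$ with square-root cancellation: $\E\bigl|\sum_i Y_i c_i\bigr|\le(\sum_{i,k}c_ic_k|\E Y_iY_k|)^{1/2}$ yields $O(b_n^{-1/2})$ for the $\mu$-cross term and $O(\sqrt{b_n/\ell_n})$ for the block-mean term, both of which vanish under $s>1/2$. Relatedly, the coupling/maximal-inequality machinery you anticipate in your ``main obstacle'' paragraph is unnecessary here: one is bounding an average of absolute values in $L^1$, so by stationarity a single per-block first- or second-moment estimate suffices, with no union bound over the $\bn$ blocks.
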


By Proposition \ref{Prop: Behaviour LRV estimator H}, we can replace the long run variance $\kappa^2$ in the central limit theorem \ref{Theorem: CLT Un} by its estimator:

\begin{corollary}{
\label{Corollary: Testing procedure H}
Assume there exist constants  $0<\delta\leq 1$ and $\rho>1\vee \frac{9\delta}{(\delta+1)(\delta+2)}$ such that $\E{\abs{Y_1}^{4+2\delta}}<\infty$ and for all $k\in \N$ it holds $\beta_Y(k)\leq C k^{-\rho (2+\delta)(1+\delta)/\delta^2}$. Moreover, let $\ell_n=n^s$ and $\elln=n^q$ such that $s\in (0.5, 0.75)$, $s>1/\rbraces{1+\delta\frac{\rho-1}{\rho+1}}\vee (1+\frac{\delta^2}{\rho(2+\delta)(1+\delta)})/(2+\frac{\delta^2}{\rho(2+\delta)(1+\delta)})$, $1-s<q\delta (\rho-1)/(\rho+1)$,  $q<s$ and $q<3(1-s)$.
 Then it holds under the null hypothesis
\begin{equation*}
\sqrt{b_n}\rbraces{\frac{\sqrt{\ell_n}}{\hat{\kappa}}U(n)- \frac{2}{\sqrt{\pi}}}\distConv \NoD{0}{\frac{4}{3}+\frac{8}{\pi}\rbraces{\sqrt{3}-2}} \quad \text{as}\quad n \rightarrow \infty.
\end{equation*}
}\end{corollary}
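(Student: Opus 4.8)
The plan is to deduce Corollary~\ref{Corollary: Testing procedure H} from Theorem~\ref{Theorem: CLT Un} and Proposition~\ref{Prop: Behaviour LRV estimator H} by a Slutsky-type argument; at this stage the corollary is essentially the statement that replacing the true long-run variance $\kappa$ by its estimator $\hat\kappa$ leaves the limiting law unchanged. First I would observe that the assumptions imposed in the corollary are precisely the union of those required in Theorem~\ref{Theorem: CLT Un} and those required in Proposition~\ref{Prop: Behaviour LRV estimator H}: note in particular that $\rho>1\vee\frac{9\delta}{(\delta+1)(\delta+2)}$ forces $\rho>1$, so the hypothesis of the proposition on $\rho$ is met, and all remaining constraints on $s$, $q$, $\delta$, the moments and the mixing rate coincide. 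Hence both results may be invoked. I would also record the elementary fact that Theorem~\ref{Theorem: CLT Un} already yields $\frac{\sqrt{\ell_n}}{\kappa}U(n)\pConv\frac{2}{\sqrt{\pi}}$, since a sequence whose $\sqrt{b_n}$-multiple (with $b_n\to\infty$) converges in distribution must itself converge in probability to the centering constant; in particular $\sqrt{\ell_n}\,U(n)=O_P(1)$, a fact that alternatively follows from Theorem~\ref{Theorem: LLN Un}.

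The core computation is then the decomposition of the difference between the standardized statistic of the corollary and that of Theorem~\ref{Theorem: CLT Un},
\begin{equation*}
\sqrt{b_n}\rbraces{\frac{\sqrt{\ell_n}}{\hat\kappa}U(n)-\frac{2}{\sqrt{\pi}}}-\sqrt{b_n}\rbraces{\frac{\sqrt{\ell_n}}{\kappa}U(n)-\frac{2}{\sqrt{\pi}}}=\sqrt{\ell_n}\,U(n)\cdot\frac{1}{\hat\kappa\kappa}\cdot\sqrt{b_n}\rbraces{\kappa-\hat\kappa}.
\end{equation*}
On the right-hand side, $\sqrt{\ell_n}\,U(n)\pConv\frac{2\kappa}{\sqrt{\pi}}$ by the remark above, $\hat\kappa\pConv\kappa$ by Proposition~\ref{Prop: Behaviour LRV estimator H} so that $\frac{1}{\hat\kappa\kappa}\pConv\frac{1}{\kappa^2}$ (using $\kappa>0$), and $\sqrt{b_n}\abs{\kappa-\hat\kappa}\pConv0$, again by Proposition~\ref{Prop: Behaviour LRV estimator H}. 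Hence the product on the right tends to $0$ in probability, i.e.\ the two $\sqrt{b_n}$-standardized quantities differ by a term that is $o_P(1)$. Since Theorem~\ref{Theorem: CLT Un} gives $\sqrt{b_n}\rbraces{\frac{\sqrt{\ell_n}}{\kappa}U(n)-\frac{2}{\sqrt{\pi}}}\distConv\NoD{0}{\frac{4}{3}+\frac{8}{\pi}\rbraces{\sqrt{3}-2}}$, an application of Slutsky's lemma transfers this limit law to $\sqrt{b_n}\rbraces{\frac{\sqrt{\ell_n}}{\hat\kappa}U(n)-\frac{2}{\sqrt{\pi}}}$, which is the assertion.

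I do not anticipate a genuine obstacle here: all the substantial analysis has already been carried out in Theorem~\ref{Theorem: CLT Un} and, decisively, in Proposition~\ref{Prop: Behaviour LRV estimator H}, whose point is exactly that $\hat\kappa$ is not merely consistent but $\sqrt{b_n}$-consistent — the rate needed so that the plug-in error $\sqrt{b_n}(\kappa-\hat\kappa)$ is asymptotically negligible and the limiting variance $\frac{4}{3}+\frac{8}{\pi}(\sqrt{3}-2)$ is not inflated. The only point requiring a moment's care is to have $\sqrt{\ell_n}\,U(n)$ bounded in probability, so that its product with the vanishing factor stays controlled, and this is immediate from Theorem~\ref{Theorem: CLT Un} (or Theorem~\ref{Theorem: LLN Un}); everything else is routine bookkeeping with the continuous mapping theorem and Slutsky's lemma.
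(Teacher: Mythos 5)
Your proposal is correct and follows essentially the same route as the paper, which simply notes that the corollary is a consequence of Theorem \ref{Theorem: CLT Un} and Proposition \ref{Prop: Behaviour LRV estimator H} via Slutzky's lemma; your explicit decomposition of the difference into $\sqrt{\ell_n}U(n)\cdot(\hat\kappa\kappa)^{-1}\cdot\sqrt{b_n}(\kappa-\hat\kappa)$ is the natural way to carry out that application, and your check that the corollary's hypotheses are the union of those of the two cited results is accurate.
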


Long run variance estimators often have the drawback of diverging under the alternative. This is unfortunate since the long run variance serves in the denominator as a standardization and its overestimation thus results in a non-negligible loss of power. Still, the following proposition ensures that the  divergence here is not fast enough to cancel out the growth rate associated with the test statistic in the numerator.

\begin{proposition}{
\label{Prop: Behaviour LRV estimaor A}
Let the assumptions (A1) and (A2) be fulfilled and assume $q<s$ as well as $q<4(1-s)$ for $\ell_n=n^s$ and $\elln=n^q$. Then it holds
$$\frac{1}{\sqrt{\elln}} \hat{\kappa} \pConv \frac{\int_0^1 \abs{\sigma^2(x)-\int_0^1 \sigma^2(y)\mathrm{d}y}\mathrm{d}x}{\int_0^1 \sigma^2(z) \mathrm{d}z} \quad \text{as } n \rightarrow \infty.$$
}\end{proposition}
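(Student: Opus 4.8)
The plan is to start from the elementary rearrangement
\[
\frac{1}{\sqrt{\elln}}\,\hat\kappa
 = \sqrt{\tfrac{\pi}{2}}\,\frac{1}{\hat\sigma_H^2}\,\frac{1}{\bn}\sum_{j=1}^{\bn}\bigl|\,V_j-\hat\sigma_H^2\,\bigr| ,
\qquad
V_j:=\frac{1}{\elln}\sum_{i=(j-1)\elln+1}^{j\elln}\tilde X_i^2 ,
\]
which holds because each inner sum in the definition of $\hat\kappa$ runs over exactly $\elln$ indices, so that $\frac{1}{\sqrt{\elln}}\bigl|\sum_i(\tilde X_i^2-\hat\sigma_H^2)\bigr|=\sqrt{\elln}\,|V_j-\hat\sigma_H^2|$. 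In contrast with the null case of Proposition~\ref{Prop: Behaviour LRV estimator H}, under the alternative the block averages $V_j$ converge to the non-constant local variances $\sigma^2(\cdot)$, so the deterministic part of $V_j-\hat\sigma_H^2$ dominates its stochastic fluctuation, which is of order $\elln^{-1/2}$; this is exactly why $\hat\kappa$ now diverges at rate $\sqrt{\elln}$. Writing $\bar\sigma^2:=\int_0^1\sigma^2(x)\,\mathrm{d}x$, I would prove
\[
\text{(i)}\quad \hat\sigma_H^2\pConv\bar\sigma^2 ,
\qquad\qquad
\text{(ii)}\quad \frac{1}{\bn}\sum_{j=1}^{\bn}\bigl|V_j-\hat\sigma_H^2\bigr| \pConv \int_0^1\bigl|\sigma^2(x)-\bar\sigma^2\bigr|\,\mathrm{d}x ,
\]
and then conclude by the continuous mapping theorem, using $\bar\sigma^2\ge\sigma_0^2>0$.

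Step (i) is essentially already at hand: summing over the centering blocks gives $\hat\sigma_H^2=\frac{\ell_n}{n}\sum_{k=1}^{b_n}\hat\sigma_k^2$ up to a boundary term over fewer than $\ell_n$ indices, which is $O_P(\ell_n/n)=o_P(1)$. The convergence of the local sample variances $\hat\sigma_k^2$ toward $\sigma^2$ on the block grid --- already part of the proof of Theorem~\ref{THM: Behaviour under A for U(n)} --- together with the Riemann-sum approximation $\frac{1}{b_n}\sum_k\sigma^2(t_k)\to\bar\sigma^2$, which is legitimate because $\sigma^2$ is bounded (c\`adl\`ag on a compact interval) and Riemann integrable (its set of discontinuities being at most countable), then gives $\hat\sigma_H^2\pConv\bar\sigma^2$.

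Step (ii) is the heart of the proof, and I would treat it in the style of Proposition~\ref{Prop: Behaviour LRV estimator H}. Since $q<s$, all but at most $b_n$ --- hence a fraction $O(n^{q-s})\to0$ --- of the $\bn$ subsampling blocks are contained in a single centering block $B^{(k)}$; as $|V_j-\hat\sigma_H^2|=O_P(1)$ for every $j$, the remaining ``straddling'' subsampling blocks contribute $o_P(1)$ to the average. For a subsampling block lying inside $B^{(k)}$ one writes $\tilde X_i=\sigma(i/n)Y_i+\bigl(\mu(i/n)-\bar\mu^{(k)}\bigr)-\overline{\sigma Y}^{(k)}$, where $\bar\mu^{(k)}$ and $\overline{\sigma Y}^{(k)}$ are the corresponding centering-block averages. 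Lipschitz continuity of $\mu$ gives $|\mu(i/n)-\bar\mu^{(k)}|=O(\ell_n/n)=O(n^{s-1})$; summability of $\sum_k|\cov{Y_0}{Y_k}|$ --- implied by (A1)--(A2) --- gives $\overline{\sigma Y}^{(k)}=O_P(\ell_n^{-1/2})$ as well as $\frac{1}{\elln}\sum_i\sigma(i/n)Y_i=O_P(\elln^{-1/2})$ over a subsampling block; and the c\`adl\`ag property of $\sigma$ yields, for each fixed $\varepsilon>0$, a finite partition of $[0,1]$ on whose pieces $\sigma$ oscillates by at most $\varepsilon$, so all but finitely many subsampling blocks see oscillation $\le\varepsilon$ and admit a representative point $\tau_j$ with $\sigma^2(i/n)=\sigma^2(\tau_j)+O(\varepsilon)$ throughout. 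Expanding $\tilde X_i^2$ and collecting terms yields $V_j=\sigma^2(\tau_j)+R_{j,n}$, where $\frac{1}{\elln}\sum_i Y_i^2\to1$ by the law of large numbers and the remainder $R_{j,n}$ gathers terms of orders $\elln^{-1/2}$, $n^{s-1}$, $\ell_n^{-1/2}\elln^{-1/2}$ and $\ell_n^{-1}$ together with the $O(\varepsilon)$ contribution. One then checks $\frac{1}{\bn}\sum_j|R_{j,n}|=o_P(1)$ by bounding its expectation and applying Markov's inequality --- grouping the shared $\overline{\sigma Y}^{(k)}$-contributions by centering block --- and afterwards lets $\varepsilon\downarrow0$; it is here that the growth restrictions $q<s$ and $q<4(1-s)$ are used. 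Hence $\frac{1}{\bn}\sum_j|V_j-\hat\sigma_H^2|=\frac{1}{\bn}\sum_j|\sigma^2(\tau_j)-\hat\sigma_H^2|+o_P(1)$; replacing $\hat\sigma_H^2$ by $\bar\sigma^2$ (legitimate since $\bigl||a-x|-|a-y|\bigr|\le|x-y|$ and $\hat\sigma_H^2\pConv\bar\sigma^2$) and applying a last Riemann-sum approximation to $x\mapsto|\sigma^2(x)-\bar\sigma^2|$ delivers the limit in (ii).

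The hard part will be the bookkeeping in Step (ii): one must handle simultaneously the mismatch of the two block scales (most subsampling blocks lying inside a centering block, a vanishing fraction straddling a boundary), the merely c\`adl\`ag --- hence not uniformly continuous --- variance function (which forces the $\varepsilon$-before-$n$ order of limits and a careful treatment of the jump-straddling blocks), and, above all, the argument that the numerous small remainder terms aggregate to $o_P(1)$ once summed over the $\bn\approx n^{1-q}$ subsampling blocks --- precisely where the conditions $q<s$ and $q<4(1-s)$ are invoked. Once (i) and (ii) are in place, the continuous mapping theorem applied to the displayed identity completes the argument.
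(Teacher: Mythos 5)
Your proposal is correct and follows essentially the same route as the paper: the paper likewise splits the argument into (a) replacing $\tilde X_i^2$ by $\sigma_i^2Y_i^2$ (via an intermediate estimator $\hat B$, using the Lipschitz property of $\mu$ and covariance summability), (b) showing the block averages converge to $\sigma^2$ on the grid via the c\`adl\`ag finite-oscillation partition and Riemann sums, (c) proving $\hat\sigma_H^2\pConv\int_0^1\sigma^2(z)\,\mathrm{d}z$, and (d) combining by Slutzky/continuous mapping. The only difference is organizational --- you fold the approximation of $\tilde X_i^2$ into the main computation rather than isolating it as a separate proposition --- so no substantive gap remains beyond the bookkeeping you already identify.
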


\begin{corollary}{
\label{Corollary: Testing procedure A}
Given the assumptions (A1) and (A2) and if $\ell_n=n^s$ with $s\in (0.5,0.75)$ as well as $\elln=n^q$ with $q<s$, it holds
 $$\sqrt{\elln} \frac{U(n)}{\hat{\kappa}}\pConv\frac{\int_0^1 \int_0^1 \abs{\log \sigma^2(x)-\log \sigma^2(y)} \mathrm{d}x\mathrm{d}y \cdot\int_0^1 \sigma^2(z) \mathrm{d}z}{\int_0^1 \abs{\sigma^2(x)-\int_0^1 \sigma^2(y)\mathrm{d}y}\mathrm{d}x} \quad \text{as } n\rightarrow \infty.$$
}\end{corollary}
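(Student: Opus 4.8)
The plan is to read the claim off from Theorem \ref{THM: Behaviour under A for U(n)} and Proposition \ref{Prop: Behaviour LRV estimaor A} by a Slutsky-type argument, so that no genuinely new estimate is needed. First I would rewrite the standardized statistic as the ratio
\[
\sqrt{\elln}\,\frac{U(n)}{\hat{\kappa}}=\frac{U(n)}{\hat{\kappa}/\sqrt{\elln}},
\]
so that numerator and denominator can be treated separately, each converging to a deterministic limit.

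Next I would check that the hypotheses of the two cited results are implied by those of the corollary. Theorem \ref{THM: Behaviour under A for U(n)} requires exactly (A1), (A2) and $\ell_n=n^s$ with $s\in(0.5,0.75)$, all of which are assumed, and it gives
\[
U(n)\pConv L_1:=\int_0^1\int_0^1\abs{\log\sigma^2(x)-\log\sigma^2(y)}\,\mathrm{d}x\,\mathrm{d}y .
\]
Proposition \ref{Prop: Behaviour LRV estimaor A} needs, besides $q<s$, also $q<4(1-s)$; but since $s<0.75$ one has $4(1-s)>1>s>q$, so this extra restriction is automatic, and the proposition yields
\[
\frac{\hat{\kappa}}{\sqrt{\elln}}\pConv L_2:=\frac{\int_0^1\abs{\sigma^2(x)-\int_0^1\sigma^2(y)\,\mathrm{d}y}\,\mathrm{d}x}{\int_0^1\sigma^2(z)\,\mathrm{d}z}.
\]

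Since convergence in probability to constants is automatically joint, the pair $(U(n),\hat{\kappa}/\sqrt{\elln})$ converges in probability to $(L_1,L_2)$. If $L_2>0$, the map $(a,b)\mapsto a/b$ is continuous at $(L_1,L_2)$, so the continuous mapping theorem gives $\sqrt{\elln}\,U(n)/\hat{\kappa}\pConv L_1/L_2$, which is precisely the asserted limit. Here $L_2=0$ exactly when $\sigma^2$ equals $\int_0^1\sigma^2(y)\,\mathrm{d}y$ Lebesgue-almost everywhere, i.e.\ when $\sigma$ is a.e.\ constant; hence under the alternative, for which the statement is meaningful, $L_2>0$ (and, $\log$ being injective, $L_1>0$ as well, so the limiting ratio is a well-defined strictly positive constant). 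The only point that needs a word of care is therefore this non-degeneracy of the denominator; otherwise the corollary is a mechanical combination of Theorem \ref{THM: Behaviour under A for U(n)} and Proposition \ref{Prop: Behaviour LRV estimaor A}.
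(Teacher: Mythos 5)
Your proof is correct and follows exactly the route the paper intends: the corollary is a mechanical combination of Theorem \ref{THM: Behaviour under A for U(n)} and Proposition \ref{Prop: Behaviour LRV estimaor A} via Slutsky's lemma, and your observations that $q<4(1-s)$ is automatic for $s<0.75$, $q<s$, and that the denominator is non-degenerate precisely under the alternative are the only points requiring care.
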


Due to $\elln=o(\ell_n)$, the expression within the central limit theorem thus still diverges under the alternative, though at a slower rate.

\section{Outline and Main Ideas of the Proofs}
\label{Sec:Outline and Main Ideas of Proofs}
This section outlines the key ideas to prove our main results, whereas the technical details are given in Appendix \ref{Appendix: Proofs}. 
 The proofs rely on a series of approximations of the statistic
\[
 U(n)=\frac{1}{b_n(b_n-1)}\sum_{1\leq j\neq k \leq b_n} \left|\log \hat{\sigma}_j^2 -\log \hat{\sigma}_k^2 \right|
\]
by simpler statistics that are easier to analyze and have the same large sample behavior as $U(n)$. In a first step, we replace the block means $\frac{1}{\ell_n}\sum_{i=(j-1)\ell_n +1}^{j\ell_n} X_i$ in the definition of the local sample variance $\hat{\sigma}_j^2$ by the expected values $\mu(i/n)$. For the resulting U-statistic $U_1(n)$ one can then derive an analogue of Theorem \ref{THM: Behaviour under A for U(n)}.

To obtain additional limit theory under the null hypothesis in Theorems \ref{Theorem: LLN Un} and \ref{Theorem: CLT Un}, two more approximations are required. In a second approximation step, we linearize the logarithm, i.e. we use the approximation $\log(1+x)\approx x$, which is valid for $x$ close to $0$. Finally, employing a coupling technique for absolutely regular observations (see, Borovkova, Burton and Dehling \citep{Borovkova.2001}), we replace the slightly shortened dependent blocks
$(X_{(j-1)\ell_n+1},\ldots, X_{j\ell_n-m_n})$ by close-by independent blocks $(X_{(j-1)\ell_n+1}^\prime,\ldots, X_{j\ell_n-m_n}^\prime)$  with the same marginal distributions. The resulting U-statistic $U_3(n)$ can be analyzed by an adaptation of U-statistic theory to triangular arrays since its entries stem from a triangular array of row-wise independent random variables.

\subsection{A first Approximation}
Centering the observations $X_i$ in each block $j$ by their expected value $\mu(i/n)$ instead of the block mean $\frac{1}{\ell_n}\sum_{r=(j-1) \ell_n+1}^{j\ell_n} X_r$, we obtain the following approximation to the empirical block variances
\[
s_{j,n}^2:= s_j^2:=\frac{1}{\ell_n} \sum_{i=(j-1) \ell_n +1}^{j\ell_n} \rbraces{X_i-\mu\rbraces{\frac{i}{n}}}^2
=\frac{1}{\ell_n}\sum_{i=(j-1) \ell_n +1}^{j\ell_n} \sigma^2\rbraces{\frac{i}{n}}\, Y_i^2.
\]
Taking Gini's mean difference of the $\log s_j^2$, we get the statistic
\[
 U_1(n)=\frac{1}{b_n(b_n-1)} \sum_{1\leq j\neq k \leq b_n}\left| \log s_j^2-\log s_k^2  \right|.
\]
Our first approximation theorem shows that $U_1(n)$ is sufficiently close to $U(n)$ for all limit theorems to carry over.
\begin{proposition}
\label{Prop: Approximation mit U_1 unter H und A}
Assume that conditions (A1) and (A2) hold and that $\ell_n=n^s$ with $s\in (0.5,0.75)$. Then, we have
\[
  \sqrt{n}\abs{U(n)-U_1(n)} \pConv 0,
\]
as $n\rightarrow \infty$.
\end{proposition}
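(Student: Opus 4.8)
The plan is to control the difference $|U(n) - U_1(n)|$ via the elementary bound $\bigl||a|-|b|\bigr| \le |a-b|$ applied inside Gini's mean difference, which reduces matters to the pairwise differences of the logarithmic block variances. Concretely, since for any reals $||x_j - x_k| - |y_j - y_k|| \le |x_j - y_j| + |x_k - y_k|$, we get
\[
  \abs{U(n) - U_1(n)} \le \frac{2}{b_n} \sum_{j=1}^{b_n} \abs{\log\hat\sigma_j^2 - \log s_j^2}.
\]
So it suffices to show $\sqrt{n}\, \frac{1}{b_n}\sum_{j=1}^{b_n} |\log\hat\sigma_j^2 - \log s_j^2| \pConv 0$, and since this is a nonnegative average it is enough to prove the $L^1$-bound $\sqrt{n}\,\E{|\log\hat\sigma_j^2 - \log s_j^2|} \to 0$ uniformly in $j \le b_n$ (or to handle the sum directly with a Markov inequality).

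Next I would expand the difference $\hat\sigma_j^2 - s_j^2$. Writing $\bar X_j = \frac{1}{\ell_n}\sum_{i \in \text{block } j} X_i$ and $\bar\mu_j = \frac{1}{\ell_n}\sum_{i\in\text{block }j}\mu(i/n)$, one has the standard identity
\[
  \hat\sigma_j^2 = \frac{1}{\ell_n}\sum_{i}(X_i - \bar X_j)^2 = \frac{1}{\ell_n}\sum_i (X_i - \mu(i/n))^2 - (\bar X_j - \bar\mu_j)^2 + R_j,
\]
where $R_j$ collects the cross terms coming from replacing $\bar\mu_j$ by $\mu(i/n)$; by Lipschitz-continuity of $\mu$, the oscillation of $\mu$ over a block of length $\ell_n = n^s$ is $O(\ell_n/n) = O(n^{s-1})$, so $R_j = O(n^{s-1})$ deterministically and $(\bar\mu_j - \mu(i/n))$-type terms are negligible. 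The genuinely stochastic term is $(\bar X_j - \bar\mu_j)^2 = \bigl(\frac{1}{\ell_n}\sum_i \sigma(i/n) Y_i\bigr)^2$; under (A1)–(A2) the partial sums of the $\beta$-mixing sequence $(Y_i)$ satisfy $\E{(\sum_i Y_i)^2} = O(\ell_n)$, hence $\E{(\bar X_j - \bar\mu_j)^2} = O(1/\ell_n) = O(n^{-s})$. Thus $\hat\sigma_j^2 - s_j^2 = O_{L^1}(n^{-s}) + O(n^{s-1})$, and both error rates are $o(n^{-1/2})$ precisely because $s \in (0.5, 0.75)$.

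To pass from the difference of the variances to the difference of their logarithms, I would use that $s_j^2 \ge \sigma_0^2 \cdot \frac{1}{\ell_n}\sum_i Y_i^2$ is bounded below away from zero with high probability (the empirical second moment of $(Y_i)$ concentrates near $\E{Y_1^2}=1$ by an $L^1$ or $L^2$ law of large numbers for $\beta$-mixing arrays, which is available under (A1)–(A2)), so on that event the mean value theorem gives $|\log\hat\sigma_j^2 - \log s_j^2| \le C|\hat\sigma_j^2 - s_j^2|$; the complementary event has probability $o(b_n^{-1})$ and the logarithmic difference there can be controlled crudely using $\E{|Y_1|^{4+2\vartheta}}<\infty$, e.g. via Hölder and a moment bound on $\log\hat\sigma_j^2$. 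Assembling the pieces and summing over $j$ yields $\sqrt{n}\,\E{|U(n)-U_1(n)|}\to 0$, which implies the claimed convergence in probability. The main obstacle is the low-probability event on which a block variance is atypically small: there the logarithm is not Lipschitz and one needs a separate, quantitative tail bound ensuring that $\Pb(\hat\sigma_j^2 < \varepsilon)$ decays fast enough to beat both the factor $\sqrt{n}$ and the number of blocks $b_n$; a second delicate point is bookkeeping the deterministic Lipschitz error $O(n^{s-1})$ and confirming it is $o(n^{-1/2})$, which forces the upper restriction $s < 0.75$.
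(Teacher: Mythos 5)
Your strategy coincides with the paper's: reduce $\abs{U(n)-U_1(n)}$ to $\frac{2}{b_n}\sum_{j}\abs{\log\hat\sigma_j^2-\log s_j^2}$, bound each log-difference by $(\hat\sigma_j^2\wedge s_j^2)^{-1}\abs{\hat\sigma_j^2-s_j^2}$ after establishing a lower bound on the block variances off an event of vanishing probability, and expand $\hat\sigma_j^2-s_j^2$ into the squared block mean, a cross term and a deterministic Lipschitz term; the roles you assign to $s>0.5$ and $s<0.75$ are exactly those in the paper. However, two steps would fail as written. First, the rate bookkeeping: the deterministic term is $\frac{1}{\ell_n}\sum_i(\mu_i-\bar\mu_j)^2=O(n^{2(s-1)})$, and it is only this squared rate that is $o(n^{-1/2})$ for $s<3/4$; the rate $O(n^{s-1})$ you state is \emph{not} $o(n^{-1/2})$ for any $s>1/2$, so your sentence ``both error rates are $o(n^{-1/2})$'' is false as written. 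More seriously, the cross term $\frac{2}{\ell_n}\sum_i\sigma_i Y_i(\mu_i-\bar\mu_j)$ cannot be dismissed as negligible via the oscillation bound $\max_i\abs{\mu_i-\bar\mu_j}=O(n^{s-1})$ alone: that yields only an $L^1$-rate $n^{s-1}$, and $\sqrt{n}\,n^{s-1}\rightarrow\infty$. One must exploit $\E{Y_i}=0$ and apply Cauchy--Schwarz together with the summable covariances from (A1)--(A2), improving the rate to $O(b_n^{-1}\ell_n^{-1/2})$; this is precisely the computation the paper carries out for this term.

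Second, the $L^1$ formulation $\sqrt{n}\,\E{\abs{\log\hat\sigma_j^2-\log s_j^2}}\rightarrow 0$ is not available under the stated assumptions: no continuity of the law of $Y_1$ is assumed, so $\hat\sigma_j^2$ may vanish with positive (albeit tiny) probability — take $Y_i$ i.i.d.\ Rademacher and $\mu\equiv 0$ — and then $\E{\abs{\log\hat\sigma_j^2}}=\infty$; no moment bound on $Y_1$ controls the negative part of the logarithm. The event-splitting argument you sketch at the end (restrict to $\min_j(\hat\sigma_j^2\wedge s_j^2)\ge c>0$, and show the complement is asymptotically negligible via a union bound over the $b_n$ blocks and Chebyshev applied to $s_j^2-\E{s_j^2}$, using the fourth moments and summable covariances of $Y_i^2$) is the correct route and delivers the claimed convergence in probability directly, with no need for any ``crude control'' of the logarithm on the bad event. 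With these two repairs your argument becomes the paper's proof.
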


Proposition \ref{Prop: Approximation mit U_1 unter H und A} holds both under the null hypothesis as well as under the alternative. Thus, we may henceforth restrict our analysis to the U-statistic $U_1(n)$, and to the centered data
$X_i-\mu(i/n) =\sigma(i/n) Y_i$. Without loss of generality, we may assume from now on that $\mu(i/n)=0$, and that the data are given by
\[
   X_i=\sigma\rbraces{ \frac{i}{n}} Y_i.
\]

\subsection{Outline of the Proof of Theorem \ref{THM: Behaviour under A for U(n)}}
Recall that the statistic $U_1(n)$ employs the arguments $\log s_j^2$ for $1\leq j\leq b_n$, where
$s_j^2 =\frac{1}{\ell_n}\sum_{i=(j-1)\ell_n +1}^{j\ell_n} \sigma^2(\frac{i}{n}) Y_i^2$. Their mean can be approximated via

\[
  Es_j^2=\sum_{i=(j-1)\ell_n+1}^{j\ell_n} \sigma^2\rbraces{\frac{i}{n}}\approx \sigma^2\rbraces{\frac{j\ell_n}{n}}=\sigma^2\rbraces{\frac{j}{b_n}}
\]
and one can moreover show that $\Var{s_j^2}\rightarrow 0$ as $n\rightarrow \infty$. Hence, we set $s_j^2\approx \sigma^2\rbraces{j/b_n}$ and this in turn implies
\[
  U_1(n)\approx \frac{1}{b(b_n-1)} \sum_{1\leq j\neq k \leq b_n} \abs{\log \sigma^2\rbraces{ \frac{j}{b_n} }-
\log \sigma^2\rbraces{\frac{k}{b_n} }}.
\]
The latter sum is a Riemann-type approximation of the desired limit integral
$\int_0^1\int_0^1 |\log \sigma^2(x) -\log \sigma^2(y)| dx dy$. A rigorous proof is given in the appendix. %

\subsection{Two further Approximations}

The second and third approximation are essential ingredients in the analysis of the asymptotic behavior of $U(n)$ under the null hypothesis, i.e. when $\sigma(x)\equiv \sigma_H$.  In this case, we have
$s_j^2=\sigma_H^2 \frac{1}{\ell_n}\sum_{i=(j-1)\ell_n+1}^{j\ell_n} Y_i^2$ and hence
\begin{align*}
  \abs{ \log s_j^2 -\log s_k^2 } & =
  \Big| \log \Big( \frac{1}{\ell_n} \sum_{i=(j-1)\ell_n+1}^{j\ell_n} Y_i^2\Big) - \log \Big(\frac{1}{\ell_n} \sum_{i=(k-1)\ell_n+1}^{k \ell_n} Y_i^2 \Big) \Big| \\
&= \big| \log (1+S_j) -\log(1+S_k)  \big|,
\end{align*}
where we have defined
$$S_j:=S_{j,n}:=\frac{1}{\ell_n}\sum_{i=(j-1)\ell_n+1}^{j\ell_n}Y_i^2 -1 =\frac{s_j^2}{\sigma_H^2}-1.$$
Since $E(Y_i)=0$ and $\Var{Y_i}=1$, the law of large  numbers implies that for a sufficiently large sample size, $S_j$ is close to $0$. This motivates a Taylor expansion of $\log(1+x)$ around $x=0$. Replacing $\log(1+S_j)$ by $S_j$ in the definition of $U_1(n)$ yields the statistic
\[
  U_2(n) :=\frac{1}{b_n(b_n-1)} \sum_{1\leq j\neq k\leq b_n} \abs{S_j-S_k}.
\]
\begin{proposition}
\label{Prop: Approx U1 U2}
Assume that conditions (A1) and (A2) hold, and that $\ell_n=n^s$ with $s>0.5$. Then, under the null hypothesis,
\[
  \sqrt{n}(U_1(n)-U_2(n)) \pConv 0,
\]
as $n\rightarrow \infty$.

\end{proposition}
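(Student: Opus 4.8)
The plan is to expand the logarithm around $1$ and control the error terms uniformly over the $b_n$ blocks. Writing $\log(1+S_j) = S_j - \frac12 S_j^2 + R_j$ with $|R_j| \le C|S_j|^3$ whenever $|S_j| \le \frac12$, one has
\[
  \bigl|\log(1+S_j) - \log(1+S_k)\bigr| = \bigl|(S_j - S_k) - \tfrac12(S_j^2 - S_k^2) + (R_j - R_k)\bigr|,
\]
so by the reverse triangle inequality
\[
  \bigl|\,|\log(1+S_j)-\log(1+S_k)| - |S_j - S_k|\,\bigr| \le \tfrac12\bigl(S_j^2 + S_k^2\bigr) + |R_j| + |R_k|.
\]
Averaging over $1\le j\ne k\le b_n$ and using symmetry in $j,k$, I would bound
\[
  |U_1(n) - U_2(n)| \le \frac{1}{b_n}\sum_{j=1}^{b_n} \Bigl( S_j^2 + 2|R_j| \Bigr) + (\text{event where some }|S_j| > \tfrac12).
\]
Hence it suffices to show $\sqrt{n}\,\frac{1}{b_n}\sum_j S_j^2 \pConv 0$, $\sqrt{n}\,\frac{1}{b_n}\sum_j |S_j|^3 \pConv 0$ on the event $\{\max_j |S_j| \le \tfrac12\}$, and that $\Pb(\max_j |S_j| > \tfrac12) \to 0$.

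For the moment bounds, recall $S_j = \frac{1}{\ell_n}\sum_{i=(j-1)\ell_n+1}^{j\ell_n}(Y_i^2 - 1)$ is a normalized block sum of the stationary, absolutely regular sequence $(Y_i^2 - 1)$, which has mean zero and, by (A1), finite $(2+\vartheta)$-th moment, with summable mixing coefficients by (A2). Standard moment inequalities for sums of mixing sequences (e.g. the Marcinkiewicz--Zygmund-type / Yokoyama or Doukhan bounds, cf. the covariance summability already invoked for $\kappa^2<\infty$ in the excerpt) give $\E{S_j^2} = O(1/\ell_n) = O(n^{-s})$ and $\E{|S_j|^{2+\vartheta}} = O(\ell_n^{-(1+\vartheta/2)})$ for $\vartheta \le 2$ (for larger moments one caps at the Gaussian rate $\ell_n^{-(2+\vartheta)/2}$). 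Since the $b_n$ blocks are identically distributed, Markov's inequality yields
\[
  \E{\sqrt{n}\,\frac{1}{b_n}\sum_{j=1}^{b_n} S_j^2} = \sqrt{n}\,\E{S_1^2} = O\bigl(n^{1/2 - s}\bigr) \to 0
\]
precisely because $s > 1/2$; this is the only place the hypothesis $s>0.5$ enters. For the cubic remainder I would split $|R_j| \le C|S_j|^2 \cdot |S_j|$ and on $\{|S_j| \le \tfrac12\}$ estimate $|S_j|^3 \le |S_j|^2$, so $\sqrt{n}\,\frac{1}{b_n}\sum_j |R_j| \le C\sqrt{n}\,\frac{1}{b_n}\sum_j S_j^2$, which is already handled; alternatively one uses the $(2+\vartheta)$-moment bound directly if one wants a genuine third power. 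Finally, $\Pb(\max_{1\le j\le b_n}|S_j| > \tfrac12) \le b_n\,\Pb(|S_1| > \tfrac12) \le b_n \cdot 2^{2+\vartheta}\E{|S_1|^{2+\vartheta}} = O\bigl(b_n \ell_n^{-(1+\vartheta/2)}\bigr) = O\bigl(n^{1-s}\,n^{-s(1+\vartheta/2)}\bigr) = O\bigl(n^{1 - s(2+\vartheta/2)}\bigr)$, which tends to $0$ since $s > 1/2 \ge 1/(2+\vartheta/2)$.

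\textbf{Main obstacle.} The routine-looking but genuinely load-bearing step is the block-sum moment inequality $\E{|S_1|^{2+\vartheta}} = O(\ell_n^{-(1+\vartheta/2)})$ under $\beta$-mixing with only the summability (A2) of $\beta_Y(k)^{\vartheta/(2+\vartheta)}$ rather than a polynomial rate: one must invoke the right version of a Rosenthal- or Yokoyama-type inequality for weakly dependent arrays and verify its hypotheses match (A1)--(A2) exactly (in particular that the mixing exponent $\vartheta/(2+\vartheta)$ appearing in (A2) is the one demanded by the inequality for the $(4+2\vartheta)$-moment assumption on $Y_1$, i.e. the $(2+\vartheta)$-moment assumption on $Y_1^2$). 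Everything else is bookkeeping with Markov's inequality and the elementary Taylor estimate for $\log(1+x)$, and the decomposition above is designed so that a \emph{single} bound, $\sqrt n\,\E{S_1^2}\to0$, drives all three pieces. I would present the $\log(1+x)$ expansion and the reverse-triangle reduction first, then state the moment lemma (citing Borovkova--Burton--Dehling or Bradley for the relevant inequality), then close with the three Markov estimates.
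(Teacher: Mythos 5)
Your proposal follows essentially the same route as the paper: Taylor-expand the logarithm around $0$, restrict to the event $M=\{\max_{1\le j\le b_n}|S_j|\le\varepsilon\}$ where the expansion is valid, and control everything with Markov's inequality and the block-sum second moment $\E{S_1^2}=O(1/\ell_n)$, the condition $s>1/2$ entering exactly as you say through $\sqrt{n}\,\E{S_1^2}=O(n^{1/2-s})$. The one point where you diverge, and where I would push back, is the step you single out as the main obstacle: the Rosenthal/Yokoyama bound $\E{|S_1|^{2+\vartheta}}=O(\ell_n^{-(1+\vartheta/2)})$. This is not needed anywhere, and it is in fact doubtful that it follows from (A1)--(A2) alone (Yokoyama's inequality for a $t$-th moment with $t>2$ demands $\sum_k k^{t/2-1}\beta(k)^{(2+\vartheta-t)/(2+\vartheta)}<\infty$, which is strictly stronger than the bare summability in (A2)). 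The paper's proof uses only the second-order bound $|\log(1+x)-x|\le Cx^2$ on $|x|\le\varepsilon$ — your extra $-\tfrac12 S_j^2+R_j$ term is superfluous — and disposes of the exceptional event by plain Chebyshev: $\Pb(\max_j|S_j|>\varepsilon)\le b_n\,\Var{S_1}/\varepsilon^2=O(b_n/\ell_n)=O(n^{1-2s})\to 0$. Since your own reduction already routes the quadratic and cubic pieces through $\sqrt{n}\,\E{S_1^2}$, replacing your $(2+\vartheta)$-moment tail estimate by this Chebyshev bound removes the only questionable ingredient and makes the whole argument run on the single fact $\Var{\sqrt{\ell_n}S_1}\to\kappa^2$, which is what (A1)--(A2) and Davydov's inequality actually deliver.
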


In a third and final approximation step, we replace the arguments $S_j$, $1\leq j \leq b_n$, of $U_2(n)$ by close-by independent random variables $S_j'$, $1\leq j\leq b_n$. In order to achieve this, we use a coupling technique for $\beta$-mixing processes. First, we divide each of the blocks $B_{j,n}=(Y_{(j-1)\ell_n+1},\ldots, Y_{j\ell_n})$ into a long block
\[
\tilde{B}_j:=\tilde{B}_{j,n}:= (Y_{(j-1)\ell_n +1},\ldots, Y_{j\ell_n-m_n})
\]
of length $\ell_n-m_n$ and a short block
\[
R_j:=R_{j,n}:= (Y_{j\ell_n-m_n+1},\ldots,Y_{j\ell_n})
\]
of length $m_n=o(\ell_n)$. The latter blocks function as separation between the main blocks  whose interdependence decreases when $m_n\rightarrow \infty$, as $n\rightarrow \infty$. At the same time, the separating blocks $R_j$ need to be sufficiently short to be asymptotically negligible compared to the longer blocks $\tilde{B}_j$, and thus $m_n$ should grow only slowly.
By Lemma 2.4 in Borovkova, Burton and Dehling \citep{Borovkova.2001}, there exists a sequence of i.i.d.  random vectors
\[
 \tilde{B}_{j,n}':= \tilde{B}_j':=(Y^\prime_{(j-1)\ell_n+1},\ldots,Y_{j\ell_n-m_n}^\prime  )
\]
with the same marginal distribution as $\tilde{B}_j$ such that
\[
  P(\tilde{B}_j^\prime = \tilde{B}_j)=1-\beta_Y(m_n),
\]
for all $1\leq j\leq b_n$. Define the corresponding block sums
\[
 \tilde{S}_{j,n}^\prime:=  \tilde{S}_j^\prime:=\frac{1}{\ell_n} \sum_{i=(j-1) \ell_n+1}^{j\ell_n-m_n} ((Y_i^\prime)^2-1),
\]
and note that for any $n$, the random variables $\tilde{S}_j^\prime$, $1\leq j\leq b_n$, are independent and identically distributed. Finally, we consider the U-statistic
\[
  U_3(n):=\frac{1}{b_n(b_n-1)} \sum_{1\leq j\neq k\leq b_n} |\tilde{S}_j^\prime-\tilde{S}_k^\prime|.
\]

\begin{proposition}
\label{Prop: Approximation U2 U3}
Assume that $\ell_n=n^s$ with  $s>0.5$, and that $m_n$ is chosen in such a way that $m_n=o(n^{2s-1})$ as well as $b_n\beta(m_n) \rightarrow 0$. Then,  under the null hypothesis,
\[
  \sqrt{n} |U_2(n)-U_3(n)| \pConv 0,
\]
as $n\rightarrow \infty$.
\end{proposition}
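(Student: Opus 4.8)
The plan is to interpolate between $U_2(n)$ and $U_3(n)$ by means of the auxiliary U-statistic built from the shortened \emph{but still dependent} block sums
\[
\tilde{S}_j := \frac{1}{\ell_n} \sum_{i=(j-1)\ell_n+1}^{j\ell_n - m_n} \rbraces{Y_i^2 - 1}, \qquad U_2'(n) := \frac{1}{b_n(b_n-1)} \sum_{1\leq j\neq k\leq b_n} \abs{\tilde{S}_j - \tilde{S}_k}.
\]
It suffices to establish $\sqrt{n}\abs{U_2(n)-U_2'(n)}\pConv 0$, which isolates the effect of deleting the last $m_n$ summands of every block, together with $\sqrt{n}\abs{U_2'(n)-U_3(n)}\pConv 0$, which isolates the effect of the coupling; the claim then follows from the triangle inequality.

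For the first part, the elementary bound $\bigl|\,\abs{S_j-S_k}-\abs{\tilde{S}_j-\tilde{S}_k}\,\bigr|\leq\abs{S_j-\tilde{S}_j}+\abs{S_k-\tilde{S}_k}$ gives, after summing over $j\neq k$,
\[
\abs{U_2(n)-U_2'(n)}\leq\frac{2}{b_n}\sum_{j=1}^{b_n}\abs{S_j-\tilde{S}_j},\qquad\text{where}\quad S_j-\tilde{S}_j=\frac{1}{\ell_n}\sum_{i=j\ell_n-m_n+1}^{j\ell_n}\rbraces{Y_i^2-1}
\]
is a centered average of $m_n$ terms. By stationarity and Jensen's inequality, $\E{\abs{S_j-\tilde{S}_j}}\leq\ell_n^{-1}\bigl(\Var{\sum_{i=1}^{m_n}(Y_i^2-1)}\bigr)^{1/2}$, and the variance on the right is at most $C\,m_n$ uniformly in $n$, because (A1) and (A2) render the covariances $\cov{Y_1^2}{Y_{k+1}^2}$ absolutely summable via a Davydov-type covariance inequality for $\beta$-mixing sequences. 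Hence
\[
\E{\sqrt{n}\,\abs{U_2(n)-U_2'(n)}}\leq\frac{2\sqrt{n}}{b_n}\sum_{j=1}^{b_n}\E{\abs{S_j-\tilde{S}_j}}\leq 2C^{1/2}\,\frac{\sqrt{n\,m_n}}{\ell_n}=2C^{1/2}\sqrt{\frac{m_n}{n^{2s-1}}}\longrightarrow 0,
\]
precisely because $m_n=o(n^{2s-1})$, and Markov's inequality then yields $\sqrt{n}\abs{U_2(n)-U_2'(n)}\pConv 0$.

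For the second part, observe that on the event $A_n:=\{\tilde{B}_j'=\tilde{B}_j\text{ for all }1\leq j\leq b_n\}$ one has $\tilde{S}_j'=\tilde{S}_j$ for every $j$, so $U_2'(n)=U_3(n)$ identically on $A_n$. The coupling of Lemma 2.4 in Borovkova, Burton and Dehling \citep{Borovkova.2001} guarantees $P(\tilde{B}_j'=\tilde{B}_j)=1-\beta_Y(m_n)$ for each $j$, so a union bound gives $P(A_n^c)\leq b_n\beta_Y(m_n)\to 0$ by assumption. Consequently $\sqrt{n}\abs{U_2'(n)-U_3(n)}$ vanishes on an event whose probability tends to one, which forces $\sqrt{n}\abs{U_2'(n)-U_3(n)}\pConv 0$ regardless of the scaling factor.

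The only step carrying genuine content is the uniform bound $\Var{\sum_{i=1}^{m_n}(Y_i^2-1)}\leq C\,m_n$: one must convert the mixing rate (A2) and the moment assumption (A1) into absolute summability of $\cov{Y_1^2}{Y_{k+1}^2}$ through a covariance inequality (Davydov, or Bradley's version for absolutely regular sequences), making sure the constant $C$ is independent of $n$. Everything else reduces to triangle-inequality bookkeeping for the shortening step and a union bound for the coupling step, with the delicate block construction itself imported from \citep{Borovkova.2001}.
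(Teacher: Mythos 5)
Your proposal is correct and follows essentially the same route as the paper: the same split into a block-shortening error (controlled in $L^1$ via stationarity and the uniform variance bound $\Var{\sum_{i=1}^{m_n}(Y_i^2-1)}\leq Cm_n$ coming from (A1), (A2) and Davydov's inequality, giving the rate $\sqrt{nm_n}/\ell_n\to 0$) and a coupling error handled by the union bound $P(\tilde{B}_j\neq\tilde{B}_j'\text{ for some }j)\leq b_n\beta_Y(m_n)\to 0$. Introducing the intermediate statistic $U_2'(n)$ explicitly is only a cosmetic difference from the paper's single triangle-inequality chain.
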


\begin{remark}{The exact choice of $m_n$ is insignificant for the structural break testing. For polynomially decaying mixing coefficients, i.e. $\beta_Y(k)\leq Ck^{-\rho(2+\delta)(1+\delta)/\delta^2}$, the conditions $m_n=o(n^{2s-1})$ and $b_n\beta_Y(m_n)\rightarrow 0$ are met as long as one chooses
 $s>(1+\frac{\delta^2}{\rho(2+\delta)(1+\delta)})/(2+\frac{\delta^2}{\rho(2+\delta)(1+\delta)})$.
}\end{remark}

\subsection{Outline of the Proof of Theorem \ref{Theorem: LLN Un}}
Given the above approximations, under the null hypothesis, it suffices to analyze the asymptotic behavior of the U-statistic $U_3(n)$, whose entries $\tilde{S}_j' =\frac{1}{\ell_n} \sum_{i=(j-1)\ell_n+1}^{j\ell_n-m_n} ((Y_i')^2 -1)$ form a row-wise independent and identically distributed triangular array. Moreover, $E(\tilde{S}_1')=0$ and given the assumptions (A1) and (A2), we have
\begin{align*}
&\kappa_n^2:=\Var{ \sqrt{\ell_n}\tilde{S}_1'} \\
=&\frac{\ell_n-m_n}{\ell_n}\Var{Y_1^2}\; + \;2 \sum_{k=1}^{\ell_n-m_n-1}\frac{\ell_n-m_n-k}{\ell_n} \cov{Y_1^2}{Y_{k+1}^2}
\longrightarrow \kappa^2,
\end{align*}

by dominated convergence. By the central limit theorem for partial sums of $\beta$-mixing processes,
$  \sqrt{\ell_n} S_j' $ converges in distribution to a normal law with mean $0$ and variance $\kappa^2$. In our further analysis, we will essentially show that we may replace $S_j'$ by $\frac{\kappa}{\sqrt{\ell_n}}Z_j$, where $Z_j$ are independent standard normal random variables.

We first establish a law of large numbers for a rescaled version of $U_3(n)$.
\begin{proposition}
\label{Prop: LLN fuer U2}
Assume that (A1) and (A2) hold and that $m_n=o(\ell_n)$. Then, under the hypothesis,
\[
  \frac{\sqrt{\ell_n}}{\kappa} U_3(n) \pConv \frac{2}{\sqrt{\pi}},
\]
as $n\rightarrow \infty$.
\end{proposition}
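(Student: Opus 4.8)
The plan is to read $\sqrt{\ell_n}\,U_3(n)$ as an ordinary U-statistic with the fixed, symmetric kernel $h(x,y)=\abs{x-y}$, evaluated at the entries $W_{j,n}:=\sqrt{\ell_n}\,\tilde{S}_j'$, $1\le j\le b_n$, which for each fixed $n$ are i.i.d.; convergence in probability then follows by controlling the first two moments. Writing
\[
\sqrt{\ell_n}\,U_3(n)=\frac{1}{b_n(b_n-1)}\sum_{1\le j\ne k\le b_n}\abs{W_{j,n}-W_{k,n}},
\]
independence of the $W_{j,n}$ gives $\E{\sqrt{\ell_n}\,U_3(n)}=\E{\abs{W_{1,n}-W_{2,n}}}$, so the task reduces to (i) identifying the limit of this expectation and (ii) showing that the U-statistic concentrates around its mean.

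For (i), the key input is the distributional convergence $W_{1,n}\distConv\NoD{0}{\kappa^2}$. By the coupling construction, $W_{1,n}\stackrel{\mathrm{d}}{=}\frac{1}{\sqrt{\ell_n}}\sum_{i=1}^{\ell_n-m_n}\rbraces{Y_i^2-1}$, and since $(Y_i^2)_{i\in\N}$ is a stationary $\beta$-mixing sequence with $\beta_{Y^2}(k)\le\beta_Y(k)$, with $\E{\abs{Y_1^2-1}^{2+\vartheta}}<\infty$ by (A1) and summable mixing by (A2), the central limit theorem for partial sums of $\beta$-mixing sequences applies; together with $m_n=o(\ell_n)$ and $\kappa_n^2=\Var{W_{1,n}}\to\kappa^2>0$ (already noted in the excerpt) this yields $W_{1,n}\distConv\NoD{0}{\kappa^2}$. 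As $W_{1,n}$ and $W_{2,n}$ are independent, $(W_{1,n},W_{2,n})\distConv(N_1,N_2)$ for i.i.d.\ $N_1,N_2\sim\NoD{0}{\kappa^2}$, hence $\abs{W_{1,n}-W_{2,n}}\distConv\abs{N_1-N_2}$ with $N_1-N_2\sim\NoD{0}{2\kappa^2}$. To pass to convergence of expectations I would observe that $\{\abs{W_{1,n}-W_{2,n}}\}_n$ is uniformly integrable: it is dominated by $\abs{W_{1,n}}+\abs{W_{2,n}}$, and the uniformly bounded second moments $\E{W_{1,n}^2}=\kappa_n^2\to\kappa^2$ already yield uniform integrability of $\{\abs{W_{j,n}}\}_n$. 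Therefore $\E{\sqrt{\ell_n}\,U_3(n)}\to\E{\abs{N_1-N_2}}=\sqrt{2\kappa^2}\cdot\sqrt{2/\pi}=\frac{2\kappa}{\sqrt{\pi}}$.

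For (ii), I would use the Hoeffding decomposition: with $h_{1,n}(w):=\E{\abs{w-W_{2,n}}}$, $\zeta_{1,n}:=\Var{h_{1,n}(W_{1,n})}$ and $\zeta_{2,n}:=\Var{\abs{W_{1,n}-W_{2,n}}}$, one has the exact identity
\[
\Var{\sqrt{\ell_n}\,U_3(n)}=\frac{4(b_n-2)}{b_n(b_n-1)}\,\zeta_{1,n}+\frac{2}{b_n(b_n-1)}\,\zeta_{2,n}.
\]
Both $\zeta_{1,n}$ and $\zeta_{2,n}$ are bounded uniformly in $n$: from $h_{1,n}(w)\le\abs{w}+\kappa_n$ one gets $\zeta_{1,n}\le\E{h_{1,n}(W_{1,n})^2}\le 4\kappa_n^2$, and $\zeta_{2,n}\le\E{\abs{W_{1,n}-W_{2,n}}^2}=2\kappa_n^2$. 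Since $b_n\to\infty$, $\Var{\sqrt{\ell_n}\,U_3(n)}=O(b_n^{-1})\to0$. Combining this with (i) via Chebyshev's inequality gives $\sqrt{\ell_n}\,U_3(n)\pConv\frac{2\kappa}{\sqrt{\pi}}$, i.e.\ $\frac{\sqrt{\ell_n}}{\kappa}U_3(n)\pConv\frac{2}{\sqrt{\pi}}$.

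The \textbf{main obstacle} is not a single hard estimate but the need to keep every bound uniform in $n$ because the $W_{j,n}$ form a triangular array: the CLT for the dependent block sums must be combined with uniform integrability — obtained from the uniform second-moment bound $\kappa_n^2\to\kappa^2$ — in order to upgrade distributional convergence to convergence of the expectation $\E{\abs{W_{1,n}-W_{2,n}}}$, and the same uniform moment control is precisely what forces the U-statistic variance to vanish.
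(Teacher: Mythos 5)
Your proposal is correct and follows essentially the same route as the paper's proof: convergence of $\E{\abs{\sqrt{\ell_n}(\tilde S_1'-\tilde S_2')}}$ via the $\beta$-mixing CLT plus uniform integrability obtained from the uniform second-moment bound $\kappa_n^2\to\kappa^2$, and a $O(b_n^{-1})$ bound on $\Var{\sqrt{\ell_n}U_3(n)}$ from the row-wise independence of the entries (your Hoeffding variance identity is just the paper's variance-plus-covariance decomposition written in standard form).
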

\begin{proof}
In order to prove Proposition \ref{Prop: LLN fuer U2}, we first show that $\E{\frac{\sqrt{\ell_n}}{\kappa}U_3(n)}\rightarrow \frac{2}{\sqrt{\pi}}$. Note that
\[
  \E{\frac{\sqrt{\ell_n}}{\kappa} U_3(n)} =\E{\abs{\frac{\sqrt{\ell_n}}{\kappa} \tilde{S}_1'-\frac{\sqrt{\ell_n}}{\kappa}
\tilde{S}_2'}}.
\]
An application of the central limit theorem to the stationary $\beta$-mixing process
$((Y_i^\prime)^2 -1)_{i\in \N}$ yields
\[
  \E{\abs{\frac{\sqrt{\ell_n}}{\kappa} \tilde{S}_1' - \frac{\sqrt{\ell_n}}{\kappa} \tilde{S}_2' }}
\rightarrow \E{\abs{Z-Z^\prime}}=\frac{2}{\sqrt{\pi}}.
\]
and it suffices to additionally verify that $\Var{\sqrt{\ell_n}U_3(n)}\rightarrow 0$. By the definition of $U_3(n)$ and by the independence of the $S_j'$, we obtain
\begin{align*}
\Var{\sqrt{\ell_n}U_3(n)}
=& \frac{1}{b_n(b_n-1)} \Var{\abs{ \sqrt{\ell_n} \tilde{S}_1'-\sqrt{\ell_n} \tilde{S}_2'}}\\
& + \frac{2}{b_n} \cov{\abs{ \sqrt{\ell_n} \tilde{S}_1'-\sqrt{\ell_n} \tilde{S}_2'}}{
 \abs{ \sqrt{\ell_n} \tilde{S}_1' -\sqrt{\ell_n} \tilde{S}_3'}}.
\end{align*}
Now, one can check that the right-hand side converges to zero. Details of the proof are given in the appendix.
\end{proof}

Theorem \ref{Theorem: LLN Un} is an immediate corollary of Proposition \ref{Prop: LLN fuer U2} and the above approximation steps.

\subsection{Outline of the Proof of Theorem \ref{Theorem: CLT Un}}
A standard tool from U-statistics theory is the Hoeffding decomposition of the kernel $h(x,y)$, given by
\[
  h(x,y)=\theta^{(n)} +h_1^{(n)}(x) +h_1^{(n)}(y) +h_2^{(n)}(x,y),
\]
where
\begin{align*}
\theta^{(n)}=& \E{ h\rbraces{\sqrt{\ell_n}\tilde{S}_1'/\kappa, \sqrt{\ell_n}\tilde{S}_2')/\kappa}} \\
h_1^{(n)}(x) =& \E{h\rbraces{x, \sqrt{\ell_n}\tilde{S}_1'/\kappa}}-\theta^{(n)} \\
h_2^{(n)}(x,y)=& h(x,y)-\theta^{(n)}-h_1^{(n)}(x) -h_1^{(n)}(y).
\end{align*}
Note that since we are dealing with a U-statistic of a triangular array, the decomposition depends upon the sample size $n$ .
Using the independence of $\tilde{S}_j'$ and Fubini's theorem, we obtain
\begin{align*}
 \E{h_1^{(n)}\rbraces{\sqrt{\ell_n}\tilde{S}_1'/\kappa}}&= 0 \\
  \E{h_2^{(n)}\rbraces{x, \sqrt{\ell_n}\tilde{S}_1'/\kappa}} &=  \E{h_2^{(n)}\rbraces{\sqrt{\ell_n}\tilde{S}_1'/\kappa,y}} =0
\end{align*}
for all $x,y\in \R$. Thus, the random variables $h_1^{(n)}(\sqrt{\ell_n}\tilde{S}_1'/\kappa)$ are independent and have mean $0$. In addition, the kernels $h_2^{(n)}(x,y)$ are degenerate, and thus the random variables $h_2^{(n)}(\sqrt{\ell_n}\tilde{S}_j')/\kappa, \sqrt{\ell_n}\tilde{S}_k'/\kappa)$ are pairwise uncorrelated.
The Hoeffding decomposition of the kernel gives rise to the Hoeffding decomposition of $U_3(n)$,
\begin{align*}
\sqrt{b_n} \rbraces{ \frac{\sqrt{\ell_n}}{\kappa}U_3(n) -\theta^{(n)} }
=& \frac{2}{\sqrt{b_n}} \sum_{j=1}^{b_n} h_1^{(n)} \rbraces{\sqrt{\ell_n} \frac{\tilde{S}_j'}{\kappa}} \\
&+ \frac{\sqrt{b_n}}{b_n(b_n-1)} \sum_{1\leq j\neq k \leq b_n}
h_2^{(n)}\rbraces{\sqrt{\ell_n} \frac{\tilde{S}_j'}{\kappa}, \sqrt{\ell_n} \frac{\tilde{S}_k'}{\kappa}}.
\end{align*}
By the degeneracy of $h_2^{(n)}(x,y)$, we obtain
\begin{align*}
 & \Var{ \frac{\sqrt{b_n}}{b_n(b_n-1)} \sum_{1\leq j\neq k \leq b_n} h_2^{(n)}\rbraces{\sqrt{\ell_n} \frac{\tilde{S}_j'}{\kappa}, \sqrt{\ell_n} \frac{\tilde{S}_k'}{\kappa}} }\\
=& \frac{1}{b_n- 1} \Var{h_2^{(n)}\rbraces{\sqrt{\ell_n} \frac{\tilde{S}_1'}{\kappa}, \sqrt{\ell_n} \frac{\tilde{S}_2'}{\kappa}} },
\end{align*}
and some further calculations show that the right-hand side converges to $0$. To handle the linear term in the Hoeffding decomposition, we can apply Lyapunov's central limit theorem for row-wise independent triangular arrays, and obtain convergence towards a normal law with mean $0$ and variance
\[
  \psi^2=4   \lim_{n\rightarrow \infty} \Var{h_1^{(n)}\rbraces{\sqrt{\ell_n}\frac{\tilde{S}_1'}{\kappa}}}.
\]
Since $  \sqrt{\ell_n}\frac{\tilde{S}_1' }{\kappa} \rightarrow N(0,1)$,
one can prove $h_1^{(n)}(x) =Eh(x,\sqrt{\ell_n}\frac{ \tilde{S}_1' }{\kappa})-\theta^{(n)} \rightarrow h_1(x)=Eh(x,Z)-\theta$ with $\theta=\E{h(Z, Z^\prime)}$ and the above variance equals $4\Var{h_1(Z)}$.
In the end, we obtain the following central limit theorem for $U_3(n)$.

\begin{proposition}{
\label{Prop: CLT U3}
Assume that $\ell_n=n^s$ with $s>0.5$,  $m_n=o(n^s\wedge n^{2s-1})$ and that there exist constants $\rho>1\vee \frac{9\delta}{(\delta+1)(\delta+2)}$ and $0<\delta\leq 1$ such that $\E{\abs{Y_1}^{4+2\delta}}<\infty$ and for all $k\in \N$, it holds $\beta_Y(k)\leq C k^{-\rho (2+\delta)(1+\delta)/\delta^2}$. Then, we obtain under the null hypothesis
\begin{equation*}
\sqrt{n} \frac{U_3(n)}{\kappa}-\sqrt{b_n}\theta^{(n)} \distConv \NoD{0}{\psi^2},
\end{equation*}
where $\psi^2=4\Var{h_1(Z)}$, $h_1(x)=\E{\abs{x-Z'}}-2/\sqrt{\pi}$, where $Z$ and $Z'$ are two independent standard normal random variables.
}\end{proposition}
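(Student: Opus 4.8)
The plan is to work directly with the row-wise i.i.d.\ triangular array $W_{j,n}:=\sqrt{\ell_n}\,\tilde{S}_j'/\kappa$, $1\le j\le b_n$, and to run the Hoeffding decomposition displayed in the outline, so that the proposition reduces to a Lyapunov central limit theorem for the linear part, negligibility of the degenerate part, and identification of the limiting variance. Before that I would record the two analytic inputs that drive everything. The first is that $W_{1,n}\distConv Z\sim\NoD{0}{1}$: this follows from the central limit theorem for partial sums of the stationary $\beta$-mixing sequence $(Y_i^2-1)_i$, together with the fact (noted in the outline) that $\kappa_n^2=\Var{\sqrt{\ell_n}\tilde{S}_1'}\to\kappa^2>0$ and $m_n=o(\ell_n)$. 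The second, and the crucial one, is a uniform moment bound $\sup_n\E{\abs{W_{1,n}}^{2+\delta}}<\infty$; I would obtain this from a Rosenthal-type moment inequality for partial sums of $\beta$-mixing sequences, and this is precisely the step that consumes the hypotheses $\E{\abs{Y_1}^{4+2\delta}}<\infty$ and $\beta_Y(k)\le Ck^{-\rho(2+\delta)(1+\delta)/\delta^2}$ with $\rho>1\vee\frac{9\delta}{(\delta+1)(\delta+2)}$. From this bound one gets uniform integrability of $\{W_{1,n}^2\}_n$, hence $\theta^{(n)}=\E{\abs{W_{1,n}-W_{2,n}}}\to\E{\abs{Z-Z'}}=2/\sqrt{\pi}$, and, because $h_1^{(n)}(x)=\E{\abs{x-W_{1,n}}}-\theta^{(n)}$ is $1$-Lipschitz in $x$ with $\sup_n\abs{h_1^{(n)}(0)}<\infty$, also $h_1^{(n)}(x)\to h_1(x)$ for every $x$ and hence $h_1^{(n)}\to h_1$ uniformly on compact sets.

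I would first dispose of the trivial rescaling. Writing $n=b_n\ell_n+r_n$ with $0\le r_n<\ell_n$, one has $\sqrt{n}-\sqrt{b_n\ell_n}\le\tfrac12\sqrt{\ell_n/b_n}$, and since $U_3(n)=O_{\Pb}(1/\sqrt{\ell_n})$ by Proposition~\ref{Prop: LLN fuer U2}, the term $(\sqrt{n}-\sqrt{b_n\ell_n})U_3(n)/\kappa$ is $O_{\Pb}(1/\sqrt{b_n})=o_{\Pb}(1)$. It therefore suffices to show $\sqrt{b_n}\bigl(\tfrac{\sqrt{\ell_n}}{\kappa}U_3(n)-\theta^{(n)}\bigr)\distConv\NoD{0}{\psi^2}$. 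For the degenerate term in the Hoeffding decomposition, the variance identity from the outline reduces matters to $\Var{h_2^{(n)}(W_{1,n},W_{2,n})}=o(b_n)$; but $\abs{h_2^{(n)}(x,y)}\le\abs{x-y}+\theta^{(n)}+\abs{h_1^{(n)}(x)}+\abs{h_1^{(n)}(y)}\le 2\abs{x}+2\abs{y}+C$, so $\E{h_2^{(n)}(W_{1,n},W_{2,n})^2}$ is bounded in $n$ by the uniform second-moment bound on $W_{1,n}$, and Chebyshev's inequality makes this term vanish.

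The heart is the linear part $\frac{2}{\sqrt{b_n}}\sum_{j=1}^{b_n}h_1^{(n)}(W_{j,n})$, whose summands are, for fixed $n$, i.i.d.\ and centered (the Fubini computation in the outline) with variance $\sigma_n^2:=\Var{h_1^{(n)}(W_{1,n})}$. I would show $\sigma_n^2\to\Var{h_1(Z)}$: since $h_1^{(n)}\to h_1$ uniformly on compacts and $W_{1,n}\distConv Z$, we get $h_1^{(n)}(W_{1,n})\distConv h_1(Z)$, and $\{h_1^{(n)}(W_{1,n})^2\}_n$ is uniformly integrable because $\abs{h_1^{(n)}(W_{1,n})}\le\abs{h_1^{(n)}(0)}+\abs{W_{1,n}}$ has a bounded $(2+\delta)$-th moment; this upgrades the convergence in distribution to convergence of first and second moments, hence of $\sigma_n^2$. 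One notes that $\Var{h_1(Z)}>0$ because $h_1(x)=\E{\abs{x-Z'}}-2/\sqrt{\pi}$ is a non-constant function of $x$, so $\sigma_n^2$ is eventually bounded away from $0$. Finally, the Lyapunov ratio of order $2+\delta$ for the array $h_1^{(n)}(W_{j,n})/(\sqrt{b_n}\,\sigma_n)$ equals $\E{\abs{h_1^{(n)}(W_{1,n})}^{2+\delta}}\big/(b_n^{\delta/2}\sigma_n^{2+\delta})$, which tends to $0$ since the numerator is bounded in $n$, $\sigma_n$ is bounded away from $0$, and $b_n\to\infty$. Lyapunov's central limit theorem for row-wise independent triangular arrays then yields $\frac{2}{\sqrt{b_n}}\sum_{j=1}^{b_n}h_1^{(n)}(W_{j,n})\distConv\NoD{0}{4\Var{h_1(Z)}}=\NoD{0}{\psi^2}$, and combining this with the negligibility of the degenerate term and the rescaling remainder via Slutsky's lemma finishes the proof.

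I expect the main obstacle to be establishing the uniform $(2+\delta)$-moment bound on $W_{1,n}=\sqrt{\ell_n}\tilde{S}_1'/\kappa$; once it is in hand, everything else is a fairly mechanical consequence of it together with the $\beta$-mixing central limit theorem. Proving it requires a moment inequality for partial sums of $\beta$- (or strongly) mixing sequences that trades the $(4+2\delta)$-th absolute moment of $Y_1$ — equivalently the $(2+\delta)$-th moment of $Y_1^2-1$ — and the polynomial mixing rate against the target order $2+\delta$; making these exponents fit is exactly where the threshold $\rho>1\vee\frac{9\delta}{(\delta+1)(\delta+2)}$ comes from, and carefully tracking this bookkeeping, rather than any conceptual difficulty, is the delicate part.
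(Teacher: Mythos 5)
Your proposal is correct and follows essentially the same route as the paper's proof: Hoeffding decomposition of the U-statistic, a Rosenthal-type moment inequality for $\beta$-mixing sums to obtain uniform moment bounds on $\sqrt{\ell_n}\tilde{S}_1'/\kappa$, Lyapunov's central limit theorem for the linear part together with convergence of $h_1^{(n)}$ to $h_1$ and of the variances, and a variance computation showing the degenerate part is $O(1/b_n)$. The one caveat is that under $\E{\abs{Y_1}^{4+2\delta}}<\infty$ the Rosenthal inequality only delivers a uniform bound on the $(2+\eta)$-th moment of $\sqrt{\ell_n}\tilde{S}_1'/\kappa$ for $\eta<\delta$ (part of $\delta$ must be spent as the excess exponent $\varepsilon$ in the mixing-rate condition), not the full $(2+\delta)$-th moment you claim; this is harmless, since any $\eta>0$ suffices for the uniform-integrability and Lyapunov steps, and it is precisely the bookkeeping through which the threshold $\rho>\frac{9\delta}{(1+\delta)(2+\delta)}$ enters.
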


Note that the centering $\theta^{(n)}$ likewise depends on the sample size, and is generally unknown. Thus, it is important to replace $\theta^{(n)}$ by its limit $\theta=2/\sqrt{\pi}$.

\begin{lemma}
 \label{Lemma: Asymptotik theta thetan}
Assume that there exist constants $\rho>1$ and $0<\delta\leq 1$ such that $\E{\abs{Y_1}^{4+2\delta}}<\infty$ and for all $k\in \N$ it holds $\beta_Y(k)\leq C k^{-\rho (2+\delta)(1+\delta)/\delta^2}$. Moreover, let $\ell_n=n^s$ with $s>\rbraces{1+\delta\frac{\rho-1}{\rho+1}}^{-1}$ and let $m_n=o(n^s)$. Then, under the null hypothesis, we have
\[
  \sqrt{b_n}(\theta^{(n)}-\frac{2}{\sqrt{\pi}}) \rightarrow 0,
\]
as $n\rightarrow \infty$.
\end{lemma}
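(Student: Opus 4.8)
The plan is to quantify the rate at which $\theta^{(n)} = \E{\abs{\sqrt{\ell_n}\tilde S_1'/\kappa - \sqrt{\ell_n}\tilde S_2'/\kappa}}$ approaches its limit $\theta = \E{\abs{Z-Z'}} = 2/\sqrt{\pi}$, and to show this rate is $o(b_n^{-1/2}) = o(n^{-(1-s)/2})$. Writing $W_n := \sqrt{\ell_n}(\tilde S_1' - \tilde S_2')/\kappa$ and $W := Z - Z'$ (a centred normal with variance $2$), the quantity to control is $\abs{\E\abs{W_n} - \E\abs{W}}$. Since $x\mapsto \abs{x}$ is Lipschitz, one naturally wants $\abs{\E\abs{W_n}-\E\abs{W}} \le \E\abs{W_n - W}$ along a suitable coupling, or more practically a bound through a smooth-metric (Wasserstein-type) distance combined with a Berry--Esseen estimate. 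First I would recall that $\sqrt{\ell_n}\tilde S_1'/\kappa$ is the normalized partial sum $\frac{1}{\kappa\sqrt{\ell_n}}\sum_{i}((Y_i')^2-1)$ over the long block of length $\ell_n - m_n$, built from the stationary $\beta$-mixing sequence $((Y_i')^2-1)_{i\in\N}$, which under the stated moment assumption $\E\abs{Y_1}^{4+2\delta}<\infty$ has $(2+\delta)$-th moments finite, and whose mixing coefficients $\beta_{Y^2}(k) \le \beta_Y(k) \le Ck^{-\rho(2+\delta)(1+\delta)/\delta^2}$ decay fast enough for a Berry--Esseen theorem for mixing sequences (e.g.\ of Tikhomirov / Rio type) to apply.

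The key steps, in order, are: (i) invoke a Berry--Esseen bound for $\beta$-mixing partial sums to get $\sup_{t}\abs{\Pb(\sqrt{\ell_n}\tilde S_1'/\kappa_n' \le t) - \Phi(t)} \le C (\ell_n - m_n)^{-\gamma}$ for an explicit $\gamma>0$ determined by $\delta$ and $\rho$ — with the moment/mixing trade-off this should give $\gamma$ of the order $\tfrac{\delta}{2}\cdot\tfrac{\rho-1}{\rho+1}$, matching the exponent appearing in the hypothesis $s>(1+\delta\tfrac{\rho-1}{\rho+1})^{-1}$; (ii) note $\kappa_n^2 = \Var{\sqrt{\ell_n}\tilde S_1'} \to \kappa^2$ with the difference being $O(m_n/\ell_n) + O(\sum_{k>\ell_n-m_n}\abs{\cov{Y_1^2}{Y_{k+1}^2}})$, both negligible at rate $o(n^{-(1-s)/2})$ under $m_n=o(n^s)$ and the summability from (A2); (iii) convert the Kolmogorov-distance bound on each marginal, plus independence of $\tilde S_1'$ and $\tilde S_2'$, into a bound on $\abs{\E\abs{W_n} - \E\abs{W}}$ — the cleanest route is to write $\E\abs{W_n} = \int_0^\infty \Pb(\abs{W_n}>t)\,dt$, split the integral at a level $T_n\to\infty$ growing slowly (say $T_n = \log \ell_n$), bound the tail $\int_{T_n}^\infty$ using a uniform moment bound $\sup_n \E W_n^2 < \infty$ (which gives $\int_{T_n}^\infty \Pb(\abs{W_n}>t)dt \le \E W_n^2 / T_n$), and on $[0,T_n]$ replace $\Pb(\abs{W_n}>t)$ by $\Pb(\abs{W}>t)$ at a cost $\le 2 T_n \cdot C\ell_n^{-\gamma}$ per marginal; (iv) collect terms: the total error is $O(T_n \ell_n^{-\gamma}) + O(T_n^{-1}) + o(n^{-(1-s)/2})$ — and here one checks that with $\ell_n = n^s$ and the hypothesis on $s$, the dominant term $T_n \ell_n^{-\gamma} = T_n n^{-s\gamma}$ is $o(n^{-(1-s)/2})$ because $s\gamma > (1-s)/2$ is precisely what $s > (1+\delta\tfrac{\rho-1}{\rho+1})^{-1}$ encodes (up to the slowly varying $T_n$, which one absorbs by taking $T_n$ a sufficiently slow power or logarithm and leaving an $\varepsilon$ of room). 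Finally, multiply through by $\sqrt{b_n} = n^{(1-s)/2}$ to conclude $\sqrt{b_n}(\theta^{(n)} - 2/\sqrt\pi)\to 0$.

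The main obstacle I expect is step (i) together with the bookkeeping in step (iv): one needs a Berry--Esseen rate for $\beta$-mixing sums whose exponent $\gamma$ is sharp enough — expressed in terms of $\delta$ and $\rho$ — to exactly match the threshold on $s$ stated in the lemma, rather than some lossier generic rate. Pinning down the right reference (and the right polynomial-mixing Berry--Esseen statement allowing only $(2+\delta)$-th moments with $\delta \le 1$) and verifying that its hypotheses are met under $\beta_Y(k)\le Ck^{-\rho(2+\delta)(1+\delta)/\delta^2}$ is the delicate part; once the rate $\ell_n^{-\gamma}$ with $\gamma = \tfrac{\delta}{2}\cdot\tfrac{\rho-1}{\rho+1}$ (or equivalent) is in hand, the passage from Kolmogorov distance to the first-absolute-moment functional via the tail-truncation argument in step (iii) is routine, as is the final comparison of exponents. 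A secondary technical point is ensuring the uniform second-moment bound $\sup_n \Var{W_n} < \infty$ used in the tail truncation, but this follows immediately from $\kappa_n^2 \to \kappa^2$ established in step (ii).
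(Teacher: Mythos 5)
Your overall strategy---a Tikhomirov-type Berry--Esseen bound for the $\beta$-mixing partial sums with rate $\Delta_n \le C(\ell_n-m_n)^{-\gamma}$, $\gamma=\tfrac{\delta}{2}\tfrac{\rho-1}{\rho+1}$, followed by a conversion of the Kolmogorov distance into a bound on $\abs{\E\abs{W_n}-\E\abs{W}}$---is exactly the paper's, and your identification of $\gamma$ and of the variance replacement $\kappa_n\to\kappa$ (handled separately at rate $\sqrt{b_n}$) is correct. The gap is in step (iii)/(iv): the truncation argument with a second-moment tail bound cannot reach the threshold on $s$ stated in the lemma. Balancing the body term $O(T_n\ell_n^{-\gamma})$ against the tail term $O(T_n^{-1})$ gives at best $O(\ell_n^{-\gamma/2})$ (at $T_n\sim\ell_n^{\gamma/2}$), so that $\sqrt{b_n}\,\abs{\theta^{(n)}-\theta}\to 0$ would require $s\gamma>1-s$, i.e.\ $s>\bigl(1+\tfrac{\delta}{2}\tfrac{\rho-1}{\rho+1}\bigr)^{-1}$, which is strictly stronger than the hypothesis $s>\bigl(1+\delta\tfrac{\rho-1}{\rho+1}\bigr)^{-1}$; that hypothesis encodes only $s\gamma>(1-s)/2$. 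Your suggestion $T_n=\log\ell_n$ is worse still: the tail contribution is then $O(1/\log n)$, which does not vanish after multiplication by $\sqrt{b_n}$. Using $(2+\eta)$-moments for the tail improves the exponent but still falls short of $(1-s)/2$.

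The missing idea is a \emph{nonuniform} Berry--Esseen bound. The paper normalizes by $\kappa_n$ so that $\int x^2\,\mathrm{d}F_n=\int x^2\,\mathrm{d}\Phi=1$, and then invokes Petrov (Theorem 9, Chapter V) to upgrade the uniform bound $\Delta_n$ to the pointwise bound $\abs{F_n(x)-\Phi(x)}\le C\Delta_n\log(1/\Delta_n)/(1+\abs{x}^2)$ (the matching second moments kill the additive term $\lambda_2$). Since $1/(1+x^2)$ is integrable, $\abs{\theta^{(n)}-\theta}=\sqrt{2}\,\abs{\int_0^\infty(\tilde F_n-\tilde\Phi)}\le C\Delta_n\log(1/\Delta_n)$, i.e.\ the $L^1$ distance is of the same order as the Kolmogorov distance up to a logarithm, and $\sqrt{b_n}\,\Delta_n\log(1/\Delta_n)\to 0$ under precisely $s\gamma>(1-s)/2$. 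Without this (or an equivalent Wasserstein-rate CLT for mixing sums), your argument only proves the lemma under a more restrictive condition on $s$.
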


The proof of Theorem \ref{Theorem: CLT Un} now follows from a combination of the former results.
Proposition \ref{Prop: Approximation mit U_1 unter H und A}, Proposition \ref{Prop: Approx U1 U2}, and Proposition \ref{Prop: Approximation U2 U3} together imply
\[
  \sqrt{n}(U_3(n)-U(n)) \rightarrow 0.
\]
Proposition \ref{Prop: CLT U3} and Lemma \ref{Lemma: Asymptotik theta thetan}  yield
$\frac{\sqrt{n}}{\kappa} U_3(n)-\sqrt{b_n}\frac{2}{\sqrt{\pi}} \stackrel{\mathcal{D}}{\longrightarrow} N(0,\psi^2)$. Thus, Theorem \ref{Theorem: CLT Un} is a consequence of Slutzky's lemma.

\section{Extensions}
\label{Sec: Extensions}

\subsection{Modifications of our Test Statistic}

Our test statistic corresponds to Gini's mean difference with entries given by the logarithms of local empirical variances. It explores the variability of estimated local variabilities. There are several possible extensions of this idea which might be useful not only for testing the constancy of the variance.

One possibility is the choice of other kernel functions $h:\R\times\R\rightarrow \R$ with suitable characteristics. In particular, we obtained similar limit results for the more general statistic
\begin{equation*}
U(n)=\frac{1}{b_n(b_n-1)}\sum_{1\leq j \neq k \leq b_n} \abs{\log\rbraces{\hat{\sigma}_j^2}-\log\rbraces{\hat{\sigma}_k^2}}^\alpha \quad \text{for } \alpha\in (0,1]
\end{equation*}
but they come at the price of additional requirements regarding the growth conditions of the blocks, for instance $b_n/\ell_n^\alpha \rightarrow 0$. Moreover, additional simulations not reported here suggest that not much is gained by employing values other than  $\alpha=1$. The good performance obtained for $\alpha=1$ can partly be explained by the rather large efficiency and better robustness of Gini's mean difference as compared to standard measures of variability like the standard deviation, see Gerstenberger, Vogel and Wendler \citep{Gerstenberger.2019} and the references cited therein. 

Another modification is the evaluation of the variability of other local statistics, which shall be addressed in future work. More robust measures of scale as considered by Gerstenberger, Vogel and Wendler \citep{Gerstenberger.2019} in a setting with a single change point might replace the empirical variances. Using estimates of central location, kurtosis or tail behavior allows testing the stability of the corresponding characteristic. A hypothesis of particular interest is the assumption of the stability of second order characteristics.

\begin{remark}{Davis, Huang and Yao \citep{Davis.1995} investigate likelihood ratio statistics for testing whether the time series can be described by a stable autoregressive process against the alternative of a single change. A change of the variance is not of direct interest but implicitly considered there since a change of the autoregressive parameters usually implies a change of the marginal variance if the variance of the innovations is constant. Within the somewhat restrictive parametric framework of Davis, Huang and Yao \citep{Davis.1995}, one can apply a version of their test which allows for changing variances of the innovation process to check the assumption of a stable dependence structure underlying our test. If we are willing to accept the hypothesis of a stable autoregressive model after application of this preliminary test, testing the stability of the marginal variance using our approach is equivalent to testing the stability of the innovational variance.

Dette, Wu and Zhou \citep{Dette.2015}, \citep{Dette.2019} investigate CUSUM-statistics for testing the constancy of second order characteristics, i.e., of the variance or the correlation structure. They work on detrended data, allowing for a possibly time-varying mean under the null hypothesis, like we do.
       Given the complicated structure of their limiting process, they need bootstrap procedures for the calculation of critical values. Their test for the constancy of the correlation structure could be combined with our test, as outlined above for the test in Davis, Huang and Yao \citep{Davis.1995}.

Tests for the null hypothesis of stable second order characteristics which are consistent not only against alternatives with a single change might be constructed by analyzing the variability of local variance and local correlation estimates jointly. Such multivariate extensions of the theory developed here are left to future work.
}\end{remark}

\subsection{Application to data with a discontinuous or a  Hölder-continuous mean}
So far, we have assumed a time series setting with a Lipschitz-continuous mean function. This assumption might be too restrictive, e.g., if structural breaks may occur not only in the variance but also in the mean. In such contexts, the hypothesis that the variance of the observed data $(X_i)_{i\in \N}$ is constant can be tested using the time series $(Z_i)_{i\in \N}$ of differences $$Z_i=X_i-X_{i-1}=\sigma_H(Y_i-Y_{i-1})+(\mu_i-\mu_{i-1}).$$
It inherits many properties from $(X_i)_{i\in \N}$ itself, like the existence of moments and the absolute regularity. 
Under the null hypothesis, the variance of the observations  becomes
\begin{align*}
\Var{Z_i} =& \sigma_H^2\cdot \rbraces{\Var{Y_i}+\Var{Y_{i-1}}+2\cov{Y_i}{Y_{i-1}}}\\
= & 2\sigma_H^2 \cdot \rbraces{\Var{Y_1}+\cov{Y_1}{Y_2}}.
\end{align*}
If the covariances are stationary, which comes along with 
the strict stationarity of $(Y_i)_{i\in \N}$ assumed in this entire work, changes in the variance of $(X_i)_{i\in\N}$ can thus be detected from the sequence of differences $(Z_i)_{i\in\N}$. Moreover, the mean function of the differences not only preserves the Lipschitz-continuity and gets arbitrarily small, $\abs{\mu_i-\mu_{i-1}}=O(1/n)$, but a close look at the proofs reveals that the results still hold in case of finitely many outliers in the mean (i.e., if $\mu$ is only piecewise Lipschitz-continuous on the intervals between finitely many jumps and we consider the time series $(Z_i)_{i\in\N}$). Hence, resorting to the differenced time series is advisable if abrupt changes in the mean are suspected.

\begin{example}{
Assume that we observe an increasing number of $\ell_n$ observations in each of an increasing number of $b_n$ groups.
One-way analysis of variance allows testing whether the group centers are identical, assuming all observations to be independent and identically distributed within each of the groups, with the same variance $\sigma_j^2=\sigma^2$ for all groups $j=1,\ldots,b_n$.
The results derived in this paper allow testing the basic assumption of constant variance even if the observations within the different groups are dependent, as long as $\ell_n$ and $b_n$ increase at appropriate rates. If the mean of the observations is not constant but only Lipschitz-continuous within each group, we can take differences of the observations within each group and proceed as described above. The condition of identical group sizes $\ell_n$ is restrictive but can presumably be relaxed as long as the group sizes $\ell_n^{(j)}$ increase at identical rates $\ell_n^{(j)}=O(\ell_n)$.
}\end{example}

\begin{remark}{
\label{Remark: Hölder-ctd} A careful inspection of our proofs shows that one can moreover weaken the assumption of a Lipschitz-continuous mean function of $(X_i)_{i\in\mathbb{N}}$ to include a Hölder-continuous one with $|\mu_i-\mu_r|\leq L|i-r|/n^{s+1/4}$.  This signifies that the overall variation of $\mu$ on the interval $[0,1]$ is even allowed to grow at rate $n^{3/4-s}$. By reverting to the differenced time series $(Z_i)_{i\in\mathbb{N}}$, one is in addition able to deal with an increasing number of jumps $J(n)$ or a growing jump height $\Delta(n)$, as long as the condition $J(n)\Delta(n)^2=o(n^{1/2})$ holds, and with a continuous part of the mean function fulfilling $|\mu_i-\mu_r|\leq L|i-r|/n^{1/4}$. 
}\end{remark}

\subsection{Estimation of the Change-points and the Variance Function}
\label{Subsec: Estimation of Cp location}

Another interesting issue is the estimation of the scale function $\sigma(\cdot)$ in case our test rejects the null hypothesis of $\sigma$ being constant. If we assume the scale function to be piecewise constant, it is reasonable to estimate the change-point locations in advance and to calculate the empirical standard deviations of the observations in between subsequent change-points. Keeping in mind that our test is consistent also against smoothly varying alternatives, one could alternatively calculate local estimates using moving window techniques, possibly with an adaptive choice of the window width.

Under the assumption of a piecewise constant scale, we can adopt the approach from Wornowizki, Fried and Meintanis \citep{Wornowizki.2017} as a first means to determine the number and location of the change-points. This approach is based on a recursive procedure which conducts in every step the change-point test described above. If the hypothesis is rejected, the dominant change point of the (sub-)sample is located by narrowing down the set of candidates to two blocks $B_{j^*}=\{(j^*-1)\ell_n+1, ..., j^*\ell_n\}$ and  $B_{j^*+1}=\{j^*\ell_n+1, ..., (j^*+1)\ell_n\}$ via
\begin{equation*}
j^*=\underset{j\in \{1, ..., b_n-1\}}{\mathrm{argmax}}\abs{\log(\hat{\sigma}^2_j)-\log(\hat{\sigma}^2_{j+1})}.
\end{equation*}
Afterwards, the location of the change-point $t^*$ is determined by
\begin{equation*}
t^*=\underset{t\in B_{j^*}\cup B_{j^*+1}}{\mathrm{argmax}}\abs{\hat{\sigma}^2\rbraces{X_{(j^*-1)\ell_n+1}, ..., X_t}- \hat{\sigma}^2\rbraces{X_{t+1}, ..., X_{(j^*+1)\ell_n}}},
\end{equation*}
where $\hat{\sigma}^2\rbraces{X_i, ..., X_k}$ for $i\leq k$ is the empirical variance based on the observations $X_i, ..., X_k$  and where we exclude values of t that are very close to the boundaries  in order to ensure a reliable estimation on both subsets $\{X_{(j^*-1)\ell_n+1}, ..., X_t \}$ and  $\{X_{t+1}, ..., X_{(j^*+1)\ell_n}\}$.
Once the change-point $t^*$ is detected, we split the sample at $t^*$ into two parts and the procedure is repeated on both subsamples. If the hypothesis can no longer be rejected, the procedure stops and it is assumed that all change points have been located.
This procedure looks somewhat ad hoc and can probably be improved. Nevertheless, in our simulations it yielded similarly good results  as the MOSUM approach described by
Eichinger and Kirch \citep{Eichinger.2018}, which in turn performed well as compared to several other state-of-the-art procedures in the simulations reported there.

\section{Simulation Study}
\label{Sec: Simualtion Study}
We investigate the empirical size and power of the change-point test introduced in Section \ref{Sec: Test statistic and main results} and compare its performance to the procedure proposed in an earlier version \citep{Dette.2015} of Dette, Wu and Zhou \citep{Dette.2019}. Moreover, we discuss the performance of the long run variance estimator $\hat{\kappa}$ introduced in Section \ref{Subsec: Estimation of LRV}.

As data-generating processes, we consider two examples of independent observations, namely standard normal, $X_i \sim \NoD{0}{1}$, and exponential ones, $X_i\sim Exp(1)$, and four examples under dependence. The latter consist of two AR(1)-processes with $\alpha_1=0.4$ and $\alpha_1=0.7$, respectively, an ARMA(2,2)-process
$$X_i= 0.8X_{i-1}-0.4X_{i-2}+\varepsilon_i+0.5 \varepsilon_{i-1}+0.34 \varepsilon_{i-2} $$
and a GARCH(1,1)-process
$$X_i=\sigma_i\varepsilon_i\quad \text{with} \quad \sigma_i^2= 0.1+0.1X_{i-1}^2+0.8 \sigma_{i-1}^2,$$
each with independent standard normal innovations $(\varepsilon_i)_{i\in\N}$. The parameter choice for the GARCH-process is as in Andreou and Ghysels \citep{AndreouGhysels.2002} and describes a high volatility persistence. ARMA- and GARCH- processes are, as mentioned in Example \ref{Example: ARMA, GARCH}, mixing at an exponential rate. 
Moreover, all models considered possess finite sixth moments (see, Theorem 5 in Lindner \citep{Lindner2009} for the GARCH-case). Based on extensive simulations not reported here, we recommend setting the tuning parameters to $s=0.7$ and $q=0.5$ with $\ell_n=n^s$  and $\tilde{\ell}_n=n^q$. Note that this choice is in line with all restrictions on the block lengths imposed by our asymptotic theory.

We investigate the empirical power of the tests under various (local) alternatives listed below. These include scenarios with one, two and four structural breaks, as well as a smoothly changing variance function, each with variance changes of magnitude $n^{-1/2}$.
  \begin{align*}
\mathbb{A}1: \sigma(x)=& 1\cdot\1_{\{0\leq x< 1/2\}}+ (1+0.2 \sqrt{2000/n})\cdot\1_{\{1/2\leq x\leq 1\}}\\
\mathbb{A}2: \sigma(x)= & 1\cdot\1_{\{0\leq x<1/3\}}+ (1+0.2 \sqrt{2000/n})\cdot\1_{\{1/3\leq x<2/3\}}+ 1 \cdot\1_{\{2/3\leq x\leq 1\}}\\
\mathbb{A}3: \sigma(x)=& 1\cdot\1_{\{0\leq x< 1/5\}}+ (1+0.2 \sqrt{2000/n})\cdot\1_{\{1/5\leq x< 2/5\}}
 + 1 \cdot\1_{\{2/5 \leq x< 3/5\}}\\
 &+ (1+0.2 \sqrt{2000/n})\cdot\1_{\{3/5 \leq x< 4/5\}} +  1 \cdot\1_{\{4/5\leq x\leq 1\}}\\
\mathbb{A}4:  \sigma(x)=&1+ 0.1\cdot \sin(4\pi x)\cdot\sqrt{2000/n}
\end{align*}

All simulations are conducted in R \citep{RCoreTeam.2019}. The long run variance $\hat{\kappa}$ is estimated as described in Section \ref{Subsec: Estimation of LRV}, whereas the centering term $\E{\abs{Z-Z'}}=2/\sqrt{\pi}$ and the variance of the limit distribution  $\psi^2= 4/3+ 8/\sqrt{\pi}(\sqrt{3}-2)$ can be calculated explicitly. All results are obtained for a nominal significance level of $\alpha=5\%$. The analysis in Subsections \ref{Subsec: Simulations Comparison procedures}-\ref{Subsec: Simulations Empirical Power} is conducted for centered observations. We investigate the influence of non-constant mean functions on our test results in Subsection \ref{Subsec: Simulations non-centred Data} and on the performance of the long run variance estimator $\hat{\kappa}$ in Subsection \ref{Subsec: Performance of LRV estimator}.

\subsection{Comparison to the Procedure of Dette, Wu and Zhou \citep{Dette.2015}, \citep{Dette.2019}}
\label{Subsec: Simulations Comparison procedures}
This section compares our test (henceforth denoted by SWFD) to the results of Dette, Wu and Zhou \citep{Dette.2015}, \citep{Dette.2019} (denoted by DWZ), who employ a CUSUM-approach to detect change points in the variance. Since the asymptotic distribution of their test statistic crucially depends upon the dependence structure of the underlying time series, they use a bootstrap approach to obtain critical values.

The comparison is limited to the sample lengths $n=500, 2000$ and the simulated rejection probabilities are based on only 4000 replications each, due to the computational cost of the DWZ procedure. As a rough comparison of the computation times of the respective procedures, we measured the overall computation time (on a 3.4 GHz Intel Core i7) required to obtain the results in Table \ref{Table: Comparison 500,2000, A size-corrected}, i.e., the time required for $4000\cdot 30$ executions of the respective test
(including the time for the simulation of the data sets).
For $n=500$ ($n=2000$), the SWFD procedure needed on average 0.00064 (0.00138) seconds per execution, 
while the DWZ procedure took 0.88813 (7.73488) seconds. 

 Table \ref{Table: Comparison 500,2000, A size-corrected}  shows the simulated rejection probabilities of the SWFD and the DWZ procedure under the hypothesis and the local alternatives $\mathbb{A}1$-$\mathbb{A}4$. 
Our SWFD test performs anti-conservative for $n=500$ as we observe empirical sizes of about 10\% or even higher instead of the nominal 5\% level. For $n=2000$, the empirical sizes fall below 10\%, except for the GARCH(1,1). A closer analysis of the SWFD procedure will be presented in the next subsection. For the DWZ test, we concentrate on the two sample sizes because of its much larger computation times. The size of the DWZ procedure is adequate for independent normal data but conservative for the exponential data and anti-conservative in the presence of higher positive dependencies. In case of the GARCH(1,1), it performs worse than the SWFD test.  

 To achieve a fair comparison, we report size-corrected empirical powers under the four alternatives, using the empirical 95\% percentile of the test results for the same distribution and the same sample size as critical values (see, Table \ref{Table: Comparison 500, 1000, 2000 nominal} in Appendix \ref{Appendix: Additional Simulations} for the rejection rates at the nominal (asymptotical) 5\%-level). While the DWZ procedure obtains higher empirical power in case of one or few structural breaks ($\mathbb{A}1$ and $\mathbb{A}2$), the SWFD does so in case of multiple breaks and for the sine function ($\mathbb{A}3$ and $\mathbb{A}4$).
 Moreover, both tests show difficulties in coping with models of strong dependence, especially the GARCH(1,1)-model, as well as with exponentially distributed observations.

\begin{table}[H]
    \centering
    \caption{Simulated rejection probabilities of the SWFD and DWZ test at the significance level $\alpha=0.05$ for the sample sizes $n=500, 2000$ under the null hypothesis $\mathbb{H}$ and various local alternatives $\mathbb{A}1$ to $\mathbb{A}4$ with effect sizes of magnitude $n^{-1/2}$ and for different data-generating processes. For the results under the alternatives, a size-correction has been conducted.}
    \label{Table: Comparison 500,2000, A size-corrected}
    \begin{tabular}{|cc|cccccc|}
     \hline
         & & N(0,1) & Exp(1) & AR(1), 0.4 & AR(1), 0.7 & ARMA(2,2) & GARCH(1,1) \\ \hline         				\multicolumn{8}{|c|}{$n=500$} \\\hline
         \multirow{2}{*}{$\mathbb{H}$}  & SWFD & 0.085 & 0.112 & 0.098 & 0.134 & 0.106 & 0.180 \\
         & DWZ & 0.052 & 0.029 & 0.064 & 0.088 & 0.073 & 0.290 \\\hline
           \multirow{2}{*}{$\mathbb{A}1$}  & SWFD & 0.734 & 0.318 & 0.630 & 0.356 & 0.400 & 0.336 \\
        & DWZ & 0.999 & 0.649 & 0.980 & 0.824 & 0.836 & 0.680 \\\hline
        \multirow{2}{*}{$\mathbb{A}2$}  & SWFD & 0.457 & 0.186 & 0.376 & 0.200 & 0.253 & 0.178 \\
        & DWZ & 0.618 & 0.159 & 0.492 & 0.258 & 0.252 & 0.134 \\\hline
        \multirow{2}{*}{$\mathbb{A}3$} & SWFD & 0.221 & 0.126 & 0.202 & 0.128 & 0.154 & 0.114 \\
        & DWZ & 0.088 & 0.047 & 0.069 & 0.044 & 0.061 & 0.042 \\\hline
        \multirow{2}{*}{$\mathbb{A}4$}  & SWFD & 0.481 & 0.194 & 0.399 & 0.222 & 0.255 & 0.199 \\
        & DWZ & 0.372 & 0.118 & 0.274 & 0.144 & 0.160 & 0.096 \\\hline
       \multicolumn{8}{|c|}{$n=2000$} \\\hline
      \multirow{2}{*}{$\mathbb{H}$}   & SWFD & 0.073 & 0.091 & 0.074 & 0.096 & 0.084 & 0.148 \\
         & DWZ & 0.052 & 0.039 & 0.058 & 0.108 & 0.074 & 0.394 \\ \hline
        \multirow{2}{*}{$\mathbb{A}1$}  & SWFD & 0.891 & 0.344 & 0.784 & 0.439 & 0.505 & 0.375 \\
             & DWZ & 1 & 0.760 & 0.995 & 0.875 & 0.920 & 0.727 \\ \hline
        \multirow{2}{*}{$\mathbb{A}2$} & SWFD & 0.734 & 0.246 & 0.610 & 0.310 & 0.343 & 0.251 \\
         & DWZ & 0.869 & 0.239 & 0.748 & 0.382 & 0.446 & 0.228 \\ \hline
        \multirow{2}{*}{$\mathbb{A}3$}  & SWFD & 0.805 & 0.269 & 0.674 & 0.368 & 0.397 & 0.294 \\
         & DWZ & 0.318 & 0.066 & 0.261 & 0.127 & 0.140 & 0.070 \\ \hline
        \multirow{2}{*}{$\mathbb{A}4$} & SWFD & 0.644 & 0.185 & 0.504 & 0.256 & 0.279 & 0.218 \\
        & DWZ & 0.516 & 0.136 & 0.392 & 0.192 & 0.248 & 0.138 \\ \hline

    \end{tabular}
\end{table}

\subsection{Analysis of the Empirical Size}
\label{Subsec: Simulations Empirical Size}
In the next subsections, we will analyse the performance of the SWFD-test in more detail. Throughout, we will base our results on 6000 replications. Figure \ref{Fig:Empirical Size} depicts the empirical size as a function of the sample length, where we used $ n=500,1000,2000,3000,4000, 5000,8000,12000$ and $16000$.

The empirical size is about 10\% or even larger in case of moderately large samples ($n=500$). It approaches the nominal significance level alpha=5\%
as the sample size increases, though the test stays liberal in case of the scenarios considered here.
Moreover, the dependence as well as the distribution of the data seem to be important. The empirical sizes are smaller if the dependences among the observations are low (N(0,1), AR(1) with parameter $\alpha_1=0.4$), while stronger dependences as in the GARCH(1,1) process lead to much higher rejection rates. The rejection rates are also rather large in case of the exponentially distributed observations, despite their independence.

\begin{figure}[H]
 \includegraphics[width=\textwidth, trim={0 0 0 1.6cm}, clip]{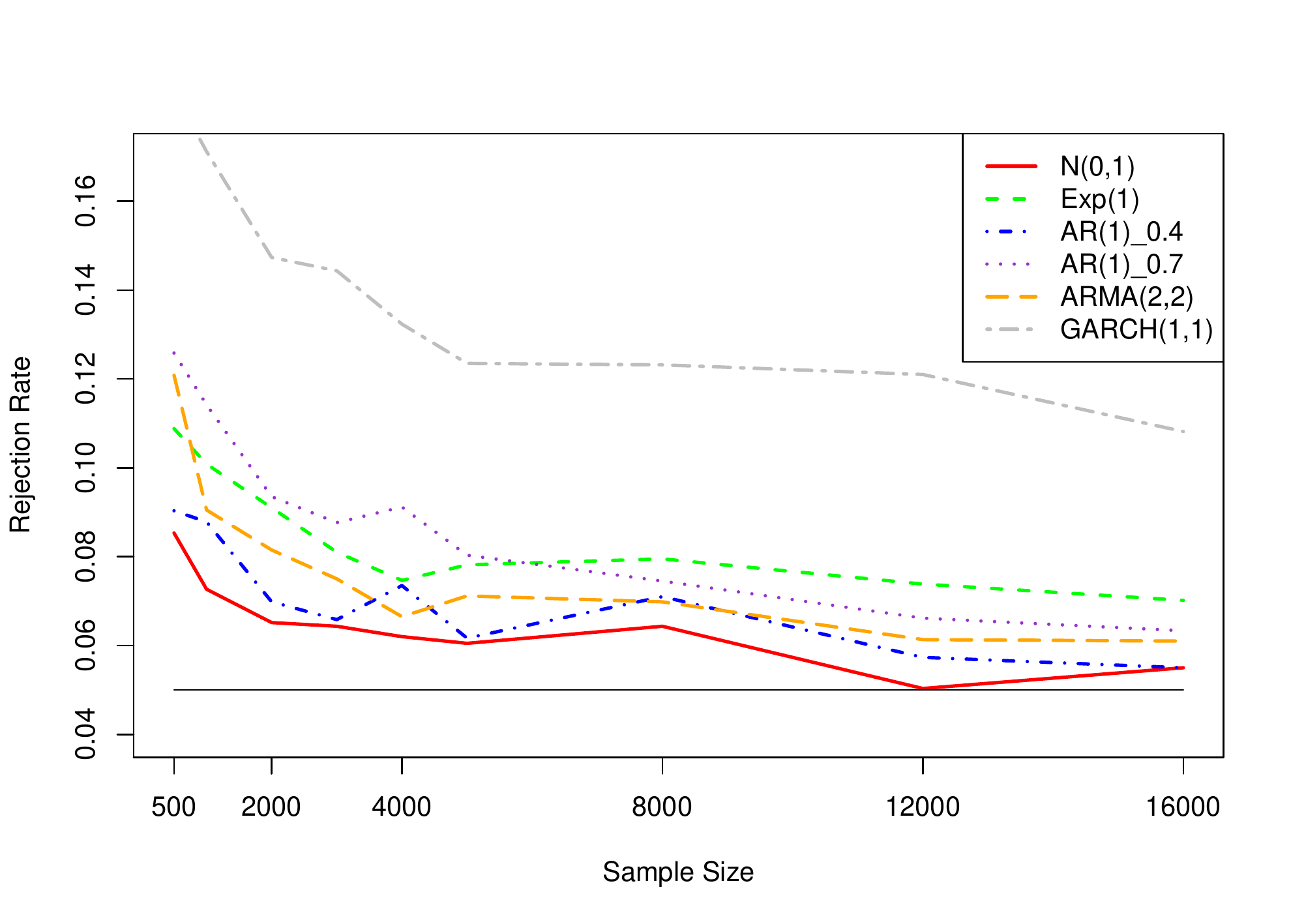}
\vspace*{-1cm}\caption{Empirical rejection rates of the SWFD test under the null hypothesis as a function of the sample size for different distributions of the data-generating process. 
 The nominal significance level $\alpha=0.05$ is represented by the solid black line.}
\label{Fig:Empirical Size}
\end{figure}

\subsection{Analysis of the Empirical Power}
\label{Subsec: Simulations Empirical Power}
Figure \ref{Fig: Empirical power} shows the empirical power at the nominal significance level $\alpha=0.05$ as a function of the sample size separately for each of the local alternatives $\mathbb{A}1$-$\mathbb{A}4$ and for the same choices of $n$ as considered before (see, Figure \ref{Fig: Empirical power, size-corrected} in Appendix \ref{Appendix: Additional Simulations} for the size-corrected version). All simulated rejection probabilities are based on 6000 replications.
Across all alternatives, the graphs for the different underlying distributions show the same order, which is almost a reversed image of the order in Figure \ref{Fig:Empirical Size}.
The highest empirical power is obtained for the distributions with the lowest dependence (N(0,1), AR(1) with parameter $\alpha_1=0.4$), while all other processes lead to somewhat smaller rejection rates. The performance for small sample sizes is significantly worse compared to the empirical power for larger $n\geq 2000$, but once an appropriate sample length is reached, the  rejection rates stabilize at a certain level.
Besides, the partitioning into blocks of length $\ell_n=n^{0.7}$ might have a certain influence, being less ideal for some $n$ with respect to the location of the break points. As an example, observe the peak at $n=4000$ under alternative $\mathbb{A}2$ (top right).
The partition there is close to ideal due to $(4000^{0.7})\cdot 4\approx 1328$ while the break points are located at $4000/3\approx 1333$ and at $2666$.

\begin{figure}[H]
 \begin{subfigure}[c]{0.49\textwidth}
  \includegraphics[width=\textwidth,  trim={0cm 0cm 0.8cm 2cm},clip]{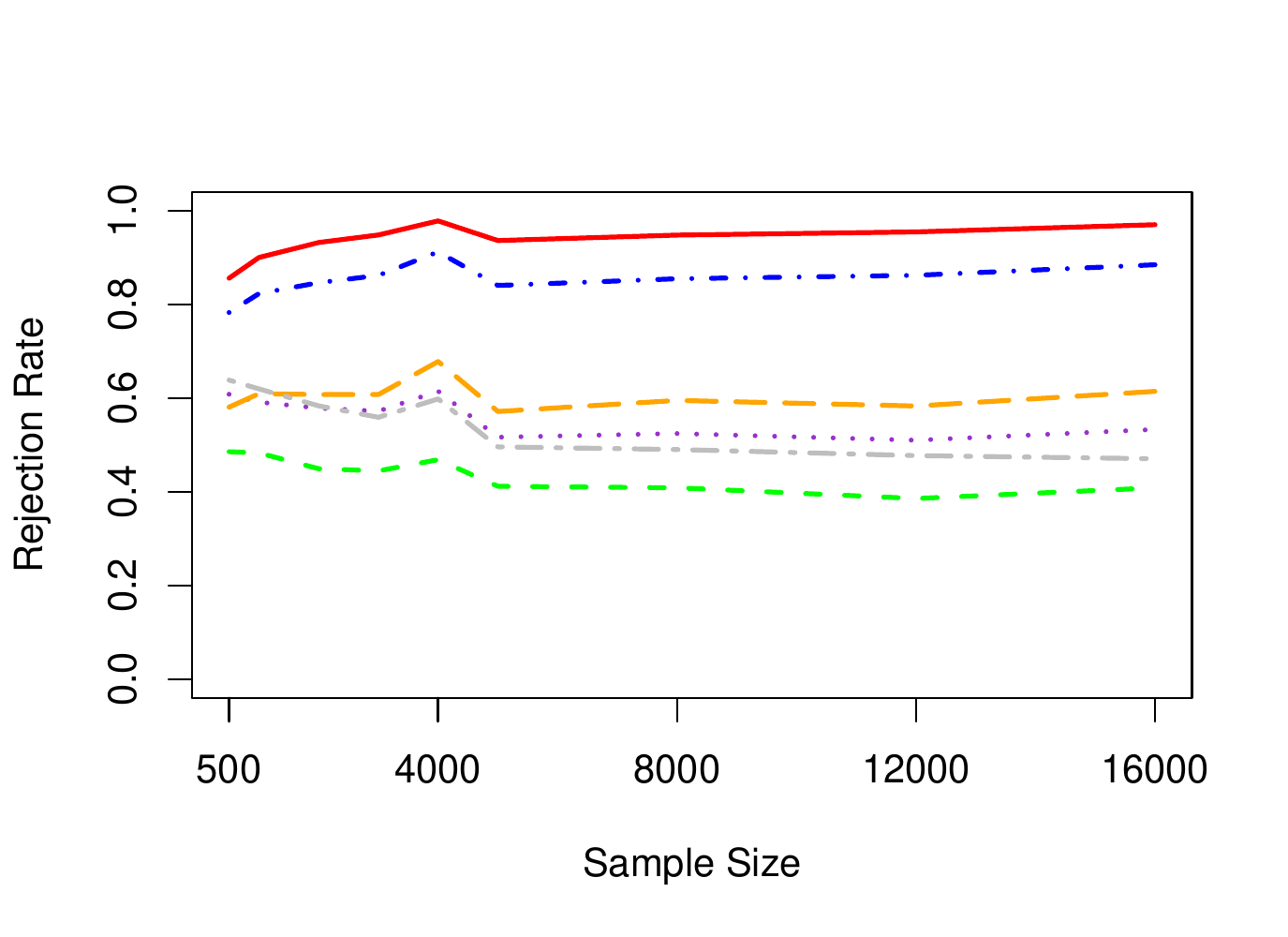}
 \end{subfigure}
   \begin{subfigure}[c]{0.49\textwidth}
  \includegraphics[width=\textwidth,  trim={0cm 0cm 0.8cm 2cm},clip]{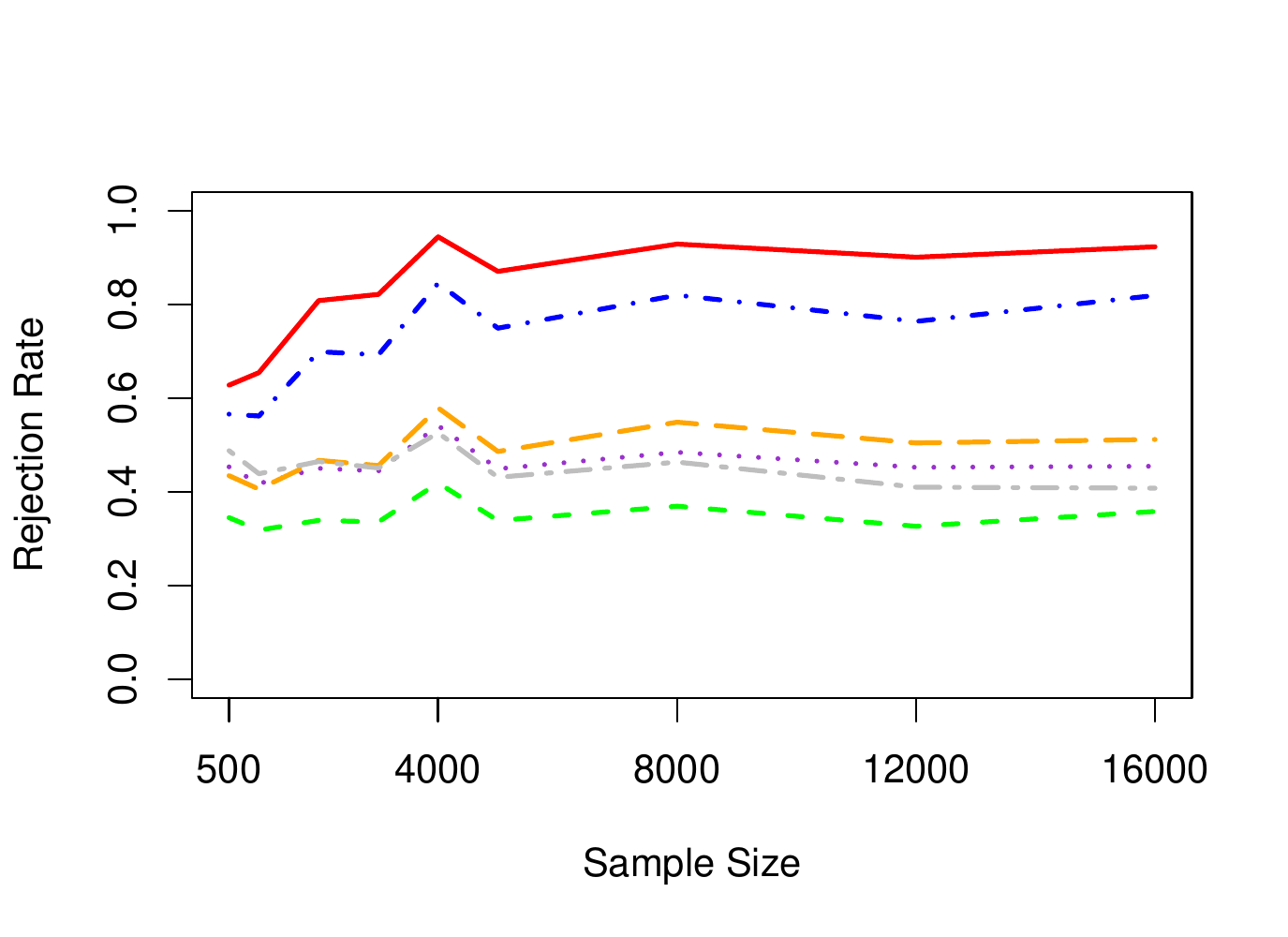}
  \end{subfigure}
   \begin{subfigure}[c]{0.49\textwidth}
  \includegraphics[width=\textwidth,  trim={0cm 0cm 0.8cm 2cm},clip]{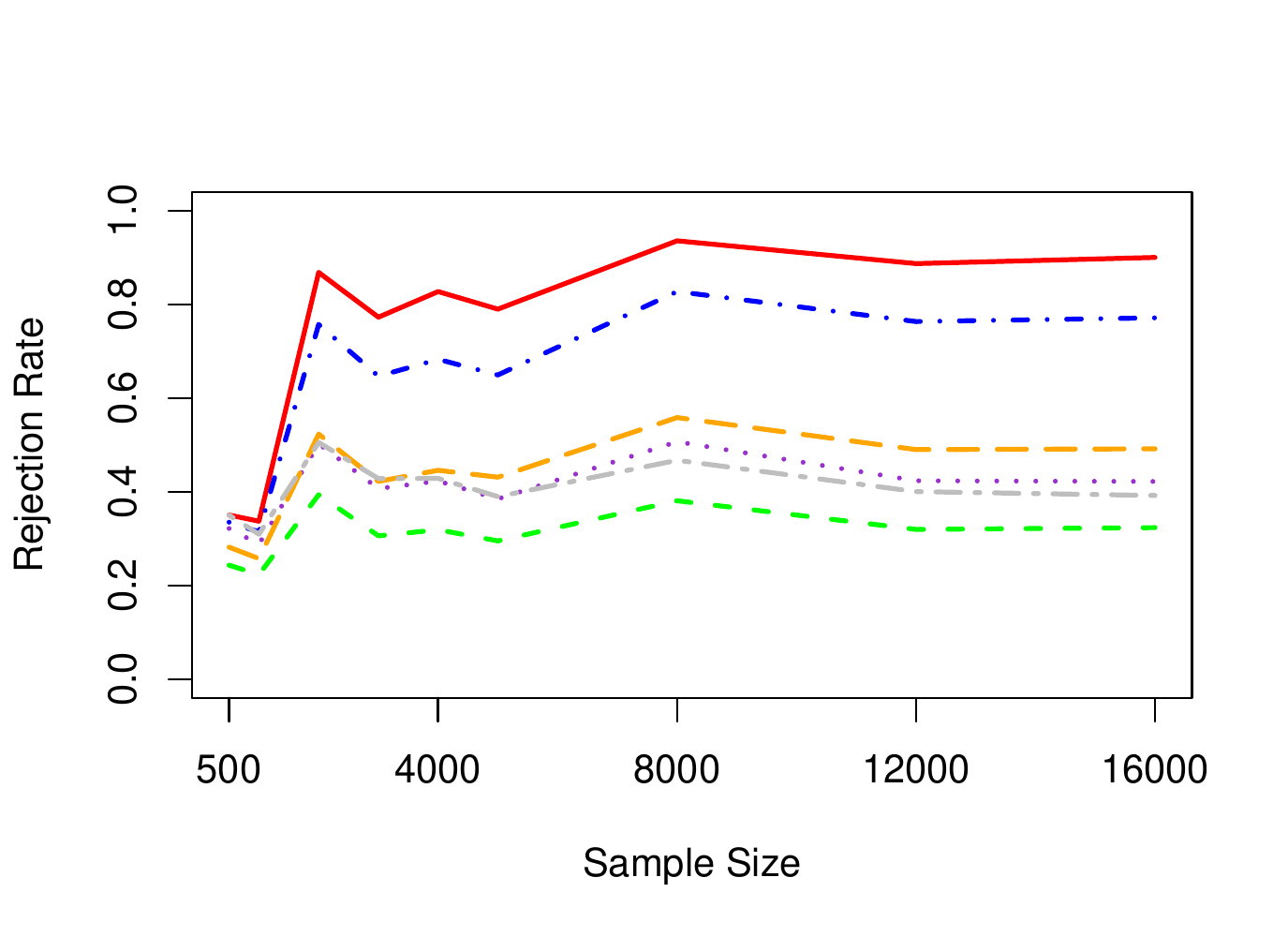}
  \end{subfigure}
   \begin{subfigure}[c]{0.49\textwidth}
  \includegraphics[width=\textwidth,  trim={0cm 0cm 0.8cm 2cm},clip]{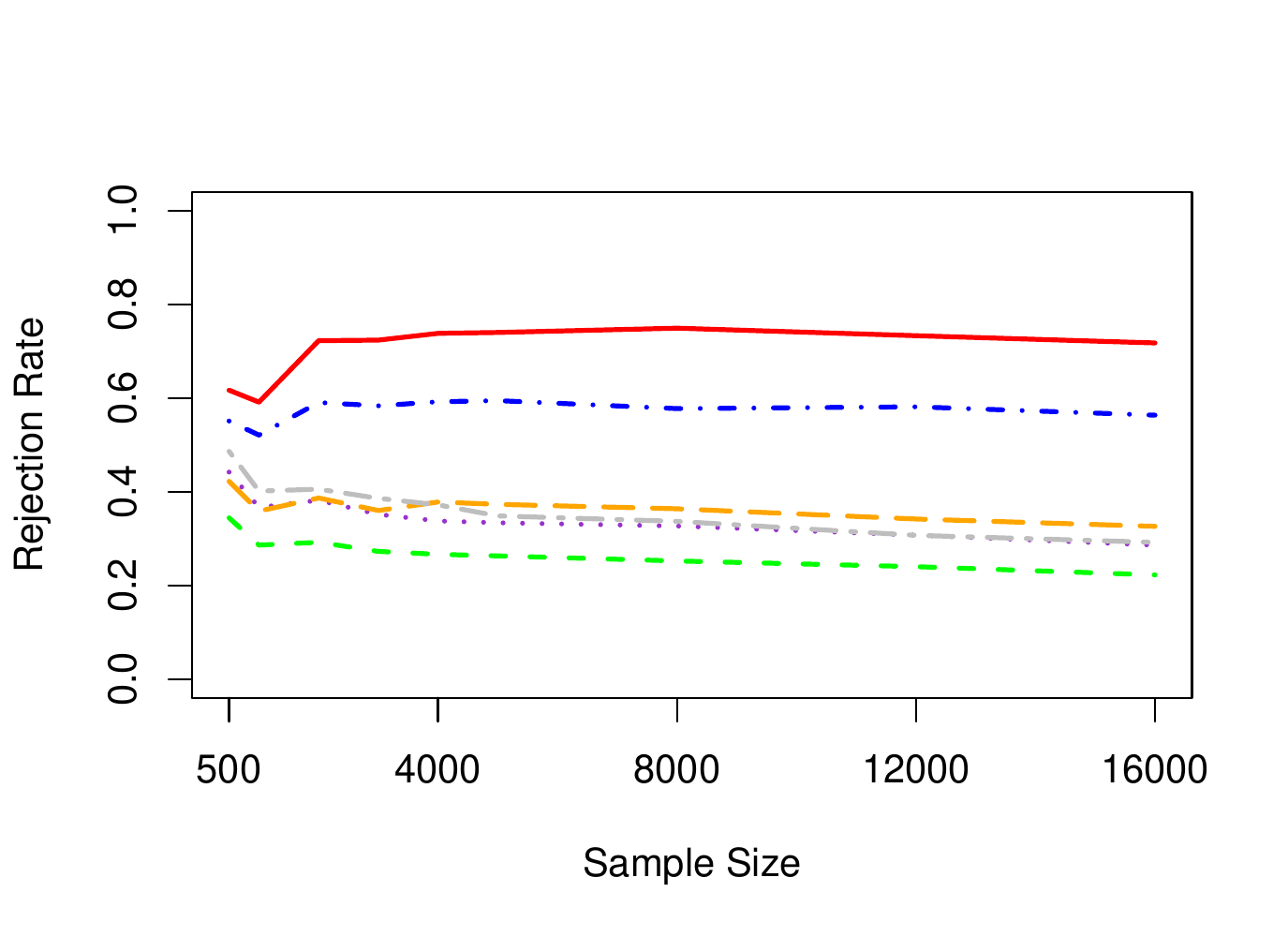}
  \end{subfigure}
   \begin{subfigure}[c]{\textwidth}

  \includegraphics[width=\textwidth, trim={0cm 7cm 0cm 6.5cm},clip]{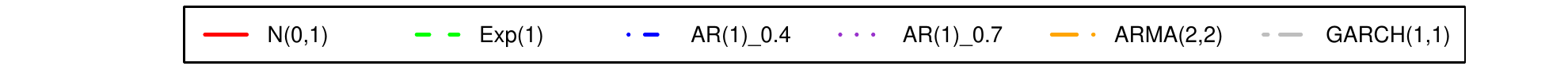}
  \end{subfigure}

\caption{Empirical power of the SWFD test for the local alternatives $\mathbb{A}1$(top left)-$\mathbb{A}4$(bottom right) with effect sizes of magnitude $n^{-1/2}$ as a function of the sample size $n$ at the nominal significance level $\alpha=0.05$. }
   \label{Fig: Empirical power}
    \end{figure}

\subsection{Performance for non-centred Data}
\label{Subsec: Simulations non-centred Data}
Our limit theory likewise holds for time series that are not stationary in the mean. Table \ref{Table: Rej prob different mu} shows the simulated rejection rates at the nominal significance level $\alpha=0.05$ for samples of length $n=3000$ and for the mean functions $\mu(x)=x$, $\mu(x)=\sin(2\pi x)$ and $\mu(x)=0\cdot\1_{\{0\leq x<1/2\}}+ 1\cdot\1_{\{1/2\leq x\leq 1\}}$  (see, Table \ref{Table: Rej prob different mu size-corr} in Appendix \ref{Appendix: Additional Simulations} for the size-corrected version). The theory developed here requires that our test is applied to data with a Lipschitz-continuous mean function, implying that jumps in the mean cause problems. As pointed out in Section \ref{Sec: Extensions}, this issue can be   resolved by analysing the differenced time series $(Z_i)_{i\in\N}$, $Z_i=X_i-X_{i-1}$. The results for the last mean function demonstrate that this approach is indeed also effective in practice.

\begin{table}
    \centering
    \caption{Simulated rejection probabilities of the test SWFD at the nominal significance level $\alpha=0.05$ for the sample size $n=3000$ under the hypothesis and various local alternatives with effect sizes of magnitude $n^{-1/2}$. Results for different data-generating processes and different mean functions $\mu$ are given. }
    \label{Table: Rej prob different mu}
   
    \begin{tabular}{|ccccccc|}
         \hline
         & N(0,1) & Exp(1) & AR(1), 0.4 & AR(1), 0.7 & ARMA(2,2) & GARCH(1,1) \\ \hline
        \multicolumn{7}{|c|}{$\mu(x)=x$} \\ \hline
         $\mathbb{H}$ & 0.062 & 0.081 & 0.066 & 0.085 & 0.076 & 0.134 \\
       $\mathbb{A}1$ & 0.954 & 0.441 & 0.863 & 0.578 & 0.619 & 0.561 \\
        $\mathbb{A}2$ & 0.822 & 0.330 & 0.703 & 0.426 & 0.463 & 0.464 \\
       $\mathbb{A}3$ & 0.783 & 0.312 & 0.639 & 0.400 & 0.420 & 0.418 \\
        $\mathbb{A}4$ & 0.723 & 0.267 & 0.582 & 0.354 & 0.368 & 0.387 \\
        \hline
        \multicolumn{7}{|c|}{$\mu(x)=\sin(2\pi x)$} \\ \hline
          $\mathbb{H}$ & 0.062 & 0.086 & 0.077 & 0.094 & 0.070 & 0.148 \\
        $\mathbb{A}1$ & 0.940 & 0.451 & 0.854 & 0.579 & 0.607 & 0.569 \\
        $\mathbb{A}2$ & 0.835 & 0.360 & 0.697 & 0.436 & 0.466 & 0.457 \\
        $\mathbb{A}3$ & 0.696 & 0.278 & 0.576 & 0.382 & 0.409 & 0.383 \\
        $\mathbb{A}4$ & 0.722 & 0.267 & 0.574 & 0.360 & 0.370 & 0.367 \\
        \hline
        \multicolumn{7}{|c|}{$\mu(x)=0\cdot\1_{\{0\leq x<1/2\}}+ 1\cdot\1_{\{1/2\leq x\leq 1\}}$ for time series $(Z_i)_{i\in\N}$}  \\ \hline
          $\mathbb{H}$ & 0.067 & 0.091 & 0.064 & 0.062 & 0.079 & 0.124 \\
        $\mathbb{A}1$ & 0.814 & 0.390 & 0.889 & 0.924 & 0.664 & 0.491 \\
        $\mathbb{A}2$ & 0.648 & 0.306 & 0.737 & 0.783 & 0.512 & 0.391 \\
        $\mathbb{A}3$ & 0.587 & 0.279 & 0.692 & 0.733 & 0.454 & 0.369 \\
        $\mathbb{A}4$ & 0.538 & 0.234 & 0.628 & 0.687 & 0.405 & 0.340 \\
        \hline
    \end{tabular}
\end{table}

\subsection{Performance of the long run variance estimator $\hat{\kappa}$}
\label{Subsec: Performance of LRV estimator}

We briefly discuss the performance of the estimator $\hat{\kappa}$ introduced in Section \ref{Subsec: Estimation of LRV}
on the basis of its empirical bias and root mean square error (RMSE). To facilitate a comparison, we standardize the data-generating processes introduced above to yield a theoretical long run variance of 1. 
 All results are based on 6000 replications.
Table \ref{Table: Performance of the LRV estimator} shows the results for the sample lengths $n=500, 1000, 3000$ and for the mean functions  $\mu(x)=0$, $\mu(x)=\sin(2\pi x)$ and $\mu(x)=\1_{\{1/2<x\leq1\}}$. Note that for the latter mean function, we consider the differenced time series $(Z_i)_{i\in\N}$.
The dependence structure and underlying distribution seem to influence the quality of the estimate, with high dependence leading to a larger bias and a higher RMSE. The empirical bias is usually negative, thereby providing an explanation for the liberal behaviour of our test in Figure \ref{Fig:Empirical Size}. 
In contrast, the mean function seems to have little influence.
These results suggest that an improved estimator $\hat{\kappa}$ might yield even better results for our test procedure. However, the task of finding a suitable estimator is rather intricate since we only observe non-centred data $(X_i)_{i\in\N}$ and additionally require a convergence rate of $\sqrt{b_n}$.

\begin{table}
  \caption{Simulated Bias and RMSE of $\hat{\kappa}$ for the mean functions $\mu(x)=0$ (top row for each $n$), 
$\mu(x)=\sin(2\pi x)$ and $\mu(x)=\1_{\{1/2<x\leq1\}}$ (bottom row for each $n$) for different sample lengths $n$, $q=0.5$ and $s=0.7$. For the last mean function, the time series $(Z_i)_{i\in\N}$ is used. The observations are standardized to yield a theoretical long run variance $\kappa_{Y^2}=1$. }
    \begin{tabular}{|rc|rc|rc|rc|rc|rc|}
    \hline
\multicolumn{2}{|c|}{\multirow{2}{*}{N(0,1)}} & \multicolumn{2}{c|}{\multirow{2}{*}{Exp(1)}} &  \multicolumn{2}{c|}{\multirow{2}{*}{AR(1), 0.4}} &  \multicolumn{2}{c|}{\multirow{2}{*}{AR(1), 0.7}} &  \multicolumn{2}{c|}{\multirow{2}{*}{ARMA(2,2)}} &  \multicolumn{2}{c|}{\multirow{2}{*}{GARCH(1,1)}} \\
  \multicolumn{2}{|c|}{} & \multicolumn{2}{c|}{}  & \multicolumn{2}{c|}{} & \multicolumn{2}{c|}{} & \multicolumn{2}{c|}{} & \multicolumn{2}{c|}{} \\
  \multicolumn{1}{|c}{Bias} & RMSE &  \multicolumn{1}{c}{Bias} & RMSE &  \multicolumn{1}{c}{Bias} & RMSE & \multicolumn{1}{c}{Bias} & RMSE & \multicolumn{1}{c}{Bias} & RMSE & \multicolumn{1}{c}{Bias} & RMSE 
  \\ \hline
 \multicolumn{12}{|c|}{$n=500$}\\\hline
             \num{ -0.0445843072647825 } & \num{ 0.171809485426514 } & \num{ -0.130349317421188 } & \num{ 0.261134714558069 } & \num{ -0.089420712390942 } & \num{ 0.192161718670788 } & \num{ -0.19869728402943 } & \num{ 0.266636014981698 } & \num{ -0.0962693044113337 } & \num{ 0.200406243065377 } & \num{ -0.28508247356466 } & \num{ 0.349705454738594 } \\
        \num{ -0.00551859040899858 } & \num{ 0.173528784907639 } & \num{ -0.12689097803988 } & \num{ 0.2548018802145 } & \num{ -0.0543493618885507 } & \num{ 0.184793396890238 } & \num{ -0.18278342317257 } & \num{ 0.257038301942507 } & \num{ -0.0889315283857526 } & \num{ 0.197927485014663 } & \num{ -0.27526880688379 } & \num{ 0.343579269306306 } \\
                      \num{ -0.0172527264528704 } & \num{ 0.178940728890507 } & \num{ -0.120072288745491 } & \num{ 0.25632999390585 } & \num{ -0.0297240215878947 } & \num{ 0.174654894740891 } & \num{ -0.032622823189028 } & \num{ 0.173413461780066 } & \num{ -0.0868763383819581 } & \num{ 0.199615562952757 } & \num{ -0.273387240330414 } & \num{ 0.344258109940323 } \\
        \hline
        \multicolumn{12}{|c|}{$n=1000$}\\\hline
        \num{ -0.0292109947460065 } & \num{ 0.147129588320456 } & \num{ -0.100956764384342 } & \num{ 0.206107979273544 } & \num{ -0.0646675012844003 } & \num{ 0.159710240943472 } & \num{ -0.136769064156063 } & \num{ 0.208521234250035 } & \num{ -0.0627238682434367 } & \num{ 0.162899668314353 } & \num{ -0.21935125570182 } & \num{ 0.286549443984359 } \\
        \num{ -0.00396562697247614 } & \num{ 0.147088167866383 } & \num{ -0.101251857214619 } & \num{ 0.208608460336517 } & \num{ -0.035094407484249 } & \num{ 0.155114850245553 } & \num{ -0.128855889391169 } & \num{ 0.201080896765942 } & \num{ -0.0634811338209933 } & \num{ 0.164505227864862 } & \num{ -0.218915668809588 } & \num{ 0.282869814066289 } \\
                \num{ -0.000926651992208415 } & \num{ 0.15435850692379 } & \num{ -0.0908529202216113 } & \num{ 0.204282278977438 } & \num{ -0.0235065168898116 } & \num{ 0.150471790266004 } & \num{ -0.023938568982439 } & \num{ 0.149154064822289 } & \num{ -0.0594396779288074 } & \num{ 0.167620736254187 } & \num{ -0.222687711031767 } & \num{ 0.288952032494635 } \\      
        \hline    
        \multicolumn{12}{|c|}{$n=3000$}\\\hline
        \num{ -0.0136148133229635 } & \num{ 0.105688512067622 } & \num{ -0.0635544942594582 } & \num{ 0.14118045112482 } & \num{ -0.0316381687252705 } & \num{ 0.108383756984066 } & \num{ -0.0745549281990314 } & \num{ 0.13166058084684 } & \num{ -0.0354984207162781 } & \num{ 0.110863391842735 } & \num{ -0.145135167050922 } & \num{ 0.194757799290149 } \\
         \num{ -0.001669970423094 } & \num{ 0.106298271295523 } & \num{ -0.059608883788893 } & \num{ 0.137228847105866 } & \num{ -0.0160307145424692 } & \num{ 0.107468978489744 } & \num{ -0.0661334366914267 } & \num{ 0.130620641221908 } & \num{ -0.0323138326300929 } & \num{ 0.113605104954864 } & \num{ -0.140053199466725 } & \num{ 0.191994531971215 } \\
                \num{ 0.0119804099723556 } & \num{ 0.109948192590694 } & \num{ -0.051389989598946 } & \num{ 0.134794801254871 } & \num{ -0.0120166335934188 } & \num{ 0.105026912428303 } & \num{ -0.0115131606110773 } & \num{ 0.105787585408145 } & \num{ -0.0328282493445137 } & \num{ 0.115270410073187 } & \num{ -0.152352499421852 } & \num{ 0.197998612139115 } \\   
        \hline
    \end{tabular}
\label{Table: Performance of the LRV estimator}
\end{table}

\section{Data Example}
\label{Sec: Data Example}

As a data example, we consider the worldwide relative search interest for the topic ``global warming''  retrieved from Google Trends (\url{www.google.com/trends}). Data from Google Trends has frequently been used in environmental research as a measure for public interest, see, for instance, Anderegg and Goldsmith \citep{Anderegg.2014} and Burivalova, Butler and Wilcove \citep{Burivalova.2018}.
Google Trends does not provide the absolute search volume but adjusts the data in two steps: Each time period, the number of searches for the topic ``global warming'' as a proportion of the total searches within that time period is calculated. Afterwards, these proportions of the total searches are scaled, with the time period of the highest proportion being assigned the value 100 (for details, see the Google Trends FAQs (\url{https://support.google.com/trends/answer/4365533?hl=en&ref_topic=6248052}).

 \begin{figure}
 \includegraphics[width=\textwidth, trim={0 0 0 1.3cm}, clip]{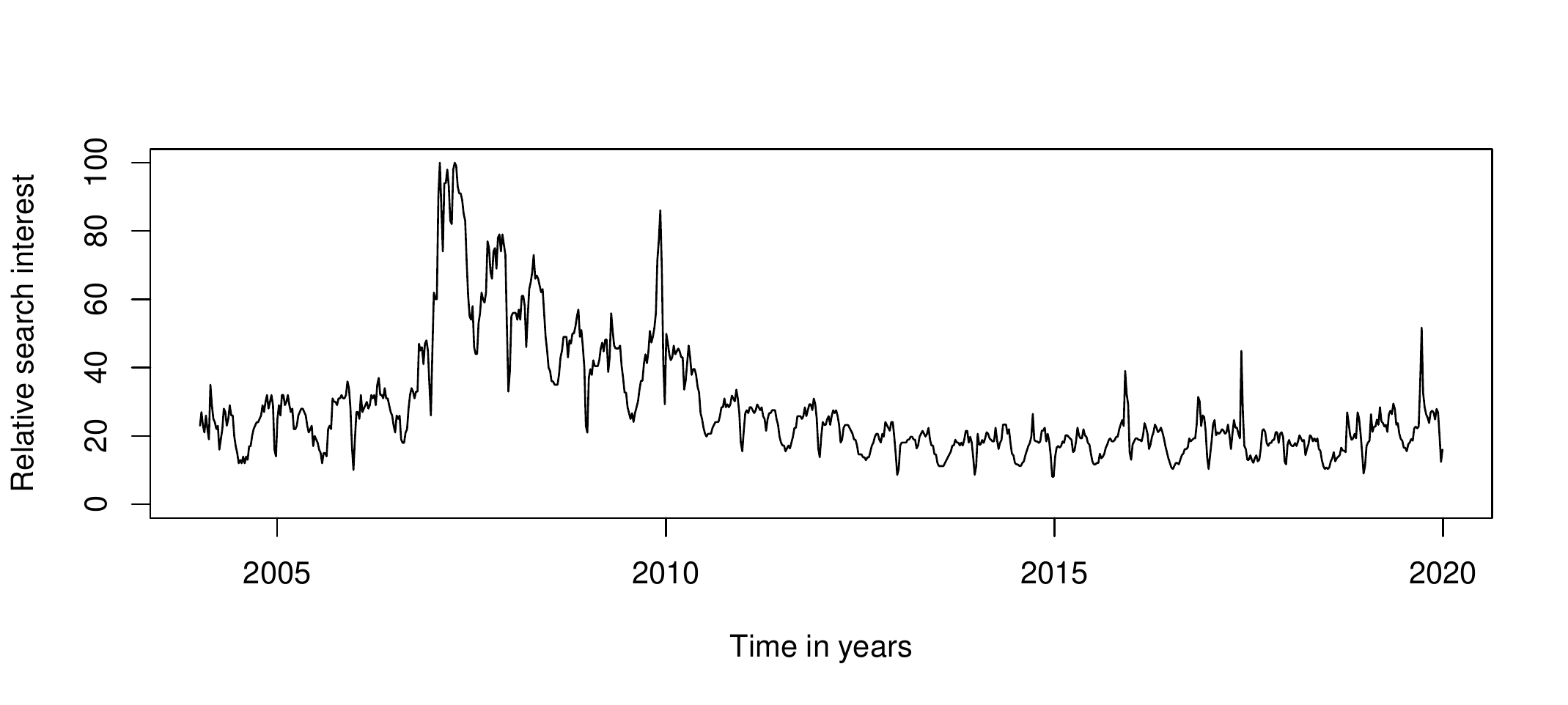}
\vspace*{-1cm}\caption{Weekly worldwide relative search interest for the topic ``global warming'' from January 2004 to December 2019 obtained from Google Trends.}
\label{Fig: Relative search interest}
\end{figure}
Figure \ref{Fig: Relative search interest} shows the weakly worldwide relative search interest from January 2004 to December 2019.  The increase in relative search interest between 2006/2007 and 2010/2011 is often related to the release of the documentary film ``An Inconvenient Truth''  in 2006 as well as the two media events colloquially often referred to as ``climategate'' and ``glaciergate'' in November 2009 and January 2010, respectively (see, \citep{Anderegg.2014} and  \citep{Burivalova.2018}). While a rise in the mean can be interpreted as a general rise of the public interest in the topic, an increase in the variance might hint at frequent news publication and an increased media coverage causing spikes and overall more fluctuation in the search interest.

\begin{figure}[H]
 \includegraphics[width=\textwidth, trim={0 0 0 1.3cm}, clip]{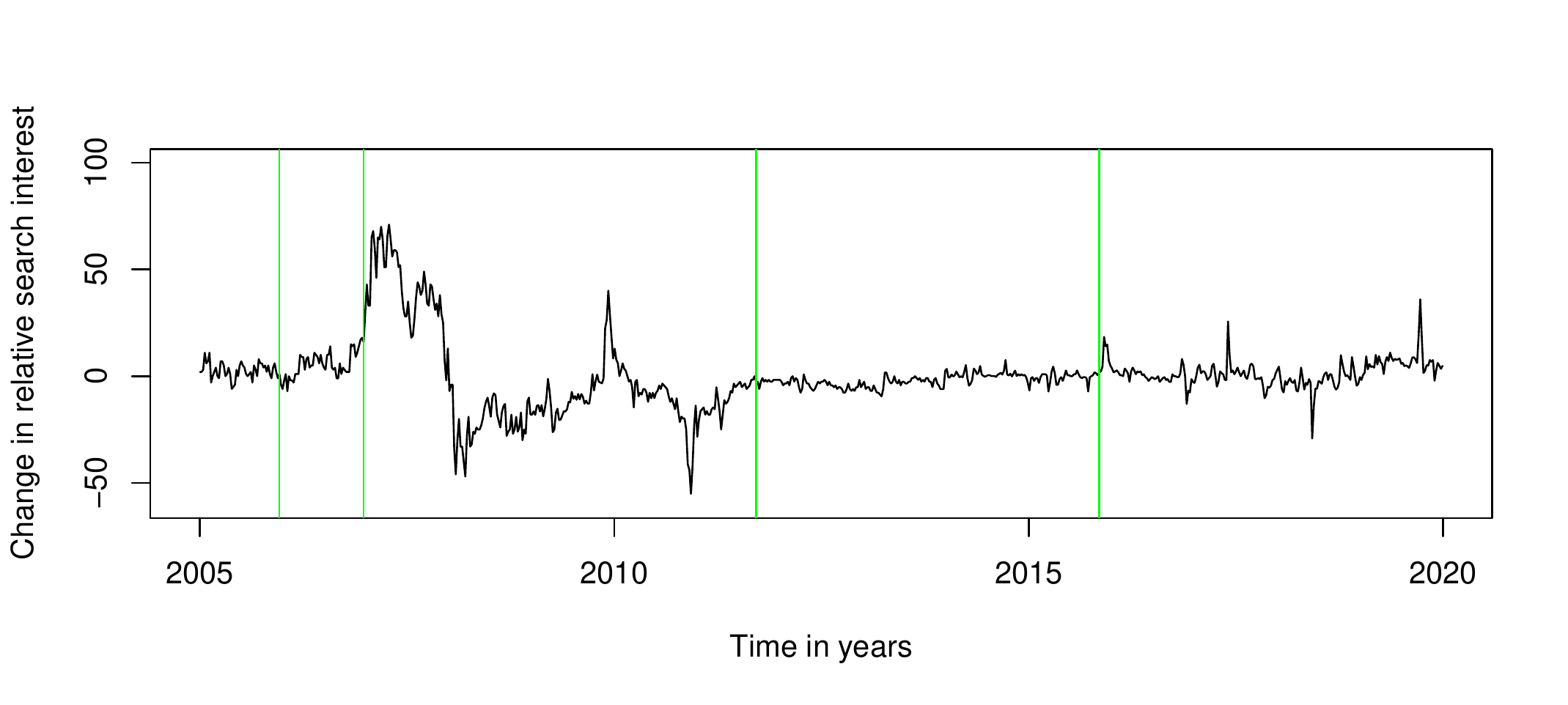}
\vspace*{-1cm}\caption{Change in the weekly worldwide relative search interest for the topic ``global warming'' from January 2005 to December 2019 obtained from Google Trends. Detected variance change points for $\alpha=5\%$ are marked by the vertical green lines.}
\label{Fig:Change in relative search interest}
\end{figure}

To eliminate seasonal effects, we work with the time series of annual differences $(Z_i)_{i\in\N}$, i.e.  $Z_i=X_i-X_{i-52}$, starting in January 2005, and thus consider changes in the relative search interest. After eliminating the last week in all years with 53 weeks (2006, 2012 and 2017) in the original time series $X$, we are left with $n=780$ differences. In the following, we test for a stationary variance in the series of annual differences. 

As we find our test  to behave rather liberal for small sample sizes in the simulation study in Section \ref{Subsec: Simulations Empirical Size}, we consider not only the usual significance level $\alpha=5\%$ but also $\alpha=1\%$. Figure \ref{Fig:Change in relative search interest} shows the seasonally  differenced observations $Z_1, \dots, Z_{780}$ as well as the detected changes in the variance for $\alpha=5\%$, which are estimated as described in Section \ref{Subsec: Estimation of Cp location} and located at the weeks of the 18.12.2005, 24.12.2006, 18.09.2011 and 08.11.2015. The estimated change points seem to capture the periods of increased variance of $(Z_i)_{i\in\N}$ quite well, thereby not only indicating changes during the time period of increased interest mentioned above, but additionally detecting a change in late 2015. For $\alpha=1\%$, we detect the same except for the last change point. 

Moreover, a close look at the data hints at a possibly non-stationary mean, with which our procedure seems to cope quite well due to the centering via the block means. A cumulative sum test for a change in mean does not reject the null hypothesis at $\alpha=1\%$, while at $\alpha=5\%$, it  sequentially detects a very large number of 11 changes when being combined with binary segmentation. There is hence considerable uncertainty concerning mean stationarity when applying such a standard test. In contrast, our test consistently indicates a non-stationary variance, irrespective of the chosen significance level.

 \section*{Acknowledgements}
The authors would like to thank the associate editor and the two anonymous referees for their careful reading of the manuscript and their insightful comments which  helped to improve the presentation of the paper significantly. The research was supported by the DFG Collaborative Research Center 823 ``Statistical Modelling of Nonlinear Dynamic Processes.'' The first author was additionally supported by the Friedrich-Ebert-Stiftung.

%
%

\appendix

\section{Proofs}
\label{Appendix: Proofs}
This appendix contains the proof details for our main theorems as well as the proofs of the auxiliary results. Its structure mainly follows that of Section \ref{Sec:Outline and Main Ideas of Proofs}. Some key results from the literature we frequently make use of are collected in Section \ref{Appendix: Subsec: Results from Literature}.

In our proofs, we will make frequent use of the shortening notation $\mu_i:= \mu\rbraces{\frac{i}{n}}$ and $\sigma_i:= \sigma\rbraces{\frac{i}{n}}$. Since $\sigma$ is a c\`adl\`ag-function, it is bounded on $[0,1]$ (see Chapter 14 in Billingsley \citep{billingsley1968}) and one can define $\sigma_{\sup}:= \sup_{t\in [0,1]} \sigma(t) <\infty$. Throughout, we will denote by $C$ a positive constant whose value is of no significance and may even change from line to line.
\subsection{A first Approximation}
\label{Appendix: Subsec: non-zero mean}
Note that we require the mean function $\mu: [0,1]\rightarrow \R$ to be Lipschitz-continuous on $[0,1]$ and therefore, we will use the property $\abs{\mu_i-\mu_r}\leq L \frac{\abs{i-r}}{n}$  without always specifically mentioning it. Without loss of generality, we set the Lipschitz-constant $L$ to be equal to one. Note that this implies
\begin{equation*}
\abs{\mu_i-\frac{1}{\ell_n}\sum_{r=(j-1)\ell_n+1}^{j\ell_n} \mu_r }\leq   \frac{1}{\ell_n}\sum_{r=(j-1)\ell_n+1}^{j\ell_n} \abs{\mu_i-\mu_r} \leq \frac{\ell_n}{n}=\frac{1}{b_n}
\end{equation*}
since $\abs{i-r}\leq \ell_n$ whenever $i,r \in  \{(j-1)\ell_n+1, ..., j\ell_n\}$.

\begin{proof}[Proof of Proposition 3.1]

\begin{align*}
&\sqrt{b_n \ell_n} \abs{U(n)-U_1(n)}\\
= & \sqrt{\ell_n b_n} \abs{\frac{1}{b_n(b_n-1)}\sum_{1\leq j\neq k\leq b_n} \rbraces{\abs{\log \hat{\sigma}_j^2 - \log\hat{\sigma}_k^2} - \abs{\log s_j^2 - \log s_k^2}} } \\
\leq  &\sqrt{\ell_n b_n} \frac{1}{b_n(b_n-1)}\sum_{1\leq j\neq k\leq b_n} \rbraces{\abs{\log \hat{\sigma}_j^2 - \log s_j^2} + \abs{\log\hat{\sigma}_k^2-\log s_k^2 } } \\
= &  2\frac{\sqrt{\ell_n}}{\sqrt{b_n}}\sum_{1\leq j\leq b_n} \abs{\log \hat{\sigma}_j^2 - \log s_j^2} \leq 2\frac{\sqrt{\ell_n}}{\sqrt{b_n}}\sum_{1\leq j\leq b_n}\frac{1}{\hat{\sigma}_j^2 \wedge s_j^2} \abs{ \hat{\sigma}_j^2 -s_j^2}
\end{align*}
As a first step towards showing convergence in probability of the entire expression, we will prove $\frac{\sqrt{\ell_n}}{\sqrt{b_n}}\sum_{1\leq j\leq b_n} \abs{ \hat{\sigma}_j^2 -s_j^2}\pConv 0 $. It holds
{\allowdisplaybreaks
\begin{align*}
&\frac{\sqrt{\ell_n}}{\sqrt{b_n}}\sum_{1\leq j\leq b_n} \abs{ \hat{\sigma}_j^2 -s_j^2}\\
= & \frac{\sqrt{\ell_n}}{\sqrt{b_n}}\sum_{1\leq j\leq b_n} \abs{\frac{1}{\ell_n}\sum_{i=(j-1)\ell_n+1}^{j\ell_n}\rbraces{X_i-\frac{1}{\ell_n}\sum_{r=(j-1)\ell_n+1}^{j\ell_n}X_r}^2- \frac{1}{\ell_n}\sum_{i=(j-1)\ell_n+1}^{j\ell_n}\rbraces{X_i-\mu_i}^2}\\
= & \frac{\sqrt{\ell_n}}{\sqrt{b_n}}\sum_{1\leq j\leq b_n} \abs{ \frac{1}{\ell_n}\sum_{i=(j-1)\ell_n+1}^{j\ell_n}\rbraces{\sigma_iY_i+\mu_i-\frac{1}{\ell_n}\sum_{r=(j-1)\ell_n+1}^{j\ell_n}\rbraces{\sigma_r Y_r+\mu_r}}^2- \frac{1}{\ell_n}\sum_{i=(j-1)\ell_n+1}^{j\ell_n}\rbraces{\sigma_i Y_i}^2}\\
\leq & \frac{\sqrt{\ell_n}}{\sqrt{b_n}} \sum_{1\leq j\leq b_n} \abs{\frac{1}{\ell_n}\sum_{i=(j-1)\ell_n+1}^{j\ell_n} \sigma_iY_i}^2 + 2 \frac{\sqrt{\ell_n}}{\sqrt{b_n}}\sum_{1\leq j\leq b_n} \abs{\frac{1}{\ell_n}\sum_{i=(j-1)\ell_n+1}^{j\ell_n} \sigma_iY_i \rbraces{\mu_i-\frac{1}{\ell_n}\sum_{r=(j-1)\ell_n+1}^{j\ell_n} \mu_r}} \\
& + \frac{\sqrt{\ell_n}}{\sqrt{b_n}}  \sum_{1\leq j\leq b_n} \abs{\frac{1}{\ell_n}\sum_{i=(j-1)\ell_n+1}^{j\ell_n} \rbraces{\mu_i-\frac{1}{\ell_n}\sum_{r=(j-1)\ell_n+1}^{j\ell_n} \mu_r }^2}\\
& + 2 \frac{\sqrt{\ell_n}}{\sqrt{b_n}} \sum_{1\leq j\leq b_n} \abs{\rbraces{\frac{1}{\ell_n}\sum_{i=(j-1)\ell_n+1}^{j\ell_n} \rbraces{\mu_i-\frac{1}{\ell_n}\sum_{r=(j-1)\ell_n+1}^{j\ell_n} \mu_r }} \rbraces{\frac{1}{\ell_n}\sum_{i=(j-1)\ell_n+1}^{j\ell_n} \sigma_i Y_i } }.
\end{align*}}

A sharp look reveals that the fourth of the above terms equals zero. We will verify that the remaining three expressions likewise converge towards zero in probability. For the first one, we obtain by an application of Markov's inequality for every $\varepsilon^*>0$ that
{\allowdisplaybreaks
\begin{align*}
&\Pb\rbraces{ \frac{\sqrt{\ell_n}}{\sqrt{b_n}} \sum_{1\leq j\leq b_n} \abs{\frac{1}{\ell_n}\sum_{i=(j-1)\ell_n+1}^{j\ell_n} \sigma_i Y_i}^2>\varepsilon^*} \\
 \leq & \frac{\sqrt{\ell_n b_n}}{\varepsilon^*} \max_{1\leq j \leq b_n} \E{ \abs{\frac{1}{\ell_n}\sum_{i=(j-1)\ell_n+1}^{j\ell_n} \sigma_i Y_i}^2}\\
= & \frac{\sqrt{\ell_n b_n}}{\varepsilon^*} \max_{1\leq j \leq b_n} \frac{1}{\ell_n^2 }\sum_{i=(j-1)\ell_n+1}^{j\ell_n} \sum_{r=(j-1)\ell_n+1}^{j\ell_n}\sigma_i \sigma_r\cov{Y_i}{Y_r}\\
\leq &\frac{\sqrt{\ell_n b_n}}{\varepsilon^*} \sigma_{\sup}^2\frac{1}{\ell_n^2 }\sum_{i=1}^{\ell_n}\sum_{r=1}^{\ell_n}\abs{\cov{Y_i}{Y_r}}\\
=& \frac{\sigma_{\sup}^2}{\varepsilon^*}\frac{\sqrt{b_n}}{\sqrt{\ell_n}} \rbraces{\Var{Y_1}+2\sum_{i=1}^{\ell_n-1}\frac{\ell_n-i}{\ell_n}\abs{\cov{Y_1}{Y_{i+1}}}}\\
\leq &\frac{\sigma_{\sup}^2}{\varepsilon^*}\frac{\sqrt{b_n}}{\sqrt{\ell_n}} \rbraces{\Var{Y_1}+2\sum_{i=1}^{\infty}\abs{\cov{Y_1}{Y_{i+1}}}}.
\end{align*}}
Due to an application of Davydov's covariance inequality for strongly mixing sequences 
together with the summability condition on the mixing coefficients $\beta_Y$ and the finiteness of the $(2+\vartheta)$-th moments of $Y_i$, the sum of absolute covariances is finite,
\begin{align*}
\sum_{i=1}^{\infty}\abs{\cov{Y_1}{Y_{i+1}}} \leq & 8 \sum_{i=1}^{\infty} \alpha\rbraces{\sigma(Y_1),\sigma(Y_{i+1})}^{\frac{\vartheta}{2+\vartheta}} \| Y_1 \|_{2+\vartheta} \| Y_{i+1} \|_{2+\vartheta} \\
 \leq & 8 \| Y_1 \|_{2+\vartheta}^2 \sum_{i=1}^{\infty} \beta_Y(i)^{\frac{\vartheta}{2+\vartheta}}<\infty.
\end{align*}
Since $s>0.5$, we have thus shown the convergence of the first term. For the second term, we have for every $\varepsilon^*>0$

{\allowdisplaybreaks
\begin{align*}
&\Pb\rbraces{\frac{\sqrt{\ell_n}}{\sqrt{b_n}}\sum_{1\leq j\leq b_n} \abs{\frac{1}{\ell_n}\sum_{i=(j-1)\ell_n+1}^{j\ell_n} \sigma_i Y_i \rbraces{\mu_i-\frac{1}{\ell_n}\sum_{r=(j-1)\ell_n+1}^{j\ell_n} \mu_r }} >\varepsilon^*}\\
\leq & \frac{1}{\varepsilon^*} \frac{\sqrt{\ell_n}}{\sqrt{b_n}} \sum_{1\leq j\leq b_n} \E{\abs{\frac{1}{\ell_n}\sum_{i=(j-1)\ell_n+1}^{j\ell_n} \sigma_i Y_i \rbraces{\mu_i-\frac{1}{\ell_n}\sum_{r=(j-1)\ell_n+1}^{j\ell_n} \mu_r }}}\\
\leq &\frac{\sqrt{\ell_nb_n}}{\varepsilon^*}  \max_{1\leq j\leq b_n} \rbraces{\E{\abs{\frac{1}{\ell_n}\sum_{i=(j-1)\ell_n+1}^{j\ell_n} \sigma_i Y_i \rbraces{\mu_i-\frac{1}{\ell_n}\sum_{r=(j-1)\ell_n+1}^{j\ell_n} \mu_r }}^2}}^{1/2}\\
\leq  & \frac{\sqrt{\ell_nb_n}}{\varepsilon^*}  \max_{1\leq j\leq b_n} \left( \frac{1}{\ell_n^2}\sum_{i=(j-1)\ell_n+1}^{j\ell_n} \sum_{k=(j-1)\ell_n+1}^{j\ell_n} \sigma_i\sigma_k \abs{\cov{Y_i}{Y_k}}\right. \\
& \quad \cdot \left.\abs{\mu_i-\frac{1}{\ell_n}\sum_{r=(j-1)\ell_n+1}^{j\ell_n} \mu_r }\abs{\mu_k-\frac{1}{\ell_n}\sum_{r=(j-1)\ell_n+1}^{j\ell_n} \mu_r }\right)^{1/2}\\
\leq & \frac{\sqrt{\ell_nb_n}}{b_n \sqrt{\ell_n}\varepsilon^*}\sigma_{\sup} \rbraces{\frac{1}{\ell_n}\sum_{i=1}^{\ell_n} \sum_{k=1}^{\ell_n} \abs{\cov{Y_i}{Y_k}} }^{1/2} \\
\leq & \frac{1}{\varepsilon^* \sqrt{b_n}}\rbraces{\Var{Y_1}+2\sum_{i=1}^{\infty}\abs{\cov{Y_1}{Y_{i+1}}}}^{1/2}
\end{align*}}
which tends towards zero as argued above. The third term is purely deterministic and can be controlled by utilising the Lipschitz-property of $\mu$ via
\begin{align*}
&\frac{\sqrt{\ell_n}}{\sqrt{b_n}}  \sum_{1\leq j\leq b_n} \abs{\frac{1}{\ell_n}\sum_{i=(j-1)\ell_n+1}^{j\ell_n} \rbraces{\mu_i-\frac{1}{\ell_n}\sum_{r=(j-1)\ell_n+1}^{j\ell_n} \mu_r }^2}\leq \frac{\sqrt{\ell_n}}{\sqrt{b_n^3}}
\end{align*}
which converges towards zero as long as we choose $s<3/4$. 
Finally, we return to our original claim and define $\tilde{s}_j^2:= \frac{1}{\ell_n}\sum_{i=(j-1)\ell_n+1}^{j\ell_n}\E{(X_i-\mu_i)^2}$. Note that it holds
$$\tilde{s}_j^2=\E{s_j^2}=\frac{1}{\ell_n}\sum_{i=(j-1)\ell_n+1}^{j\ell_n}\sigma_i^2\E{Y_i^2}\in [\sigma_0^2, \sigma_{\sup}^2]$$
 and hence, for every $\varepsilon^*>0$,
 {\allowdisplaybreaks
\begin{align*}
&\Pb\rbraces{\frac{\sqrt{\ell_n}}{\sqrt{b_n}}\sum_{1\leq j\leq b_n} \frac{1}{s_j^2\wedge \hat{\sigma}_{j}^2} \abs{s_j^2-\hat{\sigma}_{j}^2}\geq \varepsilon^*}\\
\leq & \Pb\left(\left\{\frac{\sqrt{\ell_n}}{\sqrt{b_n}}\sum_{1\leq j\leq b_n} \frac{1}{s_j^2\wedge \hat{\sigma}_{j}^2} \abs{s_j^2-\hat{\sigma}_{j}^2}\geq \varepsilon^*\right\}\cap \left\{\max_{1\leq j \leq b_n} \abs{s_j^2-\hat{\sigma}_{j}^2} \leq \frac{\sigma_0^2}{4} \right\}\right.\\
 & \left.\cap \left\{\max_{1\leq j \leq b_n} \abs{s_j^2-\tilde{s}_{j}^2} \leq \frac{\sigma_0^2}{4} \right\}\right)
+ \Pb \rbraces{ \max_{1\leq j \leq b_n} \abs{s_j^2-\hat{\sigma}_{j}^2} > \frac{\sigma_0^2}{4} }\\
&+ \Pb\rbraces{\max_{1\leq j \leq b_n} \abs{s_j^2-\tilde{s}_{j}^2} > \frac{\sigma_0^2}{4}}\\
\leq & \Pb\rbraces{\frac{\sqrt{\ell_n}}{\sqrt{b_n}}\sum_{1\leq j\leq b_n} \frac{2}{\sigma_0^2} \abs{s_j^2-\hat{\sigma}_{j}^2}\geq \varepsilon^*}+ \Pb \rbraces{ \max_{1\leq j \leq b_n} \abs{s_j^2-\hat{\sigma}_{j}^2} > \frac{\sigma_0^2}{4} }\\
& + \Pb\rbraces{\max_{1\leq j \leq b_n} \abs{s_j^2-\tilde{s}_{j}^2} > \frac{\sigma_0^2}{4}}.
\end{align*}}

The convergence of the first probability towards zero has been shown in the first part of this proof. Moreover, due to this convergence, the second probability converges towards zero as well. Lastly, we turn towards the third term,
\begin{align*}
 & \Pb\rbraces{\max_{1\leq j\leq b_n} \abs{s^2_j-  \tilde{s}^2_j} \geq \varepsilon^*}
  \leq \sum_{1\leq j \leq b_n} \Pb\rbraces{ \abs{ s^2_j-  \tilde{s}^2_j}\geq \varepsilon^*} \\
\leq &  \frac{1}{{\varepsilon^*}^2} \sum_{1\leq j \leq b_n} \E{\abs{ s^2_j- \tilde{s}^2_j}^{2}}
=  \frac{1}{{\varepsilon^*}^2 \ell_n^{2}} \sum_{1\leq j \leq b_n} \E{\abs{\sum_{i=(j-1)\ell_n+1}^{j\ell_n}\rbraces{\sigma^2_i Y_i^2-\sigma^2_i\E{Y_i^2}} }^{2}} \\
\leq & \frac{1}{{\varepsilon^*}^2 \ell_n^{2}}  \sum_{1\leq j \leq b_n} \sum_{i=(j-1)\ell_n+1}^{j\ell_n}\sum_{k=(j-1)\ell_n+1}^{j\ell_n} \sigma_i^2 \sigma_k^2 \abs{\cov{Y_i^2}{Y_k^2}}\\
\leq & \frac{b_n}{{\varepsilon^*}^2 \ell_n} \sigma_{\sup}^4 \rbraces{\Var{Y_1^2}+2\sum_{i=1}^\infty  \abs{\cov{Y_1^2}{Y_{i+1}^2}}}.
\end{align*}
Since $(Y_i^2)_{i\in \N}$ is absolutely regular with $\beta_{Y^2}(k)\leq \beta_Y(k)$ for all $k\in \N$, assumptions (A1) and (A2) in combination with Davydov's covariance inequality yield that the sum of covariances is finite and, consequently, the third probability converges towards zero, which finishes the proof.
\end{proof}

\subsection{Proof of Theorem 2.3}
\label{Appendix: subsec: Proof of THM: Behaviour under A for U(n)}
\begin{proof}
Recall the definitions
\begin{equation*}
s_j^2:= \frac{1}{\ell_n} \sum_{i=(j-1)\ell_n+1}^{j\ell_n}(X_i-\mu_i)^2 \quad \text{and} \quad \tilde{s}^2_j:= \frac{1}{\ell_n} \sum_{i=(j-1)\ell_n+1}^{j\ell_n}\E{(X_i-\mu_i)^2}.
\end{equation*}
Our intention is to approximate the test statistic $U_1(n)$ by a deterministic term via
{\allowdisplaybreaks
\begin{align*}
& \abs{U_1(n)- \frac{1}{b_n(b_n-1)} \sum_{1\leq j\neq k\leq b_n} \abs{\log \sigma^2\rbraces{\frac{j\ell_n}{n}}- \log \sigma^2\rbraces{\frac{k\ell_n}{n}}}}\\
=& \abs{\frac{1}{b_n(b_n-1)} \sum_{1\leq j\neq k\leq b_n} \rbraces{ \abs{\log s^2_j- \log s^2_k} -\abs{\log \sigma^2\rbraces{\frac{j\ell_n}{n}}- \log \sigma^2\rbraces{\frac{k\ell_n}{n}}}} }\\
\leq &  \frac{1}{b_n(b_n-1)} \sum_{1\leq j\neq k\leq b_n} \abs{ \rbraces{\log s^2_j- \log \sigma^2\rbraces{\frac{j\ell_n}{n}}} -\rbraces{\log s^2_k- \log \sigma^2\rbraces{\frac{k\ell_n}{n}}}} \\
\leq &  \frac{1}{b_n(b_n-1)} \sum_{1\leq j\neq k\leq b_n} \rbraces{\abs{ \log s^2_j- \log \sigma^2\rbraces{\frac{j\ell_n}{n}}} +\abs{\log s^2_k- \log \sigma^2\rbraces{\frac{k\ell_n}{n}}} }\\
= & \frac{2}{b_n} \sum_{1\leq j \leq b_n} \abs{\log s^2_j- \log \sigma^2\rbraces{\frac{j\ell_n}{n}}}\\
\leq & \frac{2}{b_n} \sum_{1\leq j \leq b_n} \abs{\log s^2_j- \log \tilde{s}^2_j}
+ \frac{2}{b_n} \sum_{1\leq j \leq b_n} \abs{\log \tilde{s}^2_j- \log \sigma^2\rbraces{\frac{j\ell_n}{n}}}.
\end{align*}}
We will show that the last two sums converge towards zero. For the first part, we obtain
\begin{align*}
\frac{1}{b_n} \sum_{1\leq j \leq b_n} \abs{\log s^2_j- \log \tilde{s}^2_j} \leq  \max_{1\leq j\leq b_n} \abs{\log s^2_j- \log \tilde{s}^2_j} \leq  \max_{1\leq j\leq b_n} \frac{1}{s^{2}_j \wedge \tilde{s}^{2}_j}\abs{s^2_j- \tilde{s}^2_j}.
\end{align*}
In the proof of Proposition 3.1,  
we have already shown that for every $\varepsilon^*>0$, it holds $\Pb\rbraces{\max_{1\leq j\leq b_n} \abs{s^2_j-  \tilde{s}^2_j} \geq \varepsilon^*} \rightarrow 0$. Consequently, the entire first sum converges towards zero in probability due to
\begin{align*}
& \Pb\rbraces{\max_{1\leq j\leq b_n} \frac{1}{s^{2}_j \wedge \tilde{s}^{2}_j}\abs{s^2_j- \tilde{s}^2_j}\geq \varepsilon^* }\\
\leq & \Pb\rbraces{\left\{\max_{1\leq j\leq b_n} \frac{1}{s^{2}_j \wedge \tilde{s}^{2}_j}\abs{s^2_j- \tilde{s}^2_j}\geq \varepsilon^*\right\} \cap \left\{\max_{1\leq j\leq b_n} \abs{s^2_j- \tilde{s}^2_j}>\frac{\sigma_0}{4} \right\} } \\
& +  \Pb\rbraces{\left\{\max_{1\leq j\leq b_n} \frac{1}{s^{2}_j \wedge \tilde{s}^{2}_j}\abs{s^2_j- \tilde{s}^2_j}\geq \varepsilon^*\right\} \cap \left\{\max_{1\leq j\leq b_n} \abs{s^2_j- \tilde{s}^2_j}\leq \frac{\sigma_0}{4} \right\} }\\
 \leq & \Pb\rbraces{\max_{1\leq j\leq b_n} \abs{s^2_j- \tilde{s}^2_j}>\frac{\sigma_0}{4} } +  \Pb\rbraces{\max_{1\leq j\leq b_n} \frac{1}{(\tilde{s}^{2}_j-\sigma_0/4) \wedge \tilde{s}^{2}_j}\abs{s^2_j- \tilde{s}^2_j}\geq \varepsilon^* }
\end{align*}
which tends towards zero since $\tilde{s}_j^2$ is deterministic and bounded from below by $\sigma_0^2$. We now turn towards the second part, which is purely deterministic. As a first step to proving
\begin{equation*}
 \frac{1}{b_n} \sum_{1\leq j \leq b_n} \abs{\log \tilde{s}^2_j- \log \sigma^2\rbraces{\frac{j\ell_n}{n}}} \rightarrow 0 \quad \text{as } n \rightarrow \infty,
\end{equation*}
we bound each summand by
\begin{align*}
& \abs{\log \tilde{s}^2_j- \log \sigma^2\rbraces{\frac{j\ell_n}{n}}}
\leq \frac{1}{\sigma_0^{2}}\abs{ \tilde{s}^2_j-\sigma^2\rbraces{\frac{j\ell_n}{n}}}\\
\leq  &  \frac{1}{\sigma_0^{2}} \rbraces{\frac{1}{\ell_n}\sum_{i=(j-1)\ell_n+1}^{j\ell_n}\abs{  \sigma^2\rbraces{\frac{i}{n}} - \sigma^2\rbraces{\frac{j\ell_n}{n}}}}.
\end{align*}

Note that for every c\`adl\`ag-function $\sigma$ and every $\varepsilon>0$, there exist an $r\in \N$ and $0=t_0<t_1< ... <t_r=1$  such that
\begin{equation*}
\max_{1\leq i\leq r} \sup_{s,t \in [t_{i-1},t_i)} \abs{\sigma^2(s)-\sigma^2(t)} <\varepsilon
\end{equation*}
(see, Lemma 1, Section 14, in \citep{billingsley1968}) due to $\sigma^2$ being c\`adl\`ag as well. In particular, there are at most $r$ jumps larger than $\varepsilon$ and at most $r$ of the $b_n$ intervals $\left( \frac{(j-1)\ell_n}{n} , \frac{j\ell_n}{n} \right]$  for $1\leq j \leq b_n$ lie in more than one of the intervals $[t_{i-1},t_i)$. In these at most $r$ cases, it holds
\begin{equation*}
\frac{1}{\sigma_0^{2}} \rbraces{\frac{1}{\ell_n}\sum_{i=(j-1)\ell_n+1}^{j\ell_n}\abs{  \sigma^2\rbraces{\frac{i}{n}} - \sigma^2\rbraces{\frac{j\ell_n}{n}}}}\leq 2 \frac{\sigma_{\sup}^2}{\sigma_0^{2}}
\end{equation*}
due to $\sigma$ being bounded.
In the other $b_n-r$ cases, one obtains
\begin{align*}
&\frac{1}{\sigma_0^{2}} \rbraces{\frac{1}{\ell_n}\sum_{i=(j-1)\ell_n+1}^{j\ell_n}\abs{  \sigma^2\rbraces{\frac{i}{n}} - \sigma^2\rbraces{\frac{j\ell_n}{n}}}}\\
\leq & \frac{1}{\sigma_0^{2}} \rbraces{\max_{1\leq i\leq r} \sup_{s,t \in [t_{i-1},t_i)} \abs{\sigma^2(s)-\sigma^2(t)}}\leq  \frac{1}{\sigma_0^{2}} \varepsilon.
\end{align*}
Together, it holds
\begin{align*}
& \frac{1}{b_n} \sum_{1\leq j \leq b_n} \abs{\log \tilde{s}^2_j- \log \sigma^2\rbraces{\frac{j\ell_n}{n}}}
\leq  2 \frac{\sigma_{\sup}^2}{\sigma_0^{2}} \cdot \frac{r}{b_n}+  \frac{1}{\sigma_0^{2}} \varepsilon \cdot \frac{b_n-r}{b_n}\leq  \varepsilon
\end{align*}
 for $b_n$ chosen large enough. Hence, the absolute difference between $U_1(n)$ and its deterministic approximation converges towards zero in probability. To moreover prove the convergence towards the desired integral, it remains to verify
 \begin{align*}
& \frac{1}{b_n(b_n-1)} \sum_{1\leq j\neq k\leq b_n} \abs{\log \sigma^2\rbraces{\frac{j\ell_n}{n}}- \log \sigma^2\rbraces{\frac{k\ell_n}{n}}}\\
 \longrightarrow & \int_0^1 \int_0^1 \abs{\log \sigma^2(x)-\log \sigma^2(y)} \mathrm{d}x\mathrm{d}y.
 \end{align*}
Being a c\`adl\`ag-function, $\sigma^2$ has at most countably many points of discontinuity on the interval $[0,1]$ (see, Section 14 in \citep{billingsley1968}). Consequently, the points of discontinuity on $[0,1]\times[0,1]$ of the two-dimensional function $g:[0,1]\times[0,1]\rightarrow \R_+$ defined by
\begin{equation*}
g(x,y):=\abs{\log\sigma^2(x)-\log \sigma^2(y)}
\end{equation*}
constitute a Lebesgue null set. Additionally, $g$ is bounded, due to $\sigma^2$ being bounded from above by $\sigma_{\sup}^2$ and from below by $\sigma_0^2>0$.  By Lebesgue's integrability criterion, the function $g$ is hence Riemann-integrable and the sequence of Riemann sums 
\begin{align*}
& \frac{1}{b_n(b_n-1)} \sum_{1\leq j\neq k\leq b_n} \abs{\log \sigma^2\rbraces{\frac{j\ell_n}{n}}- \log \sigma^2\rbraces{\frac{k\ell_n}{n}}}\\
= & \frac{b_n}{b_n-1} \cdot \sum_{1\leq j\leq b_n} \sum_{1\leq k\leq b_n} \abs{\log \sigma^2\rbraces{\frac{j}{b_n}}- \log \sigma^2\rbraces{\frac{k}{b_n}}} \cdot \frac{1}{b_n^2}
\end{align*}
 converges towards the desired integral as $n\rightarrow \infty$.
\end{proof}

\subsection{Two further Approximations}

\begin{proof}[Proof of Proposition 3.2]
Note that under the hypothesis, it holds $s_j^2=\sigma_H^2\rbraces{S_j+1}$ and we can thus bound the expression of interest from above via
\begin{align*}
&\sqrt{b_n}\sqrt{\ell_n} \abs{U_1(n)-U_2(n)}\\
= & \frac{\sqrt{b_n}\sqrt{\ell_n}}{b_n(b_n-1)}\abs{\sum_{1\leq j \neq k \leq b_n} \abs{ \log \rbraces{S_j+1}-\log \rbraces{S_k+1}}- \sum_{1\leq j \neq k \leq b_n} \abs{S_j-S_k} }\\
\leq & \frac{\sqrt{\ell_n}}{\sqrt{b_n}(b_n-1)}\sum_{1\leq j \neq k \leq b_n} \rbraces{ \abs{\log \rbraces{ S_j+1} - S_j} +\abs{\log \rbraces{S_k+1}- S_k} }\\
=& 2\frac{\sqrt{\ell_n}}{\sqrt{b_n}} \sum_{1\leq j\leq b_n} \abs{\log \rbraces{ S_j+1} - S_j}.
\end{align*}

We will further bound the last term by means of a Taylor-expansion for the logarithm. 
Define for a fixed $\varepsilon\in (0,1)$ the set $M_\varepsilon:= \{ \max_{1\leq j\leq b_n} \abs{S_j}\leq \varepsilon  \}$, on which it holds

\begin{equation*}
\1_{M_\varepsilon} \cdot \frac{\sqrt{\ell_n}}{\sqrt{b_n}} \sum_{1\leq j\leq b_n} \abs{\log \rbraces{ S_j+1} - S_j} \leq \1_{M_\varepsilon} \cdot  C \frac{\sqrt{\ell_n}}{\sqrt{b_n}} \sum_{1\leq j\leq b_n}  \abs{S_j}^{2}.
\end{equation*}

By Markov's inequality, we obtain for  $\varepsilon^*>0$
\begin{align*}
&\Pb\rbraces{1_{M_\varepsilon} \cdot    \frac{\sqrt{\ell_n}}{\sqrt{b_n}}\sum_{1\leq j  \leq b_n} \abs{S_j}^{2}>\varepsilon^*  }
\leq \Pb\rbraces{  \frac{\sqrt{\ell_n}}{\sqrt{b_n}}\sum_{1\leq j  \leq b_n} \abs{S_j}^{2}>\varepsilon^* }\\
\leq & \frac{1}{\varepsilon^*}  \frac{\sqrt{\ell_n}}{\sqrt{b_n}} \sum_{1\leq j  \leq b_n} \E{\abs{S_j}^{2}}
=  \frac{1}{\varepsilon^*}  \frac{\sqrt{b_n}}{\sqrt{\ell_n}}\Var{\sqrt{\ell_n}S_1},
\end{align*}
which tends towards zero due to the variance converging towards $\kappa^2$ and $s>0.5$. On $M_\varepsilon^C$, we also obtain convergence in probability due to the set itself being asymptotically negligible,
\begin{equation*}
\Pb\rbraces{M_\varepsilon^C}= \Pb\rbraces{\max_{1\leq j\leq b_n} \abs{S_j}>\varepsilon}\leq b_n \cdot \Pb\rbraces{\abs{S_1}>\varepsilon}\leq \frac{b_n}{\ell_n} \cdot\frac{\Var{\sqrt{\ell_n}S_1}}{\varepsilon^2} \longrightarrow 0.
\end{equation*}
\end{proof}

\begin{proof}[Proof of Proposition 3.3]
{
\allowdisplaybreaks{
Define
\begin{equation*}
\tilde{S}_{j}:=\tilde{S}_{j,n}:=\frac{1}{\ell_n} \sum_{i=(j-1)\ell_n+1}^{j\ell_n-m_n}\rbraces{Y_i^2-\E{Y_i^2}}
\end{equation*}
and note that
\begin{align*}
&\sqrt{b_n} \sqrt{\ell_n}\abs{U_2(n)-U_3(n)} \\
&= \abs{\frac{\sqrt{\ell_n}}{\sqrt{b_n}(b_n-1)}\sum_{1\leq j\neq k\leq b_n} \rbraces{\abs{S_j-S_k}-\abs{\tilde{S}'_j-\tilde{S}'_k}}} \\
&\leq \frac{\sqrt{\ell_n}}{\sqrt{b_n}(b_n-1)}\sum_{1\leq j\neq k\leq b_n} \abs{\rbraces{S_j-S_k}-\rbraces{\tilde{S}'_j-\tilde{S}'_k}} \\
&\leq \frac{\sqrt{\ell_n}}{\sqrt{b_n}(b_n-1)}\sum_{1\leq j\neq k\leq b_n} \rbraces{\abs{S_j- \tilde{S}_j}+ \abs{\tilde{S}_j-\tilde{S}'_j}+\abs{S_k-\tilde{S}_k}+\abs{\tilde{S}_k-\tilde{S}'_k}} \\
&=\frac{2 \sqrt{\ell_n}}{\sqrt{b_n}} \sum_{1\leq j\leq b_n} \rbraces{\abs{S_j- \tilde{S}_j}+ \abs{\tilde{S}_j-\tilde{S}'_j}}.
\end{align*}}

We will show that both $\frac{\sqrt{\ell_n}}{\sqrt{b_n}} \sum_{1\leq j\leq b_n} \abs{S_j- \tilde{S}_j}$ and $\frac{ \sqrt{\ell_n}}{\sqrt{b_n}} \sum_{1\leq j\leq b_n} \abs{\tilde{S}_j-\tilde{S}'_j}$ converge in probability towards zero. The first sum can be rewritten as
\begin{equation*}
\frac{\sqrt{\ell_n}}{\sqrt{b_n}} \sum_{1\leq j\leq b_n} \abs{S_j- \tilde{S}_j}=\frac{1}{\sqrt{b_n}} \sum_{1\leq j\leq b_n} \abs{ \frac{1}{\sqrt{\ell_n}}\sum_{i=j\ell_n-m_n+1}^{j\ell_n}\rbraces{Y_i^2-\E{Y_i^2}}}
\end{equation*}
and by stationarity, it holds
\begin{align*}
&\Pb\rbraces{\frac{ \sqrt{\ell_n}}{\sqrt{b_n}} \sum_{1\leq j\leq b_n} \abs{S_j- \tilde{S}_j}>\varepsilon^*}
\leq  \frac{1}{\varepsilon^*} \frac{ \sqrt{\ell_n}}{\sqrt{b_n}} \sum_{1\leq j\leq b_n} \E{\abs{S_j- \tilde{S}_j}}\\
=&  \frac{1}{\varepsilon^*} \frac{\sqrt{b_n}}{\sqrt{\ell_n}}\E{\abs{\sum_{i=\ell_n-m_n+1}^{\ell_n}\rbraces{Y_i^2-\E{Y_i^2}}}}\\
\leq &\frac{1}{\varepsilon^*}\frac{\sqrt{b_n m_n}}{\sqrt{\ell_n}} \rbraces{\E{\abs{\frac{1}{\sqrt{m_n}}\sum_{i=1}^{m_n}\rbraces{Y_i^2-\E{Y_i^2}}}^2}}^{1/2}
\leq   C \frac{\sqrt{b_n m_n}}{\sqrt{\ell_n}}.
\end{align*}
We required $s>0.5$ such that we must additionally choose $m_n=o\rbraces{\frac{\ell_n}{b_n}}=o\rbraces{n^{2s-1}}$ for the above expression to converge towards zero. For the second sum, the coupling argument yields
\begin{align*}
&\Pb\rbraces{\frac{ \sqrt{\ell_n}}{\sqrt{b_n}} \sum_{1\leq j\leq b_n} \abs{\tilde{S}_j-\tilde{S}'_j}>\varepsilon^*}\\
\leq &\Pb\rbraces{\tilde{B}_j\neq \tilde{B}_j' \text{ for at least one } j\in \{1, ..., b_n \}}
\leq  b_n \beta_Y(m_n)
\end{align*}
which tends towards zero as $n\rightarrow \infty$ by assumption.
}\end{proof}

\subsection{Ingredients for the Proofs of Theorems 2.4 and 2.5}
\label{Appendix: Subsec: Ingredients for the Proofs of Theorem: LLN Un and Theorem: CLT Un}
\begin{proof}[Proof of Proposition 3.5]
According to the central limit theorem for partial sums of $\beta$-mixing processes (see, e.g., Theorem 10.7 in \citep{Bradley.2007}), the random variable $\sqrt{\ell_n}\tilde{S}_j'/\kappa$ converges in distribution towards a standard normally distributed random variable,
\begin{equation*}
\frac{\sqrt{\ell_n}}{\kappa}\tilde{S}_j'=\frac{\sqrt{\ell_n-m_n}}{\sqrt{\ell_n}}\cdot \frac{1}{\kappa\sqrt{\ell_n-m_n}} \sum_{i=(j-1)\ell_n+1}^{j\ell_n-m_n}\rbraces{{Y_i'}^2-\E{{Y_i'}^2}}\distConv \NoD{0}{1}.
\end{equation*}
Moreover, the $\tilde{S}_j'(n)$'s are independent and identically distributed such that
\begin{equation*}
\sqrt{\ell_n} \cdot\frac{\tilde{S}_j'-\tilde{S}_k'}{\kappa} \distConv\NoD{0}{2}
\end{equation*}
for $j \neq k$. According to Theorem 5.4 in \citep{billingsley1968}, one additionally obtains convergence of the first absolute moments if the $\sqrt{\ell_n} \abs{\tilde{S}_j'-\tilde{S}_k'}$'s are uniformly integrable, but this holds due to
\begin{align*}
&\sup_{n\in \N} \E{\abs{\sqrt{\ell_n}\rbraces{\tilde{S}_j'-\tilde{S}_k'}}^{1+\varepsilon}}  \leq \max\rbraces{2,2^{1+\varepsilon}} \sup_{n\in \N} \E{\abs{\sqrt{\ell_n}\tilde{S}_1'}^{1+\varepsilon}}\\
\leq & 2^{1+\varepsilon} \sup_{n\in \N} \rbraces{\E{\abs{\sqrt{\ell_n}\tilde{S}_1'}^2}}^{\frac{1+\varepsilon}{2}} \leq C
\end{align*}
for $\varepsilon>0$ small enough, by the $c_r$- inequality, Jensen's inequality and since $\Var{\sqrt{\ell_n}\tilde{S}_1'}\rightarrow\kappa^2$. Hence,
\begin{align*}
&\E{\frac{\sqrt{\ell_n}}{\kappa}U_3(n)}=\frac{1}{b_n(b_n-1)}\sum_{1\leq j\neq k\leq b_n} \E{\abs{\sqrt{\ell_n}\cdot\frac{\tilde{S}'_j-\tilde{S}'_k}{\kappa}}}\\
=& \E{\abs{\sqrt{\ell_n}\cdot\frac{\tilde{S}'_1-\tilde{S}'_2}{\kappa}}}\longrightarrow\E{\abs{Z-Z'}}
\end{align*}
for two independent and standard normally distributed random variables $Z$ and $Z'$. To finish the proof, it remains to check that the variance of $\sqrt{\ell_n}U_3(n)$ converges towards zero. By the Cauchy-Schwarz inequality, it holds

\begin{align*}
&\Var{\sqrt{\ell_n}U_3(n)} \\
= &\frac{1}{b_n^2(b_n-1)^2}\sum_{1\leq j_1\neq k_1\leq b_n} \sum_{1\leq j_2\neq k_2\leq b_n} \cov{\abs{\sqrt{\ell_n}\rbraces{\tilde{S}'_{j_1}-\tilde{S}'_{k_1}}}}{\abs{\sqrt{\ell_n}\rbraces{\tilde{S}'_{j_2}-\tilde{S}'_{k_2}}}}\\
=& \frac{2}{b_n(b_n-1)}\Var{\abs{\sqrt{\ell_n}\rbraces{\tilde{S}'_1- \tilde{S}'_{2}}}}\\
  & + \frac{2(b_n-2)}{b_n(b_n-1)}\cov{\abs{\sqrt{\ell_n}\rbraces{\tilde{S}'_1-\tilde{S}'_{2}}}}{\abs{\sqrt{\ell_n}\rbraces{\tilde{S}'_1-\tilde{S}'_{3}}}}\\
\leq & \frac{2b_n-2}{b_n(b_n-1)} \Var{\abs{\sqrt{\ell_n}\rbraces{\tilde{S}'_1-\tilde{S}'_{2}}}}
= \frac{2b_n-2}{b_n(b_n-1)}2 \Var{\abs{\sqrt{\ell_n}\tilde{S}'_1}}
\leq C \frac{2b_n-2}{b_n(b_n-1)}
\end{align*}
which tends towards zero as $n\rightarrow\infty$.
\end{proof}

\begin{proposition}{
\label{Prop: Replacing kappan by kappa}

Assume that there exist constants $\rho>1$ and $0<\delta\leq 1$ such that $\E{\abs{Y_1}^{4+2\delta}}<\infty$ and for all $k\in \N$ it holds $\beta_Y(k)\leq C k^{-\rho (2+\delta)(1+\delta)/\delta^2}$. Moreover, assume $\ell_n=n^s$ with $s>0.5$ and $m_n=o(n^{2s-1})$. Then, under the null hypothesis,
\begin{equation*}
\sqrt{b_n}\abs{\kappa-\kappa_n} \rightarrow 0
\end{equation*}
as $n \rightarrow \infty$, where  $\kappa_n =\Var{\sqrt{\ell_n}\tilde{S}_1'}$.
}\end{proposition}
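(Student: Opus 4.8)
The plan is to reduce the statement to a deterministic estimate of $\abs{\kappa^2-\kappa_n^2}$, where $\kappa_n^2:=\Var{\sqrt{\ell_n}\tilde S_1'}$ is the quantity already computed in the outline of the proof of Theorem~\ref{Theorem: LLN Un}, and then to bound the three error contributions produced by the $m_n$-truncation. First I would record that $\kappa_n^2\ge 0$ and $\kappa>0$, so that $\kappa+\kappa_n\ge\kappa$ and hence
\[
  \sqrt{b_n}\,\abs{\kappa-\kappa_n}=\sqrt{b_n}\,\frac{\abs{\kappa^2-\kappa_n^2}}{\kappa+\kappa_n}\le\frac{1}{\kappa}\,\sqrt{b_n}\,\abs{\kappa^2-\kappa_n^2};
\]
it therefore suffices to prove $\sqrt{b_n}\,\abs{\kappa^2-\kappa_n^2}\to 0$.

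Next I would make the difference explicit. Since $(Y_i')_{1\le i\le\ell_n-m_n}$ has the same law as $(Y_i)_{1\le i\le\ell_n-m_n}$, expanding the variance gives $\kappa_n^2=\frac{\ell_n-m_n}{\ell_n}\Var{Y_1^2}+2\sum_{k=1}^{\ell_n-m_n-1}\frac{\ell_n-m_n-k}{\ell_n}\cov{Y_1^2}{Y_{k+1}^2}$, and using $1-\frac{\ell_n-m_n-k}{\ell_n}=\frac{m_n+k}{\ell_n}$ one obtains
\[
  \kappa^2-\kappa_n^2=\frac{m_n}{\ell_n}\Var{Y_1^2}+\frac{2}{\ell_n}\sum_{k=1}^{\ell_n-m_n-1}(m_n+k)\,\cov{Y_1^2}{Y_{k+1}^2}+2\sum_{k\ge\ell_n-m_n}\cov{Y_1^2}{Y_{k+1}^2}.
\]
Each term is then handled by Davydov's covariance inequality exactly as in the proof of Proposition~\ref{Prop: Approximation mit U_1 unter H und A}: from $\E{\abs{Y_1}^{4+2\delta}}<\infty$ and $\beta_{Y^2}(k)\le\beta_Y(k)\le Ck^{-\rho(2+\delta)(1+\delta)/\delta^2}$ one gets $\abs{\cov{Y_1^2}{Y_{k+1}^2}}\le C\beta_Y(k)^{\delta/(2+\delta)}\le Ck^{-\gamma}$ with $\gamma:=\rho(1+\delta)/\delta$, and the point to retain is that $\gamma>2$, because $\rho>1$ and $\delta\le 1$ give $\gamma>\frac{1+\delta}{\delta}\ge 2$.

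Plugging in this covariance bound, the three terms are estimated routinely: the first is $O(m_n/\ell_n)$; the second is at most $\frac{C}{\ell_n}\bigl(m_n\sum_{k\ge1}k^{-\gamma}+\sum_{k\ge1}k^{1-\gamma}\bigr)=O(m_n/\ell_n)$; and the third is $O\bigl((\ell_n-m_n)^{1-\gamma}\bigr)=O(\ell_n^{1-\gamma})$, using $m_n=o(\ell_n)$ (which holds since $m_n=o(n^{2s-1})$ and $2s-1<s$). Hence $\abs{\kappa^2-\kappa_n^2}=O(m_n/\ell_n)+O(\ell_n^{1-\gamma})$. It remains to multiply by $\sqrt{b_n}=n^{(1-s)/2}$ with $\ell_n=n^s$: for the first term $\sqrt{b_n}\,m_n/\ell_n=m_n\,n^{(1-3s)/2}=o\bigl(n^{2s-1+(1-3s)/2}\bigr)=o\bigl(n^{(s-1)/2}\bigr)\to 0$ since $s<1$; for the second, $\sqrt{b_n}\,\ell_n^{1-\gamma}=n^{(1+s)/2-s\gamma}\to 0$ as soon as $\gamma>\frac{1+s}{2s}$, and since $\frac{1+s}{2s}$ is decreasing in $s$ we have $\frac{1+s}{2s}<\tfrac32<2<\gamma$ for every $s>\tfrac12$. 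Combining the two bounds yields $\sqrt{b_n}\,\abs{\kappa^2-\kappa_n^2}\to 0$, which proves the proposition.

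The only genuinely delicate point is the middle (weight-defect) term: controlling $\frac{1}{\ell_n}\sum_k k\,\abs{\cov{Y_1^2}{Y_{k+1}^2}}$ requires summability of $\sum_k k^{1-\gamma}$, i.e.\ $\gamma>2$ rather than merely $\gamma>1$, so one must invoke the polynomial mixing rate and not just the summability condition (A2). Everything else is bookkeeping of exponents, and the assumed ranges $\rho>1$, $\delta\in(0,1]$, $s>\tfrac12$, $m_n=o(n^{2s-1})$ are precisely what make the two final exponent inequalities strict.
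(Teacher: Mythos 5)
Your proof is correct and follows essentially the same route as the paper: the identical three-term decomposition of $\kappa^2-\kappa_n^2$ (weight defect on the variance, weight defect on the covariances, and the tail sum), controlled by Davydov's inequality and the summability of $k^{1-\gamma}$ with $\gamma=\rho(1+\delta)/\delta>2$. The only cosmetic difference is the reduction from squares to square roots: the paper uses $\abs{\sqrt{a}-\sqrt{b}}\le\sqrt{\abs{a-b}}$ and hence shows $b_n\abs{\kappa^2-\kappa_n^2}\to 0$, while you divide by $\kappa+\kappa_n\ge\kappa>0$ and only need the (weaker) statement $\sqrt{b_n}\abs{\kappa^2-\kappa_n^2}\to 0$; both succeed under the stated assumptions.
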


\begin{proof}[Proof of Proposition \ref{Prop: Replacing kappan by kappa}]
\allowdisplaybreaks{
It holds
\begin{align*}
& \sqrt{b_n} \abs{\kappa-\kappa_n}\\
 = & \sqrt{b_n}\left|\abs{\Var{Y_1^2}+2\sum_{k=1}^\infty \cov{Y_1^2}{Y_{k+1}^2}}^{1/2}\right. \\
& \left. -\abs{ \frac{\ell_n-m_n}{\ell_n} \Var{Y_1^2} +2\sum_{k=1}^{\ell_n-m_n-1} \frac{\ell_n-m_n-k}{\ell_n}\cov{Y_1^2}{Y_{k+1}^2} }^{1/2} \right|\\
 \leq & \sqrt{b_n}\left|\Var{Y_1^2}+2\sum_{k=1}^\infty \cov{Y_1^2}{Y_{k+1}^2} - \frac{\ell_n-m_n}{\ell_n} \Var{Y_1^2}\right. \\
 & \left.-2\sum_{k=1}^{\ell_n-m_n-1} \frac{\ell_n-m_n-k}{\ell_n}\cov{Y_1^2}{Y_{k+1}^2}\right|^{1/2}\\
\leq & \abs{ \frac{b_n m_n}{\ell_n}\Var{Y_1^2} }^{1/2}+ \abs{2 b_n\sum_{k=1}^{\ell_n-m_n-1} \frac{m_n+k}{\ell_n}\cov{Y_1^2}{Y_{k+1}^2} }^{1/2}\\
& + \abs{2 b_n \sum_{k=\ell_n-m_n}^\infty \cov{Y_1^2}{Y_{k+1}^2} }^{1/2}.
\end{align*}
We will treat each of these terms separately. The first expression obviously converges towards zero due to $m_n=o\rbraces{n^{2s-1}}$. For the second term, we apply Davydov's covariance inequality together with the polynomial decay of the mixing coefficients and obtain
\begin{align*}
&b_n\sum_{k=1}^{\ell_n-m_n-1} \frac{m_n+k}{\ell_n}\cov{Y_1^2}{Y_{k+1}^2}\leq 8 b_n \lVert Y_1^2\rVert_{2+\delta}^2 \sum_{k=1}^{\ell_n-m_n-1} \frac{m_n+k}{\ell_n} \beta_{Y^2}(k)^{\delta/(2+\delta)}\\
\leq & C \frac{b_n m_n}{\ell_n}\sum_{k=1}	^\infty k^{-\rho(1+\delta)/\delta} + C\frac{b_n}{\ell_n}\sum_{k=1}^\infty k^{1-\rho(1+\delta)/\delta}.
\end{align*}
Since both of the above sums are finite as long as $\rho>2\delta/(1+\delta)$ and we assumed $\rho>1$ together with $\delta\in (0,1]$, the second term converges towards zero as well. Similarly, the third term can be treated via
\begin{align*}
& b_n \sum_{k=\ell_n-m_n}^\infty \cov{Y_1^2}{Y_{k+1}^2} \leq b_n \sum_{k=\ell_n-m_n}^\infty \frac{k}{\ell_n-m_n} \cov{Y_1^2}{Y_{k+1}^2}\\
 \leq & C \frac{b_n}{\ell_n-m_n}\sum_{k=1}^\infty k^{1-\rho(1+\delta)/\delta}
\end{align*}
which tends likewise towards zero.
}
\end{proof}

\begin{proof}[Proof of Lemma 3.7]
{Define  the cumulative distribution functions
\begin{equation*}
F_n(x):=\Pb\rbraces{\frac{\sqrt{\ell_n}}{\sqrt{2}\kappa_n}\rbraces{\tilde{S}'_1-\tilde{S}'_{2}}\leq x}
\end{equation*}
and
\begin{equation*}
\Phi(x):=\Pb\rbraces{\frac{1}{\sqrt{2}}\rbraces{Z-Z'}\leq x},
\end{equation*}
where $Z$ and $Z'$ are independent standard normally distributed random variables. Denote their maximal difference by
\begin{equation*}
\Delta_n:=\underset{x}{\sup}\abs{F_n(x)-\Phi(x)}.
\end{equation*}
We will bound this difference from above with the help of Theorem 1 in Tikhomirov \citep{Tikhomirov.1980}.
To do so, note that
\begin{equation*}
\frac{\sqrt{\ell_n}}{\sqrt{2}\kappa_n}\rbraces{\tilde{S}'_1-\tilde{S}'_{2}}= \frac{1}{\sqrt{2}\kappa_n} \sum_{i=1}^{\ell_n-m_n}\frac{1}{\sqrt{\ell_n}}\rbraces{{Y_i'}^2-{Y'}^2_{\ell_n+i}}
\end{equation*}
 has mean zero and variance one. Moreover, the sequence $\rbraces{{Y_i'}^2-{Y'}^2_{\ell_n+i}}/\sqrt{\ell_n}$   for $i=1,..., \ell_n-m_n$ is strictly stationary, absolutely regular with mixing coefficients smaller than or equal to $2\beta_{Y^2}(k)\leq 2\beta_Y(k)\leq 2C k^{-\rho (2+\delta)(1+\delta)/\delta^2}$ (see, Theorem 1, Chapter 1.1, in Doukhan \citep{Doukhan.1994}), and has finite $(2+\delta)$-moments by assumption and by an application of the $c_r$-inequality.
Hence, there exists a constant $C$ such that
\begin{equation*}
\Delta_n\leq C \rbraces{\ell_n-m_n}^{-(\delta/2)(\rho-1)/(\rho+1)}.
\end{equation*}
In particular, $\Delta_n\rightarrow 0$ as $n \rightarrow \infty$ such that $ 0<\Delta_n<\frac{1}{\sqrt{e}}$ for $n$ large enough. Additionally, note that $ \int_{-\infty}^{\infty}\abs{x}^2 \mathrm{d} F_n=\Var{\sqrt{\ell_n}\rbraces{\tilde{S}'_1-\tilde{S}'_{2}}/\sqrt{2}\kappa_n}=1$. According to Theorem 9, Chapter V, in Petrov \citep{Petrov.1975}, there hence exists a constant $C>0$ such that
\begin{equation*}
\abs{F_n(x)-\Phi(x)}\leq \frac{C\Delta_n\log\rbraces{\frac{1}{\Delta_n}}}{1+\abs{x}^2}.
\end{equation*}
Combining both results yields
\begin{equation*}
\abs{F_n(x)-\Phi(x)}\leq \frac{C \rbraces{\ell_n-m_n}^{-(\delta/2)(\rho-1)/(\rho+1)}\log\rbraces{C(\ell_n-m_n)}}{1+\abs{x}^2}.
\end{equation*}
Next, we define the cumulative distribution functions of the absolute values as
\begin{equation*}
\tilde{F}_n(x):=\Pb\rbraces{\frac{\sqrt{\ell_n}}{\sqrt{2}\kappa_n}\abs{\tilde{S}'_1-\tilde{S}'_{2}}\leq x} \quad
\text{and} \quad \tilde{\Phi}(x):=\Pb\rbraces{\frac{1}{\sqrt{2}}\abs{Z-Z'}\leq x}.
\end{equation*}
Due to their symmetry, we obtain
\begin{equation*}
\abs{\tilde{F}_n(x)-\tilde{\Phi}(x)}\leq 2\frac{C \rbraces{\ell_n-m_n}^{-(\delta/2)(\rho-1)/(\rho+1)}\log\rbraces{C(\ell_n-m_n)}}{1+\abs{x}^2}.
\end{equation*}
This yields
\begin{align*}
&\sqrt{b_n}\abs{\theta^{(n)}-\theta} = \sqrt{b_n} \sqrt{2} \abs{\E{\abs{\sqrt{\ell_n}\cdot\frac{\tilde{S}_1'-\tilde{S}_2'}{\sqrt{2} \kappa_n}}}-\E{\abs{\frac{Z-Z'}{\sqrt{2}}}}}\\
=&\sqrt{b_n} \sqrt{2}\abs{\int_0^\infty \rbraces{\tilde{F}_n(x)-\tilde{\Phi}(x)}\mathrm{d}x}
 \leq \sqrt{b_n} \sqrt{2} \int_0^\infty \abs{\tilde{F}_n(x)-\tilde{\Phi}(x)}\mathrm{d}x\\
\leq & \sqrt{b_n} 2\sqrt{2}   C \rbraces{\ell_n-m_n}^{-(\delta/2)(\rho-1)/(\rho+1)}\log\rbraces{C(\ell_n-m_n)} \int_0^\infty  \frac{1}{1+\abs{x}^2}\mathrm{d}x\\
\leq & C \sqrt{b_n} \rbraces{\ell_n-m_n}^{-(\delta/2)(\rho-1)/(\rho+1)}\log\rbraces{C(\ell_n-m_n)},
\end{align*}
which converges towards zero due to  $s>\rbraces{1+\delta\frac{\rho-1}{\rho+1}}^{-1}$ and $m_n=o(\ell_n)$. 
}\end{proof}

\begin{proof}[Proof of Proposition 3.6] 
By Proposition \ref{Prop: Replacing kappan by kappa}, we can replace the long run variance $\kappa$ in the formulation of Proposition 3.6 
by the sample size dependent normalization $\kappa_n$.
Using the Hoeffding-decomposition, we can then rewrite the U-statistic as
\begin{align*}
&\sqrt{b_n}\rbraces{\frac{\sqrt{\ell_n}}{\kappa_n}U_3(n)-\theta^{(n)}}\\
=&\frac{2}{\sqrt{b_n}}\sum_{1\leq j \leq b_n} h_1^{(n)}\rbraces{\sqrt{\ell_n}\frac{\tilde{S}'_j}{\kappa_n}}+ \frac{1}{\sqrt{b_n}(b_n-1)}\sum_{1\leq j \neq k \leq b_n} h_2^{(n)} \rbraces{\sqrt{\ell_n}\frac{\tilde{S}'_j}{\kappa_n},\sqrt{\ell_n}\frac{\tilde{S}'_k}{\kappa_n}}.
\end{align*}
We will prove that the first part of this decomposition converges in distribution towards a normal random variable and that the latter converges towards zero in probability.

To show the convergence of the linear part, we will apply Lyapunov's central limit theorem. By construction, $\E{h_1^{(n)}\rbraces{\sqrt{\ell_n}\tilde{S}'_j/\kappa_n}}=0$. Moreover, define for $j=1, ..., b_n$

\begin{equation*}
Y_{n,j}:= \frac{ h_1^{(n)}\rbraces{\sqrt{\ell_n}\tilde{S}'_j/\kappa_n}}{\sqrt{b_n \Var{h_1^{(n)}\rbraces{\sqrt{\ell_n}\tilde{S}'_j/\kappa_n}}}}
\end{equation*}
such that $\E{Y_{n,j}}=0$ and $\sum_{j=1}^{b_n}\Var{Y_{n,j}}=1$. To apply Lyapunov's central limit theorem, we need to verify that for some $\eta>0$
\begin{equation*}
\limn \sum_{j=1}^{b_n} \E{ \abs{Y_{n,j}}^{2+\eta}}=0.
\end{equation*}
By stationarity, we have
\begin{align*}
&\limn \sum_{j=1}^{b_n} \E{ \abs{Y_{n,j}}^{2+\eta}}= \limn b_n \cdot \E{ \abs{Y_{n,1}}^{2+\eta}}\\
=& \limn b_n^{-\eta/2} \Var{h_1^{(n)}\rbraces{\sqrt{\ell_n}\tilde{S}'_1/\kappa_n}}^{-(1+\eta/2)} \E{\abs{h_1^{(n)}\rbraces{\sqrt{\ell_n}\tilde{S}'_1/\kappa_n}}^{2+\eta}},
\end{align*}
which converges towards zero if the last two terms are bounded. For the latter of the two, it holds
\begin{align*}
&\E{\abs{h_1^{(n)}\rbraces{\sqrt{\ell_n}\tilde{S}'_1/\kappa_n}}^{2+\eta}}= \E{\abs{\EQ{h\rbraces{\sqrt{\ell_n}\tilde{S}'_1/\kappa_n,\sqrt{\ell_n}\tilde{S}'_{2}/\kappa_n}}{2}-\theta^{(n)}}^{2+\eta}}\\
\leq & 2^{1+\eta} \rbraces{\E{\abs{\EQ{h\rbraces{\sqrt{\ell_n}\tilde{S}'_1/\kappa_n,\sqrt{\ell_n}\tilde{S}'_{2}/\kappa_n}}{2} }^{2+\eta}}+ \rbraces{\theta^{(n)}}^{2+\eta}}\\
\leq  & 2^{1+\eta} \rbraces{\E{\abs{h\rbraces{\sqrt{\ell_n}\tilde{S}'_1/\kappa_n,\sqrt{\ell_n}\tilde{S}'_{2}/\kappa_n}}^{2+\eta} }+ \rbraces{\theta^{(n)}}^{2+\eta}}
\end{align*}
by the $c_r$- and by Jensen's inequality. Since $\theta^{(n)} \rightarrow \theta$ by Lemma 3.7, 
 it remains to show that the expectation
\begin{align*}
&\E{\abs{h\rbraces{\sqrt{\ell_n}\tilde{S}'_1/\kappa_n,\sqrt{\ell_n}\tilde{S}'_{2}/\kappa_n}}^{2+\eta} } = \E{\abs{\sqrt{\ell_n}\rbraces{\tilde{S}'_1-\tilde{S}'_{2}}/\kappa_n}^{2+\eta} }\\
\leq & 2^{2+\eta}  \E{\abs{\sqrt{\ell_n}\tilde{S}'_1/\kappa_n}^{2+\eta}}
\end{align*}
is bounded.
To apply a Rosenthal-type inequality (see, Theorem 2, Section 1.4, in \citep{Doukhan.1994}) to the latter expectation, we need to verify that for some $\varepsilon>0$, it holds
$ \E{\abs{Y_1^2}^{2+\eta+\varepsilon}}<\infty$ as well as
$$ \sum_{k=1}^\infty (k+1)^2 \beta(k)^{\frac{\varepsilon}{4+\varepsilon}}\leq C \sum_{k=1}^\infty (k+1)^2 k^{-\rho \frac{(2+\delta)(1+\delta)}{\delta^2}\frac{\varepsilon}{4+\varepsilon}}<\infty.$$
The moment condition is easily met since we may choose $\eta$ and $\varepsilon$ via $\eta+\varepsilon=\delta$. The second condition translates to
$$\rho \frac{(2+\delta)(1+\delta)}{\delta^2}\frac{\varepsilon}{4+\varepsilon}>3, \quad \text{or, equivalently,} \quad \varepsilon>12\rbraces{\rho \frac{(2+\delta)(1+\delta)}{\delta^2}-3}^{-1}$$
To find an admissible choice for $\varepsilon$ and $\eta$, we must choose it in such a way that $\varepsilon<\delta$. The existence of such a choice is ensured if we have $\delta>12\rbraces{\rho \frac{(2+\delta)(1+\delta)}{\delta^2}-3}^{-1}$ which is fulfilled for $\rho>\frac{9\delta}{(2+\delta)(1+\delta)}.$
An application of the Rosenthal-type inequality now yields
\begin{align*}
&\E{\abs{\sqrt{\ell_n}\frac{\tilde{S}_1'}{\kappa_n}}^{(2+\eta)}}
=  \E{\abs{\frac{1}{\kappa_n\sqrt{\ell_n} } \sum_{i=1}^{\ell_n-m_n}\rbraces{{Y_i'}^2-\E{{Y_i'}^2}}}^{(2+\eta)}}\\
\leq & C \kappa_n^{-(2+\eta)} \ell_n^{-(2+\eta)/2} \max\left\lbrace \sum_{i=1}^{\ell_n-m_n}\rbraces{\E{\abs{{Y_i'}^2-\E{{Y_i'}^2}}^{(2+\eta)+\varepsilon}}}^{\frac{(2+\eta)}{(2+\eta)+\varepsilon}}   , \right.\\
& \left. \rbraces{ \sum_{i=1}^{\ell_n-m_n}\E{\abs{{Y_i'}^2-\E{{Y_i'}^2}}^{2+\varepsilon}}^{\frac{2}{2+\varepsilon}}}^{(2+\eta)/2}   \right\rbrace\\
\leq & C \kappa_n^{- (2+\eta)} \ell_n^{-(2+\eta)/2} \max\left\lbrace \ell_n-m_n, \rbraces{\ell_n-m_n}^{(2+\eta)/2} \right\rbrace \leq C.
\end{align*}
We now turn towards the variance $\Var{h_1^{(n)}\rbraces{\sqrt{\ell_n}\tilde{S}'_1/\kappa_n}}$, where
$$h_1^{(n)}\rbraces{\sqrt{\ell_n}\tilde{S}_1'/\kappa_n}= \EQ{h\rbraces{\sqrt{\ell_n}\tilde{S}_1'/\kappa_n, \sqrt{\ell_n}\tilde{S}_2'/\kappa_n}}{2}- \theta^{(n)}.$$
According to the central limit theorem under $\beta$-mixing, it holds
\begin{equation*}
\sqrt{\ell_n}\cdot\frac{\tilde{S}_1'}{\kappa_n} \distConv Z \quad \text{and} \quad \sqrt{\ell_n}\cdot \frac{\tilde{S}_2'}{\kappa_n} \distConv Z',
\end{equation*}
where $Z$ and $Z'$ are two independent standard normally distributed random variables. We will apply the  continuous mapping theorem for sequences of functions (see, Theorem 5.5 in \citep{billingsley1968}) to show that $h_1^{(n)}\rbraces{\sqrt{\ell_n}\tilde{S}_1'/\kappa_n}$ converges towards $h_1(Z)$ in distribution, where $h_1(x)= \E{h(x,Z')}-\theta$. First, we have to check that $\Pb(Z\in E)=0$ for the set
\begin{equation*}
 E:=\{ x\in \R: \exists (x_n)_{n\in \N}\subset \R \text{ with } x_n \rightarrow x   \text{ but } h_1^{(n)}(x_n) \not\rightarrow h_1(x)\}.
\end{equation*}
Actually, $E$ is even empty, which can be seen as follows: Take an arbitrary $x\in \R$ and a sequence $(x_n)_{n\in \N}$ such that $x_n \rightarrow x$. Lemma 3.7 
yields $\theta^{(n)}\rightarrow \theta$ as $n \rightarrow \infty$ and we additionally need to verify that
$\E{h\rbraces{x_n, \sqrt{\ell_n}\tilde{S}_2'/\kappa_n}}$ converges towards $\E{h(x,Z')}$. By the continuous mapping theorem,
$$h\rbraces{x_n,\sqrt{\ell_n}\tilde{S}_2'/\kappa_n}= \abs{x_n- \sqrt{\ell_n}\tilde{S}_2'/\kappa_n} \distConv \abs{x-Z'}=h(x,Z').$$

Moreover, the $\abs{x_n- \sqrt{\ell_n}\tilde{S}_2'/\kappa_n}$'s are uniformly integrable  due to
\begin{align*}
&\sup_n \E{\abs{x_n- \sqrt{\ell_n}\frac{\tilde{S}_2'}{\kappa_n}}^{1+\varepsilon}} \leq  2^\varepsilon
\sup_n \rbraces{\abs{x_n}^{1+\varepsilon}+\E{\abs{\sqrt{\ell_n}\frac{\tilde{S}_2'}{\kappa_n}}^{1+\varepsilon}}}\\
& \leq 2^\varepsilon \sup_n \abs{x_n}^{1+\varepsilon}+ 2^\varepsilon\sup_n\E{\abs{\sqrt{\ell_n}\frac{\tilde{S}_2'}{\kappa_n}}^2}^{(1+\varepsilon)/2} \leq C
\end{align*}
for some $\varepsilon>0$ small enough. Theorem 5.4 in \citep{billingsley1968} thus implies
$$\E{h\rbraces{x_n,\sqrt{\ell_n}\tilde{S}_2'/\kappa_n}}\longrightarrow\E{h(x,Z')}$$ and consequently, $h_1^{(n)}(x_n) \rightarrow h_1(x)$. Hence, $x\notin E$ and
$h_1^{(n)}\rbraces{\sqrt{\ell_n}\tilde{S}_1'/\kappa_n} \distConv h_1(Z)$
follows by Theorem 5.5 in \citep{billingsley1968}.

 To show the convergence of the variances
\begin{equation*}
\Var{h_1^{(n)}\rbraces{\sqrt{\ell_n}\frac{\tilde{S}_1'}{\kappa_n}}}= \E{\rbraces{h_1^{(n)}\rbraces{\sqrt{\ell_n}\frac{\tilde{S}_1'}{\kappa_n}}}^2} \rightarrow \E{h_1(Z)^2}= \Var{h_1(Z)},
\end{equation*}
we  once more apply Theorem 5.4 in \citep{billingsley1968}. To do so, it only remains to ensure that the $h_1^{(n)}\rbraces{\sqrt{\ell_n}\tilde{S}_1'/\kappa_n}$'s have uniformly integrable second moments but
\begin{equation*}
\sup_n \E{\abs{h_1^{(n)}\rbraces{\sqrt{\ell_n}\frac{\tilde{S}_1'}{\kappa_n}}}^{2(1+\varepsilon)}} \leq C
\end{equation*}
for some $\varepsilon>0$ has already been shown at the beginning of the proof. Consequently, the variance converges and
\begin{equation*}
\limn \sum_{j=1}^{b_n} \E{ \abs{Y_{n,j}}^{2+\eta}}= 0.
\end{equation*}
By Lyapunov's central limit theorem, it therefore holds
\begin{equation*}
\sum_{j=1}^{b_n} \frac{h_1^{(n)}\rbraces{\sqrt{\ell_n}\tilde{S}_j'/\kappa_n}}{\sqrt{b_n\Var{h_1^{(n)}\rbraces{\sqrt{\ell_n}\tilde{S}_j'/\kappa_n}}}} \distConv \NoD{0}{1}.
\end{equation*}
Taking into account the convergence of the variances shown above,  this leads to
 \begin{align*}
 \frac{2}{\sqrt{b_n}} \sum_{1\leq j \leq b_n} h_1^{(n)}\rbraces{\sqrt{\ell_n}\tilde{S}_j'/\kappa_n} \distConv \NoD{0}{\psi^2}
 \end{align*}
with $\psi^2=4\Var{h_1(Z))}$.

We now check that the degenerate part of the Hoeffding-decomposition converges towards zero in probability. Note that, due to the independence of the $\tilde{S}_j'$'s, the expectation of the degenerate part is zero and its variance is given by
\begin{align*}
&\frac{1}{b_n (b_n-1)^2}\sum_{1\leq j \neq k \leq b_n} \Var{h_2^{(n)} \rbraces{\sqrt{\ell_n}\tilde{S}_j'/\kappa_n, \sqrt{\ell_n} \tilde{S}_k'/\kappa_n}}\\
= & \frac{1}{(b_n-1) }\Var{h_2^{(n)} \rbraces{\sqrt{\ell_n}\tilde{S}_1'/\kappa_n, \sqrt{\ell_n}\tilde{S}_2'/\kappa_n}}.
\end{align*}
since the terms are mutually uncorrelated by construction. To prove the convergence of the latter expression towards zero, it suffices that the variance is bounded, which holds due to
{\allowdisplaybreaks
\begin{align*}
&\Var{ h_2^{(n)} \rbraces{\sqrt{\ell_n}\tilde{S}_1'/\kappa_n, \sqrt{\ell_n}\tilde{S}_2'/\kappa_n}}
=  \E{h_2^{(n)} \rbraces{\sqrt{\ell_n}\tilde{S}_1'/\kappa_n, \sqrt{\ell_n}\tilde{S}_2'/\kappa_n}^2}\\
= & \E{ \rbraces{h(\sqrt{\ell_n}\tilde{S}_1'/\kappa_n,\sqrt{\ell_n}\tilde{S}_2'/\kappa_n)-\theta^{(n)}-h_1^{(n)}(\sqrt{\ell_n}\tilde{S}_1'/\kappa_n)-h_1^{(n)}(\sqrt{\ell_n}\tilde{S}_2'/\kappa_n) }^2}\\
= & \E{h\rbraces{\sqrt{\ell_n}\tilde{S}_1'/\kappa_n,\sqrt{\ell_n} \tilde{S}_2'/\kappa_n}^2}- \rbraces{\theta^{(n)}}^2\\
& - 4\E{h\rbraces{\sqrt{\ell_n}\tilde{S}_1'/\kappa_n, \sqrt{\ell_n}\tilde{S}_2'/\kappa_n}h_1^{(n)}\rbraces{\sqrt{\ell_n}\tilde{S}_1'/\kappa_n}}+ 2 \E{h_1^{(n)}\rbraces{\sqrt{\ell_n}\tilde{S}_1'/\kappa_n}^2}\\
= &  \E{h\rbraces{\sqrt{\ell_n}\tilde{S}_1'/\kappa_n, \sqrt{\ell_n}\tilde{S}_2'/\kappa_n}^2}- \rbraces{\theta^{(n)}}^2- 2\E{h_1^{(n)}\rbraces{\sqrt{\ell_n}\tilde{S}_1'/\kappa_n}^2}\\
\leq & \E{h\rbraces{\sqrt{\ell_n}\tilde{S}_1'/\kappa_n, \sqrt{\ell_n}\tilde{S}_2'/\kappa_n}^2}
=  \E{\abs{\sqrt{\ell_n}\rbraces{\tilde{S}_1' -\tilde{S}_2'}/\kappa_n}^{2}}\\
\leq & 4 \E{\rbraces{\sqrt{\ell_n}\tilde{S}_2'/\kappa_n}^2}=4
\end{align*}}
by an application of the $c_r$-inequality. Thus, the degenerate part converges towards zero in probability and together with the asymptotic normality of the linear part,  Slutzky's Lemma proves the Proposition.
\end{proof}

\subsection{Estimation of the Long Run Variance}
\subsubsection{An Approximation of the Estimator $\hat{\kappa}^2$}
We will first prove that instead of the estimator $\hat{\kappa}^2$, based on the block-mean centred observations $\tilde{X}_i$, we can employ
$$\hat{B}:= \frac{1}{\tilde{b}_n}\sqrt{\frac{\pi}{2}} \sum_{j=1}^{\tilde{b}_n} \frac{ \abs{ \sum_{i=(j-1)\tilde{\ell}_n+1}^{j\tilde{\ell}_n} \rbraces{\sigma_i^2Y_i^2 - \frac{1}{n}\sum_{k=1}^n \sigma_k^2Y_k^2}}} {\sqrt{\tilde{\ell}_n}},$$
which is based on the centred observations $X_i-\mu_i=\sigma_iY_i$, since both are asymptotically equivalent.

\begin{proposition}{
\label{Appendix:Prop: LRV Mean approx H and A}
Let the assumptions (A1) and (A2) hold and assume $\ell_n=n^s$ and $\elln=n^q$ with $q<s$, $q<3(1-s)$ and $s>0.5$.  Then it holds
 \begin{equation*}
\sqrt{b_n}\abs{\hat{\kappa}\cdot\hat{\sigma}_H^2- \hat{B}}\pConv 0 \quad \text{as } n \rightarrow \infty.
 \end{equation*}
}\end{proposition}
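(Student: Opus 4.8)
The plan is to reduce, via the elementary inequality $\abs{\,\abs{a}-\abs{b}\,}\le\abs{a-b}$, to controlling the difference between the squared block-mean-centered observations $\tilde X_i^2$ and the squared ``ideally centered'' observations $\sigma_i^2Y_i^2=(X_i-\mu_i)^2$, both inside the subsampling blocks and in the two global normalizations. Write $B_k:=\{(k-1)\ell_n+1,\dots,k\ell_n\}$ for the $\ell_n$-blocks used in the centering defining $\tilde X_i$, $I_j:=\{(j-1)\elln+1,\dots,j\elln\}$ for the subsampling blocks, $k(i)$ for the index with $i\in B_{k(i)}$, and set $\bar V_k:=\frac{1}{\ell_n}\sum_{r\in B_k}\sigma_rY_r$, $\bar\mu_k:=\frac{1}{\ell_n}\sum_{r\in B_k}\mu_r$ and $C_H:=\frac1n\sum_{k=1}^n\sigma_k^2Y_k^2$. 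Using $\hat\kappa\,\hat\sigma_H^2-\hat B=\frac{\sqrt{\pi/2}}{\bn}\sum_{j}\frac{1}{\sqrt{\elln}}\bigl(\abs{\sum_{i\in I_j}(\tilde X_i^2-\hat\sigma_H^2)}-\abs{\sum_{i\in I_j}(\sigma_i^2Y_i^2-C_H)}\bigr)$ together with $\hat\sigma_H^2-C_H=\frac1n\sum_{i=1}^n(\tilde X_i^2-\sigma_i^2Y_i^2)$, the triangle inequality gives
\begin{align*}
\sqrt{b_n}\,\abs{\hat\kappa\,\hat\sigma_H^2-\hat B}
\;\le\;\sqrt{\tfrac{\pi}{2}}\,\frac{\sqrt{b_n}}{\bn}\sum_{j=1}^{\bn}\frac{1}{\sqrt{\elln}}\,\abs{\sum_{i\in I_j}(\tilde X_i^2-\sigma_i^2Y_i^2)}
\;+\;\sqrt{\tfrac{\pi}{2}}\,\sqrt{b_n\elln}\,\abs{\hat\sigma_H^2-C_H}.
\end{align*}

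Since $X_i=\sigma_iY_i+\mu_i$, a short computation yields the pointwise decomposition
\[
\tilde X_i^2-\sigma_i^2Y_i^2
=\bar V_{k(i)}^2+(\mu_i-\bar\mu_{k(i)})^2-2\sigma_iY_i\bar V_{k(i)}+2\sigma_iY_i(\mu_i-\bar\mu_{k(i)})-2\bar V_{k(i)}(\mu_i-\bar\mu_{k(i)}),
\]
so that both terms on the right of the previous display are (averaged) sums of these five elementary expressions. I would bound each resulting contribution by Markov's inequality after taking expectations, using three standing estimates: (a) $\E{\bar V_k^2}=O(\ell_n^{-1})$ uniformly in $k$, which follows from the summability $\sum_{m\ge1}\abs{\cov{Y_1}{Y_{m+1}}}<\infty$ (a consequence of (A1), (A2) and Davydov's covariance inequality, exactly as in the proof of Proposition \ref{Prop: Approximation mit U_1 unter H und A}) and the boundedness of $\sigma$; (b) the Lipschitz bound $\abs{\mu_i-\bar\mu_{k(i)}}\le\ell_n/n=b_n^{-1}$, recorded at the start of Appendix \ref{Appendix: Subsec: non-zero mean}; (c) the $L^2$-estimate $\E{\bigl(\sum_{i\in I}\sigma_iY_i\bigr)^2}=O(\abs{I})$ for any index interval $I$, again from covariance summability.

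The key structural point --- and the main obstacle --- is that the subsampling blocks $I_j$ (length $\elln=n^q$) and the centering blocks $B_k$ (length $\ell_n=n^s$) are not aligned; the hypothesis $q<s$ guarantees $\elln=o(\ell_n)$, so each $I_j$ meets at most two consecutive blocks $B_k$, and one must avoid losing a spurious factor $\elln$ by bounding the $\bar V_{k(i)}$-terms crudely in modulus. With this in mind, the ``main'' cross term $-2\sigma_iY_i\bar V_{k(i)}$ is treated as in the classical consistency proof for subsampling long-run variance estimators: on an $I_j$ contained in a single $B_k$ it equals $-2\bar V_k\sum_{i\in I_j}\sigma_iY_i$, so Cauchy--Schwarz with (a) and (c) bounds its $L^1$-norm by $O(\sqrt{\elln/\ell_n})$, whence a total contribution of order $\sqrt{b_n/\ell_n}=n^{(1-2s)/2}\to0$ by $s>1/2$. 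The purely deterministic term $(\mu_i-\bar\mu_{k(i)})^2\le b_n^{-2}$ produces a contribution of order $\sqrt{\elln}\,b_n^{-3/2}=n^{(q-3(1-s))/2}$, and this is exactly where the assumption $q<3(1-s)$ is needed. The remaining three terms $\bar V_{k(i)}^2$, $\sigma_iY_i(\mu_i-\bar\mu_{k(i)})$ and $\bar V_{k(i)}(\mu_i-\bar\mu_{k(i)})$, as well as all the corresponding pieces of $\sqrt{b_n\elln}\,\abs{\hat\sigma_H^2-C_H}$, are of strictly smaller order under $q<s$, $q<3(1-s)$, $s>1/2$; for the global normalization $\hat\sigma_H^2-C_H$ one additionally exploits $\sum_{i\in B_k}(\mu_i-\bar\mu_k)=0$ and $\sum_{i\in B_k}(\sigma_iY_i-\bar V_k)=0$ to extract extra decay. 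Summing the finitely many contributions and applying Markov's inequality then yields the claim. The step I expect to demand the most care is the bookkeeping for the $O(b_n)$ subsampling blocks $I_j$ straddling a boundary between two centering blocks $B_k,B_{k+1}$: although negligible in number relative to $\bn$, each such block must be split into its two sub-pieces and estimated separately, so that the worst-case-over-$j$ bound, once summed and multiplied by $\sqrt{b_n}$, still tends to zero.
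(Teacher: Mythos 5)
Your proposal is correct and follows essentially the same route as the paper: the reduction via $\abs{\,\abs{a}-\abs{b}\,}\le\abs{a-b}$, the identical five-term expansion of $\tilde X_i^2-\sigma_i^2Y_i^2$, the Cauchy--Schwarz/covariance-summability bounds with the boundary-block splitting for the cross term, and the same identification of where $q<3(1-s)$, $q<s$ and $s>0.5$ enter. The only (immaterial) difference is that the paper disposes of the global normalization term by noting it is dominated by the blockwise term via the triangle inequality over the subsampling blocks, rather than treating it separately.
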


\begin{proof}
It holds
\allowdisplaybreaks{
\begin{align*}
 &\sqrt{\frac{2}{\pi}} \sqrt{b_n}\abs{\hat{\kappa}\cdot\hat{\sigma}_H^2- \hat{B}}\\
 =& \sqrt{b_n} \abs{ \frac{1}{\tilde{b}_n}  \sum_{j=1}^{\tilde{b}_n} \rbraces{\abs{\sum_{i=(j-1)\tilde{\ell}_n+1}^{j\tilde{\ell}_n} \sigma_i^2Y_i^2 - \frac{\tilde{\ell}_n}{n}\sum_{i=1}^n \sigma_i^2Y_i^2 }-\abs{\sum_{i=(j-1)\tilde{\ell}_n+1}^{j\tilde{\ell}_n} \tilde{X}_i^2 - \frac{\tilde{\ell}_n}{n}\sum_{i=1}^n \tilde{X}_i^2 } } \frac{1}{\sqrt{\tilde{\ell}_n}} }\\
 \leq &   \frac{\sqrt{b_n}}{\tilde{b}_n} \sum_{j=1}^{\tilde{b}_n} \abs{\sum_{i=(j-1)\tilde{\ell}_n+1}^{j\tilde{\ell}_n} \sigma_i^2Y_i^2 - \frac{\tilde{\ell}_n}{n}\sum_{i=1}^n \sigma_i^2Y_i^2 -\rbraces{\sum_{i=(j-1)\tilde{\ell}_n+1}^{j\tilde{\ell}_n} \tilde{X}_i^2 - \frac{\tilde{\ell}_n}{n}\sum_{i=1}^n \tilde{X}_i^2 } } \frac{1}{\sqrt{\tilde{\ell}_n}} \\
 \leq &   \frac{\sqrt{b_n}}{\tilde{b}_n} \sum_{j=1}^{\tilde{b}_n} \abs{\sum_{i=(j-1)\tilde{\ell}_n+1}^{j\tilde{\ell}_n} \rbraces{\sigma_i^2Y_i^2 -\tilde{X}_i^2}} \frac{1}{\sqrt{\tilde{\ell}_n}}+ \sqrt{b_n}  \abs{ \frac{\tilde{\ell}_n}{n}\sum_{i=1}^n \rbraces{\sigma_i^2Y_i^2 -\tilde{X}_i^2}} \frac{1}{\sqrt{\tilde{\ell}_n}} \\
 \leq & 2  \frac{\sqrt{b_n}}{\tilde{b}_n} \sum_{j=1}^{\tilde{b}_n} \abs{\sum_{i=(j-1)\tilde{\ell}_n+1}^{j\tilde{\ell}_n} \rbraces{\sigma_i^2Y_i^2 -\tilde{X}_i^2}} \frac{1}{\sqrt{\tilde{\ell}_n}}.
\end{align*}
Inserting the definition of $\tilde{X}_i^2$, we can bound the above expression by
\begin{align*}
&2  \frac{\sqrt{b_n}}{\tilde{b}_n} \sum_{j=1}^{\tilde{b}_n} \abs{\sum_{i=(j-1)\tilde{\ell}_n+1}^{j\tilde{\ell}_n} \rbraces{\mu_i-\frac{1}{\ell_n}\sum_{r(i)}\mu_r}^2} \frac{1}{\sqrt{\tilde{\ell}_n}}\\
+ & 2  \frac{\sqrt{b_n}}{\tilde{b}_n} \sum_{j=1}^{\tilde{b}_n} \abs{\sum_{i=(j-1)\tilde{\ell}_n+1}^{j\tilde{\ell}_n} \rbraces{2\sigma_iY_i\rbraces{\frac{1}{\ell_n}\sum_{r(i)}\mu_r-\mu_i}}} \frac{1}{\sqrt{\tilde{\ell}_n}}\\
+ & 2  \frac{\sqrt{b_n}}{\tilde{b}_n} \sum_{j=1}^{\tilde{b}_n} \abs{\sum_{i=(j-1)\tilde{\ell}_n+1}^{j\tilde{\ell}_n} 2\rbraces{\frac{1}{\ell_n}\sum_{r(i)}\sigma_rY_r}\rbraces{\mu_i-\frac{1}{\ell_n}\sum_{r(i)}\mu_r}} \frac{1}{\sqrt{\tilde{\ell}_n}}\\
+& 2  \frac{\sqrt{b_n}}{\tilde{b}_n} \sum_{j=1}^{\tilde{b}_n} \abs{\sum_{i=(j-1)\tilde{\ell}_n+1}^{j\tilde{\ell}_n} \rbraces{2\sigma_iY_i \frac{1}{\ell_n}\sum_{r(i)}\sigma_rY_r}} \frac{1}{\sqrt{\tilde{\ell}_n}}\\
+ & 2  \frac{\sqrt{b_n}}{\tilde{b}_n} \sum_{j=1}^{\tilde{b}_n} \abs{\sum_{i=(j-1)\tilde{\ell}_n+1}^{j\tilde{\ell}_n} \rbraces{\frac{1}{\ell_n}\sum_{r(i)}\sigma_rY_r}^2} \frac{1}{\sqrt{\tilde{\ell}_n}},
\end{align*}
where $r(i)$ denotes the summation over all $r\in \{(k-1)\ell_n+1, ..., k \ell_n\}$  in case  $i$ itself lies in $\{(k-1)\ell_n+1, ..., k \ell_n\}$. Once more, we will show that all five of these terms converge towards zero. The first of these terms is deterministic and can be bounded by
\begin{align*}
2  \frac{\sqrt{b_n}}{\tilde{b}_n} \sum_{j=1}^{\tilde{b}_n} \abs{\sum_{i=(j-1)\tilde{\ell}_n+1}^{j\tilde{\ell}_n} \rbraces{\frac{1}{\ell_n}\sum_{r(i)}\abs{\mu_i-\mu_r}}^2} \frac{1}{\sqrt{\tilde{\ell}_n}}\leq  2 \sqrt{\frac{\elln}{b_n^3}} \longrightarrow 0.
\end{align*}
To show convergence of the remaining terms, it suffices that their expectations converge towards zero. For the second term, it holds by stationarity
\begin{align*}
& 2  \frac{\sqrt{b_n}}{\tilde{b}_n} \sum_{j=1}^{\tilde{b}_n} \E{ \abs{\sum_{i=(j-1)\tilde{\ell}_n+1}^{j\tilde{\ell}_n} 2\sigma_iY_i\rbraces{\frac{1}{\ell_n}\sum_{r(i)}\mu_r-\mu_i}}} \frac{1}{\sqrt{\tilde{\ell}_n}}\\
\leq & 2  \frac{\sqrt{b_n}}{\tilde{b}_n} \sum_{j=1}^{\tilde{b}_n} \rbraces{ \sum_{i=(j-1)\tilde{\ell}_n+1}^{j\tilde{\ell}_n} \sum_{k=(j-1)\tilde{\ell}_n+1}^{j\tilde{\ell}_n}4\sigma_i\sigma_k\E{Y_iY_k}\rbraces{\frac{1}{\ell_n}\sum_{r(i)}\mu_r-\mu_i}\rbraces{\frac{1}{\ell_n}\sum_{r(k)}\mu_r-\mu_k}}^{1/2} \frac{1}{\sqrt{\tilde{\ell}_n}}\\
\leq & 4 \sigma_{\sup} \frac{1}{\sqrt{b_n}} \rbraces{\frac{1}{\elln} \sum_{i=1}^{\elln}\sum_{k=1}^{\elln} \abs{\E{Y_iY_k}} }^{1/2} \longrightarrow 0.
\end{align*}
Similarly, the expected value of the third term can be bounded by
\begin{align*}
& 4 \frac{\sqrt{b_n}}{\tilde{b}_n} \sum_{j=1}^{\tilde{b}_n} \sum_{i=(j-1)\tilde{\ell}_n+1}^{j\tilde{\ell}_n} \E{\abs{\frac{1}{\ell_n}\sum_{r(i)}\sigma_rY_r}}\cdot\abs{\mu_i-\frac{1}{\ell_n}\sum_{r(i)}\mu_r} \frac{1}{\sqrt{\tilde{\ell}_n}}\\
\leq & 4\sigma_{\sup} \frac{1}{\sqrt{b_n}}\sqrt{\frac{\elln}{\ell_n}} \rbraces{\frac{1}{\ell_n}\sum_{i=1}^{\ell_n}\sum_{k=1}^{\ell_n}\abs{\E{Y_iY_k}}}^{1/2} \longrightarrow 0.
\end{align*}
To treat the fourth term, recall that $\elln=o(\ell_n)$ and we can thus assume without loss of generality that $\elln<\ell_n$. Hence, each block $\{(j-1)\elln+1, ..., j \elln\}$ overlaps with at most two blocks $\{(k-1)\ell_n+1, ..., k\ell_n\}$ and $\{k\ell_n+1, ..., (k+1)\ell_n\}$ and is thus split into two parts, $\{(j-1)\elln+1, ...,\tau_{j,n}\}$  and $\{\tau_{j,n}+1, ..., j \elln\}$. Consequently, we can also split the inner sum in the fourth term up via
\begin{align*}
&2  \frac{\sqrt{b_n}}{\tilde{b}_n} \sum_{j=1}^{\tilde{b}_n} \E{\abs{\sum_{i=(j-1)\tilde{\ell}_n+1}^{j\tilde{\ell}_n} \rbraces{2\sigma_iY_i \frac{1}{\ell_n}\sum_{r(i)}\sigma_rY_r}}} \frac{1}{\sqrt{\tilde{\ell}_n}}\\
\leq &  2  \frac{\sqrt{b_n}}{\tilde{b}_n} \sum_{j=1}^{\tilde{b}_n} \E{\abs{\rbraces{\sum_{i=(j-1)\tilde{\ell}_n+1}^{\tau_{j,n}}2\sigma_iY_i} \rbraces{ \frac{1}{\ell_n}\sum_{r((j-1)\tilde{\ell}_n+1)}\sigma_rY_r}}} \frac{1}{\sqrt{\tilde{\ell}_n}}\\
& + 2  \frac{\sqrt{b_n}}{\tilde{b}_n} \sum_{j=1}^{\tilde{b}_n} \E{\abs{\rbraces{\sum_{i=\tau_{j,n}+1}^{j\elln}2\sigma_iY_i} \rbraces{ \frac{1}{\ell_n}\sum_{r(j\tilde{\ell}_n)}\sigma_rY_r}}} \frac{1}{\sqrt{\tilde{\ell}_n}}\\
\end{align*}
Both these terms converge towards zero since, e.g. for the first one,
\begin{align*}
 & 2  \frac{\sqrt{b_n}}{\tilde{b}_n} \sum_{j=1}^{\tilde{b}_n} \E{\abs{\rbraces{\sum_{i=(j-1)\tilde{\ell}_n+1}^{\tau_{j,n}}2\sigma_iY_i} \rbraces{ \frac{1}{\ell_n}\sum_{r((j-1)\tilde{\ell}_n+1)}\sigma_rY_r}}} \frac{1}{\sqrt{\tilde{\ell}_n}}\\
  \leq & 2  \frac{\sqrt{b_n}}{\tilde{b}_n} \sum_{j=1}^{\tilde{b}_n} \Bigg\{\E{\rbraces{\sum_{i=(j-1)\tilde{\ell}_n+1}^{\tau_{j,n}}2\sigma_iY_i}^2}\Bigg\}^{1/2} \Bigg\{\E{\rbraces{ \frac{1}{\ell_n}\sum_{r((j-1)\tilde{\ell}_n+1)}\sigma_rY_r}^2}\Bigg\}^{1/2} \frac{1}{\sqrt{\tilde{\ell}_n}}\\
  \leq & 4 \sigma_{\sup}^2 \sqrt{\frac{b_n}{\ell_n}} \rbraces{\frac{1}{\elln}\sum_{i=1}^{\elln}\sum_{k=1}^{\elln}\abs{\E{Y_iY_k}}}^{1/2} \rbraces{\frac{1}{\ell_n}\sum_{i=1}^{\ell_n}\sum_{k=1}^{\ell_n}\abs{\E{Y_iY_k}}}^{1/2} \longrightarrow 0
\end{align*}
by the Cauchy-Schwarz inequality. For the fifth term, one obtains
\begin{align*}
2  \frac{\sqrt{b_n}}{\tilde{b}_n} \sum_{i=1}^{n} \E{ \rbraces{\frac{1}{\ell_n}\sum_{r(i)}\sigma_rY_r}^2} \frac{1}{\sqrt{\tilde{\ell}_n}}
\leq 2\sigma_{\sup}^2 \frac{\sqrt{b_n}\sqrt{\elln}}{\ell_n}\rbraces{\frac{1}{\ell_n}\sum_{i=1}^{\ell_n}\sum_{k=1}^{\ell_n}\abs{\E{Y_iY_k}}} \longrightarrow 0
\end{align*}
and hence, convergence in probability towards zero of the entire expression holds.
}
\end{proof}
Consequently, it suffices from now on to prove our results for $\hat{B}$.

\subsubsection{Limit Results under the Null Hypothesis}
Under the null hypothesis, $\sigma= \sigma_H$, and we have
$$\hat{B}= \frac{\sigma_H^2}{\tilde{b}_n} \sqrt{\frac{\pi}{2}} \sum_{j=1}^{\tilde{b}_n} \abs{T_j- \elln \overline{Y^2} } \frac{1}{\sqrt{\tilde{\ell}_n}},$$
where we defined for ease of notation
$$ T_j:= T_{j,n}:=  \sum_{i=(j-1)\elln+1}^{j\elln} Y_i^2 \quad \text{and} \quad \overline{Y^2}:=\overline{Y^2_n}:=\frac{1}{n}\sum_{i=1}^nY_i^2.$$

\begin{proposition}{
\label{Appendix: Prop: LRV esti under H}
Assume that there exist constants $\rho>1$ and $0<\delta\leq 1$ such that $\E{\abs{Y_1}^{4+2\delta}}<\infty$ and for all $k\in \N$ it holds $\beta_Y(k)\leq C k^{-\rho (2+\delta)(1+\delta)/\delta^2}$. Moreover, let $\ell_n=n^s$ and $\elln=n^q$ such that $1-s<q\delta (\rho-1)/(\rho+1)$ and  $q<s$. 
Then, under the null hypothesis,
$$\sqrt{b_n}\abs{\hat{B}-\sigma_H^2\kappa}\pConv 0  \quad \text{ as } n \rightarrow \infty.$$
}\end{proposition}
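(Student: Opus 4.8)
The plan is to follow the route used for $U_3(n)$ and for the centering $\theta^{(n)}$ in Lemma \ref{Lemma: Asymptotik theta thetan}: under the null hypothesis $\hat B/\sigma_H^2$ is, up to a negligible global centering correction, the standard subsampling estimator $\sqrt{\pi/2}\,\bn^{-1}\sum_{j=1}^{\bn}\abs{W_j}$ built from the block sums
\[
  W_j:=W_{j,n}:=\frac{1}{\sqrt{\elln}}\sum_{i=(j-1)\elln+1}^{j\elln}\rbraces{Y_i^2-\E{Y_1^2}},
\]
and I would deduce the claim from (i) a bias estimate carrying a Berry--Esseen rate and (ii) a variance estimate exploiting the vanishing factor $b_n/\bn=n^{q-s}$. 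Note first that the hypotheses force $q>0$ and $q<s<1$, so both $\elln\to\infty$ and $\bn\to\infty$.

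First I would remove the global centering. Writing $c_n:=\sqrt{\elln}(\overline{Y^2}-\E{Y_1^2})$, one has $T_j-\elln\overline{Y^2}=\sqrt{\elln}(W_j-c_n)$, hence $\hat B/\sigma_H^2=\sqrt{\pi/2}\,\bn^{-1}\sum_{j=1}^{\bn}\abs{W_j-c_n}$; since $c_n$ does not depend on $j$, the elementary bound $\abs{\abs{W_j-c_n}-\abs{W_j}}\le\abs{c_n}$ gives $\abs{\hat B/\sigma_H^2-\sqrt{\pi/2}\,\bn^{-1}\sum_j\abs{W_j}}\le\sqrt{\pi/2}\,\abs{c_n}$. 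Since $\E{\abs{c_n}}\le\sqrt{\elln}\,\rbraces{\Var{n^{-1/2}\sum_{i=1}^n(Y_i^2-\E{Y_1^2})}}^{1/2}n^{-1/2}\le C\sqrt{\elln/n}$ (the long-run variance of $(Y_i^2)$ being finite under (A1), (A2)), Markov's inequality gives $\sqrt{b_n}\abs{c_n}\pConv0$ because $\sqrt{b_n\elln/n}=n^{(q-s)/2}\to0$. It therefore suffices to prove $\sqrt{b_n}\rbraces{\sqrt{\pi/2}\,\bn^{-1}\sum_{j=1}^{\bn}\abs{W_j}-\kappa}\pConv0$, which I would split into a bias term and a fluctuation term.

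For the bias, note that by stationarity $\E{\sqrt{\pi/2}\,\bn^{-1}\sum_j\abs{W_j}}=\sqrt{\pi/2}\,\E{\abs{W_1}}$. Setting $\kappa_{\elln}^2:=\Var{W_1}=\Var{Y_1^2}+2\sum_{k=1}^{\elln-1}\frac{\elln-k}{\elln}\cov{Y_1^2}{Y_{k+1}^2}$, Davydov's inequality together with $\beta_{Y^2}(k)\le\beta_Y(k)\le Ck^{-\rho(2+\delta)(1+\delta)/\delta^2}$ gives $\abs{\cov{Y_1^2}{Y_{k+1}^2}}\le Ck^{-\rho(1+\delta)/\delta}$ with $\rho(1+\delta)/\delta>2$ (as $\rho>1\ge2\delta/(1+\delta)$), whence $\abs{\kappa^2-\kappa_{\elln}^2}\le C/\elln$ and thus $\sqrt{b_n}\abs{\kappa-\kappa_{\elln}}\le Cn^{(1-s)/2-q}\to0$ (here $1-s<q$ because $\delta\frac{\rho-1}{\rho+1}<1$) --- the exact analogue of Proposition \ref{Prop: Replacing kappan by kappa}. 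It then remains to control $\sqrt{b_n}\abs{\sqrt{\pi/2}\,\E{\abs{W_1}}-\kappa_{\elln}}=\sqrt{b_n}\,\kappa_{\elln}\sqrt{\pi/2}\,\abs{\E{\abs{W_1/\kappa_{\elln}}}-\sqrt{2/\pi}}$. Here I would argue as in the proof of Lemma \ref{Lemma: Asymptotik theta thetan}: $W_1/\kappa_{\elln}$ has mean zero and variance one and is built from the strictly stationary, absolutely regular array $\elln^{-1/2}(Y_i^2-\E{Y_1^2})$, $1\le i\le\elln$, with finite $(2+\delta)$-th moments, so Tikhomirov's CLT with rate gives $\Delta_n:=\sup_x\abs{\Pb(W_1/\kappa_{\elln}\le x)-\Phi(x)}\le C\elln^{-(\delta/2)(\rho-1)/(\rho+1)}$, and Petrov's $x$-weighted refinement gives $\sup_x(1+x^2)\abs{\Pb(\abs{W_1/\kappa_{\elln}}\le x)-\Pb(\abs Z\le x)}\le C\Delta_n\log(1/\Delta_n)$; integrating in $x$ over $(0,\infty)$ yields $\abs{\E{\abs{W_1/\kappa_{\elln}}}-\E{\abs Z}}\le C\Delta_n\log(1/\Delta_n)$. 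Hence $\sqrt{b_n}\abs{\sqrt{\pi/2}\,\E{\abs{W_1}}-\kappa_{\elln}}\le C\sqrt{b_n}\,\elln^{-(\delta/2)(\rho-1)/(\rho+1)}\log\elln=Cn^{(1-s)/2-q(\delta/2)(\rho-1)/(\rho+1)}\log n$, which tends to $0$ precisely under the hypothesis $1-s<q\delta(\rho-1)/(\rho+1)$.

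Finally, for the stochastic fluctuation I would use Chebyshev's inequality, so it suffices to show $b_n\Var{\sqrt{\pi/2}\,\bn^{-1}\sum_{j=1}^{\bn}\abs{W_j}}\to0$. Splitting the variance into diagonal and off-diagonal parts, the diagonal contributes at most $\bn^{-1}\E{W_1^2}=\kappa_{\elln}^2/\bn$, so $b_n$ times it is $\le\kappa_{\elln}^2 b_n/\bn=\kappa_{\elln}^2 n^{q-s}\to0$; for $j\ne k$ the variables $\abs{W_j}$ and $\abs{W_k}$ are functions of disjoint runs of $\elln$ consecutive $Y_i^2$, the $O(\bn)$ adjacent pairs being handled crudely by $\abs{\cov{\abs{W_j}}{\abs{W_k}}}\le\Var{\abs{W_1}}\le C$ and the pairs with $\abs{k-j}\ge2$ by Davydov's covariance inequality, $\abs{\cov{\abs{W_j}}{\abs{W_k}}}\le C\beta_Y\rbraces{(\abs{k-j}-1)\elln}^{\delta'/(2+\delta')}$, where $\lVert W_j\rVert_{2+\delta'}\le C$ for a suitable $\delta'>0$ by a Rosenthal-type inequality (as in the proof of Proposition \ref{Prop: CLT U3}, using $\E{\abs{Y_1}^{4+2\delta}}<\infty$ and the polynomial mixing rate); summing over $k$ and then over $j$ and multiplying by $b_n\bn^{-2}$, this off-diagonal contribution is $O(b_n/\bn)=O(n^{q-s})\to0$. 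Combining the three steps yields $\sqrt{b_n}\abs{\hat B-\sigma_H^2\kappa}\pConv0$. The hard part will be the bias step: one must produce a quantitative normal approximation for the single block sum $W_1$ strong enough to survive multiplication by $\sqrt{b_n}=n^{(1-s)/2}$ under merely polynomial $\beta$-mixing and only $(4+2\delta)$ moments --- this is exactly where the condition $1-s<q\delta(\rho-1)/(\rho+1)$ is needed, and it calls for combining Tikhomirov's CLT-with-rate with Petrov's refinement as in Lemma \ref{Lemma: Asymptotik theta thetan}; by contrast the variance control is routine once the vanishing factor $b_n/\bn=n^{q-s}$ has been identified.
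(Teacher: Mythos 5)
Your proof is correct and follows essentially the same route as the paper's: you remove the global centering using $q<s$, control the bias $\sqrt{b_n}\,\abs{\sqrt{\pi/2}\,\E{\abs{W_1}}-\kappa}$ by first replacing $\kappa$ with the finite-$n$ standard deviation and then combining Tikhomirov's Berry--Esseen bound with Petrov's weighted refinement (exactly where the paper, too, spends the condition $1-s<q\delta(\rho-1)/(\rho+1)$), and show the fluctuation term vanishes at rate $b_n/\bn=n^{q-s}$. The only minor divergence is in the fluctuation step, where the paper applies Yokoyama's moment inequality to the stationary mixing sequence of block sums while you use a direct diagonal/off-diagonal covariance decomposition with Davydov's inequality and a Rosenthal-type bound on $\lVert W_1\rVert_{2+\delta'}$; both yield the same $O(b_n/\bn)$ estimate.
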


\begin{proof}
Parts of this proof rely on the proof of Proposition 3.1 in Dehling et al. \citep{Dehling.2013} but various alterations need to be made to account for the additional scaling $\sqrt{b_n}$, such that the entire proof is written out here. Consider
{\allowdisplaybreaks
\begin{align*}
\sqrt{b_n}\abs{\hat{B}/\sigma_H^2-\kappa}
=& \sqrt{b_n}\abs{ \frac{1}{\tilde{b}_n} \sqrt{\frac{\pi}{2}} \sum_{j=1}^{\tilde{b}_n} \abs{T_j- \elln \overline{Y^2} } \frac{1}{\sqrt{\tilde{\ell}_n}}-\kappa}\\
\leq & \sqrt{b_n}\abs{ \frac{1}{\tilde{b}_n} \sqrt{\frac{\pi}{2}} \sum_{j=1}^{\tilde{b}_n} \rbraces{\abs{T_j- \elln \overline{Y^2} } -\abs{T_j- \elln \E{Y^2_1} }}\frac{1}{\sqrt{\tilde{\ell}_n}}}\\
&+  \sqrt{b_n}\abs{ \frac{1}{\tilde{b}_n} \sqrt{\frac{\pi}{2}} \sum_{j=1}^{\tilde{b}_n} \abs{T_j- \elln \E{Y^2_1} } \frac{1}{\sqrt{\tilde{\ell}_n}}-\kappa}.
\end{align*}}

We will show that each of the last two terms converges towards zero in probability. For the first one, we obtain
{\allowdisplaybreaks
\begin{align*}
& \Pb\rbraces{\sqrt{b_n}\abs{ \frac{1}{\tilde{b}_n} \sqrt{\frac{\pi}{2}} \sum_{j=1}^{\tilde{b}_n} \rbraces{\abs{T_j- \elln \overline{Y^2} } -\abs{T_j- \elln \E{Y^2_1} }}\frac{1}{\sqrt{\tilde{\ell}_n}}}>\varepsilon^*}\\
\leq & \Pb \rbraces{\sqrt{b_n}\abs{ \frac{1}{\tilde{b}_n} \sqrt{\frac{\pi}{2}} \sum_{j=1}^{\tilde{b}_n} \rbraces{\abs{T_j- \elln \overline{Y^2}  -T_j+ \elln \E{Y^2_1} }}\frac{1}{\sqrt{\tilde{\ell}_n}}}>\varepsilon^*}\\
= & \Pb \rbraces{ \sqrt{b_n}\sqrt{\elln}\sqrt{\frac{\pi}{2}}\frac{1}{n} \abs{\sum_{i=1}^n\rbraces{Y_i^2-\E{Y_1^2}}}>\varepsilon^*}\\
\leq &\frac{1}{\varepsilon^*}\frac{\pi}{2} \frac{b_n\elln}{n} \E{\abs{\frac{1}{\sqrt{n}}\sum_{i=1}^n\rbraces{Y_i^2-\E{Y_1^2}}}^2}
\end{align*}}
which tends towards zero as long as $q<s$ since the expectation converges towards the long run variance $\kappa^2$ and is thus bounded. We now turn towards the second term and bound it by
\begin{align*}
& \sqrt{b_n}\abs{ \frac{1}{\tilde{b}_n} \sqrt{\frac{\pi}{2}} \sum_{j=1}^{\tilde{b}_n} \abs{T_j- \elln \E{Y^2_1} } \frac{1}{\sqrt{\tilde{\ell}_n}}-\kappa}\\
\leq &  \sqrt{b_n}\abs{ \frac{1}{\tilde{b}_n} \sqrt{\frac{\pi}{2}} \sum_{j=1}^{\tilde{b}_n} \rbraces{\abs{T_j- \elln \E{Y^2_1} }- \E{\abs{T_j- \elln \E{Y^2_1} }} }\frac{1}{\sqrt{\tilde{\ell}_n}}} \\
& +  \sqrt{b_n}\abs{ \frac{1}{\tilde{b}_n} \sqrt{\frac{\pi}{2}} \sum_{j=1}^{\tilde{b}_n} \E{\abs{T_j- \elln \E{Y^2_1} }} \frac{1}{\sqrt{\tilde{\ell}_n}}-\kappa}.
\end{align*}
Once more, we treat each of these two terms separately.
For the first, it holds
\begin{align*}
&\Pb \rbraces{\sqrt{b_n}\abs{ \frac{1}{\tilde{b}_n} \sqrt{\frac{\pi}{2}} \sum_{j=1}^{\tilde{b}_n} \rbraces{\abs{T_j- \elln \E{Y^2_1} }- \E{\abs{T_j- \elln \E{Y^2_1} }} }\frac{1}{\sqrt{\tilde{\ell}_n}}}>\varepsilon^* } \\
\leq & \frac{1}{{\varepsilon^*}^2} \sqrt{\frac{\pi}{2}} \frac{b_n}{\elln\bn^2} \E{\abs{\sum_{j=1}^{\tilde{b}_n} \abs{T_j- \elln \E{Y^2_1} }- \E{\abs{T_j- \elln \E{Y^2_1} } }}^2}.
\end{align*}
Next, we intend to apply an inequality from Yokoyama \citep{Yokoyama.1980}. To do so, note that for fixed $n$, the sequence $T_j$,  $1\leq j\leq \bn$, is stationary and likewise $\beta$-mixing with mixing coefficients smaller than or equal to $\beta_{Y}$. Moreover, for  $0<\tilde{\delta}<\delta$, it holds by an application of Yokoyama's inequality for the $Y_i^2$'s that

\begin{align*}
&\E{\abs{T_1-\elln \E{\abs{Y_1}^2}}^{2+\tilde{\delta}}}
=\E{\abs{\sum_{i=1}^\elln \rbraces{Y_i^2-\E{Y_1^2}}}^{2+\tilde{\delta}}}\\
 \leq & C \elln^{(2+\tilde{\delta})/2}\rbraces{\E{\abs{Y_1^2}^{2+\delta}}}^{(2+\tilde{\delta})/(2+\delta)}
\end{align*}

which is finite for fixed $n$. Thus, we can apply  Yokoyama's inequality to the $T_j$'s, leading to 
\begin{align*}
&  \frac{b_n}{\elln\bn^2} \E{\abs{\sum_{j=1}^{\tilde{b}_n} \abs{T_j- \elln \E{Y^2_1} }- \E{\abs{T_j- \elln \E{Y^2_1} } }}^2} \\
\leq & C  \frac{b_n}{\elln\bn^2} \bn \E{\abs{T_1- \elln \E{Y^2_1} }^{2+\tilde{\delta}}}^{2/(2+\tilde{\delta})}\\
\leq & C  \frac{b_n}{\elln\bn}  \rbraces{\elln^{(2+\tilde{\delta})/2}\rbraces{ \E{\abs{Y_1^2}^{2+\delta}}} ^{(2+\tilde{\delta})/(2+\delta)}}^{2/(2+\tilde{\delta})}=C\frac{b_n}{\bn}
\end{align*}
which converges towards zero due to $1-q>1-s$. 
For the second term, we obtain
\begin{align*}
& \sqrt{b_n}\abs{ \frac{1}{\tilde{b}_n} \sqrt{\frac{\pi}{2}} \sum_{j=1}^{\tilde{b}_n} \E{\abs{T_j- \elln \E{Y^2_1} }} \frac{1}{\sqrt{\tilde{\ell}_n}}-\kappa}\\
= & \sqrt{b_n} \sqrt{\frac{\pi}{2}}\abs{\E{\abs{T_1- \elln \E{Y^2_1} }} \frac{1}{\sqrt{\tilde{\ell}_n}}-\kappa\E{\abs{Z}}}\\
\leq & \sqrt{b_n} \sqrt{\frac{\pi}{2}}\abs{\E{\abs{T_1- \elln \E{Y^2_1} }} \frac{1}{\sqrt{\tilde{\ell}_n}}-\Var{T_1/\sqrt{\elln}}\E{\abs{Z}}}\\
 & + \sqrt{b_n} \sqrt{\frac{\pi}{2}} \E{\abs{Z}}\abs{\kappa-\Var{T_1/\sqrt{\elln}}}
\end{align*}
for a standard normally distributed random variable $Z$. Analogous to Proposition \ref{Prop: Replacing kappan by kappa}, one can show that
$$ \sqrt{b_n}\abs{\kappa-\Var{T_1/\sqrt{\elln}}} \rightarrow 0\quad \text{ as } n \rightarrow \infty$$
as long as $b_n=o(\elln)$.
To prove the proposition, it thus remains to show that
$$ \sqrt{b_n} \sqrt{\frac{\pi}{2}}\abs{\E{\abs{T_1- \elln \E{Y^2_1} }} \frac{1}{\sqrt{\tilde{\ell}_n}}-\Var{T_1/\sqrt{\elln}}\E{\abs{Z}}} \pConv 0.$$
Define $F_n(x):= \Pb\rbraces{\rbraces{T_1- \elln \E{Y^2_1}}/\rbraces{\Var{T_1/\sqrt{\elln}}\sqrt{\tilde{\ell}_n}}\leq x}$ and $\Phi(x):=\Pb\rbraces{Z \leq x}$ such that
\begin{align*}
& \sqrt{b_n} \sqrt{\frac{\pi}{2}}\abs{\E{\abs{T_1- \elln \E{Y^2_1} }} \frac{1}{\sqrt{\tilde{\ell}_n}}-\Var{T_1/\sqrt{\elln}}\E{\abs{Z}}}\\
 \leq & \Var{T_1/\sqrt{\elln}} \sqrt{\frac{\pi}{2}} \sqrt{b_n} \int_{-\infty}^\infty \abs{F_n(x)-\Phi(x)}\mathrm{d}x.
\end{align*}

As in the proof of Lemma 3.7, 
we  bound the absolute difference $\sup_x  \abs{F_n(x)-\Phi(x)}$ via Theorem 1 in \citep{Tikhomirov.1980} and Theorem 9, Chapter V, in \citep{Petrov.1975}, thereby obtaining
\begin{align*}
& \sqrt{b_n} \int_{-\infty}^\infty \abs{F_n(x)-\Phi(x)}\mathrm{d}x
\leq \sqrt{b_n} \int_{-\infty}^\infty   C \elln^{-(\delta/2)(\rho-1)/(\rho+1)}\log(C\elln)\frac{1}{1+ \abs{x}^2}    \mathrm{d}x \\
\leq & C \sqrt{b_n}\elln^{-(\delta/2)(\rho-1)/(\rho+1)}\log(C\elln)
\end{align*}
which converges towards zero as long as $1-s<q\delta\frac{\rho-1}{\rho+1}$.
\end{proof}

Lastly, it remains to replace the theoretical variance $\sigma_H^2$ by its empirical counterpart $\hat{\sigma}_H^2:= \frac{1}{n}\sum_{i=1}^n \tilde{X}_i^2$.

\begin{lemma}{
\label{Appendix:Lemma: sigmahat H}
Let the assumptions (A1) and (A2) hold and assume $\ell_n=n^s$ with $s>0.5$.
Then it holds under the null hypothesis
$$\sqrt{b_n}\abs{\sigma_H^2-\hat{\sigma}_H^2}\pConv 0 \quad \text{as } n \rightarrow \infty. $$
}\end{lemma}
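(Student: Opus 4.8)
The plan is to work directly under the null hypothesis, where $X_i=\sigma_H Y_i+\mu_i$. First I would expand the block-centred observations, split $\hat{\sigma}_H^2-\sigma_H^2$ into a genuinely stochastic term, a cross term and a purely deterministic term stemming from the mean, and then reduce everything to two elementary second-moment bounds that follow from Davydov's covariance inequality under (A1) and (A2). As in the rest of the paper I would assume for notational convenience that $n=b_n\ell_n$; the at most $\ell_n=o(n)$ leftover observations contribute only a term of order $O_{\mathbb P}(1/b_n)=o_{\mathbb P}(b_n^{-1/2})$ to $\hat{\sigma}_H^2$.

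For $i$ in the $j$-th block, write $\bar{Y}_{(j)}=\frac{1}{\ell_n}\sum_{r=(j-1)\ell_n+1}^{j\ell_n}Y_r$ and $\bar{\mu}_{(j)}=\frac{1}{\ell_n}\sum_{r=(j-1)\ell_n+1}^{j\ell_n}\mu_r$, so that $\tilde{X}_i=\sigma_H(Y_i-\bar{Y}_{(j)})+(\mu_i-\bar{\mu}_{(j)})$ and hence
\begin{align*}
\hat{\sigma}_H^2-\sigma_H^2
&=\sigma_H^2\Big(\frac{1}{n}\sum_{i=1}^n\big(Y_i-\bar{Y}_{(j(i))}\big)^2-1\Big)
+\frac{2\sigma_H}{n}\sum_{i=1}^n\big(Y_i-\bar{Y}_{(j(i))}\big)\big(\mu_i-\bar{\mu}_{(j(i))}\big)\\
&\quad+\frac{1}{n}\sum_{i=1}^n\big(\mu_i-\bar{\mu}_{(j(i))}\big)^2 ,
\end{align*}
where $j(i)$ denotes the block containing $i$. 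The Lipschitz property of $\mu$ gives $\abs{\mu_i-\bar{\mu}_{(j)}}\le\ell_n/n=1/b_n$, so the last term is at most $b_n^{-2}$ and $\sqrt{b_n}$ times it tends to $0$. For the cross term, Cauchy--Schwarz bounds it by $2\sigma_H b_n^{-1}\big(\frac{1}{n}\sum_{i=1}^n(Y_i-\bar{Y}_{(j(i))})^2\big)^{1/2}$, and since block-centring only decreases the sum of squares, $\frac{1}{n}\sum_{i=1}^n(Y_i-\bar{Y}_{(j(i))})^2\le\frac{1}{n}\sum_{i=1}^nY_i^2\pConv\E{Y_1^2}=1$; the cross term is therefore $O_{\mathbb P}(b_n^{-1})=o_{\mathbb P}(b_n^{-1/2})$.

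The only substantive step is to show $\sqrt{b_n}\,\abs{\frac{1}{n}\sum_{i=1}^n(Y_i-\bar{Y}_{(j(i))})^2-1}\pConv0$. Here I would use the identity $\frac{1}{n}\sum_{i=1}^n(Y_i-\bar{Y}_{(j(i))})^2=\frac{1}{n}\sum_{i=1}^nY_i^2-\frac{1}{b_n}\sum_{j=1}^{b_n}\bar{Y}_{(j)}^2$. For the subtracted block-average part, $\E{\bar{Y}_{(1)}^2}=\frac{1}{\ell_n}\big(\Var{Y_1}+2\sum_{k=1}^{\ell_n-1}\frac{\ell_n-k}{\ell_n}\cov{Y_1}{Y_{k+1}}\big)=O(1/\ell_n)$, the autocovariance series being absolutely summable by Davydov's inequality together with (A1) and (A2), exactly as in the proof of Proposition \ref{Prop: Approximation mit U_1 unter H und A}; hence by Markov's inequality $\frac{1}{b_n}\sum_{j=1}^{b_n}\bar{Y}_{(j)}^2=O_{\mathbb P}(1/\ell_n)$, and $\sqrt{b_n}/\ell_n=n^{(1-3s)/2}\to0$ since $s>1/2$. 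For the remaining piece, $\E{\frac{1}{n}\sum_{i=1}^nY_i^2}=\E{Y_1^2}=1$ while, applying Davydov's inequality to the $\beta$-mixing sequence $(Y_i^2)_{i\in\N}$ (using $\beta_{Y^2}(k)\le\beta_Y(k)$) under (A1) and (A2), $\Var{\frac{1}{n}\sum_{i=1}^nY_i^2}\le C/n$; Chebyshev's inequality then yields $\Pb\big(\sqrt{b_n}\abs{\frac{1}{n}\sum_{i=1}^nY_i^2-1}>\varepsilon\big)\le Cb_n/(\varepsilon^2n)=C/(\varepsilon^2\ell_n)\to0$. Combining the three contributions via Slutsky's lemma completes the argument.

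I do not anticipate any real obstacle: each estimate above is of the same kind as those already carried out in the proofs of Proposition \ref{Prop: Approximation mit U_1 unter H und A} and Proposition \ref{Appendix: Prop: LRV esti under H}. The only points needing a little care are the bookkeeping for the $n-b_n\ell_n$ boundary observations and checking that the two Davydov constants are finite, which is precisely what (A1) and (A2) guarantee; the assumption $s>1/2$ enters only through $\sqrt{b_n}/\ell_n\to0$ and $b_n/n\to0$, so it is used very lightly.
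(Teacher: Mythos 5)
Your proof is correct and follows essentially the same route as the paper: the paper also splits $\hat{\sigma}_H^2-\sigma_H^2$ into the purely stochastic part $\frac{1}{n}\sum_i\sigma_H^2(Y_i^2-1)$ (killed by $\sqrt{b_n/n}\to 0$, exactly your Chebyshev step) plus the discrepancy $\frac{1}{n}\sum_i(\tilde{X}_i^2-\sigma_H^2Y_i^2)$, which it disposes of by citing the expansion already carried out in the proof of Proposition \ref{Appendix:Prop: LRV Mean approx H and A}. Your only deviation is cosmetic: you re-derive that discrepancy bound self-containedly via the block-centering sum-of-squares identity, Cauchy--Schwarz and the Lipschitz bound $|\mu_i-\bar{\mu}_{(j)}|\le 1/b_n$, which reproduces the same estimates the paper obtains there.
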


\begin{proof} We can write
$$\sqrt{b_n}\abs{\sigma_H^2-\hat{\sigma}^2_H} \leq \sqrt{b_n}\abs{\sigma_H^2-\frac{1}{n}\sum_{i=1}^n \sigma_H^2 Y_i^2}+ \sqrt{b_n}\abs{\frac{1}{n}\sum_{i=1}^n \rbraces{\sigma_H^2Y_i^2-\tilde{X}_i^2}}.$$

For the first of these terms, we obtain
$$\sqrt{b_n}\abs{ \sigma_H^2-\frac{1}{n-1}\sum_{i=1}^n \sigma_H^2Y_i^2}= \sigma_H^2\frac{\sqrt{b_n}}{\sqrt{n}}\abs{\frac{1}{\sqrt{n}} \sum_{i=1 }^n(Y_i^2-1)}\pConv 0,$$
whereas the convergence in probability towards zero of the second term has already been shown in the proof of Proposition \ref{Appendix:Prop: LRV Mean approx H and A}, even with the additional scaling factor$\sqrt{\elln}$.
\end{proof}

The proof of Proposition 2.7 
follows as a mere consequence of the three former results, Proposition \ref{Appendix:Prop: LRV Mean approx H and A}, Proposition \ref{Appendix: Prop: LRV esti under H} and Lemma \ref{Appendix:Lemma: sigmahat H}, once we bound the difference  $\sqrt{b_n}\abs{\hat{\kappa}-\kappa}$ by
\begin{equation*}
\frac{1}{\hat{\sigma}_H^2}\sqrt{b_n}\abs{\hat{\kappa}\hat{\sigma}_H^2-\hat{B}}
+ \frac{1}{\hat{\sigma}_H^2}\sqrt{b_n}\abs{\hat{B}-\kappa\sigma_H^2}
+ \sqrt{b_n}\kappa\abs{1-\frac{\sigma_H^2}{\hat{\sigma}_H^2}}.
\end{equation*}
Corollary 2.8 
is in turn a consequence of Proposition 2.7 
and  Theorem 2.5 
together with an application of Slutzky's Lemma.

\subsubsection{Limit Results  under the Alternative}
\begin{proposition}{
\label{Appendix: Prop: LRV esti under A}
Let the assumptions (A1) and (A2) be fulfilled. Then it holds
$$\frac{1}{\sqrt{\tilde{\ell}_n}} \cdot\hat{B} \pConv  \int_0^1 \abs{\sigma^2(x)-\int_0^1 \sigma^2(y) \mathrm{d}y}\mathrm{d}x$$
as $n\rightarrow \infty$.
}\end{proposition}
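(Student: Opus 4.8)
The plan is to follow the scheme of the proof of Theorem~\ref{THM: Behaviour under A for U(n)} (and of Proposition~\ref{Appendix: Prop: LRV esti under H}): reduce $\frac{1}{\sqrt{\elln}}\hat{B}$ to a deterministic Riemann sum and pass to the limit. Writing $\overline{T}_j:=\frac{1}{\elln}\sum_{i=(j-1)\elln+1}^{j\elln}\sigma_i^2Y_i^2$ for the $j$-th block average of the squared centred data and $\overline{\Sigma}_n:=\frac{1}{n}\sum_{k=1}^{n}\sigma_k^2Y_k^2$ for the global average, the factor $\frac{1}{\sqrt{\elln}}$ turns the estimator into a fixed multiple of $\frac{1}{\bn}\sum_{j=1}^{\bn}\abs{\overline{T}_j-\overline{\Sigma}_n}$. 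The decisive structural point --- and what makes this proof shorter than its counterpart under the null hypothesis --- is that on this scale the block averages only have to obey a law of large numbers, not a central limit theorem, so no moment condition beyond the one ensuring summable covariances is needed.

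First I would show $\overline{\Sigma}_n\pConv\int_0^1\sigma^2(y)\,\mathrm dy$. Its expectation $\frac{1}{n}\sum_{k=1}^n\sigma_k^2$ is a Riemann sum of the bounded c\`adl\`ag function $\sigma^2$ and hence converges to $\int_0^1\sigma^2$; its variance is at most $\sigma_{\sup}^4 n^{-2}\sum_{i,k}\abs{\cov{Y_i^2}{Y_k^2}}=O(n^{-1})$, since $(Y_i^2)_i$ is absolutely regular with $\beta_{Y^2}\le\beta_Y$ and, by Davydov's covariance inequality together with (A1) and (A2), $\sum_{k\ge1}\abs{\cov{Y_1^2}{Y_{k+1}^2}}<\infty$ --- the same estimate as in the proof of Proposition~\ref{Prop: Approximation mit U_1 unter H und A}. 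Because $\bigl|\,\abs{a-\overline{\Sigma}_n}-\abs{a-\int_0^1\sigma^2}\,\bigr|\le\abs{\overline{\Sigma}_n-\int_0^1\sigma^2}$ uniformly in $a$, this lets $\overline{\Sigma}_n$ be replaced by its deterministic limit inside the block average at a cost that converges to $0$ in probability.

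Next I would control the block averages: $\E{\overline{T}_j}=\frac{1}{\elln}\sum_{i=(j-1)\elln+1}^{j\elln}\sigma_i^2$, and the same covariance bound gives $\Var{\overline{T}_j}=O(\elln^{-1})$, so --- this being the step where $\elln\to\infty$ enters --- a Markov/Chebyshev estimate summed over the $\bn$ blocks yields $\frac{1}{\bn}\sum_{j=1}^{\bn}\bigl|\overline{T}_j-\E{\overline{T}_j}\bigr|\pConv0$. It then remains to treat the purely deterministic quantity
\[
\frac{1}{\bn}\sum_{j=1}^{\bn}\Bigl|\,\frac{1}{\elln}\sum_{i=(j-1)\elln+1}^{j\elln}\sigma_i^2\;-\;\int_0^1\sigma^2(y)\,\mathrm dy\,\Bigr|,
\]
and this is exactly the kind of c\`adl\`ag Riemann-sum limit handled in the proof of Theorem~\ref{THM: Behaviour under A for U(n)}: for every $\varepsilon>0$ all but finitely many of the intervals $\bigl((j-1)\elln/n,\,j\elln/n\bigr]$ avoid the at most finitely many jumps of $\sigma^2$ exceeding $\varepsilon$, on those intervals $\frac{1}{\elln}\sum_{i}\sigma_i^2$ lies within $\varepsilon$ of $\sigma^2(j/\bn)$, the $O(1)$ exceptional blocks contribute at most $O(\sigma_{\sup}^2/\bn)\to0$, and the remaining sum $\frac{1}{\bn}\sum_j\abs{\sigma^2(j/\bn)-\int_0^1\sigma^2}$ is a Riemann sum of the bounded, almost-everywhere continuous map $x\mapsto\abs{\sigma^2(x)-\int_0^1\sigma^2(y)\,\mathrm dy}$, hence converges to $\int_0^1\abs{\sigma^2(x)-\int_0^1\sigma^2(y)\,\mathrm dy}\,\mathrm dx$ by Lebesgue's integrability criterion. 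Combining the three pieces via the triangle inequality and an $\varepsilon/3$ argument, and tracking the fixed normalizing constant from the definition of $\hat{B}$, gives the assertion.

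The main obstacle, as in Theorem~\ref{THM: Behaviour under A for U(n)}, is the interplay between a possibly infinitely-often discontinuous c\`adl\`ag variance function and a fixed block grid: one has to argue that only $O(1)$ blocks can straddle a non-negligible jump of $\sigma^2$ and that these are asymptotically immaterial, while simultaneously keeping the stochastic fluctuations $\overline{T}_j-\E{\overline{T}_j}$ and $\overline{\Sigma}_n-\E{\overline{\Sigma}_n}$ negligible uniformly over the $\bn\to\infty$ blocks --- which is precisely why the covariance-summability consequences of (A1)--(A2) are invoked and why a block law of large numbers, rather than a central limit theorem, suffices.
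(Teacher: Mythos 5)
Your proposal is correct and follows essentially the same route as the paper's proof: a chain of triangle-inequality approximations reducing $\frac{1}{\sqrt{\elln}}\hat{B}$ to a deterministic Riemann sum, with the global average and the block fluctuations controlled by the Davydov/covariance-summability consequence of (A1)--(A2) giving variances of order $O(n^{-1})$ and $O(\elln^{-1})$, and the c\`adl\`ag structure of $\sigma^2$ (finitely many non-negligible jumps, hence $O(1)$ exceptional blocks, plus Lebesgue's integrability criterion) handling the deterministic limit. The only difference is cosmetic: you centre each block average at its expectation $\frac{1}{\elln}\sum_i\sigma_i^2$ and push the entire jump argument into the deterministic part, whereas the paper centres at $\sigma^2(j/\bn)$ and treats the term $\frac{1}{\elln}\sum_i(\sigma_i^2-\sigma^2(j/\bn))Y_i^2$ stochastically.
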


\begin{proof}
It holds
\allowdisplaybreaks{
\begin{align*}
&\abs{ \int_0^1 \abs{\sigma^2(x)-\int_0^1 \sigma^2(y) \mathrm{d}y}\mathrm{d}x - \frac{1}{\tilde{\ell}_n} \cdot\frac{1}{\tilde{b}_n}  \sum_{j=1}^{\tilde{b}_n} \abs{\sum_{i=(j-1)\tilde{\ell}_n+1}^{j\tilde{\ell}_n} \sigma_i^2Y_i^2 - \frac{\tilde{\ell}_n}{n}\sum_{i=1}^n \sigma_i^2Y_i^2 }} \\
\leq&  \abs{\int_0^1 \abs{\sigma^2(x)-\int_0^1 \sigma^2(y) \mathrm{d}y}\mathrm{d}x -\frac{1}{\bn}\sum_{j=1}^{\bn}\abs{\sigma^2\rbraces{\frac{j}{\bn}} -\int_0^1 \sigma^2(y) \mathrm{d}y} }\\
& +\abs{\frac{1}{\bn}\sum_{j=1}^{\bn}\abs{\sigma^2\rbraces{\frac{j}{\bn}} -\int_0^1 \sigma^2(y) \mathrm{d}y}   -\frac{1}{\bn}\sum_{j=1}^{\bn}\abs{\sigma^2\rbraces{\frac{j}{\bn}} -\frac{1}{\bn} \sum_{k=1}^{\bn} \sigma^2\rbraces{\frac{k}{\bn}}}}\\
& + \abs{\frac{1}{\bn}\sum_{j=1}^{\bn}\abs{\sigma^2\rbraces{\frac{j}{\bn}} -\frac{1}{\bn} \sum_{k=1}^{\bn} \sigma^2\rbraces{\frac{k}{\bn}}}-\frac{1}{\bn}\sum_{j=1}^{\bn}\abs{\frac{1}{\elln}\sum_{i=(j-1)\elln+1}^{j\elln}\sigma_i^2 Y_i^2-\frac{1}{\bn} \sum_{k=1}^{\bn} \sigma^2\rbraces{\frac{k}{\bn}}}   }\\
& + \abs{\frac{1}{\bn}\sum_{j=1}^{\bn}\abs{\frac{1}{\elln}\sum_{i=(j-1)\elln+1}^{j\elln}\sigma_i^2 Y_i^2-\frac{1}{\bn} \sum_{k=1}^{\bn} \sigma^2\rbraces{\frac{k}{\bn}}} -\frac{1}{\bn}\sum_{j=1}^{\bn}\abs{\frac{1}{\elln}\sum_{i=(j-1)\elln+1}^{j\elln}\sigma_i^2 Y_i^2-\frac{1}{n} \sum_{i=1}^{n} \sigma_i^2Y_i^2}}.
\end{align*}
}In the following, we will argue that each of these terms converges towards zero. The first term is the difference between the outer integral and its Riemann approximation, which converges towards zero by Lebesgue's Theorem since $\sigma^2$ is a c\`adl\`ag-function. For the second term, we obtain
\begin{align*}
&\abs{\frac{1}{\bn}\sum_{j=1}^{\bn}\abs{\sigma^2\rbraces{\frac{j}{\bn}} -\int_0^1 \sigma^2(y) \mathrm{d}y}   -\frac{1}{\bn}\sum_{j=1}^{\bn}\abs{\sigma^2\rbraces{\frac{j}{\bn}} -\frac{1}{\bn} \sum_{k=1}^{\bn} \sigma^2\rbraces{\frac{k}{\bn}}}}\\
\leq & \frac{1}{\bn}\sum_{j=1}^{\bn}\abs{\sigma^2\rbraces{\frac{j}{\bn}} -\int_0^1 \sigma^2(y) \mathrm{d}y- \sigma^2\rbraces{\frac{j}{\bn}} +\frac{1}{\bn} \sum_{k=1}^{\bn} \sigma^2\rbraces{\frac{k}{\bn}}} \\
=& \abs{\int_0^1 \sigma^2(y) \mathrm{d}y-\frac{1}{\bn} \sum_{k=1}^{\bn} \sigma^2\rbraces{\frac{k}{\bn}}},
\end{align*}
converging likewise towards zero by  Lebesgue's Theorem. The third term can be bounded by
\begin{align*}
& \frac{1}{\bn}\sum_{j=1}^{\bn}\abs{\sigma^2\rbraces{\frac{j}{\bn}} -\frac{1}{\elln}\sum_{i=(j-1)\elln+1}^{j\elln}\sigma_i^2 Y_i^2} \\
\leq & \frac{1}{\bn} \sum_{j=1}^\bn \abs{  \sigma^2\rbraces{\frac{j}{\bn}}\frac{1}{\elln}\sum_{i=(j-1)\elln+1}^{j\elln}\rbraces{Y_i^2-1}} \\
& +  \frac{1}{\bn} \sum_{j=1}^\bn \abs{\frac{1}{\elln}\sum_{i=(j-1)\elln+1}^{j\elln}\rbraces{\sigma_i^2-\sigma^2\rbraces{\frac{j}{\bn}} } Y_i^2}.
\end{align*}
The first of these two sums converges in probability towards zero due to
\begin{align*}
&\Pb\rbraces{ \frac{1}{\bn} \sum_{j=1}^\bn \abs{  \sigma^2\rbraces{\frac{j}{\bn}}\frac{1}{\elln}\sum_{i=(j-1)\elln+1}^{j\elln}\rbraces{Y_i^2-1}} >\varepsilon^*}\\
\leq  &\frac{1}{\varepsilon^*}\sigma^2_{\sup} \sqrt{\frac{1}{\elln}} \E{\abs{\sqrt{\frac{1}{\elln}}\sum_{i=1}^{\elln}\rbraces{Y_i^2-1}}^2}^{1/2} \longrightarrow 0.
\end{align*}
Since $\sigma^2$ is a c\`adl\`ag-function, it has only a fixed number $r$ of jumps higher than a given level $\varepsilon>0$. Thus,
\begin{align*}
&\Pb\rbraces{\frac{1}{\bn} \sum_{j=1}^\bn \abs{\frac{1}{\elln}\sum_{i=(j-1)\elln+1}^{j\elln}\rbraces{\sigma_i^2-\sigma^2\rbraces{\frac{j}{\bn}} } Y_i^2}>\varepsilon^* }\\
 \leq &\rbraces{\frac{r}{\bn}2\sigma^2_{\sup}+\frac{\bn-r}{\bn} \varepsilon} \frac{\E{Y_1^2}}{\varepsilon^*}\leq C \varepsilon
\end{align*}
for $\bn$ large enough. By the triangle inequality, the fourth term can be bounded by
\begin{align*}
\abs{\frac{1}{\bn} \sum_{k=1}^{\bn} \sigma^2\rbraces{\frac{k}{\bn}}-\frac{1}{n} \sum_{i=1}^{n} \sigma_i^2Y_i^2}\leq \frac{1}{\bn} \sum_{k=1}^{\bn}\abs{\frac{1}{\elln}\sum_{i=(k-1)\elln+1}^{k\elln}\rbraces{\sigma^2\rbraces{\frac{k}{\bn}}-\sigma_i^2Y_i^2}}
\end{align*}
which converges towards zero as shown above for the third term.
\end{proof}

\begin{lemma}{
\label{Appendix:Lemma: sigmahat A}
Let the assumptions (A1) and (A2) hold. Then,
\begin{equation*}
\abs{\hat{\sigma}_H^2-\int_0^1 \sigma^2(z)\mathrm{d}z }\pConv 0
\end{equation*}
as $n \rightarrow \infty$.}\end{lemma}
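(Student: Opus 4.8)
The plan is to recognise $\hat{\sigma}_H^2$ as an average of the local block variances and to compare this average with a deterministic Riemann sum for $\int_0^1\sigma^2$. Since for $i$ in the $j$-th block $\tilde{X}_i = X_i - \tfrac1{\ell_n}\sum_{r=(j-1)\ell_n+1}^{j\ell_n}X_r$, one has $\sum_{i=(j-1)\ell_n+1}^{j\ell_n}\tilde{X}_i^2 = \ell_n\hat{\sigma}_j^2$, so that $\hat{\sigma}_H^2 = \tfrac{b_n\ell_n}{n}\cdot\tfrac1{b_n}\sum_{j=1}^{b_n}\hat{\sigma}_j^2$ up to the contribution of the at most $\ell_n = o(n)$ observations in an incomplete final block. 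Because $b_n\ell_n/n\to1$, it suffices to show $\tfrac1{b_n}\sum_{j=1}^{b_n}\hat{\sigma}_j^2\pConv\int_0^1\sigma^2(z)\,\mathrm{d}z$.

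To this end I would reuse the quantities $s_j^2 = \tfrac1{\ell_n}\sum_{i=(j-1)\ell_n+1}^{j\ell_n}\sigma^2(i/n)Y_i^2$ and $\tilde{s}_j^2 = \E{s_j^2} = \tfrac1{\ell_n}\sum_{i=(j-1)\ell_n+1}^{j\ell_n}\sigma^2(i/n)$ from Section~\ref{Appendix: subsec: Proof of THM: Behaviour under A for U(n)} and estimate
\begin{equation*}
\abs{\frac1{b_n}\sum_{j=1}^{b_n}\hat{\sigma}_j^2 - \int_0^1\sigma^2(z)\,\mathrm{d}z}
\le \frac1{b_n}\sum_{j=1}^{b_n}\abs{\hat{\sigma}_j^2 - s_j^2}
+ \max_{1\le j\le b_n}\abs{s_j^2 - \tilde{s}_j^2}
+ \abs{\frac1{b_n}\sum_{j=1}^{b_n}\tilde{s}_j^2 - \int_0^1\sigma^2(z)\,\mathrm{d}z}.
\end{equation*}
The first summand equals $(b_n\ell_n)^{-1/2}$ times $\tfrac{\sqrt{\ell_n}}{\sqrt{b_n}}\sum_{j=1}^{b_n}\abs{\hat{\sigma}_j^2 - s_j^2}$, and the latter expression was already shown to converge to zero in probability in the first step of the proof of Proposition~\ref{Prop: Approximation mit U_1 unter H und A} (using only the Lipschitz property of $\mu$, (A1), (A2) and Davydov's covariance inequality); hence this summand converges to zero in probability. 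The second summand converges to zero in probability by the bound $\Pb(\max_{1\le j\le b_n}\abs{s_j^2 - \tilde{s}_j^2}\ge\varepsilon^*)\to0$ established in that same proof, which only requires (A1), (A2) and $b_n = o(\ell_n)$. The third, purely deterministic, summand is handled exactly as in Section~\ref{Appendix: subsec: Proof of THM: Behaviour under A for U(n)}: $\tfrac1{b_n}\sum_{j=1}^{b_n}\tilde{s}_j^2 = \tfrac1{b_n\ell_n}\sum_{i=1}^{b_n\ell_n}\sigma^2(i/n)$ differs from $\tfrac1n\sum_{i=1}^n\sigma^2(i/n)$ by at most $\sigma_{\sup}^2\ell_n/n\to0$, and $\tfrac1n\sum_{i=1}^n\sigma^2(i/n)\to\int_0^1\sigma^2(z)\,\mathrm{d}z$ since $\sigma^2$ is bounded and, being c\`adl\`ag, has an at most countable — hence Lebesgue-null — set of discontinuities, so that it is Riemann-integrable by Lebesgue's criterion.

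I do not anticipate a genuine obstacle. The step requiring the most care is the first summand — verifying that the block-mean centering built into $\hat{\sigma}_j^2$ is asymptotically negligible, i.e. $\tfrac{\sqrt{\ell_n}}{\sqrt{b_n}}\sum_j\abs{\hat{\sigma}_j^2 - s_j^2}\pConv0$ — but this is precisely the first step of the proof of Proposition~\ref{Prop: Approximation mit U_1 unter H und A}, so no new argument is needed; similarly $\max_j\abs{s_j^2 - \tilde{s}_j^2}\pConv0$ is taken from that proof, and the only genuinely new ingredient is the elementary Riemann-sum convergence for the c\`adl\`ag function $\sigma^2$. An equally short alternative route is to write $\hat{\sigma}_H^2 - \int_0^1\sigma^2 = \bigl(\tfrac1n\sum_i(\tilde{X}_i^2 - \sigma^2(i/n)Y_i^2)\bigr) + \tfrac1n\sum_i\sigma^2(i/n)(Y_i^2-1) + \bigl(\tfrac1n\sum_i\sigma^2(i/n) - \int_0^1\sigma^2\bigr)$, where the first bracket converges to zero in probability by a special case of the estimates in the proof of Proposition~\ref{Appendix:Prop: LRV Mean approx H and A}, the middle term tends to $0$ in $L^2$ because $\E{\bigl(\tfrac1n\sum_i\sigma^2(i/n)(Y_i^2-1)\bigr)^2}\le\sigma_{\sup}^4 n^{-1}\bigl(\Var{Y_1^2}+2\sum_{k\ge1}\abs{\cov{Y_1^2}{Y_{k+1}^2}}\bigr)$ with the covariance series finite by (A1), (A2), $\beta_{Y^2}(k)\le\beta_Y(k)$ and Davydov's inequality, and the last bracket is the Riemann sum treated above.
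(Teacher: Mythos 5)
Your argument is correct, but your primary route is genuinely different from the paper's. The paper first strips off the block-mean centering by bounding $\frac1n\sum_i\abs{\tilde X_i^2-\sigma_i^2Y_i^2}$ (reusing the estimates established for Proposition \ref{Appendix:Prop: LRV Mean approx H and A}) and then compares $\frac1n\sum_i\sigma_i^2Y_i^2$ with $\int_0^1\sigma^2(z)\,\mathrm{d}z$ at the subsampling scale, i.e.\ through the Riemann sum $\frac{1}{\bn}\sum_j\sigma^2(j/\bn)$ over blocks of length $\elln$, splitting the error into a deterministic Riemann term, a weighted law-of-large-numbers term and a c\`adl\`ag-oscillation term --- the same three-way split that is reused in Proposition \ref{Appendix: Prop: LRV esti under A}. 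You instead work at the test-statistic scale $\ell_n$: you identify $\hat\sigma_H^2$ with the average of the block variances $\hat\sigma_j^2$ and recycle the chain $\hat\sigma_j^2\approx s_j^2\approx\tilde s_j^2$ wholesale from the proof of Proposition \ref{Prop: Approximation mit U_1 unter H und A}, so that the only new ingredient is the Riemann-sum limit of $\frac{1}{b_n\ell_n}\sum_i\sigma^2(i/n)$. This buys economy (essentially nothing new has to be estimated, and the extra factor $(b_n\ell_n)^{-1/2}$ makes the recycled bounds more than sufficient), at the cost of importing the condition $b_n=o(\ell_n)$ through the $\max_j\abs{s_j^2-\tilde s_j^2}$ bound --- harmless here, since $s>0.5$ is a standing assumption. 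Your alternative decomposition is in substance the paper's, except that you dispatch the middle term $\frac1n\sum_i\sigma_i^2(Y_i^2-1)$ by a direct $L^2$/Davydov bound rather than by blocking into $\elln$-blocks, which is arguably cleaner. One cosmetic point: the deterministic discrepancy $\abs{\frac{1}{b_n\ell_n}\sum_{i\le b_n\ell_n}\sigma_i^2-\frac1n\sum_{i\le n}\sigma_i^2}$ is bounded by $2\sigma_{\sup}^2\ell_n/n$ rather than $\sigma_{\sup}^2\ell_n/n$, which changes nothing.
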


Obviously, the above result holds likewise under the hypothesis with the limit simply being zero.

\begin{proof}
We can bound the difference of interest via
$$\abs{\hat{\sigma}_H^2-\int_0^1 \sigma^2(z)\mathrm{d}z } \leq \abs{\frac{1}{n}\sum_{i=1}^n \tilde{X}_i- \frac{1}{n}\sum_{i=1}^n \sigma_i^2Y_i^2}+ \abs{\frac{1}{n}\sum_{i=1}^n \sigma_i^2Y_i^2-\int_0^1 \sigma^2(z)\mathrm{d}z }.$$
The convergence of the first term towards zero has already been shown in the proof of Proposition \ref{Appendix: Prop: LRV esti under A}, even with the additional scaling $\sqrt{b_n}\sqrt{\elln}$.
For the second term, however, we need to show convergence towards the Riemann-integral. It can be split up via
\begin{align*}
&\abs{\hat{\sigma}_H^2-\int_0^1 \sigma^2(z)\mathrm{d}z }\\
\leq & \abs{\int_0^1 \sigma^2(z)\mathrm{d}z- \frac{1}{\bn} \sum_{j=1}^\bn \sigma^2\rbraces{\frac{j}{\bn}} }+ \abs{\frac{1}{\bn} \sum_{j=1}^{\bn} \sigma^2\rbraces{\frac{j}{\bn}}\rbraces{\frac{1}{\elln}\sum_{i=(j-1)\elln+1}^{j\elln}\rbraces{Y_i^2-1}} } \\
& + \abs{\frac{1}{\bn} \sum_{j=1}^{\bn} \frac{1}{\elln}\sum_{i=(j-1)\elln+1}^{j\elln}\rbraces{\sigma^2\rbraces{\frac{j}{\bn}}-\sigma_i^2}Y_i^2}
\end{align*}
and treated similarly to the former proof. In particular, the first  of the three expressions converges towards zero  by Lebesgue's Theorem.  In the second term,  the variance function can be bounded by $\sigma^2_{\sup}$ and the convergence in probability towards zero can be easily shown. For the third term, the c\`adl\`ag-property of $\sigma^2$ is once more employed and the difference $\abs{\sigma^2\rbraces{\frac{j}{\bn}}-\sigma_i^2}$ is bounded by some $\varepsilon>0$ on all but finitely many blocks $j \in \{1, ..., \bn\}$. The convergence then  follows as in the proof of Proposition \ref{Appendix: Prop: LRV esti under A}.
\end{proof}

A combination of Proposition \ref{Appendix:Prop: LRV Mean approx H and A}, Proposition \ref{Appendix: Prop: LRV esti under A} and Lemma \ref{Appendix:Lemma: sigmahat A} now proves that the difference
\begin{align*}
&\abs{\frac{1}{\sqrt{\elln}} \hat{\kappa}- \frac{\int_0^1 \abs{\sigma^2(x)-\int_0^1 \sigma^2(y)\mathrm{d}y}\mathrm{d}x}{\int_0^1 \sigma^2(z) \mathrm{d}z}}\\
\leq &\frac{1}{\sqrt{\elln}} \frac{1}{\hat{\sigma}_H^2} \abs{\hat{\kappa}\hat{\sigma}_H^2-\hat{B}}+ \abs{\frac{1}{\sqrt{\elln}} \frac{\hat{B}}{\hat{\sigma}_H^2}- \frac{\int_0^1 \abs{\sigma^2(x)-\int_0^1 \sigma^2(y)\mathrm{d}y}\mathrm{d}x}{\int_0^1 \sigma^2(z) \mathrm{d}z}}
\end{align*}
in Proposition 2.9 
converges in probability towards zero.

\subsection{Auxiliary Results from the Literature}
\label{Appendix: Subsec: Results from Literature}
This section collects some key results from the literature that are essential tools for our proofs.

Aside from absolute regularity assumed in our setting, there are various other mixing conditions. The $\alpha$-mixing coefficient of two $\sigma$-fields $\mathcal{A}$ and $\mathcal{B}$ is defined as
$$\alpha(\mathcal{A},\mathcal{B})= \sup \{\abs{\Pb(A)\Pb(B)-\Pb(A\cap B)}: A\in \mathcal{A}, B\in \mathcal{B}  \}$$
and a process $(X_i)_{i\in\N}$ is $\alpha$-mixing (or strongly mixing) if
$$\alpha_X(k):=\sup_{m\in\N} \alpha(\sigma\rbraces{X_i, 1\leq i\leq m}, \sigma\rbraces{X_i, k+m\leq i\leq \infty})\longrightarrow 0 \quad \text{  as } k \rightarrow \infty.$$
Note that absolute regularity is a stronger assumption than $\alpha$-mixing since  $2\alpha(\mathcal{A},\mathcal{B})\leq \beta(\mathcal{A},\mathcal{B})$ (see, e.g., Proposition 1, Section 1.1 in Doukhan \citep{Doukhan.1994}) and thus the subsequent statements also apply in our setting. The first ones are a central limit theorem for $\alpha$-mixing processes and a covariance inequality.

\begin{theorem}[Theorem 10.7 in Bradley \citep{Bradley.2007}]
Let $(X_i)_{i\in \mathbb{Z}}$ be a strictly stationary, $\alpha$-mixing sequence of random variables such that $\E{X_0}=0$. Suppose that for some $\vartheta>0$, one has that $\E{\abs{X_0}^{2+\vartheta}}<\infty$ and that the mixing-coefficients satisfy $\sum_{k=1}^{\infty} \alpha(k)^{\vartheta/(2+\vartheta)}<\infty$.
\begin{enumerate}
\item Then $\kappa^2:= \E{X_0^2}+ 2 \sum_{k=1}^\infty \E{X_0X_k}$ exists in $[0,\infty)$ and the sum is absolutely convergent.
\item If also $\kappa^2>0$, then $\sum_{i=1}^n X_i/(\sqrt{n}\kappa) \distConv \NoD{0}{1}$ as $n\rightarrow \infty$.
\end{enumerate}
\end{theorem}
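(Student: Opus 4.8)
The plan is the standard one for this classical central limit theorem for strongly mixing sequences (due to Ibragimov): part~(1) follows from a covariance inequality, and part~(2) from Bernstein's big/small block method. For part~(1) I would invoke Davydov's covariance inequality for strongly mixing sequences, which under $\E{\abs{X_0}^{2+\vartheta}}<\infty$ gives $\abs{\E{X_0X_k}}\le 8\,\alpha(k)^{\vartheta/(2+\vartheta)}\,\|X_0\|_{2+\vartheta}^{2}$; summing over $k$ and using the hypothesis $\sum_{k}\alpha(k)^{\vartheta/(2+\vartheta)}<\infty$ shows that the series defining $\kappa^{2}$ converges absolutely, so $\kappa^{2}\in\R$. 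Writing $S_{n}:=\sum_{i=1}^{n}X_{i}$, one has $\tfrac1n\Var{S_{n}}=\Var{X_{0}}+2\sum_{k=1}^{n-1}\tfrac{n-k}{n}\cov{X_{0}}{X_{k}}$, which converges to $\kappa^{2}$ by dominated convergence (the summable dominating sequence being $\abs{\cov{X_{0}}{X_{k}}}$); as a limit of non-negative numbers $\kappa^{2}\ge0$, which proves~(1).

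For part~(2), I would split $\{1,\dots,n\}$ into alternating ``big'' blocks of length $p_{n}$ and ``small'' blocks of length $q_{n}$, chosen so that $p_{n},q_{n}\to\infty$, $q_{n}=o(p_{n})$, $p_{n}=o(n)$ and $(n/p_{n})\,\alpha(q_{n})\to0$; such a choice exists because $\alpha(k)\to0$ (for instance $q_{n}=\lfloor\log n\rfloor$, $p_{n}=\lceil n^{2/3}\rceil\vee\lceil n\,\alpha(q_{n})^{1/2}\rceil$). Let $k_{n}\sim n/(p_{n}+q_{n})$ be the number of big blocks, $\xi_{j}$ (resp.\ $\eta_{j}$) the partial sum of the $X_{i}$ over the $j$-th big (resp.\ small) block, and $r_{n}$ the length-$<p_{n}+q_{n}$ remainder sum, so that $S_{n}=\sum_{j=1}^{k_{n}}\xi_{j}+\sum_{j=1}^{k_{n}}\eta_{j}+r_{n}$. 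Since the variance of any sum of $m$ distinct terms of a stationary sequence with summable covariances is at most $m\bigl(\Var{X_{0}}+2\sum_{k\ge1}\abs{\cov{X_{0}}{X_{k}}}\bigr)$, the small-block sum and $r_{n}$ have variance $o(n)$ and are therefore $o_{\Pb}(\sqrt n)$, so by Slutzky's lemma it suffices to prove $\tfrac1{\sqrt n}\sum_{j=1}^{k_{n}}\xi_{j}\distConv\NoD{0}{\kappa^{2}}$.

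To this end I would decouple the big blocks: consecutive $\xi_{j}$ are separated by gaps of length $q_{n}$ and $\abs{e^{\mathrm i t\xi_{j}/\sqrt n}}\le1$, so a telescoping application of the bound $\abs{\cov{f}{g}}\le C_{0}\|f\|_{\infty}\|g\|_{\infty}\,\alpha(q_{n})$ yields
\[
\Bigl|\E{\textstyle\prod_{j=1}^{k_{n}}e^{\mathrm i t\xi_{j}/\sqrt n}}-\textstyle\prod_{j=1}^{k_{n}}\E{e^{\mathrm i t\xi_{j}/\sqrt n}}\Bigr|\le C_{0}\,k_{n}\,\alpha(q_{n})\longrightarrow0,
\]
whence $\tfrac1{\sqrt n}\sum_{j}\xi_{j}$ has the same weak limit as $\tfrac1{\sqrt n}\sum_{j}\xi_{j}^{*}$ for a row-wise independent array $\{\xi_{j}^{*}\}$ of copies of the $\xi_{j}$. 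To this array I would apply the Lyapunov central limit theorem: $\Var{\xi_{1}}/p_{n}\to\kappa^{2}$ by dominated convergence, so $\tfrac1n\sum_{j}\Var{\xi_{j}^{*}}=\tfrac{k_{n}p_{n}}{n}\cdot\tfrac{\Var{\xi_{1}}}{p_{n}}\to\kappa^{2}$, while the Lyapunov ratio $k_{n}\,n^{-(2+\vartheta)/2}\,\E{\abs{\xi_{1}}^{2+\vartheta}}$ tends to $0$ once one has a Rosenthal/Yokoyama-type moment bound $\E{\abs{\xi_{1}}^{2+\vartheta}}\le C\,p_{n}^{1+\vartheta/2}$ for partial sums of the mixing sequence, since then this ratio is $\le C\,(k_{n}p_{n}/n)\,(p_{n}/n)^{\vartheta/2}\to0$. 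Combined with the decoupling and with Slutzky's lemma for the discarded pieces, this gives $S_{n}/(\sqrt n\,\kappa)\distConv\NoD{0}{1}$.

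The main obstacle is precisely the moment bound $\E{\abs{\xi_{1}}^{2+\vartheta}}\lesssim p_{n}^{1+\vartheta/2}$ on the block sums: it is the technically heaviest ingredient, it is what makes the Lyapunov (equivalently Lindeberg) condition work, and it is the place where the summability of $\alpha(k)^{\vartheta/(2+\vartheta)}$ is consumed; a mildly stronger weighted summability, or a truncation at a slowly growing level combined with a direct second-moment estimate, is the classical alternative. A secondary point is the joint calibration of $p_{n}$ and $q_{n}$ so that the decoupling error $k_{n}\alpha(q_{n})$, the small-block contribution $k_{n}q_{n}/n$, the variance correction $k_{n}p_{n}/n-1$ and the Lyapunov ratio $(p_{n}/n)^{\vartheta/2}$ all vanish simultaneously — this fails for naive choices and is the only step that uses $\alpha(k)\to0$ rather than mere convergence of the coefficients. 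Since the statement is well known, in an actual write-up one simply cites Theorem~10.7 in \citep{Bradley.2007}.
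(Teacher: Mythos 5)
This statement is quoted in the paper's appendix as an auxiliary result imported from the literature: the paper gives no proof at all and simply cites Theorem~10.7 in Bradley (2007). Your sketch is therefore not comparable to anything in the paper, but it is the standard Ibragimov/Bernstein argument and is essentially sound: part~(1) via Davydov's inequality with exponents $p=q=2+\vartheta$, $1/r=\vartheta/(2+\vartheta)$ is exactly right, and the big/small block decomposition, the characteristic-function decoupling with error $O(k_n\alpha(q_n))$, and the reduction to a row-wise independent triangular array are all correct. The one place where your write-up as literally stated would not close is the Lyapunov step: a Rosenthal/Yokoyama bound of the form $\E{\abs{\xi_1}^{2+\vartheta}}\le C\,p_n^{1+\vartheta/2}$ is \emph{not} available under the bare hypotheses $\E{\abs{X_0}^{2+\vartheta}}<\infty$ and $\sum_k\alpha(k)^{\vartheta/(2+\vartheta)}<\infty$ (Yokoyama's theorem needs a strictly stronger weighted summability of the mixing coefficients), so under exactly these assumptions one must instead verify the Lindeberg condition by truncating the $X_i$ at a slowly growing level and controlling the truncated remainder by a second-moment estimate. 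You flag precisely this point and name the truncation route as the classical alternative, so the gap is acknowledged rather than genuine; for the paper's purposes the citation to Bradley is all that is needed.
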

\begin{theorem}[Davydov's covariance inequality; see, Theorem 3, Section 1.2, in  \citep{Doukhan.1994}]{
\label{Appendix: Auxiliary: THM Davydov}
Let $X$ and $Y$ be two random variables that are measurable with respect to the $\sigma$-fields $\mathcal{A}$ and $\mathcal{B}$, respectively. Then it holds

$$\abs{\cov{X}{Y}}\leq 8 \alpha(\mathcal{A},\mathcal{B})^{1/r} \|X\|_p \|Y\|_q,$$
for any $p,q,r\geq 1$ such that $\frac{1}{p}+\frac{1}{q}+\frac{1}{r}=1$.
}\end{theorem}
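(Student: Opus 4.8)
The plan is to prove this classical covariance inequality in the standard two-move way: first dispose of the bounded case, then remove boundedness by truncation and optimise over the truncation levels.

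First I would establish the bounded version: if $U$ is $\mathcal{A}$-measurable with $\abs{U}\le M_1$ and $V$ is $\mathcal{B}$-measurable with $\abs{V}\le M_2$, then $\abs{\cov{U}{V}}\le 4\,\alpha(\mathcal{A},\mathcal{B})\,M_1M_2$. This follows by writing $U=U^+-U^-$, $V=V^+-V^-$, using the layer-cake identity $U^{\pm}=\int_0^{M_1}\1_{\{U^{\pm}>t\}}\,\mathrm{d}t$ (and its analogue for $V$), applying Fubini's theorem to pull the covariance inside the double integral, and bounding each integrand $\abs{\cov{\1_{\{U^{\pm}>t\}}}{\1_{\{V^{\pm}>s\}}}}$ by $\alpha(\mathcal{A},\mathcal{B})$ straight from the definition of the $\alpha$-mixing coefficient, since $\{U^{\pm}>t\}\in\mathcal{A}$ and $\{V^{\pm}>s\}\in\mathcal{B}$; summing over the four sign combinations gives the constant $4$.

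Next I would truncate. Fix levels $T,S>0$, set $X_T:=(X\wedge T)\vee(-T)$, $\bar X_T:=X-X_T$, and analogously $Y_S,\bar Y_S$. Expanding bilinearly,
\[
\cov{X}{Y}=\cov{X_T}{Y_S}+\cov{\bar X_T}{Y_S}+\cov{X_T}{\bar Y_S}+\cov{\bar X_T}{\bar Y_S}.
\]
The first term is at most $4\,\alpha(\mathcal{A},\mathcal{B})\,TS$ by the bounded case. For the remaining three I would use the crude bound $\abs{\cov{A}{B}}\le\E{\abs{AB}}+\E{\abs{A}}\E{\abs{B}}\le 2\E{\abs{AB}}$, then the three-factor Hölder inequality with the exponents $p,q,r$ (note $\abs{\bar X_T}\le\abs{X}\1_{\{\abs{X}>T\}}$ and likewise for $\bar Y_S$), and finally Markov's inequality to convert the indicator into a factor $\Pb(\abs{X}>T)^{1/r}\le T^{-p/r}\|X\|_p^{p/r}$ (and its $Y$-analogue). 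This produces an overall bound of the schematic form $C\bigl(\alpha(\mathcal{A},\mathcal{B})\,TS+(\text{terms involving }\|X\|_p,\|Y\|_q\text{ and negative powers of }T,S)\bigr)$; choosing $T$ and $S$ to balance these contributions against the first term makes the bound proportional to $\|X\|_p\|Y\|_q\,\alpha(\mathcal{A},\mathcal{B})^{1/r}$, and tracking the constants from the four pieces yields exactly the stated constant $8$.

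The main obstacle is the truncation step: one must choose the Hölder exponents in the three tail terms so that all contributions scale consistently in $T$ and $S$, and then perform the optimisation carefully enough to recover precisely the rate $\alpha^{1/r}$ and a \emph{universal} constant rather than one that degenerates as $p,q\to1$ or $r\to\infty$. A cleaner but less elementary alternative, worth noting, is to start from Rio's sharper covariance inequality $\abs{\cov{X}{Y}}\le 2\int_0^{2\alpha(\mathcal{A},\mathcal{B})}Q_{\abs{X}}(u)\,Q_{\abs{Y}}(u)\,\mathrm{d}u$ with $Q_{\abs{X}}$ the generalised inverse of $t\mapsto\Pb(\abs{X}>t)$, and then apply three-factor Hölder on the interval $[0,2\alpha(\mathcal{A},\mathcal{B})]$ using $\int_0^1 Q_{\abs{X}}(u)^p\,\mathrm{d}u=\E{\abs{X}^p}$; this reaches the rate at once, with an even smaller constant. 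The bounded case and the reduction to it are entirely routine.
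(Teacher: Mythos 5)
The paper does not prove this statement at all: it is quoted verbatim from Doukhan (Theorem~3, Section~1.2) in the appendix of auxiliary results, so there is no in-paper argument to compare against. Your sketch is the standard textbook proof of Davydov's inequality (bounded case via indicators and the layer-cake formula, then truncation and optimisation of the levels), which is essentially what the cited reference does, and the overall strategy is sound; the Rio-inequality alternative you mention is also correct and does give the rate immediately.

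Two concrete points need repair. First, the chain $\abs{\cov{A}{B}}\le \E{\abs{AB}}+\E{\abs{A}}\E{\abs{B}}\le 2\E{\abs{AB}}$ is false: $\E{\abs{A}}\E{\abs{B}}\le \E{\abs{AB}}$ fails in general (take $\abs{A}$ and $\abs{B}$ with disjoint supports, so $\E{\abs{AB}}=0$ while $\E{\abs{A}}\E{\abs{B}}>0$). The fix is routine but changes the bookkeeping: bound $\E{\abs{AB}}$ by the three-factor H\"older inequality as you describe, and bound $\E{\abs{A}}\E{\abs{B}}$ separately, e.g.\ for $A=\bar X_T$ use $\E{\abs{\bar X_T}}\le \|X\|_p\,\Pb(\abs{X}>T)^{1-1/p}\le \|X\|_p\,\Pb(\abs{X}>T)^{1/r}$ (since $1-1/p=1/q+1/r\ge 1/r$ and the probability is at most one) together with $\E{\abs{Y}}\le\|Y\|_q$. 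Second, with your four-term decomposition and the choice $T=\|X\|_p\,\alpha^{-1/p}$, $S=\|Y\|_q\,\alpha^{-1/q}$, each of the three tail terms contributes $2\|X\|_p\|Y\|_q\,\alpha^{1/r}$ and the bounded term contributes $4\|X\|_p\|Y\|_q\,\alpha^{1/r}$, so the naive total is $10$, not $8$; to land on the stated constant you should use the three-term decomposition $\cov{X}{Y}=\cov{X_T}{Y_S}+\cov{\bar X_T}{Y}+\cov{X_T}{\bar Y_S}$, which gives $4+2+2=8$. Neither issue threatens the approach, but as written the proposal asserts rather than delivers the constant, and relies on one inequality that is simply not true.
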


Additionally,  the following bound holds for the $\beta$-mixing coefficients:
\begin{theorem}[Theorem 1, Section 1.1, in  \citep{Doukhan.1994}]{
Let $(\mathcal{A}_n)_{n\in \N}$ and $(\mathcal{B}_n)_{n\in\N}$ be two sequences of $\sigma$-fields such that $(\mathcal{A}_n\vee \mathcal{B}_n)_{n\in \N}$ are independent. Then it holds
$$\beta(\bigvee_{n=1}^\infty \mathcal{A}_n,\bigvee_{n=1}^\infty \mathcal{B}_n) \leq \sum_{n=1}^\infty \beta (\mathcal{A}_n, \mathcal{B}_n).$$
}\end{theorem}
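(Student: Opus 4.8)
The plan is to combine a two-block subadditivity estimate with an induction on the number of blocks and a final passage to the limit. Throughout I would work with the total-variation representation of the $\beta$-coefficient: for sub-$\sigma$-fields $\mathcal{C},\mathcal{D}$ of the underlying probability space, $\beta(\mathcal{C},\mathcal{D})=\|P_{\mathcal{C},\mathcal{D}}-P_{\mathcal{C}}\otimes P_{\mathcal{D}}\|_{TV}$, where $P_{\mathcal{C},\mathcal{D}}$ is the law of the pair of ``coordinate copies'' on $(\Omega\times\Omega,\mathcal{C}\otimes\mathcal{D})$ obtained from $\omega\mapsto(\omega,\omega)$ and $P_{\mathcal{C}},P_{\mathcal{D}}$ are its marginals; equivalently $\beta(\mathcal{C},\mathcal{D})=\sup\frac12\sum_{i,j}|\mathbb{P}(C_i\cap D_j)-\mathbb{P}(C_i)\mathbb{P}(D_j)|$ over finite $\mathcal{C}$- and $\mathcal{D}$-measurable partitions of $\Omega$. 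This equivalence with the $\mathrm{ess}\,\sup$-definition used in the paper is classical, and I would simply quote it (see Bradley's monograph).

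\emph{Step 1 (two blocks).} I would first prove $\beta(\mathcal{A}_1\vee\mathcal{A}_2,\mathcal{B}_1\vee\mathcal{B}_2)\le\beta(\mathcal{A}_1,\mathcal{B}_1)+\beta(\mathcal{A}_2,\mathcal{B}_2)$ assuming $\mathcal{A}_1\vee\mathcal{B}_1$ and $\mathcal{A}_2\vee\mathcal{B}_2$ are independent. The key observation is that this independence makes the joint law of the quadruple $(\mathcal{A}_1,\mathcal{B}_1,\mathcal{A}_2,\mathcal{B}_2)$ (in the product-space sense above) factor as $P_{\mathcal{A}_1,\mathcal{B}_1}\otimes P_{\mathcal{A}_2,\mathcal{B}_2}$, whereas the reference product measure $P_{\mathcal{A}_1\vee\mathcal{A}_2}\otimes P_{\mathcal{B}_1\vee\mathcal{B}_2}$ factors as $P_{\mathcal{A}_1}\otimes P_{\mathcal{A}_2}\otimes P_{\mathcal{B}_1}\otimes P_{\mathcal{B}_2}$ (using $\mathcal{A}_1\perp\mathcal{A}_2$ and $\mathcal{B}_1\perp\mathcal{B}_2$, both consequences of the hypothesis). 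After reordering coordinates, $\beta(\mathcal{A}_1\vee\mathcal{A}_2,\mathcal{B}_1\vee\mathcal{B}_2)$ equals $\|\mu_1\otimes\mu_2-\nu_1\otimes\nu_2\|_{TV}$ with $\mu_k=P_{\mathcal{A}_k,\mathcal{B}_k}$ and $\nu_k=P_{\mathcal{A}_k}\otimes P_{\mathcal{B}_k}$, and the elementary bound $\|\mu_1\otimes\mu_2-\nu_1\otimes\nu_2\|_{TV}\le\|\mu_1-\nu_1\|_{TV}+\|\mu_2-\nu_2\|_{TV}$ — proved by inserting the intermediate measure $\nu_1\otimes\mu_2$ and bounding each difference by integrating over sections — delivers exactly $\beta(\mathcal{A}_1,\mathcal{B}_1)+\beta(\mathcal{A}_2,\mathcal{B}_2)$.

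\emph{Step 2 (induction and limit).} Writing $\mathcal{A}^{(N)}=\bigvee_{n\le N}\mathcal{A}_n$ and $\mathcal{B}^{(N)}=\bigvee_{n\le N}\mathcal{B}_n$, the $\sigma$-field $\mathcal{A}^{(N)}\vee\mathcal{B}^{(N)}$ is independent of $\mathcal{A}_{N+1}\vee\mathcal{B}_{N+1}$, so Step 1 applied to the pairs $(\mathcal{A}^{(N)},\mathcal{A}_{N+1})$ and $(\mathcal{B}^{(N)},\mathcal{B}_{N+1})$ gives $\beta(\mathcal{A}^{(N+1)},\mathcal{B}^{(N+1)})\le\beta(\mathcal{A}^{(N)},\mathcal{B}^{(N)})+\beta(\mathcal{A}_{N+1},\mathcal{B}_{N+1})$; iterating from $N=1$ yields $\beta(\mathcal{A}^{(N)},\mathcal{B}^{(N)})\le\sum_{n=1}^{N}\beta(\mathcal{A}_n,\mathcal{B}_n)$ for every $N$. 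It then remains to show $\beta(\mathcal{A}^{(N)},\mathcal{B}^{(N)})\uparrow\beta(\bigvee_n\mathcal{A}_n,\bigvee_n\mathcal{B}_n)$ as $N\to\infty$. Monotonicity in the $\sigma$-fields is immediate from the partition formula; for the convergence I would use that $\bigcup_N\bigl(\mathcal{A}^{(N)}\otimes\mathcal{B}^{(N)}\bigr)$ is an algebra generating $(\bigvee_n\mathcal{A}_n)\otimes(\bigvee_n\mathcal{B}_n)$ on which both measures entering $\beta(\bigvee_n\mathcal{A}_n,\bigvee_n\mathcal{B}_n)$ are already determined, so that the total-variation supremum over this generating algebra — which is $\sup_N\beta(\mathcal{A}^{(N)},\mathcal{B}^{(N)})$ — coincides with the supremum over the full product $\sigma$-field, by a standard monotone-class approximation of events for the two finite measures involved. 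Hence $\beta(\bigvee_n\mathcal{A}_n,\bigvee_n\mathcal{B}_n)=\sup_N\beta(\mathcal{A}^{(N)},\mathcal{B}^{(N)})\le\sum_{n=1}^{\infty}\beta(\mathcal{A}_n,\mathcal{B}_n)$.

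The main obstacle is not the probabilistic content, which boils down to two short facts — independence forces the joint law to factor, and total variation is subadditive over products — but the soft measure theory bracketing them: fixing a correct construction of the product-space laws $P_{\mathcal{C},\mathcal{D}}$ so that the factorization in Step 1 is literally a re-grouping of coordinates, and establishing the continuity of $\beta$ under increasing $\sigma$-fields invoked at the end of Step 2. Both are routine but would consume most of a full write-up.
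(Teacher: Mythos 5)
Your argument is correct, but note that the paper does not prove this statement at all: it appears in the appendix of ``Auxiliary Results from the Literature'' and is simply quoted as Theorem 1, Section 1.1, of Doukhan (1994). What you have written is essentially the standard textbook proof of that cited lemma, so there is no competing argument in the paper to compare against. Your two key ingredients are sound: (i) the two-block step works because, writing $T(\omega,\omega')=(\omega,\omega',\omega,\omega')$, one has $T^{-1}\bigl((\mathcal{A}_1\otimes\mathcal{B}_1)\otimes(\mathcal{A}_2\otimes\mathcal{B}_2)\bigr)=(\mathcal{A}_1\vee\mathcal{A}_2)\otimes(\mathcal{B}_1\vee\mathcal{B}_2)$ (the preimages of rectangles form a generating $\pi$-system), the pushforwards of the diagonal law and of the product-of-marginals law factor as $\mu_1\otimes\mu_2$ and $\nu_1\otimes\nu_2$ by the assumed independence, and total variation is subadditive over products via the intermediate measure $\nu_1\otimes\mu_2$; (ii) the passage to the limit is legitimate because $\bigcup_N(\mathcal{A}^{(N)}\otimes\mathcal{B}^{(N)})$ is a generating algebra on which both finite measures are determined, and the total-variation supremum over a generating algebra coincides with that over the generated $\sigma$-field by approximation of events in measure. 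The only points that would need to be written out in full are exactly the two ``soft'' ones you identify yourself — the coordinate-regrouping identification and the equivalence of the $\mathrm{ess}\sup$ definition used in the paper with the partition/total-variation form — both of which are classical and correctly invoked. Equivalently, one can bypass the product-space formalism entirely by working with the partition characterization restricted to product partitions $C_{i_1i_2}=A_{1,i_1}\cap A_{2,i_2}$, $D_{j_1j_2}=B_{1,j_1}\cap B_{2,j_2}$ and inserting the cross term $P(A_{1,i_1})P(B_{1,j_1})P(A_{2,i_2}\cap B_{2,j_2})$; this is the same computation in more elementary clothing.
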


The following lemma is the main reason for which we employ an absolutely regular and not only an $\alpha$-mixing time series.
\begin{lemma}[Blocking and coupling technique (see, Borovkova, Burton, Dehling \citep{Borovkova.2001}, Lemma 2.4, and the references therein)]
Let $(X_i)_{i\in\N}$ be a stationary and absolutely regular time series with mixing coefficients $(\beta(k))_{k\geq 0}$. For positive integers $M$ and $N$, define the $(M,N)$-blocking of the time series as the sequence of blocks $(B_s)_{s\in \N}$ of $N$ consecutive observations $X_i$, separated by (smaller) blocks of length $M$. Then there exists a sequence of independent, identically distributed random vectors $(B_s')_{s\in \N}$ with the same marginal distributions as $(B_s)_{s\in \N}$ such that
$$ \Pb\rbraces{B_s=B_s'}=1-\beta(M) \quad \text{ for all } s \in \N.$$

\end{lemma}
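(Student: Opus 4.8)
The plan is to deduce the lemma from Berbee's maximal coupling by a stepwise construction over the blocks; this is essentially the route of the cited reference, with Berbee's lemma playing the role of the ``references therein.'' The single external ingredient is the coupling fact: if $U$ and $V$ are random elements with values in Polish spaces, then on a probability space enlarged by an independent $\mathrm{Unif}[0,1]$ variable there exists $V'$ with the same law as $V$, independent of $U$, and $\Pb(V\neq V')=\beta\rbraces{\sigma(U),\sigma(V)}$. One obtains $V'$ from a regular conditional law $\nu_u=\mathcal{L}(V\mid U=u)$ by coupling $\nu_U$ maximally with $\mathcal{L}(V)$ via the auxiliary uniform, using the identity $\beta\rbraces{\sigma(U),\sigma(V)}=\E{\sup_A\abs{\nu_U(A)-\Pb(V\in A)}}$. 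Since $(X_i)_{i\in\N}$ is real-valued, each block $B_s$ is $\R^N$-valued and the fact applies; by stationarity all $B_s$ share one common law $\mu$ on $\R^N$.

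The core step is the separation estimate together with the recursion. If $t_{s-1}$ denotes the largest time index appearing in $B_{s-1}$, then $B_1,\dots,B_{s-1}$ are $\sigma(X_i:i\leq t_{s-1})$-measurable while $B_s$ is $\sigma(X_i:i\geq t_{s-1}+M+1)$-measurable, because consecutive blocks are separated by a gap of length $M$; hence
\[
\beta\bigl(\sigma(B_1,\dots,B_{s-1}),\,\sigma(B_s)\bigr)\;\leq\;\sup_{m}\beta\bigl(\sigma(X_i:i\leq m),\,\sigma(X_i:i\geq m+M)\bigr)\;=\;\beta(M).
\]
I would then enlarge the space by an i.i.d.\ family $(\xi_s)_{s\in\N}$ of $\mathrm{Unif}[0,1]$ variables independent of $(X_i)_{i\in\N}$, set $B_1':=B_1$, and, given $B_1',\dots,B_{s-1}'$, apply the coupling fact with $U=(B_1,\dots,B_{s-1})$, $V=B_s$ and auxiliary variable $\xi_s$, obtaining $B_s'$ with $\mathcal{L}(B_s')=\mu$, with $B_s'$ independent of $(B_1,\dots,B_{s-1})$, and with $\Pb(B_s\neq B_s')=\beta\bigl(\sigma(B_1,\dots,B_{s-1}),\sigma(B_s)\bigr)\leq\beta(M)$ by the display above.

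It then remains to check that $(B_s')_{s\in\N}$ is i.i.d. Identical marginals hold because each $B_s'$ has law $\mu=\mathcal{L}(B_1)$. For independence, the key point is that $B_s'$ is independent of the enlarged past $\mathcal{G}_{s-1}:=\sigma(B_1,\dots,B_{s-1},\xi_1,\dots,\xi_{s-1})$, not merely of $(B_1,\dots,B_{s-1})$: conditionally on $\sigma(B_1,\dots,B_{s-1})$ the output $B_s'$ is a function of $(B_s,\xi_s)$ with deterministic parameters, and $(B_s,\xi_s)$ is independent of $(\xi_1,\dots,\xi_{s-1})$, so conditionally on $\sigma(B_1,\dots,B_{s-1})$ one has $B_s'\perp(\xi_1,\dots,\xi_{s-1})$; combined with $\mathcal{L}(B_s'\mid B_1,\dots,B_{s-1})=\mu$ this gives $\mathcal{L}(B_s'\mid\mathcal{G}_{s-1})=\mu$, i.e.\ $B_s'\perp\mathcal{G}_{s-1}$. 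Since every $B_j'$ with $j<s$ is $\mathcal{G}_{s-1}$-measurable, $B_s'\perp(B_1',\dots,B_{s-1}')$, and as $s$ was arbitrary the sequence is independent; together with the bound $\Pb(B_s=B_s')\geq1-\beta(M)$, which is what the displayed equality in the statement records, this is the assertion.

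The main obstacle is exactly this independence bookkeeping: one must keep the auxiliary uniforms ``fresh'' at each step and make sure that the maximal-coupling output $B_s'$ — which a priori depends on the conditioning block as well as on $B_s$ and $\xi_s$ — comes out independent of the full enlarged past and not only of $(B_1,\dots,B_{s-1})$. A minor technical point, the existence of regular conditional distributions and of the maximal coupling, is supplied by the Polish-space setting, automatic here since the data are real-valued.
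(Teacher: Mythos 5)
The paper does not actually prove this lemma: it is quoted verbatim as an auxiliary result from the literature (Lemma~2.4 of Borovkova, Burton and Dehling), so there is no in-paper argument to compare against. Your reconstruction is correct and is, in substance, the standard proof behind that citation: iterate Berbee's maximal coupling block by block, use the gap of length $M$ and monotonicity of the $\beta$-coefficient to bound $\beta\rbraces{\sigma(B_1,\ldots,B_{s-1}),\sigma(B_s)}$ by $\beta(M)$, and carry along fresh auxiliary uniforms so that each $B_s'$ comes out independent of the enlarged past $\sigma(B_1,\ldots,B_{s-1},\xi_1,\ldots,\xi_{s-1})$ rather than merely of $(B_1,\ldots,B_{s-1})$ — you correctly identify and resolve this as the one genuinely delicate point, and your conditional-independence bookkeeping is sound. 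The only discrepancy is cosmetic: your construction yields $\Pb\rbraces{B_s=B_s'}=1-\beta\rbraces{\sigma(B_1,\ldots,B_{s-1}),\sigma(B_s)}\geq 1-\beta(M)$ rather than the exact equality displayed in the statement; that equality should be read as the bound $\Pb\rbraces{B_s\neq B_s'}\leq\beta(M)$, which is all the paper ever uses (e.g.\ in the proof of Proposition 3.3), so nothing is lost.
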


Frequently in our proofs, we require inequalities bounding the moments of a sum  of some random variables.
\begin{lemma}[$c_r$-inequality]{
For two random variables $X$ and $Y$ with existing $r$-th moments for some $r> 0$, it holds
\begin{equation*}
\E{\abs{X+Y}^r} \leq \max\rbraces{ 1,2^{r-1} } \cdot \rbraces{\E{\abs{X}^r}+ \E{\abs{Y}^r}}.
\end{equation*}
}\end{lemma}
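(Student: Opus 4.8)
The plan is to reduce the claim to an elementary deterministic two‑point inequality and then integrate. Since $\abs{X+Y}\leq \abs{X}+\abs{Y}$ almost surely and $t\mapsto t^r$ is non‑decreasing on $[0,\infty)$, we have $\abs{X+Y}^r\leq \rbraces{\abs{X}+\abs{Y}}^r$ pointwise, so it suffices to prove that
\[
  (a+b)^r \leq \max\rbraces{1,\,2^{r-1}}\rbraces{a^r+b^r}\qquad\text{for all } a,b\geq 0,
\]
and then take expectations, using linearity and monotonicity of $\E{\cdot}$ together with the assumed finiteness of $\E{\abs{X}^r}$ and $\E{\abs{Y}^r}$.

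To prove the displayed inequality I would split according to the size of $r$. For $r\geq 1$ I would use convexity of $\phi(t)=t^r$ on $[0,\infty)$: applying Jensen's inequality at the two points $a,b$ with equal weights gives $\rbraces{\frac{a+b}{2}}^r\leq \frac{a^r+b^r}{2}$, i.e. $(a+b)^r\leq 2^{r-1}(a^r+b^r)$, and in this range $2^{r-1}=\max\rbraces{1,2^{r-1}}$. For $0<r<1$ I would instead establish subadditivity, $(a+b)^r\leq a^r+b^r$: if $a=b=0$ this is trivial, and otherwise, writing $u=a/(a+b)$ and $v=b/(a+b)$ in $[0,1]$ with $u+v=1$, the elementary fact $t^r\geq t$ for $t\in[0,1]$ (valid because $r\leq 1$) yields $u^r+v^r\geq u+v=1$, hence $a^r+b^r=(a+b)^r(u^r+v^r)\geq (a+b)^r$; since $2^{r-1}<1=\max\rbraces{1,2^{r-1}}$ here, the bound again follows. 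Combining both cases and taking expectations in the pointwise estimate completes the proof.

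There is essentially no serious obstacle: the only thing to be careful about is the case distinction $r\geq 1$ versus $0<r<1$ and the observation that everything reduces to the one‑line two‑point inequality above; no probabilistic input enters beyond the hypothesis that the $r$‑th moments exist, which is exactly what makes the final passage to expectations legitimate.
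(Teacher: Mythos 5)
Your proof is correct and complete: the reduction to the deterministic two-point inequality $(a+b)^r\leq \max(1,2^{r-1})(a^r+b^r)$, handled by convexity for $r\geq 1$ and by subadditivity of $t\mapsto t^r$ for $0<r<1$, followed by monotonicity of the expectation, is exactly the standard argument. The paper states this $c_r$-inequality only as an auxiliary result from the literature and gives no proof of its own, so there is nothing to compare against; your write-up would serve as a valid self-contained justification.
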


\begin{theorem}[Rosenthal-type inequality; see, Theorem 2, Section 1.4, in \citep{Doukhan.1994}] 
Let $(X_i)_{i\in \N}$ be an $\alpha$-mixing sequence of random variables and let $T$ be a finite subset of $\N$ such that $\E{X_t}=0$ for all $t\in T$.
Assume there exists an $\varepsilon>0$ and a constant $c \in 2\N$ with $c\geq \tau$ such that
$$\sum_{k=1}^\infty \rbraces{k+1}^{c-2} \alpha(k)^{\varepsilon/(c+\varepsilon)}<\infty\quad \text{as well as } \quad
 \E{\abs{X_t}^{\tau+\varepsilon}}<\infty$$ for some $\tau>0$ and all $t\in T$. Then there exists a constant $C$ depending only on $\tau$ and the mixing coefficients $\alpha(k)$ of $X$ such that
 $$\E{\abs{\sum_{t\in T}X_t}^\tau}\leq C D(\tau, \varepsilon,T),$$
 where
 $$
 D(\tau, \varepsilon,T) = \begin{cases}
L(\tau,0,T) \quad \text{ for } 0<\tau\leq 1, \varepsilon\geq 0,\\
L(\tau,\varepsilon,T) \quad \text{ for } 1<\tau \leq 2, \varepsilon>0,\\
\max\rbraces{L(\tau,\varepsilon,T), (L(2,\varepsilon,T))^{\tau/2}} \quad \text{ for } \tau>2, \varepsilon>0,\\
\end{cases}
$$
with $$L(\mu, \varepsilon,T):= \sum_{t\in T} \E{\abs{X_t}^{\mu+\varepsilon}}^{\mu/(\mu+\varepsilon)}.$$
\end{theorem}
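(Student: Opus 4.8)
The plan is to establish this classical Rosenthal-type moment inequality for strongly mixing sequences along the standard route: reduce to an even moment, expand it combinatorially, and control the resulting mixed moments by Davydov's covariance inequality (stated above) together with the weighted summability $\sum_k(k+1)^{c-2}\alpha(k)^{\varepsilon/(c+\varepsilon)}<\infty$. The ranges $0<\tau\le1$ and $1<\tau\le2$ are handled first and separately. For $\tau\le1$, the $c_r$-inequality and Jensen give $\E{\abs{\sum_{t\in T}X_t}^\tau}\le\sum_{t\in T}\E{\abs{X_t}^\tau}\le\sum_{t\in T}\E{\abs{X_t}^{\tau+\varepsilon}}^{\tau/(\tau+\varepsilon)}=L(\tau,0,T)$. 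For $1<\tau\le2$ one argues as in the von Bahr--Esseen inequality, the contribution of the dependence between separated blocks being absorbed into the constant via $\sum_k\alpha(k)^{\varepsilon/(c+\varepsilon)}<\infty$ and Davydov, which yields the $L(\tau,\varepsilon,T)$ bound. For $\tau>2$, let $c\in2\N$ with $c\ge\tau$ be the integer supplied by the hypothesis; since $\tau/c\le1$, Jensen gives $\E{\abs{\sum_{t\in T}X_t}^\tau}\le\rbraces{\E{\rbraces{\sum_{t\in T}X_t}^{c}}}^{\tau/c}$, and subadditivity of $x\mapsto x^{\tau/c}$ together with the uniform $(\tau+\varepsilon)$-moment bound turns a $D(c,\varepsilon,T)$ estimate into a $D(\tau,\varepsilon,T)$ one. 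Hence it suffices to prove the inequality when $\tau=c$ is an even integer.

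For even $c$, put $S_T=\sum_{t\in T}X_t$ and expand $\E{S_T^{c}}=\sum_{(t_1,\dots,t_c)\in T^{c}}\E{X_{t_1}\cdots X_{t_c}}$. For each tuple, reorder the indices so that $t_1\le\cdots\le t_c$, let $g=t_{r+1}-t_r$ be a largest gap between consecutive indices, and split the product there by Davydov's inequality: the difference $\abs{\E{X_{t_1}\cdots X_{t_c}}-\E{X_{t_1}\cdots X_{t_r}}\,\E{X_{t_{r+1}}\cdots X_{t_c}}}$ is at most a constant times $\alpha(g)^{\varepsilon/(c+\varepsilon)}$ times a product of finite moment norms of the $X_{t_j}$, the three exponents in Davydov's bound being chosen so that exactly this power of $\alpha(g)$ survives while the remaining H\"older room (available since every $X_t$ has a finite $(\tau+\varepsilon)$-moment) keeps each factor norm finite. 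The two shorter expectations $\E{X_{t_1}\cdots X_{t_r}}$ and $\E{X_{t_{r+1}}\cdots X_{t_c}}$ involve fewer than $c$ factors and are treated recursively in the same way; the recursion terminates because any product in which some index occurs exactly once has expectation zero, which is where the centering $\E{X_t}=0$ is used.

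Unwinding the recursion bounds $\abs{\E{X_{t_1}\cdots X_{t_c}}}$ by $C$ times a product of the factors $\alpha(g_i)^{\varepsilon/(c+\varepsilon)}$ over the gaps $g_i$ removed along the way, times moment norms of the $X_{t_j}$. Summing over $T^{c}$, each mixing factor $\alpha(g)^{\varepsilon/(c+\varepsilon)}$ gets multiplied by the number of ways the associated gap of size $g$ can occur inside a sorted $c$-tuple, which in the relevant regime is of order $g^{c-2}$ --- this is precisely why $\alpha(k)$ is weighted by $(k+1)^{c-2}$ in the hypothesis and why $\sum_k(k+1)^{c-2}\alpha(k)^{\varepsilon/(c+\varepsilon)}<\infty$ is required --- while the dependence on $\abs{T}$ comes out of the combinatorics of coincidences among the indices. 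Grouping the tuples by their coincidence pattern (each group a product of terms, a block of two equal indices yielding a second-moment factor and a single block of $c$ equal indices a $c$-th-moment factor) and collapsing the resulting multilinear sum by a power-mean/interpolation argument that also brings the moment orders appearing down to $2+\varepsilon$ and $\tau+\varepsilon$, one obtains $\E{S_T^{c}}\le C\,(L(2,\varepsilon,T)^{c/2}+L(c,\varepsilon,T))\le2C\,D(c,\varepsilon,T)$. Feeding this into the reduction of the first paragraph gives the theorem for every $\tau$.

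The main obstacle is exactly this combinatorial bookkeeping: organizing the recursion over the number of surviving factors so that the H\"older exponents always close up (they do because the total H\"older budget, fixed at $c+\varepsilon$, is conserved along the recursion), tracking the accumulated product of mixing coefficients, and then performing the counting that isolates which coincidence patterns of the $t_j$ generate the \emph{variance} term $L(2,\varepsilon,T)^{c/2}$, which ones generate $L(c,\varepsilon,T)$, and verifying that no intermediate pattern dominates --- this balance is the essence of the Rosenthal phenomenon. A lesser technicality is making the $\tau\mapsto c$ reduction quantitative, handled by first rescaling so that $\sup_{t\in T}\|X_t\|_{\tau+\varepsilon}\le1$ before invoking Jensen and subadditivity.
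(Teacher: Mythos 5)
This statement is not proved in the paper at all: it is quoted verbatim, with citation, from Doukhan (1994, Theorem~2, Section~1.4) in the appendix section that merely collects auxiliary results from the literature. There is therefore no in-paper argument to compare yours against, and any assessment has to be of your sketch on its own merits.

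On those merits, the broad outline is the classical one (even-moment expansion, splitting each mixed moment at the largest gap via Davydov's inequality with the H\"older budget $c+\varepsilon$, the $(k+1)^{c-2}$ count of sorted tuples with a prescribed largest gap, and the dichotomy between the ``variance'' pattern giving $L(2,\varepsilon,T)^{c/2}$ and the ``diagonal'' pattern giving $L(c,\varepsilon,T)$), and that part is consistent with the stated hypotheses. However, your reduction of a non-even exponent $\tau>2$ to the even integer $c\geq\tau$ is genuinely broken: when $c>\tau$ the hypothesis only supplies $\E{\abs{X_t}^{\tau+\varepsilon}}<\infty$, so $\E{\abs{\sum_{t\in T}X_t}^{c}}$ may simply be infinite (take $\tau=3$, $c=4$, and variables without fourth moments), and the Jensen step $\E{\abs{S_T}^{\tau}}\leq\rbraces{\E{S_T^{c}}}^{\tau/c}$ yields nothing; rescaling so that $\sup_t\|X_t\|_{\tau+\varepsilon}\leq 1$ does not create the missing higher moments. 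The standard proofs (including Doukhan's) handle non-even $\tau$ by truncating the summands and combining an even-moment bound for the bounded parts with a separate estimate for the tails, or equivalently via a tail-probability/interpolation argument --- not by passing to a larger even moment. A second, lesser issue is that the final ``power-mean/interpolation argument'' collapsing the multilinear sum into $L(2,\varepsilon,T)^{c/2}+L(c,\varepsilon,T)$, which is the actual content of the Rosenthal phenomenon, is asserted rather than carried out. As it stands the proposal is an accurate road map for even $\tau$ but not a proof of the theorem as stated.
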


\begin{theorem}[Theorem 1 in Yokoyama \citep{Yokoyama.1980}] 
Let $(X_i)_{i\in \N}$ be a strictly stationary $\alpha$-mixing sequence of random variables such that $\E{X_1}=\mu$.  Assume there exist constants $\delta$ with $0<\delta\leq \infty$ and  $t$ with $2\leq t <2+\delta$ such that $$\E{\abs{X_1}^{2+\delta}}<\infty \quad \text{and} \quad \sum_{k=1}^{\infty} k^{t/2-1}\alpha(k)^{\rbraces{2+\delta-t}/(2+\delta)}<\infty.$$
Then it holds
\begin{equation*}
\E{\abs{\sum_{i=1}^n(X_i-\mu)}^t} \leq C n^{t/2}\E{\abs{X_1}^{2+\delta}}^{t/(2+\delta)}.
\end{equation*}
\end{theorem}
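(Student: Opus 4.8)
I would reconstruct the classical blocking argument for moment bounds of partial sums of mixing sequences. \emph{Reductions and the endpoint $t=2$.} Passing from $X_i$ to $X_i-\mu$ leaves strict stationarity and all $\alpha$-mixing coefficients unchanged, so I may assume $\mu=0$; rescaling $X_i\mapsto X_i/\rbraces{\E{\abs{X_1}^{2+\delta}}}^{1/(2+\delta)}$ normalises $\E{\abs{X_1}^{2+\delta}}=1$, since both sides of the asserted bound carry the factor $\E{\abs{X_1}^{2+\delta}}^{t/(2+\delta)}$. The goal is then $\E{\abs{S_n}^t}\le C\,n^{t/2}$ with $S_n:=\sum_{i=1}^nX_i$. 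For $t=2$ this is immediate from $\E{S_n^2}=n\,\E{X_1^2}+2\sum_{k=1}^{n-1}(n-k)\E{X_1X_{k+1}}$ together with Davydov's covariance inequality (with $p=q=2+\delta$ and $1/r=\delta/(2+\delta)$), which gives $\abs{\E{X_1X_{k+1}}}\le 8\,\alpha(k)^{\delta/(2+\delta)}$; this is summable because $\alpha(k)\le1$ and $t\ge2$ force $\alpha(k)^{\delta/(2+\delta)}\le\alpha(k)^{(2+\delta-t)/(2+\delta)}\le k^{t/2-1}\alpha(k)^{(2+\delta-t)/(2+\delta)}$, and the last is summable by hypothesis. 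Hence $\E{\abs{S_n}^2}\le c\,n$ \emph{unconditionally}, and I may assume $2<t<2+\delta$ from now on.

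\emph{A many-block recursion.} I would prove $\E{\abs{S_n}^t}\le C\,n^{t/2}$ by strong induction on $n$. Fix a block count $m=m_n\to\infty$ and a gap length $g=g_n$ with $m_ng_n=o(n)$; partition $\{1,\dots,n\}$ into $m$ big blocks of length $\ell\approx n/m$, each followed by a short gap of length $g$, and write $S_n=\sum_{s=1}^mB_s+R_n$, where $B_s$ is the sum over the $s$-th big block and $R_n$ collects the gaps and the final remainder. Since the $B_s$ are separated by $g$ indices, I would replace them by i.i.d.\ copies $B_s'$ using the coupling lemma quoted above (this is valid under $\beta$-mixing, which is the regime actually used in the paper; for the theorem exactly as stated, under $\alpha$-mixing alone, one estimates $\E{\abs{\sum_sB_s}^t}$ directly by a covariance/Rosenthal-type recursion instead). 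The coupling costs at most $m\,\beta(g)$ in probability, and the resulting $L^t$-error is $o(n^{t/2})$ provided $g$ grows fast enough --- which is possible because the summability hypothesis together with monotonicity of the mixing coefficients forces $\alpha(k)=o\rbraces{k^{-(t/2)(2+\delta)/(2+\delta-t)}}$, i.e.\ a genuine polynomial rate. For the i.i.d.\ sum, Rosenthal's inequality for independent summands gives $\E{\abs{\sum_sB_s'}^t}\lesssim\sum_s\E{\abs{B_s}^t}+\rbraces{\sum_s\E{\abs{B_s}^2}}^{t/2}$. The induction hypothesis bounds $\sum_s\E{\abs{B_s}^t}$ by $m\cdot C\ell^{t/2}\le C\,m^{1-t/2}n^{t/2}$, a \emph{vanishing} multiple of $n^{t/2}$ because $t>2$; the unconditional $t=2$ estimate bounds $\rbraces{\sum_s\E{\abs{B_s}^2}}^{t/2}$ by $\rbraces{mc\ell}^{t/2}\le(cn)^{t/2}$, of the exact target order and free of any running constant. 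Treating $R_n$ in the same way (its total length being $o(n)$) contributes a further $o(n^{t/2})$. Collecting, $\E{\abs{S_n}^t}\le\rbraces{\varepsilon_n C+C_0+o(1)}n^{t/2}$ for a fixed $C_0$ and $\varepsilon_n\to0$, so the induction closes: the running constant $C$ multiplies only the vanishing term, and one chooses $C$ large enough to absorb the finitely many base cases.

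\emph{Main obstacle.} Everything except the recursion --- the reductions, the $t=2$ case, and the elementary inequalities --- is routine. The real work is making the recursion close, and it has two linked parts. First, \emph{controlling the dependence between blocks}: under $\beta$-mixing this is the coupling step together with an $L^t$-bound on the coupling error, but for the statement as given (mere $\alpha$-mixing) one must run Yokoyama's original covariance recursion, whose delicate point is that each peeling step must be arranged so as not to accumulate a multiplicative constant; this is precisely where the weighted mixing sum $\sum_k k^{t/2-1}\alpha(k)^{(2+\delta-t)/(2+\delta)}$, rather than a cruder condition, is needed. Second, \emph{balancing the parameters} $m_n$, $g_n$ and the target constant $C$ so that the induction hypothesis applies at the smaller scales $\ell_n$ and $g_n$, the ``Gaussian'' term $\rbraces{\sum_s\E{\abs{B_s}^2}}^{t/2}$ --- which alone produces the correct order $n^{t/2}$ --- stays controlled by the $t=2$ estimate, and all remaining errors are genuinely $o(n^{t/2})$. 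The exponent budget is tightest as $t\downarrow2$ or $t\uparrow2+\delta$ and in the degenerate case $\delta=\infty$, which should be checked separately.
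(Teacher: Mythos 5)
This statement is Theorem 1 of Yokoyama (1980), quoted in the appendix purely as an auxiliary tool; the paper gives no proof of it, so your attempt has to stand on its own. Your reductions, the $t=2$ case via Davydov's inequality, and the bookkeeping of the induction on $n$ (the term $C m^{1-t/2}n^{t/2}$ coming from the induction hypothesis versus the fixed term $(cn)^{t/2}$ coming from the unconditional variance bound) are all sound in principle. The gap is in the one step that carries the entire content of the theorem: removing the dependence between the big blocks.

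First, the coupling lemma is a $\beta$-mixing device, while the theorem is stated under $\alpha$-mixing only; since $\beta$ is not dominated by any function of $\alpha$ (only $2\alpha\le\beta$ holds, and there are $\alpha$-mixing sequences that are not absolutely regular), the coupling is simply unavailable here. Second, and more seriously because it persists even if you strengthen the hypothesis to absolute regularity at the corresponding rate, the coupling error cannot be shown to be $o(n^{t/2})$ with the moment information at hand. On the event $A_s=\{B_s\neq B_s'\}$ the only bound you have on $\|B_s-B_s'\|_{2+\delta}$ is the crude $O(\ell)$ from the triangle inequality applied to the $\ell$ summands; an $O(\ell^{1/2})$ bound in $L^{2+\delta}$ is exactly the type of statement the theorem is trying to establish, at an exponent you are not permitted to use. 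Hölder then yields
\begin{equation*}
\E{\abs{\textstyle\sum_s(B_s-B_s')}^t}\;\le\; m^{t-1}\sum_s\rbraces{\E{\abs{B_s-B_s'}^{2+\delta}}}^{t/(2+\delta)}\Pb(A_s)^{(2+\delta-t)/(2+\delta)}\;\lesssim\; n^{t}\,\beta(g)^{(2+\delta-t)/(2+\delta)},
\end{equation*}
and with the decay $\beta(k)\lesssim k^{-(t/2)(2+\delta)/(2+\delta-t)}$ that your summability condition would supply, the right-hand side is of order $n^{t/2}(n/g)^{t/2}$, which diverges relative to the target for every admissible gap $g=o(n)$. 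So the assertion that the coupling error is $o(n^{t/2})$ "provided $g$ grows fast enough" is false under the stated hypotheses. Your fallback for the $\alpha$-mixing case --- "estimate directly by a covariance/Rosenthal-type recursion" --- is not a repair but a pointer to Yokoyama's actual argument (a direct recursion on block sums in which the cross-moments $\E{\abs{U}^{t-j}\abs{V}^{j}}$ between separated blocks are controlled by covariance inequalities, the weighted condition $\sum_k k^{t/2-1}\alpha(k)^{(2+\delta-t)/(2+\delta)}$ arising from summing these errors over scales); none of that work appears in your sketch, and it is precisely the part that cannot be replaced by coupling plus Rosenthal for independent summands.
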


The next two theorems enable us to control the difference between an arbitrary distribution function $F$ and the standard normal distribution $\Phi$.

\begin{theorem}[Theorem 9, Chapter V, in Petrov \citep{Petrov.1975}]
Let $\Phi$ denote the distribution function of the  standard normal distribution, let $F$ be another arbitrary distribution function and define $\Delta:= \sup_x \abs{F(x)-\Phi(x)}$. Suppose that $0<\Delta<1/\sqrt{e}$ and that $F$ has finite absolute moments of order $p$ for some $p>0$. Then there exists a constant $C_p$ depending only on $p$ such that
\begin{equation*}
\abs{F(x)-\Phi(x)} \leq \frac{C_p \Delta\log\rbraces{\frac{1}{\Delta}}^{p/2}+\lambda_p}{1+\abs{x}^p}
\end{equation*}
for all $x\in \R$, where
\begin{equation*}
\lambda_p= \abs{\int_{-\infty}^\infty\abs{x}^p\mathrm{d}F(x)-\int_{-\infty}^\infty\abs{x}^p\mathrm{d}\Phi(x)}.
\end{equation*}
\end{theorem}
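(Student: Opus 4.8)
This is the classical non-uniform refinement of the Kolmogorov bound, and the plan is to reproduce Petrov's argument \citep{Petrov.1975}. By symmetry of $\Phi$ and an entirely analogous treatment of the two tails it is enough to bound $\abs{F(x)-\Phi(x)}$ for $x\ge 0$. First I would fix the truncation level $T:=\sqrt{2\log(1/\Delta)}$, which is well defined and exceeds $1$ because $0<\Delta<1/\sqrt e$, and which satisfies $e^{-T^{2}/2}=\Delta$, so that $T^{p}\varphi(T)=C_{p}\,\Delta\,(\log(1/\Delta))^{p/2}$, with $\varphi$ the standard normal density. I then split the half-line at $T$: on $0\le x\le T$ the uniform bound is already sharp enough, while on $x>T$ one exploits the smallness of the remaining mass of $F$ and of $\Phi$.

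On the central range I would just combine $\abs{F(x)-\Phi(x)}\le\Delta$ with $1+x^{p}\le 1+T^{p}\le C_{p}(\log(1/\Delta))^{p/2}$ to obtain $(1+x^{p})\abs{F(x)-\Phi(x)}\le C_{p}\,\Delta\,(\log(1/\Delta))^{p/2}$. On the tail range I would write $\abs{F(x)-\Phi(x)}\le(1-F(x))+(1-\Phi(x))$ and treat the two summands separately. For the Gaussian one, $1-\Phi(x)\le\varphi(x)$ and $x\mapsto x^{p}\varphi(x)$ is decreasing for $x>\sqrt p$, so for $x>T$ one gets $x^{p}(1-\Phi(x))\le T^{p}\varphi(T)=C_{p}\,\Delta\,(\log(1/\Delta))^{p/2}$ (the bounded range of $\Delta$ for which $T<\sqrt p$ is harmless, since there $\Delta(\log(1/\Delta))^{p/2}$ is bounded below by a constant depending only on $p$), whence $(1+x^{p})(1-\Phi(x))\le C_{p}\,\Delta\,(\log(1/\Delta))^{p/2}$. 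For the $F$-summand I would use Markov's inequality on $(x,\infty)$, namely $x^{p}(1-F(x))\le\int_{\{\abs{t}>x\}}\abs{t}^{p}\,dF(t)=:J(x)$, together with the monotonicity of $J$, to conclude $(1+x^{p})(1-F(x))\le 2J(x)\le 2J(T)$ for all $x>T$; and then I would control $J(T)$ by comparison with $\Phi$ through the identity $J(T)=\int_{\{\abs{t}>T\}}\abs{t}^{p}\,d\Phi(t)+\bigl(\int\abs{t}^{p}\,dF-\int\abs{t}^{p}\,d\Phi\bigr)-\bigl(\int_{\{\abs{t}\le T\}}\abs{t}^{p}\,dF-\int_{\{\abs{t}\le T\}}\abs{t}^{p}\,d\Phi\bigr)$, whose first term is $C_{p}\,\Delta\,(\log(1/\Delta))^{p/2}$ as above, whose second term is at most $\lambda_{p}$ in modulus, and whose third term is at most $C_{p}\,T^{p}\,\Delta=C_{p}\,\Delta\,(\log(1/\Delta))^{p/2}$ by integrating by parts against $F-\Phi$ on $[-T,T]$ and using $\abs{F-\Phi}\le\Delta$ and the total variation $\int_{-T}^{T}\!d(\abs{t}^{p})=2T^{p}$. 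Adding the two summands gives $(1+x^{p})\abs{F(x)-\Phi(x)}\le C_{p}\,\Delta\,(\log(1/\Delta))^{p/2}+\lambda_{p}$ on the tail range as well, which is the assertion.

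The routine ingredients here are the Gaussian tail estimates and Markov's inequality. The delicate point — and the reason one needs $\Delta<1/\sqrt e$ and precisely the truncation level $T=\sqrt{2\log(1/\Delta)}$ — is the bookkeeping in the displayed identity: one must pass from $J(x)$ to $J(T)$ by monotonicity so that no surviving power of $x$ spoils the estimate, compare the truncated $p$-th moment of $F$ with that of $\Phi$ via integration by parts so that only the moment increment $\lambda_{p}$ enters additively rather than the full moment $\int\abs{t}^{p}\,d\Phi$, and balance $T$ so that $e^{-T^{2}/2}\asymp\Delta$, which is exactly what turns the truncation into the logarithmic factor $(\log(1/\Delta))^{p/2}$. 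The case of $\Delta$ bounded away from $0$ has to be dealt with separately, but it is trivial because then all the relevant quantities are bounded in terms of $p$ alone.
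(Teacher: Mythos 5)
This statement is quoted verbatim from Petrov (Theorem 9, Chapter V of \citep{Petrov.1975}) as an auxiliary tool in Appendix A.6; the paper gives no proof of it, so there is nothing in the paper to compare your argument against. Your reconstruction is essentially the standard non-uniform Berry--Esseen-type argument and is sound: the truncation at $T=\sqrt{2\log(1/\Delta)}$, the uniform bound on $[0,T]$, the Gaussian tail estimate via monotonicity of $x^p\varphi(x)$ beyond $\sqrt{p}$, the Markov bound $x^p(1-F(x))\le J(x)$, the three-term decomposition of $J(T)$, and the integration by parts on $[-T,T]$ all check out, as does your disposal of the regime where $\Delta$ is bounded away from $0$. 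One small quantitative slip: bounding $(1+x^p)(1-F(x))\le 2J(x)\le 2J(T)$ delivers $2\lambda_p$ in the numerator rather than the stated $\lambda_p$; to recover the exact coefficient you should bound the ``$1\cdot(1-F(x))$'' contribution by $(1-\Phi(x))+\Delta\le C_p\,\Delta(\log(1/\Delta))^{p/2}$ instead of by $J(x)$, reserving Markov for the $x^p$ factor only. This is cosmetic here: in the paper's application one takes $p=2$ with $F_n$ standardized to unit second moment, so $\lambda_2=0$ and any fixed multiple of $\lambda_p$ would do.
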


\begin{theorem}[Theorem 1 in Tikhomirov \citep{Tikhomirov.1980}]
Let $(X_i)_{i\in \N}$ be a strictly stationary, $\alpha$-mixing sequence of random variables with mean zero and finite variance. Let $\Phi$ denote the distribution function of the standard normal distribution and define $$F_n(x):= \Pb\rbraces{\frac{\sum_{i=1}^nX_i}{\sqrt{\Var{\sum_{i=1}^n X_i}}}\leq x}\quad  \text{as well as} \quad \Delta_n:=\sup_x\abs{F_n(x)-\Phi(x)}.$$
Suppose that there exist constants $C_1>0$ and $\rho>1$ such that $$\alpha(k)\leq C_1k^{-\rho(2+\delta)(1+\delta)/\delta^2}$$ holds for all $k\in \N$ and some  $0<\delta\leq 1$ such that $\E{\abs{X_1}^{2+\delta}}<\infty$. Then,
$$\kappa^2=\E{X_1^2}+ 2\sum_{k=2}^\infty \E{X_1X_k}<\infty$$
 and if $\kappa^2>0$,  there exists a constant $C_2$ depending solely on $C_1, \rho $ and $\delta$ such that
\begin{equation*}
\Delta_n\leq C_2 n^{-(\delta/2)(\rho-1)/(\rho+1)}.
\end{equation*}
\end{theorem}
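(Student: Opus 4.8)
The plan is to establish this Berry--Esseen bound by the characteristic--function method of Esseen combined with a big--block/small--block decomposition, extracting the rate by optimizing the block lengths against the polynomial mixing rate. Throughout I write $\sigma_n^2=\Var{\sum_{i=1}^n X_i}$ and denote by $f_n$ the characteristic function of $\sum_{i=1}^n X_i/\sigma_n$. First I would record the preliminaries. Davydov's covariance inequality yields $\abs{\E{X_1 X_{k+1}}}\leq 8\,\alpha(k)^{\delta/(2+\delta)}\|X_1\|_{2+\delta}^2$, and the hypothesis $\alpha(k)\leq C_1 k^{-\rho(2+\delta)(1+\delta)/\delta^2}$ forces $\sum_k \alpha(k)^{\delta/(2+\delta)}<\infty$, because the resulting exponent equals $\rho(1+\delta)/\delta>1$. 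Hence the series defining $\kappa^2$ converges absolutely, and a standard variance argument gives $\sigma_n^2/n\to\kappa^2>0$, so that $\sigma_n^2\asymp n$. With these in hand, Esseen's smoothing inequality reduces the claim to a characteristic--function estimate,
$$\Delta_n \leq \frac{1}{\pi}\int_{-T}^{T}\abs{\frac{f_n(t)-e^{-t^2/2}}{t}}\,\mathrm{d}t + \frac{C}{T},$$
for a truncation level $T=T_n$ to be chosen; the whole task becomes a uniform bound on $\abs{f_n(t)-e^{-t^2/2}}$ for $\abs{t}\leq T_n$.

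The core device is the blocking construction. I would partition $\{1,\dots,n\}$ into alternating big blocks of length $p=p_n$ and separating small blocks of length $q=q_n$ with $q=o(p)$, giving $k_n\asymp n/p$ big blocks with partial sums $U_1,\dots,U_{k_n}$ and small--block sums whose total I write $S''$; set $S'=\sum_j U_j$. The first approximation discards the small blocks: since the mixing and moment conditions give $\E{(S'')^2}\asymp k_n q\asymp nq/p$, the elementary bound
$$\abs{f_n(t)-\E{e^{\mathrm{i}tS'/\sigma_n}}}\leq \E{\abs{e^{\mathrm{i}tS''/\sigma_n}-1}}\leq \abs{t}\,\E{(S'')^2}^{1/2}/\sigma_n \lesssim \abs{t}\sqrt{q/p}$$
lets me replace the full sum by the big--block sum.

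Two further steps handle $S'$. The decoupling step exploits that $\abs{e^{\mathrm{i}tU_j/\sigma_n}}=1$, so the strong--mixing inequality for bounded functionals, applied by a telescoping argument across the $k_n$ separating gaps of length $q$, yields
$$\abs{\E{\prod_{j=1}^{k_n}e^{\mathrm{i}tU_j/\sigma_n}}-\prod_{j=1}^{k_n}\E{e^{\mathrm{i}tU_j/\sigma_n}}}\leq C\,k_n\,\alpha(q).$$
The independent--replacement step then treats the product of individual factors by a Lyapunov--Taylor expansion: the Rosenthal--type inequality quoted in the excerpt bounds $\E{\abs{U_j}^{2+\delta}}\lesssim p^{1+\delta/2}$, so the Lyapunov ratio satisfies $\sum_j \E{\abs{U_j}^{2+\delta}}/\sigma_n^{2+\delta}\asymp (n/p)\,p^{1+\delta/2}/n^{1+\delta/2}=(p/n)^{\delta/2}$, which together with $\sum_j \Var{U_j}/\sigma_n^2\to 1$ gives $\abs{\prod_j \E{e^{\mathrm{i}tU_j/\sigma_n}}-e^{-t^2/2}}\lesssim (1+\abs{t}^{2+\delta})(p/n)^{\delta/2}$; near the origin all three quantities are $O(t^2)$, which keeps the integrand in Esseen's inequality integrable.

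Finally I would assemble the three errors --- $\abs{t}\sqrt{q/p}$, $k_n\alpha(q)\lesssim (n/p)\,q^{-\rho(2+\delta)(1+\delta)/\delta^2}$, and $(1+\abs{t}^{2+\delta})(p/n)^{\delta/2}$ --- insert them into the smoothing integral over $\abs{t}\leq T_n$, and select $T_n$, $p_n=n^{a}$ and $q_n=n^{b}$ to balance the resulting powers of $n$. The dominant competition is between the Lyapunov term $n^{-(1-a)\delta/2}$, which wants $a$ small, and the decoupling term $n^{1-a-b\rho(2+\delta)(1+\delta)/\delta^2}$, which wants $a,b$ large, with the small--block term $n^{(b-a)/2}$ kept subordinate; the simultaneous balancing of all three pins the exponent at exactly $(\delta/2)(\rho-1)/(\rho+1)$. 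The main obstacle is precisely this optimization together with the uniform control of the block characteristic functions over the growing range $\abs{t}\leq T_n$: one must carry all constants depending only on $C_1,\rho,\delta$, verify the Rosenthal moment bound under the stated mixing rate, and check that neither the small--block term nor the $C/T_n$ tail of Esseen's inequality degrades the target rate. This is the bookkeeping carried out in Tikhomirov \citep{Tikhomirov.1980}, from which the statement is quoted.
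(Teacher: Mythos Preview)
The paper does not prove this statement at all: it is listed in the appendix under ``Auxiliary Results from the Literature'' and is simply quoted from Tikhomirov \cite{Tikhomirov.1980} without proof. There is therefore no proof in the paper to compare your proposal against.

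That said, your sketch is a faithful outline of the classical route to such Berry--Esseen bounds for mixing sequences --- Esseen's smoothing inequality combined with a Bernstein big-block/small-block decomposition, decoupling via the $\alpha$-mixing covariance bound for bounded functionals, a Lyapunov-type expansion of the block characteristic functions using Rosenthal moment bounds, and a final optimization over the block lengths --- and this is indeed the strategy of Tikhomirov's original paper, as you yourself note in your closing sentence. The one point where your sketch is a bit loose is the claim that balancing the three error terms ``pins the exponent at exactly $(\delta/2)(\rho-1)/(\rho+1)$'': the actual optimization in \cite{Tikhomirov.1980} is more delicate than a three-term balance, since the Esseen integral introduces further powers of $T_n$ (from integrating $\abs{t}^{2+\delta}$ and from the $C/T_n$ tail) that must be tracked simultaneously, and Tikhomirov in fact uses a more refined Stein-type characteristic-function argument rather than the naive Taylor expansion you describe. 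But as a high-level plan your proposal is sound; it is just that the paper under review never attempts any of this and relies on the cited result directly.
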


Lastly, we state two very useful results concerning the properties of a c\`adl\`ag function and the continuous mapping theorem, respectively.

\begin{lemma}[Lemma 1, Section 14, in \citep{billingsley1968}]{
Let $f:[0,1]\rightarrow \R$ be a c\`adl\`ag function (right-continuous with left-hand limits). Then for every $\varepsilon>0$, there exist  points $0=t_0<t_1< ... <t_r=1$ such that
$$\sup_{s,t\in[t_{i-1},t_i)} \abs{f(s)-f(t)}<\varepsilon$$
for all $i=1, ..., r$.
}\end{lemma}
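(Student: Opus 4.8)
The plan is to prove the lemma by a standard "continuation to the right" argument that exploits precisely the two defining features of a c\`adl\`ag function: right-continuity at every point and the existence of a left-hand limit at every point. For a set $S\subseteq[0,1]$ write $\omega(S):=\sup_{s,t\in S}\abs{f(s)-f(t)}$ for the oscillation of $f$ on $S$, and note that $\omega$ is monotone, i.e. $S\subseteq S'$ implies $\omega(S)\le\omega(S')$. Fix $\varepsilon>0$ and let
\[
A:=\{\,t\in(0,1]:\ \exists\,0=t_0<t_1<\dots<t_k=t \text{ with } \omega([t_{i-1},t_i))<\varepsilon \text{ for } i=1,\dots,k\,\}.
\]
The goal is to show $1\in A$, which is exactly the assertion of the lemma, with the understanding that the final block is the half-open interval $[t_{r-1},1)$.

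I would first observe that $A$ is downward closed: if $t\in A$ with partition $0=t_0<\dots<t_k=t$ and $0<t'<t$, pick $j$ with $t_{j-1}<t'\le t_j$; then $0=t_0<\dots<t_{j-1}<t'$ is a valid partition of $[0,t')$, since the truncated last block $[t_{j-1},t')\subseteq[t_{j-1},t_j)$ has oscillation $<\varepsilon$ by monotonicity of $\omega$. Hence $A$ is an interval with left endpoint $0$, and it is nonempty: by right-continuity at $0$ there is $\delta>0$ with $\omega([0,\delta))<\varepsilon$, so $\delta\in A$ and $\tau:=\sup A>0$. Next, $\tau\in A$: the left limit $f(\tau-)$ exists, so there is $\delta'>0$ with $\omega((\tau-\delta',\tau))<\varepsilon$; since $\tau-\delta'/2<\tau=\sup A$ and $A$ is an interval with left endpoint $0$, we have $\tau-\delta'/2\in A$, and appending the block $[\tau-\delta'/2,\tau)$ to a partition of $[0,\tau-\delta'/2)$ yields a partition of $[0,\tau)$, so $\tau\in A$. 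Finally, $\tau=1$: if $\tau<1$, right-continuity at $\tau$ gives $\delta''>0$ with $\omega([\tau,\tau+\delta''))<\varepsilon$; setting $t^\ast:=\min\{\tau+\delta''/2,1\}>\tau$ and appending $[\tau,t^\ast)$ to a partition of $[0,\tau)$ (which exists because $\tau\in A$) shows $t^\ast\in A$, contradicting $t^\ast>\sup A$. Therefore $\tau=1$ and $1\in A$, which is the claim.

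The argument is essentially routine; the only point requiring care is the bookkeeping with half-open intervals, and this is where I expect whatever friction there is to arise. One must consistently control oscillation over $[t_{i-1},t_i)$ rather than over the closed interval, since the jump of $f$ at $t_i$ is deliberately excluded — this is why the left-limit step controls $\omega((\tau-\delta',\tau))$ and not $\omega((\tau-\delta',\tau])$, and why the right-continuity steps control $\omega([0,\delta))$ and $\omega([\tau,\tau+\delta''))$. It is also worth stating explicitly that the resulting partition leaves the single point $\{1\}$ uncovered (the last block being $[t_{r-1},1)$), in accordance with the statement; no control of $f$ at $t=1$ beyond the existence of its left limit is needed, and none is claimed.
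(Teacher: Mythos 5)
Your proof is correct. The paper does not prove this lemma itself --- it is quoted verbatim as an auxiliary result from Billingsley --- and your supremum/``continuation to the right'' argument is essentially the standard proof given there (define the set of $t$ for which $[0,t)$ admits a suitable decomposition, show it is nonempty and closed under the operations forced by right-continuity and left limits, and conclude its supremum is $1$ and is attained). The only cosmetic point is that to guarantee the \emph{strict} inequality $\omega<\varepsilon$ after taking a supremum of oscillations you should invoke right-continuity and the left limits at level $\varepsilon/3$ rather than $\varepsilon$; this changes nothing in the structure of the argument.
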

As pointed out  in \citep{billingsley1968}, this clearly implies that there are only finitely many jumps that  exceed a given positive number. Moreover, every c\`adl\`ag function $f$ is bounded on $[0,1]$ and has at most countably many discontinuities.

The following theorem is a generalization of the continuous mapping theorem to sequences of functions.
\begin{theorem}[Theorem 5.5 in \citep{billingsley1968}]{
Let $X$ and $X_n$ for $n\in \N$ be real-valued random variables and $h$ and $h_n$ for $n\in \N$ be real-valued, measurable functions. If $X_n\distConv X$ and $\Pb(X\in E)=0$, where
$$ E:=\{x\in \R: \exists (x_n)_{n\in \N} \text{ such that } x_n\rightarrow x \text{ but } h_n(x_n)  \not\rightarrow h(x)  \} ,$$
then it also holds $h_n(X_n)\distConv h(X)$.

}\end{theorem}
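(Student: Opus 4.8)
The plan is to derive this extended continuous mapping theorem from Skorokhod's representation theorem, which converts the distributional convergence $X_n \distConv X$ into almost sure convergence on a conveniently chosen probability space, after which the pointwise defining property of the exceptional set $E$ can be applied directly.

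First I would invoke Skorokhod's representation theorem, which is available since all variables are real-valued and hence take values in the Polish space $\R$: there exist random variables $\tilde{X}_n$, $n\in\N$, and $\tilde{X}$, all defined on a common probability space, such that $\tilde{X}_n \stackrel{\mathcal{D}}{=} X_n$ for every $n$, $\tilde{X}\stackrel{\mathcal{D}}{=} X$, and $\tilde{X}_n \to \tilde{X}$ almost surely. The hypothesis $\Pb(X\in E)=0$ then transfers to $\Pb(\tilde{X}\in E)=0$, since $\tilde{X}$ and $X$ have the same law.

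Second, I would work on the event $A:=\{\tilde{X}\notin E\}\cap\{\tilde{X}_n\to\tilde{X}\}$, which has probability one by the previous step. For $\omega\in A$, set $x:=\tilde{X}(\omega)$ and $x_n:=\tilde{X}_n(\omega)$; then $x_n\to x$ and $x\notin E$. By the very definition of $E$ — namely, $x\notin E$ means precisely that $h_n(x_n')\to h(x)$ holds for \emph{every} sequence $x_n'\to x$ — the particular sequence furnished by Skorokhod yields $h_n(\tilde{X}_n(\omega))\to h(\tilde{X}(\omega))$. Hence $h_n(\tilde{X}_n)\to h(\tilde{X})$ almost surely, and since almost sure convergence implies convergence in distribution, $h_n(\tilde{X}_n)\distConv h(\tilde{X})$. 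To return to the original variables I would note that, as $h_n$ and $h$ are measurable and $\tilde{X}_n\stackrel{\mathcal{D}}{=}X_n$, $\tilde{X}\stackrel{\mathcal{D}}{=}X$, the push-forward laws satisfy $h_n(\tilde{X}_n)\stackrel{\mathcal{D}}{=}h_n(X_n)$ and $h(\tilde{X})\stackrel{\mathcal{D}}{=}h(X)$, whence $h_n(X_n)\distConv h(X)$, as claimed.

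The point requiring the most care — the main obstacle — is the correct reading and use of the set $E$: one must treat $E$ as measurable (so that $\Pb(X\in E)$ is meaningful, as the statement presumes) and must interpret $x\notin E$ as the universally quantified assertion that \emph{any} approximating sequence forces $h_n(x_n')\to h(x)$, which is exactly what the almost surely convergent Skorokhod sequence supplies. If one prefers to avoid Skorokhod's theorem, the alternative is a direct Portmanteau argument establishing $\limsup_n \Pb(h_n(X_n)\in F)\le \Pb(h(X)\in F)$ for every closed $F$, by sandwiching the preimages $h_n^{-1}(F)$ between a shrinking neighbourhood of $h^{-1}(F)$ and the bad set $E$; this route is genuinely more delicate because the maps $h_n$ vary with $n$, and it is precisely this difficulty that makes the Skorokhod approach preferable here.
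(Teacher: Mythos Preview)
The paper does not supply its own proof of this statement: it is listed in the appendix under ``Auxiliary Results from the Literature'' and simply cited from Billingsley (1968) without argument. So there is no paper proof to compare against.

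Your Skorokhod-based argument is correct and is in fact one of the standard ways to establish this extended continuous mapping theorem. The two steps---almost-sure representation on a common space, followed by the pointwise implication encoded in the complement of $E$---are exactly what is needed, and your reading of $x\notin E$ as the universally quantified statement is the right one. For context, Billingsley's original proof in the 1968 edition proceeds via the Portmanteau route you sketch at the end (bounding $\limsup_n \Pb(h_n(X_n)\in F)$ for closed $F$), which avoids Skorokhod but, as you note, is more delicate because the maps $h_n$ vary with $n$. Your approach is cleaner and entirely adequate here.
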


\section{Additional Simulation Results}
\label{Appendix: Additional Simulations}
\subsection{Comparison to the Procedure of Dette, Wu and Zhou \citep{Dette.2015},  \citep{Dette.2019}}

\phantom{.}
\vspace*{-1mm}

\begin{table}[H]
    \centering
    \caption{Simulated rejection probabilities of the tests SWFD and DWZ at the nominal significance level $\alpha=0.05$ for the sample sizes $n=500, 1000, 2000$ under the null hypothesis $\mathbb{H}$ and various local alternatives $\mathbb{A}1$ to $\mathbb{A}4$ with effect sizes of magnitude $n^{-1/2}$ and for different data-generating processes.}
    \label{Table: Comparison 500, 1000, 2000 nominal}
    \begin{tabular}{|cc|cccccc|}
     \hline
         & & N(0,1) & Exp(1) & AR(1), 0.4 & AR(1), 0.7 & ARMA(2,2) & GARCH(1,1) \\ \hline
\multicolumn{8}{|c|}{$n=500$} \\ \hline
         \multirow{2}{*}{$\mathbb{H}$}  & SWFD & 0.085 & 0.112 & 0.098 & 0.134 & 0.106 & 0.180 \\
         & DWZ & 0.052 & 0.029 & 0.064 & 0.088 & 0.073 & 0.290 \\\hline
        \multirow{2}{*}{$\mathbb{A}1$} & SWFD & 0.851 & 0.496 & 0.775 & 0.614 & 0.579 & 0.644 \\
         & DWZ & 0.999 & 0.565 & 0.984 & 0.886 & 0.876 & 0.911 \\\hline
        \multirow{2}{*}{$\mathbb{A}2$} & SWFD & 0.627 & 0.343 & 0.564 & 0.465 & 0.424 & 0.481 \\
         & DWZ & 0.630 & 0.103 & 0.550 & 0.404 & 0.345 & 0.617 \\\hline
        \multirow{2}{*}{$\mathbb{A}3$} & SWFD & 0.350 & 0.242 & 0.339 & 0.328 & 0.288 & 0.352 \\
          & DWZ & 0.093 & 0.027 & 0.092 & 0.100 & 0.095 & 0.306 \\\hline
        \multirow{2}{*}{$\mathbb{A}4$} & SWFD & 0.620 & 0.334 & 0.556 & 0.449 & 0.410 & 0.471 \\
          & DWZ & 0.382 & 0.076 & 0.317 & 0.242 & 0.220 & 0.458 \\      \hline
          \multicolumn{8}{|c|}{$n=1000$} \\ \hline
        \multirow{2}{*}{$\mathbb{H}$} & SWFD & 0.077 & 0.098 & 0.090 & 0.111 & 0.104 & 0.168 \\
        & DWZ & 0.056 & 0.033 & 0.056 & 0.096 & 0.077 & 0.346 \\ \hline
        \multirow{2}{*}{$\mathbb{A}1$}  & SWFD & 0.896 & 0.496 & 0.822 & 0.604 & 0.613 & 0.619 \\
         & DWZ & 0.999 & 0.670 & 0.994 & 0.902 & 0.925 & 0.937 \\\hline
        \multirow{2}{*}{$\mathbb{A}2$} & SWFD & 0.642 & 0.319 & 0.563 & 0.405 & 0.400 & 0.438 \\
         & DWZ & 0.797 & 0.142 & 0.706 & 0.516 & 0.485 & 0.755 \\\hline
        \multirow{2}{*}{$\mathbb{A}3$} & SWFD & 0.347 & 0.228 & 0.309 & 0.277 & 0.260 & 0.319 \\
         & DWZ & 0.224 & 0.034 & 0.187 & 0.188 & 0.148 & 0.473 \\\hline
        \multirow{2}{*}{$\mathbb{A}4$} & SWFD & 0.599 & 0.274 & 0.514 & 0.359 & 0.372 & 0.398 \\
        & DWZ & 0.470 & 0.105 & 0.408 & 0.306 & 0.292 & 0.581 \\\hline
          \multicolumn{8}{|c|}{$n=2000$} \\ \hline
                \multirow{2}{*}{$\mathbb{H}$}   & SWFD & 0.073 & 0.091 & 0.074 & 0.096 & 0.084 & 0.148 \\
         & DWZ & 0.052 & 0.039 & 0.058 & 0.108 & 0.074 & 0.394 \\ \hline
        \multirow{2}{*}{$\mathbb{A}1$} & SWFD & 0.932 & 0.474 & 0.849 & 0.574 & 0.620 & 0.591 \\
         & DWZ & 1 & 0.724 & 0.996 & 0.928 & 0.941 & 0.944 \\ \hline
         \multirow{2}{*}{$\mathbb{A}2$} & SWFD & 0.808 & 0.357 & 0.700 & 0.456 & 0.460 & 0.471 \\
         & DWZ & 0.876 & 0.196 & 0.773 & 0.568 & 0.539 & 0.812 \\ \hline
         \multirow{2}{*}{$\mathbb{A}3$} & SWFD & 0.862 & 0.386 & 0.752 & 0.514 & 0.522 & 0.510 \\
       & DWZ & 0.333 & 0.052 & 0.288 & 0.261 & 0.200 & 0.581 \\ \hline
         \multirow{2}{*}{$\mathbb{A}4$} & SWFD & 0.714 & 0.291 & 0.596 & 0.376 & 0.383 & 0.410 \\
         & DWZ & 0.528 & 0.112 & 0.418 & 0.327 & 0.316 & 0.639 \\  \hline
    \end{tabular}
\end{table}

\begin{table}[H]
    \centering
    \caption{Simulated rejection probabilities of the tests SWFD and DWZ with size-correction at the significance level $\alpha=0.05$ for the sample sizes $n=500, 1000, 2000$ under various local alternatives with effect sizes of magnitude $n^{-1/2}$ and for different data-generating processes.}
    \label{Table: Comparison 500, 1000, 2000 size-corrected}
    \begin{tabular}{|cc|cccccc|}
     \hline
          & & N(0,1) & Exp(1) & AR(1), 0.4 & AR(1), 0.7 & ARMA(2,2) & GARCH(1,1) \\\hline
           \multicolumn{8}{|c|}{$n=500$} \\ \hline
           \multirow{2}{*}{$\mathbb{A}1$}  & SWFD & 0.734 & 0.318 & 0.630 & 0.356 & 0.400 & 0.336 \\
        & DWZ & 0.999 & 0.649 & 0.980 & 0.824 & 0.836 & 0.680 \\\hline
        \multirow{2}{*}{$\mathbb{A}2$}  & SWFD & 0.457 & 0.186 & 0.376 & 0.200 & 0.253 & 0.178 \\
        & DWZ & 0.618 & 0.159 & 0.492 & 0.258 & 0.252 & 0.134 \\\hline
        \multirow{2}{*}{$\mathbb{A}3$} & SWFD & 0.221 & 0.126 & 0.202 & 0.128 & 0.154 & 0.114 \\
        & DWZ & 0.088 & 0.047 & 0.069 & 0.044 & 0.061 & 0.042 \\\hline
        \multirow{2}{*}{$\mathbb{A}4$}  & SWFD & 0.481 & 0.194 & 0.399 & 0.222 & 0.255 & 0.199 \\
        & DWZ & 0.372 & 0.118 & 0.274 & 0.144 & 0.160 & 0.096 \\\hline
         \multicolumn{8}{|c|}{$n=1000$} \\ \hline
                 \multirow{2}{*}{$\mathbb{A}1$} & SWFD & 0.837 & 0.350 & 0.676 & 0.430 & 0.466 & 0.367 \\
        & DWZ & 1 & 0.854 & 1 & 0.944 & 0.968 & 0.828 \\\hline
       \multirow{2}{*}{$\mathbb{A}2$} & SWFD & 0.530 & 0.201 & 0.392 & 0.255 & 0.262 & 0.199 \\
        & DWZ & 0.971 & 0.399 & 0.916 & 0.618 & 0.666 & 0.366 \\\hline
        \multirow{2}{*}{$\mathbb{A}3$} & SWFD & 0.242 & 0.124 & 0.190 & 0.147 & 0.148 & 0.120 \\
         & DWZ & 0.786 & 0.240 & 0.674 & 0.376 & 0.398 & 0.232 \\\hline
        \multirow{2}{*}{$\mathbb{A}4$} & SWFD & 0.494 & 0.173 & 0.361 & 0.219 & 0.245 & 0.178 \\
         & DWZ & 0.562 & 0.138 & 0.453 & 0.203 & 0.234 & 0.137 \\\hline
          \multicolumn{8}{|c|}{$n=2000$} \\ \hline
                  \multirow{2}{*}{$\mathbb{A}1$}  & SWFD & 0.891 & 0.344 & 0.784 & 0.439 & 0.505 & 0.375 \\
             & DWZ & 1 & 0.760 & 0.995 & 0.875 & 0.920 & 0.727 \\ \hline
        \multirow{2}{*}{$\mathbb{A}2$} & SWFD & 0.734 & 0.246 & 0.610 & 0.310 & 0.343 & 0.251 \\
         & DWZ & 0.869 & 0.239 & 0.748 & 0.382 & 0.446 & 0.228 \\ \hline
        \multirow{2}{*}{$\mathbb{A}3$}  & SWFD & 0.805 & 0.269 & 0.674 & 0.368 & 0.397 & 0.294 \\
         & DWZ & 0.318 & 0.066 & 0.261 & 0.127 & 0.140 & 0.070 \\ \hline
        \multirow{2}{*}{$\mathbb{A}4$} & SWFD & 0.644 & 0.185 & 0.504 & 0.256 & 0.279 & 0.218 \\
        & DWZ & 0.516 & 0.136 & 0.392 & 0.192 & 0.248 & 0.138 \\ \hline
       \end{tabular}
\end{table}

\pagebreak

\subsection{Analysis of the Empirical Power}
Figure \ref{Fig: Empirical power, size-corrected} presents the size-corrected simulated rejection probabilities. While the overall picture is similar to that of Figure \ref{Fig: Empirical power}
 in Section \ref{Subsec: Simulations Empirical Power}, 
 the empirical power is generally lower than before, though the effect decreseases for larger sample sizes due to the test becoming less oversized. The test for the GARCH(1,1)-distributed data exhibits by far the highest empirical size in Figure \ref{Fig:Empirical Size}
 in Section \ref{Subsec: Simulations Empirical Size} 
 and is thus associated with a substantially lower size-corrected empirical power.

\begin{figure}[H]

 \begin{subfigure}[c]{0.49\textwidth}
  \includegraphics[width=\textwidth,  trim={0cm 0cm 0.8cm 2cm},clip]{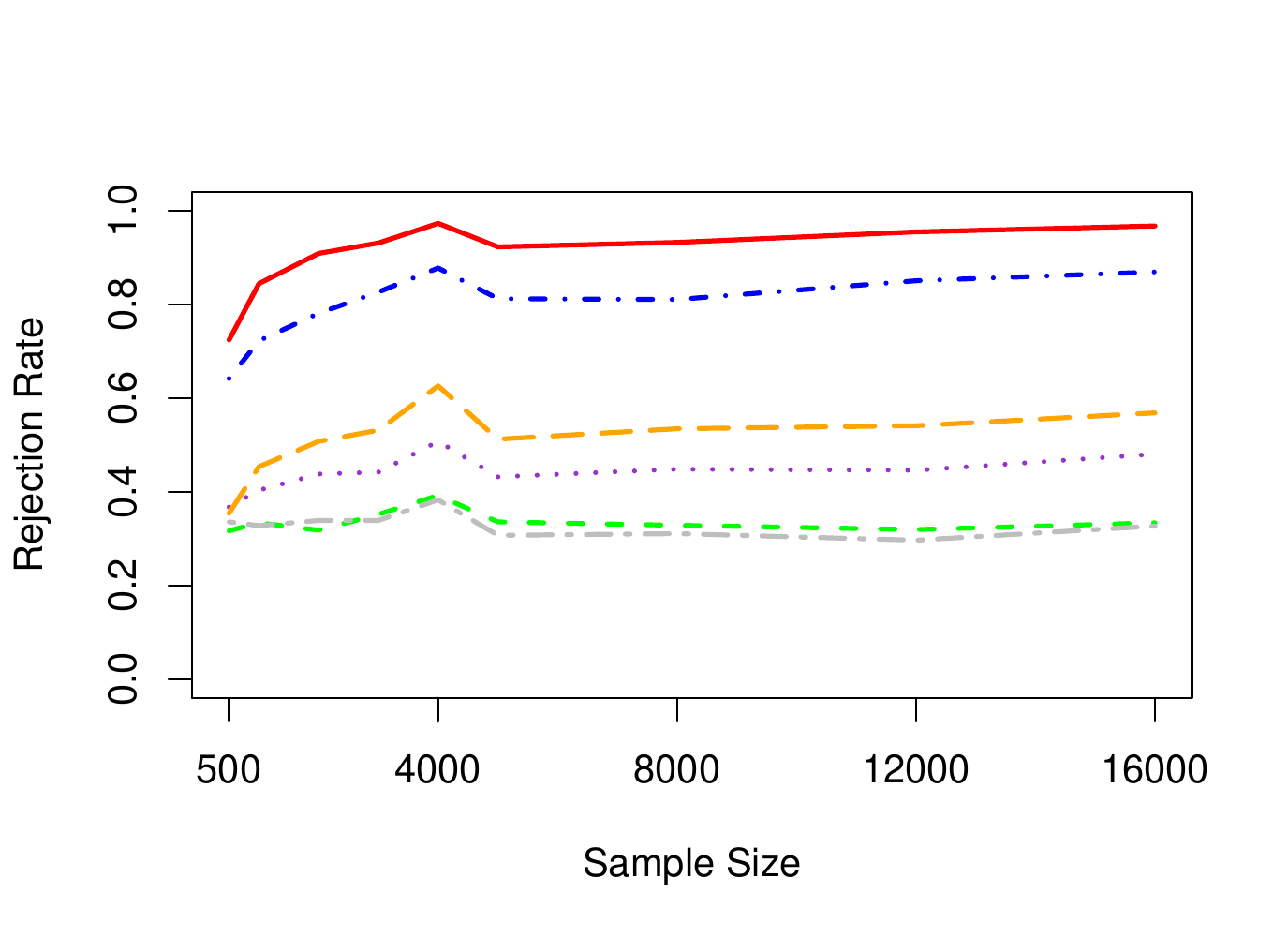}
 \end{subfigure}
   \begin{subfigure}[c]{0.49\textwidth}
  \includegraphics[width=\textwidth,  trim={0cm 0cm 0.8cm 2cm},clip]{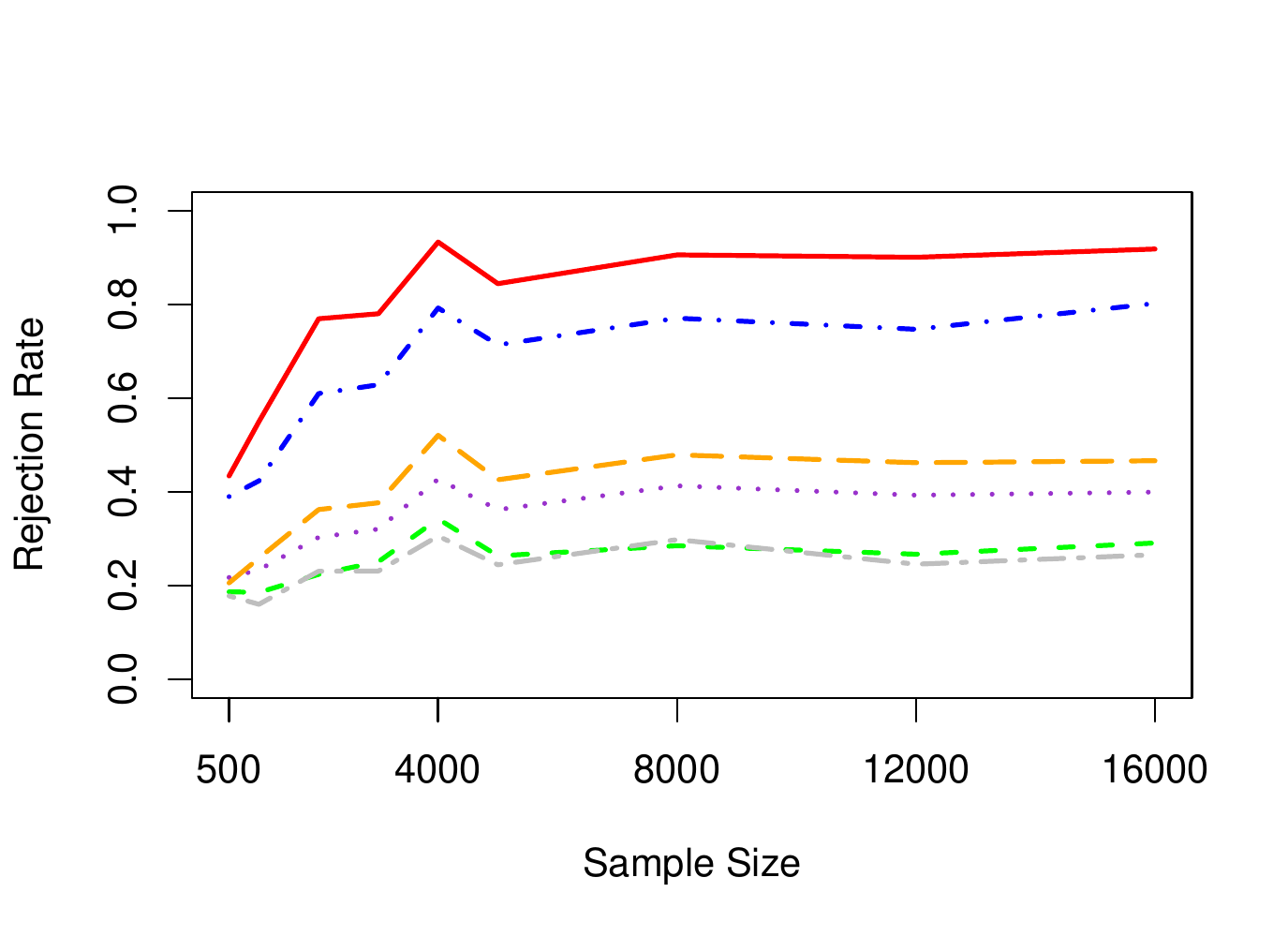}
  \end{subfigure}
   \begin{subfigure}[c]{0.49\textwidth}
  \includegraphics[width=\textwidth,  trim={0cm 0cm 0.8cm 2cm},clip]{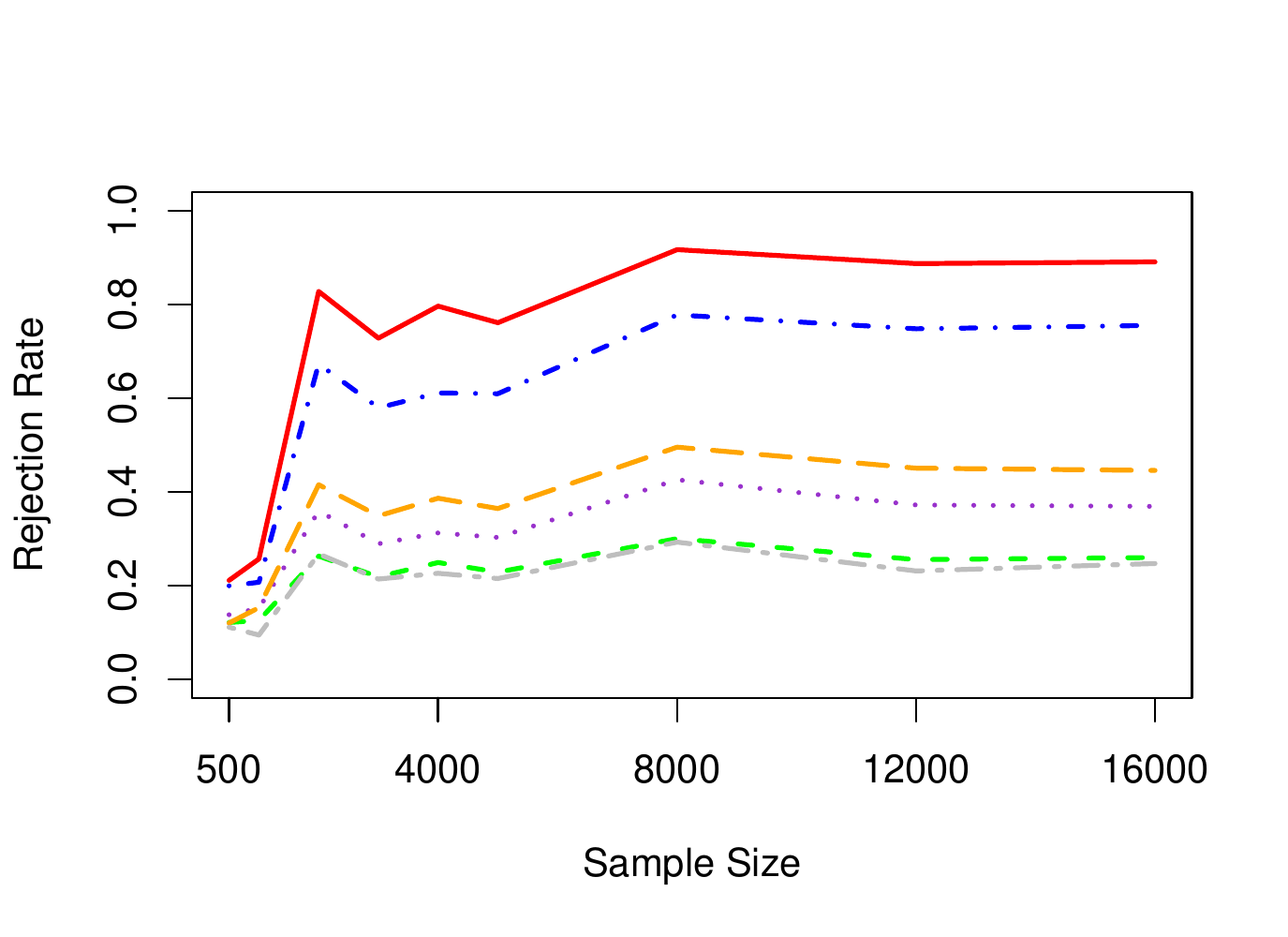}
  \end{subfigure}
   \begin{subfigure}[c]{0.49\textwidth}
  \includegraphics[width=\textwidth,  trim={0cm 0cm 0.8cm 2cm},clip]{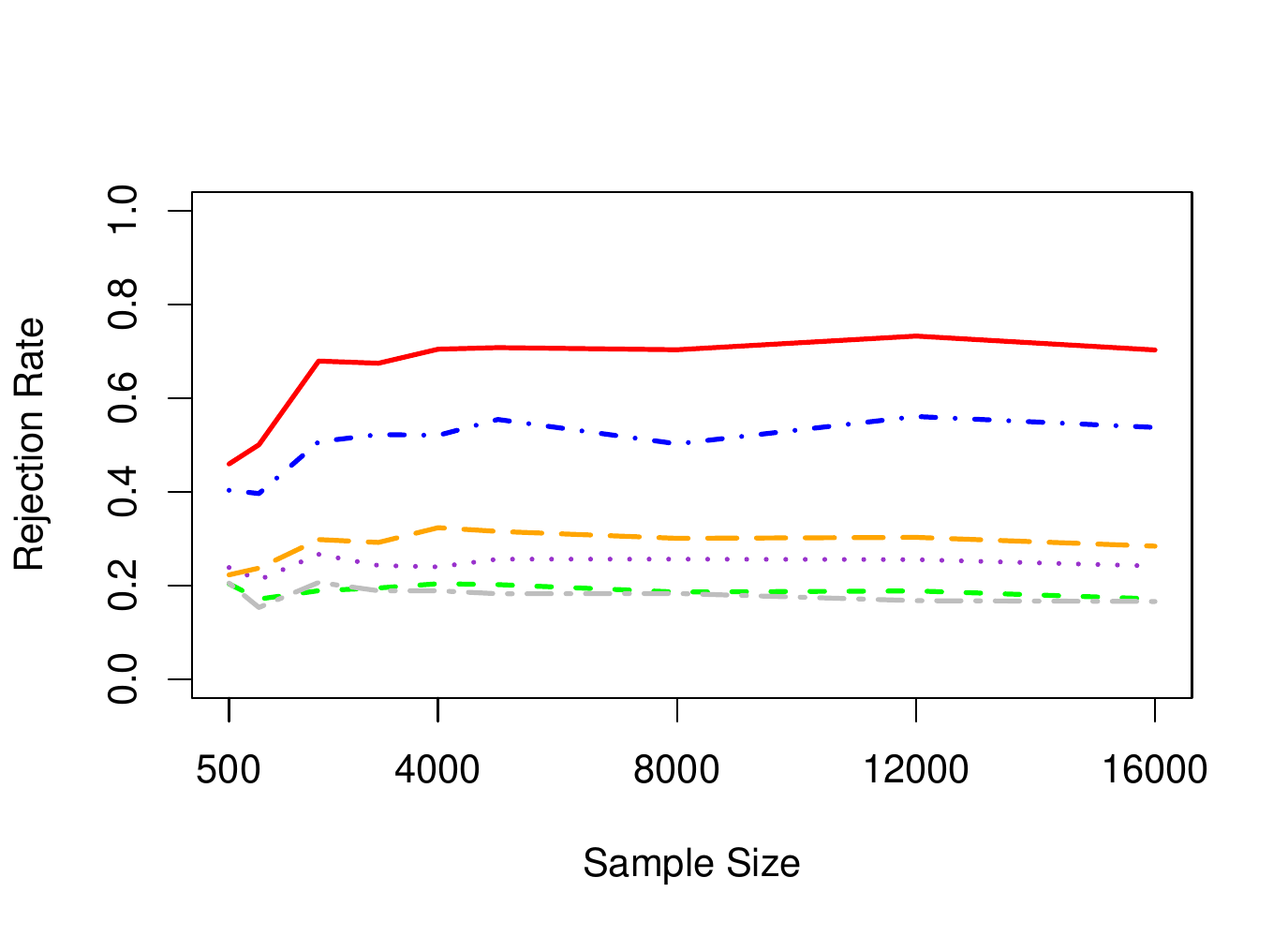}
  \end{subfigure}
     \begin{subfigure}[c]{\textwidth}

  \includegraphics[width=\textwidth, trim={0cm 7cm 0cm 6.5cm},clip]{Pictures/Legend_horizontal_thick.pdf}
  \end{subfigure}
\caption{Empirical power of the SWFD test for the local alternatives $\mathbb{A}1$(top left)-$\mathbb{A}4$(bottom right) with effect sizes of magnitude $n^{-1/2}$ as a function of the sample size $n$, size-correction at significance level $\alpha=0.05$. }
  \label{Fig: Empirical power, size-corrected}
  \end{figure}
  
 \pagebreak

\subsection{Analysis of Performance for non-centred Data}

\phantom{.}

  \begin{table}[H]
    \centering
    \caption{Simulated rejection probabilities of the test SWFD with size-correction at the significance level $\alpha=0.05$ for the sample size $n=3000$ under various local alternatives with effect sizes of magnitude $n^{-1/2}$. Results for different data-generating processes and different mean functions $\mu$ are given. }
    \label{Table: Rej prob different mu size-corr}
    \begin{tabular}{|ccccccc|}
    \hline
         & N(0,1) & Exp(1) & AR(1), 0.4 & AR(1), 0.7 & ARMA(2,2) & GARCH(1,1) \\ \hline
         \multicolumn{7}{|c|}{$\mu(x)=x$} \\ \hline
        $\mathbb{A}1$ & 0.938 & 0.332 & 0.822 & 0.469 & 0.524 & 0.351 \\
        $\mathbb{A}2$ & 0.786 & 0.232 & 0.643 & 0.320 & 0.366 & 0.262 \\
        $\mathbb{A}3$ & 0.744 & 0.221 & 0.578 & 0.300 & 0.328 & 0.234 \\
        $\mathbb{A}4$ & 0.678 & 0.179 & 0.516 & 0.259 & 0.280 & 0.201 \\
        \hline
         \multicolumn{7}{|c|}{$\mu(x)=\sin(2\pi x)$} \\ \hline
        $\mathbb{A}1$ & 0.926 & 0.346 & 0.799 & 0.443 & 0.539 & 0.339 \\
        $\mathbb{A}2$ & 0.794 & 0.261 & 0.611 & 0.303 & 0.392 & 0.242 \\
        $\mathbb{A}3$ & 0.651 & 0.194 & 0.487 & 0.262 & 0.348 & 0.188 \\
        $\mathbb{A}4$ & 0.680 & 0.183 & 0.500 & 0.251 & 0.307 & 0.180 \\
        \hline
		\multicolumn{7}{|c|}{$\mu(x)=0\cdot\1_{\{0\leq x<1/2\}}+ 1\cdot\1_{\{1/2\leq x\leq 1\}}$ for time series $(Z_i)_{i\in\N}$}  \\ \hline
        $\mathbb{A}1$ & 0.768 & 0.281 & 0.862 & 0.902 & 0.566 & 0.318 \\
        $\mathbb{A}2$ & 0.578 & 0.200 & 0.685 & 0.739 & 0.414 & 0.223 \\
        $\mathbb{A}3$ & 0.522 & 0.182 & 0.640 & 0.680 & 0.359 & 0.204 \\
        $\mathbb{A}4$ & 0.473 & 0.150 & 0.576 & 0.638 & 0.307 & 0.184 \\
        \hline
    \end{tabular}
\end{table}

\end{document}